\newtheorem{theorem}{Theorem}[section]
\newtheorem{lemma}[theorem]{Lemma}
\newtheorem{proposition}[theorem]{Proposition}
\newtheorem{corollary}[theorem]{Corollary}
\newtheorem{example}[theorem]{Example}
\theoremstyle{plain}
\theoremstyle{definition}
\newtheorem{definition}[theorem]{Definition}
\newtheorem{remark}[theorem]{Remark}
\numberwithin{equation}{section}
\newcommand{\Ext}{\operatorname{Ext}}
\newcommand{\id}{\operatorname{id}}
\newcommand{\Ker}{\operatorname{Ker}}
\newcommand{\pr}{\operatorname{pr}}
\newcommand{\N}{\mathbb{N}}
\newcommand{\T}{\mathbb{T}}
\newcommand{\Z}{\mathbb{Z}}
\newcommand{\Zp}{{\mathbb{Z}}_+}
\def\M{\mathcal{M}}
\def\A{\mathcal{A}}
\def\F{\mathcal{F}}
\def\P{\mathcal{P}}
\def\Q{\mathcal{Q}}
\def\X{\mathcal{X}}
\def\Y{\mathcal{Y}}
\def\R{\mathcal{R}}
\def\Ext{{{\operatorname{Ext}}}}
\def\det{{{\operatorname{det}}}}
\def\ev{{{\operatorname{ev}}}}
\def\LGBS{({\frak L}^-, {\frak L}^+)}
\def\LLGBS{({\frak L}^-_\Lambda, {\frak L}^+_\Lambda)}
\title{$C^*$-algebras associated with two-sided subshifts
\\
%(Twosubshiftv2.tex)
}
\author{Kengo Matsumoto \\
Department of Mathematics \\
Joetsu University of Education \\
Joetsu, 943-8512, Japan
}
\begin{document}
\maketitle

\date{}

\def\det{{{\operatorname{det}}}}

%\maketitle
\begin{abstract}   
This paper is a continuation of the paper entitled 
``Subshifts, $\lambda$-graph bisystems and $C^*$-algebras'',
arXiv:1904.06464.
A $\lambda$-graph bisystem consists of a pair of two labeled Bratteli diagrams
satisfying certain compatibility condition on their edge labeling.
For any two-sided subshift $\Lambda$,
there exists a $\lambda$-graph bisystem satisfying a special property called FPCC.
We will construct an AF-algebra ${\mathcal{F}}_{\frak L}$ 
with shift automorphism
$\rho_{\frak L}$
from a $\lambda$-graph bisystem 
$({\frak L}^-,{\frak L}^+)$, 
and  define a $C^*$-algebra 
${\mathcal R}_{\frak L}$
by the crossed product
${\mathcal{F}}_{\frak L}\rtimes_{\rho_{\frak L}}\Z$.
It is a two-sided subshift analogue of asymptotic Ruelle algebras constructed from Smale spaces. 
If $\lambda$-graph bisystems come from  two-sided subshifts,
these $C^*$-algebras are proved to be invariant under topological conjugacy of the underlying subshifts.
 We will present a simplicity condition of 
the $C^*$-algebra ${\mathcal R}_{\frak L}$
and the K-theory formulas of the $C^*$-algebras
${\mathcal{F}}_{\frak L}$ and  ${\mathcal R}_{\frak L}$.
The K-group for the AF-algebra
${\mathcal{F}}_{\frak L}$ is regarded as a two-sided extension of the dimension group
of subshifts.
\end{abstract}

{\it Mathematics Subject Classification}:
 Primary 37B10, 46L55; Secondary 46L35.

{\it Keywords and phrases}:
subshift, Bratteli diagram,   dimension group,  
$\lambda$-graph bisystem,  topological conjugacy, topological Markov shift,
Smale space, AF-algebra,  K-group,  $C^*$-algebra, Ruelle algebra,  crossed product.

\bigskip

\medskip

Contents:

\begin{enumerate}
\renewcommand{\theenumi}{\arabic{enumi}}
\renewcommand{\labelenumi}{\textup{\theenumi}}
\item Introduction
%\item Subshifts, $\lambda$-graph systems and its $C^*$-algebras
%\item $\lambda$-graph bisystems and symbolic matrix bisystems
\item Subshifts and $\lambda$-graph bisystems
%\item Strong shift equivalence and shift equivalence
%\item Nonnegative matrix bisystems
%\item Dimension groups, 
\item Two-sided AF algebras 
\item $C^*$-algebras associated with subshifts
\item Invariance under topological conjugacy
\item Dimension groups and K-theory formulas
\item Topological Markov shifts
\item Even shift
%\item A duality: $\lambda$-graph systems as $\lambda$-graph bisystems
\end{enumerate}

\newpage

%%%%%%%%%%%%%%%%%%%%%%%%%%%%%%%%%%%%%%%%%%%%%%%%%%%%   

%%%%%%%%%%%%%%%%%%%%%%%%%%%%%%%%%%%%%%%
\section{Introduction}
%%%%%%%%%%%%%%%%%%%%%%%%%%%%%%%%

This paper is a continuation of the preprint  \cite{MaPre2019} entitled 
``Subshifts, $\lambda$-graph bisystems and $C^*$-algebras'',
arXiv:1904.06464.

Let $\Sigma$ be a finite set called an alphabet.
Let $\sigma:\Sigma^\Z \longrightarrow \Sigma^\Z$ be a homeomorphism of the shift
$\sigma((x_n)_{n\in \N}) = (x_{n+1})_{n\in \N}$, where  
$\Sigma^\Z$  is endowed with its product topology.
A closed $\sigma$-invariant subset $\Lambda$ of $\Sigma^\Z$ is called a subshift.
It is a topological dynamical system
$(\Lambda,\sigma)$ on the compact Hausdorff space $\Lambda$. 
In \cite{MaDocMath1999}, the author introduced a notion of 
$\lambda$-graph system as a generalization of finite labeled directed graphs.
A $\lambda$-graph system over a finite alphabet $\Sigma$ 
is a labeled Bratteli diagram $(V, E, \lambda, \iota)$ with a labeling map 
$\lambda:E \longrightarrow \Sigma$ of edges and an additional structure
$\iota: V\longrightarrow V$.  
The map $\iota : V = \cup_{l=0}^\infty V_l\longrightarrow \cup_{l=1}^\infty V_l$ 
consists of  a sequence  $\iota: V_{l+1}\longrightarrow V_l$ of sujections between vertices.
In \cite{MaDocMath1999},
it was proved that   
any $\lambda$-graph system presents a subshift, 
and conversely any subshift may be presented by a $\lambda$-graph system
called the canonical $\lambda$-graph system. 
It was proved that two subshifts are topologically conjugate 
if and only if their canonical $\lambda$-graph systems are 
(properly) strong shift equivalent.
  In \cite{MaDocMath2002}, a certain $C^*$-algebra associated with  a $\lambda$-graph system was introduced  as a generalization of Cuntz--Krieger algebras.
Since algebraic structure of the Cuntz--Krieger algebras     
reflect one-sided shift structure of the underlying topological Markov shifts,
the $C^*$-algebras associated with $\lambda$-graph systems  
reflect one-sided shift structure of the underlying subshifts.
 
There is a construction of $C^*$-algebras 
that reflect two-sided shift structure of topological Markov shifts.
It has been presented by D. Ruelle \cite{Ruelle2}, and I. Putnam \cite{Putnam1}.
They constructed 
several classes of $C^*$-algebras from Smale spaces.
Two-sided topological Markov shifts are typical examples of Smale spaces,
and the Smale space $C^*$-algebras for  topological Markov shifts 
are asymptotic Ruelle algebras $\R_A^a$. 
Let $(\Lambda_A, \sigma_A)$ be a topological Markov shift defined by a nonnegative matrix $A$.
The $C^*$-algebra  $\R_A^a$ is realized, roughly speaking, as the crossed product $C^*$-algebra 
$\F_A\rtimes_{\sigma_A}\Z$ of the two-sided AF-algebra defined by the matrix $A$ 
by the shift automorphism $\sigma_A$ (\cite{Holton}, \cite{Putnam1}, \cite{PutSp}, etc. ).   
It is well-known that a subshift can not be a Smale space unless it is a topological Markov shift.
Hence we may not apply the construction of $C^*$-algebras for general subshifts.

In this paper, we will construct a two-sided AF-algebra having shift automorphism 
from a general subshift,
so that we may construct a subshift version of asymptotic Ruelle algebras.
To construct AF-algebras from general subshifts,
we will use a notion of $\lambda$-graph bisystem introduced in a recent paper
\cite{MaPre2019}.
A $\lambda$-graph bisystem $\LGBS$ consists of a pair of two labeled Bratteli diagrams
${\frak L}^-,{\frak L}^+$
satisfying certain compatibility condition on their edge labeling.
Let $\LGBS$ be a $\lambda$-graph bisystem,
that consists of two labeled Bratteli diagrams
${\frak L}^- =(V, E^-, \lambda^-)$ and ${\frak L}^+ =(V, E^+, \lambda^+).$
They have the common vertex set $V = \cup_{l=0}^\infty V_l$.
The former one ${\frak L}^-$ has 
 upward directed edges $E^- = \cup_{l=0}^\infty E^-_{l,l+1}$,
and the latter one ${\frak L}^+$ has downward directed edges 
$E^+ = \cup_{l=0}^\infty E^+_{l,l+1}$.
The labeling maps of edges 
$\lambda^-: E^-\longrightarrow \Sigma^-$ 
and
$\lambda^+: E^+\longrightarrow \Sigma^+$ 
satisfy certain compatibility condition, 
called local property of $\lambda$-graph bisystem.
For any two-sided subshift $\Lambda$,
there exists a $\lambda$-graph bisystem satisfying a special property called 
FPCC (Follower-Predecessor Compatibility Condition).
We will construct an AF-algebra $\F_{\frak L}$ from a $\lambda$-graph bisystem 
$({\frak L}^-,{\frak L}^+)$ satisfying FPCC.
The AF-algebra $\F_{\frak L}$ has a shift automorphism $\rho_{\frak L}$ 
arising from a shift homeomorphism $\sigma$ on $\Lambda$. 
Then we may construct the crossed product $C^*$-algebra $\F_{\frak L}\rtimes_{\rho_{\frak L}}\Z$ 
denoted by $\R_{\frak L}$.
It is a two-sided subshift analogue of asymptotic Ruelle algebras constructed from Smale spaces. 
Under a certain irreducibility condition on $\LGBS$, we have 
\begin{theorem}[{Proposition \ref{prop:simplicity}, Theorem \ref{thm:simplicity}}]
Let $\LGBS$ be a $\lambda$-graph bisystem satisfying FPCC.
%\hspace{6cm}
\begin{enumerate}
\renewcommand{\theenumi}{\roman{enumi}}
\renewcommand{\labelenumi}{\textup{(\theenumi)}}
\item
If  $\LGBS$ is irreducible,
then
the AF-algebra $\F_{\frak L}$ is simple.
\item
If  $\LGBS$ satisfies condition (I) and is irreducible,
then
the $C^*$-algebra $\R_{\frak L}$ is simple.
\end{enumerate}
\end{theorem}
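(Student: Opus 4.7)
The plan is to treat the two parts separately, with (ii) building on (i). For part (i), I would first identify $\F_{\frak L}$ as the inductive limit of finite-dimensional subalgebras indexed by the level $l$ of the underlying Bratteli diagram of $\LGBS$, with matrix units parametrized by pairs of labeled paths from some vertex down through ${\frak L}^+$ and back up through ${\frak L}^-$. Simplicity of an AF-algebra is equivalent to the associated Bratteli diagram having no proper nontrivial hereditary directed subset, and this in turn reduces to the following combinatorial claim: for every vertex $v \in V_l$ there exists $l' \ge l$ such that every vertex $u \in V_{l'}$ is reachable from $v$ in the Bratteli diagram of $\F_{\frak L}$. I would extract this directly from the irreducibility hypothesis on $\LGBS$ — which by definition asserts that any two vertices are connected by appropriate labeled paths through both ${\frak L}^-$ and ${\frak L}^+$ — by translating a connecting labeled path into a chain of nonzero matrix-unit inclusions between the corresponding minimal projections. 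Any nonzero ideal $I$ would then contain a minimal projection at some level $l$, and by subequivalence meet every matrix block at level $l'$, forcing $I = \F_{\frak L}$.

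For part (ii), I would invoke the standard Kishimoto / Olesen--Pedersen criterion: if $\F_{\frak L}$ is simple and the $\Z$-action $\rho_{\frak L}$ is pointwise outer (i.e.\ $\rho_{\frak L}^n$ is an outer automorphism for every $n \ne 0$), then $\R_{\frak L} = \F_{\frak L} \rtimes_{\rho_{\frak L}} \Z$ is simple. Part (i) supplies simplicity of $\F_{\frak L}$, so the remaining task is to verify outerness of every nonzero power of $\rho_{\frak L}$. Condition (I) should be the $\lambda$-graph bisystem analogue of the Cuntz--Krieger condition (I); I would use it to produce, within the diagonal commutative subalgebra sitting inside $\F_{\frak L}$ (built from the product of the diagonals coming from $\DLMP$ and $\DLPM$), a sufficiently rich supply of cylinder-type projections on which the shift $\rho_{\frak L}^n$ has no fixed points. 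Concretely, for each nonzero projection $q$ in a large enough commutative subalgebra and each $n\ne 0$, I would exhibit a nonzero subprojection $q' \le q$ with $\rho_{\frak L}^n(q')\, q' = 0$. Standard arguments (the Kishimoto commutator trick) then produce, for any putative unitary implementing $\rho_{\frak L}^n$, a contradiction with simplicity, establishing outerness.

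The main obstacle will be the outerness verification in (ii): extracting from condition (I) the dynamical fact that long enough labeled cylinder projections in the diagonal can be strictly shifted off themselves by $\rho_{\frak L}^n$. This requires careful bookkeeping of how $\rho_{\frak L}$ relabels paths through both ${\frak L}^-$ and ${\frak L}^+$ simultaneously, and how FPCC ensures that predecessor and follower sets separate enough local states to refine any projection into a piece disjoint from its $n$-shift. Part (i), by comparison, is a largely combinatorial consequence of irreducibility once the Bratteli diagram of $\F_{\frak L}$ has been correctly identified, and I expect it to be short; the intrinsic work lies in showing that the version of condition (I) adopted here is strong enough to sustain the aperiodicity argument for all $n$, in the absence of the Markov structure that makes the analogous step routine for topological Markov shifts.
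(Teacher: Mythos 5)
Your part (i) is essentially the paper's argument: simplicity of the AF-algebra is reduced to the nonexistence of nontrivial hereditary subsets of the Bratteli diagram ${\frak B}_{({}^t\!{\frak L}^{-},{\frak L}^{+})}$ (whose two-step connecting matrices $M^-M^+$ are exactly the multiplicities of the inclusions $\F_{\frak L}^{l}\hookrightarrow\F_{\frak L}^{l+1}$), and this is read off directly from the definition of irreducibility. For part (ii) the dynamical core of your argument coincides with the paper's: from condition (I) one manufactures, for each $l$ and $k$, a projection $q_k^l$ in the diagonal commuting with $\F_{\frak L}^{l}$, nondegenerate on $\F_{\frak L}^{l}$, and satisfying $q_k^l\,\rho_{\frak L}^m(q_k^l)=0$ for $0<|m|\le k$ (the paper's Lemma 4.11, using the $k$-aperiodic words; the commutation with $\F_{\frak L}^{l}$ rests on the FPCC/left-resolving Lemmas 4.5 and 4.8, which is exactly the bookkeeping you flag as the main obstacle). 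Where you diverge is in how this lemma is converted into simplicity of $\R_{\frak L}$: you run it through the Kishimoto/Olesen--Pedersen criterion, showing every nonzero power of $\rho_{\frak L}$ is outer via the commutator trick, whereas the paper instead proves a uniqueness theorem for the crossed product (Proposition 4.12, by viewing $(\F_{\frak L},\rho_{\frak L})$ as a $C^*$-symbolic dynamical system and citing an intersection-property theorem from the author's earlier work), from which any nontrivial ideal of $\R_{\frak L}$ must meet $\F_{\frak L}$. Both routes are valid and consume the same input; yours is more self-contained in that it appeals only to a standard crossed-product theorem rather than to the external $C^*$-symbolic machinery, while the paper's uniqueness statement is slightly stronger output (it characterizes all representations of the pair $(\F_{\frak L},U)$, not just the ideals of the crossed product). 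One small caution: when running the commutator trick you should use that $q_k^l$ lies in the relative commutant of $\F_{\frak L}^{l}$ and acts faithfully there, which is precisely why the lemma is stated with both properties (1) and (2); outerness of $\rho_{\frak L}^n$ alone for a single projection not commuting with the approximating finite-dimensional algebra would not suffice.
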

For a two-sided subshift $\Lambda$,
there exists a canonical method to construct a $\lambda$-graph bisystem
satisfying FPCC.
The $\lambda$-graph bisystem is called   
the canonical $\lambda$-graph bisystem for a subshift $\Lambda$
and written $({\frak L}_\Lambda^-,{\frak L}_\Lambda^+)$.
The $C^*$-algebras $\F_{{\frak L}_\Lambda},\R_{{\frak L}_\Lambda}$
and the automorphism $\rho_{{\frak L}_\Lambda}$
for the canonical $\lambda$-graph bisystem
$({\frak L}_\Lambda^-,{\frak L}_\Lambda^+)$
for a subshift $\Lambda$
are written $\F_\Lambda, \R_\Lambda$ and $\rho_\Lambda$, respectively.
We may prove the following theorem.
\begin{theorem}[{Corollary \ref{cor:conjugacy}}] 
Suppose that two subshifts 
$\Lambda_1, \Lambda_2$ are topologically conjugate.
Then there exists an isomorphism 
$\Phi: \F_{\Lambda_1}\longrightarrow \F_{\Lambda_2}$
of $C^*$-algebras such that 
$\Phi\circ\rho_{\Lambda_1} =\rho_{\Lambda_2}\circ\Phi$.
Hence it induces an isomorphism 
$\widehat{\Phi}: \R_{\Lambda_1}\longrightarrow \R_{\Lambda_2}$
between their  crossed products
$\F_{\Lambda_1}\rtimes_{\rho_{\Lambda_1}}\Z$
and
$\F_{\Lambda_2}\rtimes_{\rho_{\Lambda_2}}\Z$.
\end{theorem}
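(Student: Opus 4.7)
The plan is to reduce the general topological conjugacy to elementary pieces via higher block presentations, and then show that the canonical $\lambda$-graph bisystem $\LLGBS$, together with its AF-algebra $\F_\Lambda$ and shift automorphism $\rho_\Lambda$, is functorial under these pieces. By the Curtis--Hedlund--Lyndon theorem any topological conjugacy $\phi:\Lambda_1\to\Lambda_2$ is a sliding block code with some memory $m$ and anticipation $a$, and by passing to a sufficiently high block presentation $\Lambda_1^{[N]}$ it becomes a $1$-block conjugacy $\bar\phi:\Lambda_1^{[N]}\to\Lambda_2$. So it suffices to prove two things:
\begin{enumerate}
\renewcommand{\theenumi}{\alph{enumi}}
\renewcommand{\labelenumi}{\textup{(\theenumi)}}
\item For every subshift $\Lambda$ and every $N\geq 1$ there is an isomorphism $\F_{\Lambda^{[N]}}\to \F_\Lambda$ intertwining $\rho_{\Lambda^{[N]}}$ and $\rho_\Lambda$.
\item Any $1$-block conjugacy induces an isomorphism of canonical $\lambda$-graph bisystems, and hence of the associated AF-algebras intertwining the shift automorphisms.
\end{enumerate}

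Step (b) is the easier one. A bijective $1$-block code is essentially a relabeling of the alphabets $\Sigma^-,\Sigma^+$ that preserves admissibility of every bi-infinite sequence. It therefore bijects admissible words of every length in a way that preserves past/future context equivalences used to define the vertex sets $V_l$, respects the surjections $\iota:V_{l+1}\to V_l$, and matches the upward/downward edges and their labels. This gives an isomorphism of $\lambda$-graph bisystems preserving FPCC, and by the canonical construction of $\F_{\frak L}$ and $\rho_{\frak L}$ from $\LGBS$ (as developed in the earlier sections of the paper and in \cite{MaPre2019}), this lifts to the desired equivariant isomorphism $\Phi$.

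Step (a), the invariance under higher block presentation, is the main obstacle. The alphabet of $\Lambda^{[N]}$ is the set $B_N(\Lambda)$ of admissible $N$-blocks, and an admissible word of length $\ell$ in $\Lambda^{[N]}$ corresponds to an admissible word of length $\ell+N-1$ in $\Lambda$. So one expects a level-shifted identification of vertex sets $V_\ell^{[N]}\leftrightarrow V_{\ell+N-1}$ of the two canonical bisystems, compatible with $\iota$ and the labeling, and one must verify that the resulting isomorphism of AF-algebras $\F_{\Lambda^{[N]}}\to \F_\Lambda$ intertwines the shift automorphisms. The delicate point is that the shift automorphism $\rho_\Lambda$ is defined in terms of exchanging observation windows on $\LLGBS$, and under higher block coding the ``window'' picks up an offset of $N-1$ that must be absorbed into an inner automorphism of the AF-algebra, or equivalently via a telescoping of the Bratteli diagram. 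I expect this to be achieved by writing down an explicit partial isometry in $\F_\Lambda$ implementing the offset and checking AF-equivariance step by step.

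With (a) and (b) established, composing $\F_{\Lambda_1}\to \F_{\Lambda_1^{[N]}}\to \F_{\Lambda_2}$ gives $\Phi$, and its $\rho$-equivariance yields $\Phi\circ\rho_{\Lambda_1}=\rho_{\Lambda_2}\circ\Phi$. The induced isomorphism $\widehat\Phi:\R_{\Lambda_1}\to \R_{\Lambda_2}$ of crossed products is then immediate from the universal property of $C^*$-crossed products applied to the covariant pair $(\Phi,\Phi)$. Together with Theorem~1.1 ensuring simplicity under irreducibility, this shows that $\F_\Lambda$, $\R_\Lambda$ and the pair $(\F_\Lambda,\rho_\Lambda)$ are genuine topological conjugacy invariants of the subshift.
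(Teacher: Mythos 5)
Your reduction to ``higher block presentation plus a $1$-block conjugacy'' is a reasonable opening move, but step (b) rests on a false premise, and that is exactly where the difficulty of the theorem lives. A $1$-block conjugacy $\bar\phi:\Lambda_1^{[N]}\to\Lambda_2$ is \emph{not} a relabeling of alphabets: it is in general many-to-one on symbols (the alphabet of $\Lambda_1^{[N]}$ is $B_N(\Lambda_1)$, typically much larger than the alphabet of $\Lambda_2$), and only its inverse, a sliding block code with a nontrivial window, restores injectivity at the level of points. Consequently it does not ``biject admissible words of every length,'' and it does not induce an isomorphism of canonical $\lambda$-graph bisystems in any obvious way; the central equivalence classes $W_{k,l}(x)$ that define the vertex sets can change drastically under such a code. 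If $1$-block conjugacies really were alphabet bijections, conjugacy invariance of essentially any word-combinatorial invariant would be trivial, which it is not. Step (a) is also left as a plan (``I expect this to be achieved by\dots'') rather than an argument, though it is the more tractable of your two steps.

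The paper avoids this trap by routing through Theorem \ref{thm:topconj} (quoted from \cite{MaPre2019}): two subshifts are topologically conjugate if and only if their canonical symbolic matrix bisystems are properly strong shift equivalent, i.e.\ connected by a chain of bipartite decompositions $\M^+\simeq\P\Q$, $\M^-\simeq\X\Y$ subject to the compatibility relations \eqref{eq:PSSE1}--\eqref{eq:PSSE4}. This is the two-sided, general-subshift analogue of Williams' splitting/amalgamation decomposition and is the correct substitute for your step (b). Theorem \ref{thm:psseC} then does the real work: from one bipartite step it builds homomorphisms $j_+:\F_{{\frak L}_1}^{k,l}\to\F_{{\frak L}_2}^{k,l+1}$ and $j_-:\F_{{\frak L}_2}^{k,l}\to\F_{{\frak L}_1}^{k-1,l}$ out of the transition matrices of the bipartite $\lambda$-graph bisystem, verifies $j_-\circ j_+=\iota_-\circ\iota_+$ and $j_+\circ j_-=\iota_+\circ\iota_-$, and concludes that the two inductive limits are equivariantly isomorphic; the corollary is then immediate. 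To repair your proof you would need either to import that decomposition theorem, or to prove directly that a $1$-block conjugacy whose inverse has a nontrivial window induces an equivariant isomorphism of the AF-algebras --- which amounts to redoing the bipartite analysis.
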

The above theorem shows that  the triple of its K-theory group  
\begin{equation*}
(K_0(\F_{\Lambda}),K_0(\F_{\Lambda})_+, \rho_{{\Lambda}*})  
 \end{equation*}
is invariant under topological conjugacy of subshift
$\Lambda$.
If $\Lambda$ is a topological Markov shift
$\Lambda_A$ defined by a nonnegative matrix $A$, 
then the $C^*$-algebra $\F_{\Lambda_A}$ is isomorphic to the AF-algebra defined by two-sided asymptotic equivalence relation on $\Lambda_A$
seen in \cite{Ruelle2} and \cite{Putnam1} (cf.  \cite{Holton}, \cite{PutSp}, etc.). 
The algebra and its K-group are deeply studied by
Killough--Putnam in \cite{KilPut} (cf. \cite{Holton}).

Two subshifts $\Lambda_1, \Lambda_2$
are said to be flip conjugate if 
$\Lambda_1$ is topologically conjugate to $\Lambda_2$ or its transpose
${}^t\!\Lambda_2$.
As a corollary, we have  
\begin{corollary}[{Corollary \ref{cor:flip}}]
Suppose that two subshifts 
$\Lambda_1, \Lambda_2$ are flip conjugate.
Then there exists an isomorphism 
$\Psi:  \R_{\Lambda_1}\longrightarrow \R_{\Lambda_2}$
of $C^*$-algebras.
\end{corollary}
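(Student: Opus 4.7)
The plan is to reduce to the topological conjugacy case already handled by the preceding theorem, by analyzing how transposition acts on the canonical $\lambda$-graph bisystem and on its shift automorphism.

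First, if $\Lambda_1$ is topologically conjugate to $\Lambda_2$, the previous theorem directly produces an isomorphism $\R_{\Lambda_1}\cong \R_{\Lambda_2}$. It therefore suffices to treat the remaining case where $\Lambda_1$ is topologically conjugate to ${}^t\!\Lambda_2$; combined with $\R_{\Lambda_1}\cong \R_{{}^t\!\Lambda_2}$ supplied by the previous theorem, the task reduces to proving $\R_{{}^t\!\Lambda}\cong \R_\Lambda$ for an arbitrary subshift $\Lambda$.

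The key structural observation I intend to use is that the canonical $\lambda$-graph bisystem of ${}^t\!\Lambda$ coincides with the swapped pair $({\frak L}^+_\Lambda, {\frak L}^-_\Lambda)$ obtained from the canonical $\lambda$-graph bisystem of $\Lambda$. Indeed, past and future are exchanged when the bi-infinite sequences of $\Lambda$ are reversed, and the FPCC is manifestly symmetric in the two labeled Bratteli diagrams. This swap yields an identification $\F_{{}^t\!\Lambda}\cong \F_\Lambda$ at the level of AF-algebras. However, the sequence reversal $x\mapsto (x_{-n})_n$ intertwines $\sigma$ with $\sigma^{-1}$, so under this identification the shift automorphism $\rho_{{}^t\!\Lambda}$ corresponds to $\rho_\Lambda^{-1}$ rather than to $\rho_\Lambda$.

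Finally, crossed products by $\Z$ are insensitive to replacing a generating automorphism by its inverse: the sign flip $n\mapsto -n$ on $\Z$ induces a canonical isomorphism
\[ \F_\Lambda\rtimes_{\rho_\Lambda^{-1}}\Z \;\cong\; \F_\Lambda\rtimes_{\rho_\Lambda}\Z. \]
Combining the two steps yields $\R_{{}^t\!\Lambda_2}\cong \R_{\Lambda_2}$, and composition with the isomorphism $\R_{\Lambda_1}\cong \R_{{}^t\!\Lambda_2}$ from the previous theorem produces the required $\Psi$. The main obstacle I anticipate is verifying cleanly, within the bisystem framework of \cite{MaPre2019}, that the canonical construction is indeed symmetric under transposition in the above sense and that the resulting AF-isomorphism intertwines the two shift automorphisms in the inverted way; once this compatibility is in place, the crossed-product reduction is purely formal.
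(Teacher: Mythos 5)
Your proposal is correct and follows essentially the same route as the paper: reduce to the conjugacy case via Corollary \ref{cor:conjugacy}, identify $\R_{{}^t\!\Lambda}$ with $\F_{\Lambda}\rtimes_{\rho_{\Lambda}^{-1}}\Z$, and use the canonical isomorphism $\F_{\Lambda}\rtimes_{\rho_{\Lambda}^{-1}}\Z\cong\F_{\Lambda}\rtimes_{\rho_{\Lambda}}\Z$. The only difference is that you spell out the swap $({\frak L}^-_{\Lambda},{\frak L}^+_{\Lambda})\mapsto({\frak L}^+_{\Lambda},{\frak L}^-_{\Lambda})$ underlying the identification $\F_{{}^t\!\Lambda}\cong\F_{\Lambda}$, which the paper leaves implicit in the word ``canonically''.
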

Hence the $C^*$-algebra 
$\R_\Lambda$ and its K-theory groups $K_*(\R_\Lambda)$
are invariant under flip conjugacy of subshifts.

%%%%%%%%%%%%%%%%%%%%%%%%%%%%%

We will compute the K-theoretic triple 
$(K_0(\F_{\frak L}),K_0(\F_{\frak L})_+, \sigma_{{\frak L}*})  
$
and the K-groups 
$K_i(\R_{\frak L}), i=0,1$
in terms of the $\lambda$-graph bisystem $\LGBS$ satisfying FPCC.
Let us denote by $\{v_1^l,\dots, v_{m(l)}^l \}$
the vertex set $V_l$ for $l \in \Zp =\{0,1,2,\dots \}$.
For two vertices $v_i^l\in V_l$ and $v_j^{l+1} \in V_{l+1}$, 
let $M^-_{l,l+1}(i,j)$ (resp. $M^+_{l,l+1}(i,j)$)
be the nonnegative integer of the cardinal number of edges in 
${\frak L}^-$ (resp. ${\frak L}^+$)
starting at $v_j^{l+1}$ (resp. $v_i^l$)
and ending with $v_i^l$ (resp. $v_j^{l+1}$).
We then have   
$m(l) \times m(l+1)$ nonnegative matrices 
$M^-_{l,l+1}$ and $ M^+_{l,l+1}$.
By the local property of $\lambda$-graph bisystem, the commutation relations
\begin{equation}
M^-_{l,l+1}M^+_{l+1,l+2} =M^+_{l,l+1}M^-_{l+1,l+2}, \qquad l \in \Zp \label{eq:MMnnmbs}
\end{equation}
hold.
The sequence of pairs $(M_{l,l+1}^-,M_{l,l+1}^+)_{l\in \Zp}$
of nonnegative matrices satisfying \eqref{eq:MMnnmbs}
is called a nonnegative matrix bisystem.
The transpose ${}^t\!M^{-}_{l,l+1}$ of the matrix $M_{l,l+1}^-$
naturally induces an order preserving homomorphism from
${\Z}^{m(l)}$ to
${\Z}^{m(l+1)}$,
where 
${\Z}^{m(l)}$ and
${\Z}^{m(l+1)}$ have their natural positive cones  
${\Z}^{m(l)}_+$ and
${\Z}^{m(l+1)}_+$, respectively.
%%%%%%%%%%%%%%%%%%%%%%%%%%%%%%%%%%%%%%%%%%%%%%%%%%%%%
%${\Z}^{m(l)}_{+}$ of the group ${\Z}^{m(l)}$ is defined by
%$$ {\Z}^{m(l)}_+ = \{  (n_1,n_2,\dots,n_{m(l)}) \in {\Z}^{m(l)} |
%n_i \in \Zp, i=1,2,\dots,m(l) \}. $$
%We put the inductive limits:@@@
%%%%%%%%%%%%%%%%%%%%%%%%%%%%%%%%%%%%%%
Let us denote by
${\Z}_{M^-}$ the inductive limit 
$\varinjlim \{ {}^t\!M^{-}_{l,l+1}: {\Z}^{m(l)} \longrightarrow {\Z}^{m(l+1)} \}
$
of the abelian group.
Its positive cone
$\varinjlim \{ {}^t\!M^{-}_{l,l+1}: {\Z}^{m(l)}_+ \longrightarrow {\Z}^{m(l+1)}_+ \}
$ is denoted by
${\Z}_{M^{-}}^{+}$.
The sequence of the transposed matrices  ${}^t\!M^{+}_{l,l+1}$
of $M^+_{l,l+1}$ 
 naturally induces an order preserving endomorphism denoted by
$\lambda_{M^+}$ on the ordered group 
${\Z}_{M^-}$. 
We set 
$
{\Z}_{M^-}(k) = {\Z}_{M^-}
$
and
$
{\Z}^+_{M^-}(k) = {\Z}^+_{M^-}
$
for
$ k \in \N$, 
and define an abelian group and its positive cone by the inductive 
limits:
\begin{align}
\Delta_{(M^-,M^+)} & = 
\underset{k}{\varinjlim} \{ \lambda_{M^+}:{\Z}_{M^-}(k) 
                    \longrightarrow {\Z}_{M^-}(k+1) \}, \label{eq:dim}\\
\Delta^+_{(M^-,M^+)} & = 
\underset{k}{\varinjlim}\{ \lambda_{M^+}:{\Z}^+_{M^-}(k) 
                    \longrightarrow {\Z}^+_{M^-}(k+1) \}.\label{eq:dimpo}
\end{align} 
We call the ordered group
$(\Delta_{(M^-,M^+)},\Delta^+_{(M^-,M^+)})$
the {\it dimension group  for}\/ $(M^-,M^+)$.
%Let us denote by $[X,k]$ for $X \in {\Z}_{M^-}$ and
%$k \in \N$ the element $[X,k] $ in $\Delta_{(M^-,M^+)}.$Since 
The map
$
\delta_{(M^-, M^+)} :{\Z}_{M^-}(k) 
                    \longrightarrow {\Z}_{M^-}(k+1) 
$
defined by
$\delta_{(M^-, M^+)}([X,k]) = ([X,k+1])$
for
$X \in {\Z}_{M^-}$ 
yields an automorphism on 
$(\Delta_{(M^-,M^+)},\Delta_{(M^-,M^+)}^+)$
called the {\it dimension automorphism}.
We call the triple
$(\Delta_{(M^-,M^+)},\Delta^+_{(M^-,M^+)},\delta_{(M^-,M^+)})$
the {\it dimension triple for}\/ $(M^-,M^+)$.
Let
$\Lambda$ be a subshift and
$(M^-_\Lambda,M^+_\Lambda)$ its associated nonnegative matrix bisystem 
for the canonical $\lambda$-graph bisystem for the subshift $\Lambda$.
Then the {\it bidimension triple}\/
$(\Delta_{\Lambda},\Delta^+_{\Lambda},\delta_{\Lambda})$
 for $\Lambda$
is defined to be the dimension triple
$(\Delta_{(M^-_\Lambda,M^+_\Lambda)},
\Delta^+_{(M^-_\Lambda, M^+_\Lambda)},
\delta_{(M^-_\Lambda, M^+_\Lambda)})$.

\begin{theorem}[{Theorem \ref{thm:DDelta}}]
Let ${\frak L}$ be a $\lambda$-graph bisystem satisfying FPCC and $(M^-, M^+)$ 
its nonnegative matrix bisystem.
We then have 
\begin{equation*}
(K_0(\F_{\frak L}),K_0(\F_{\frak L})_+, \rho_{{\frak L}*})  
 \cong
(\Delta_{(M^-, M^+)}, \Delta_{(M^-, M^+)}^+, \delta_{(M^-, M^+)}).
\end{equation*}
\end{theorem}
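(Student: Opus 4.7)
The plan is to exploit the AF-structure of $\F_{\frak L}$ constructed in Section 3 and compute its K-theory as a double inductive limit whose two levels are governed by the matrices $M^-$ and $M^+$ respectively. Concretely, I will exhibit a two-parameter family of finite-dimensional $C^*$-subalgebras $\F_{\frak L}^{k,l} \subset \F_{\frak L}$ indexed by $(k,l) \in \Zp \times \Zp$, where $l$ records depth in the ${\frak L}^-$ direction and $k$ records depth in the ${\frak L}^+$ direction. From the construction of $\F_{\frak L}$, for each fixed $k$ the algebra $\F_{\frak L}^{k,l}$ is a direct sum of $m(l)$ matrix blocks, one per vertex in $V_l$, and the inclusion $\F_{\frak L}^{k,l} \hookrightarrow \F_{\frak L}^{k,l+1}$ is described on $K_0$ by the transposed matrix ${}^t\!M^-_{l,l+1} : \Z^{m(l)} \to \Z^{m(l+1)}$.

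Taking the inductive limit in $l$ first, continuity of $K_0$ on AF inductive limits yields a subalgebra $\F_{\frak L}^k := \varinjlim_l \F_{\frak L}^{k,l}$ with
\begin{equation*}
(K_0(\F_{\frak L}^k), K_0(\F_{\frak L}^k)_+) \cong (\Z_{M^-}, \Z_{M^-}^+).
\end{equation*}
Next I would exhibit a natural inclusion $\F_{\frak L}^k \hookrightarrow \F_{\frak L}^{k+1}$ coming from moving one level down in the ${\frak L}^+$ direction. The crucial claim is that on K-theory this inclusion realizes the endomorphism $\lambda_{M^+}$: the transposed $M^+$ matrices $\,{}^t\!M^+_{l,l+1}$ should count precisely the multiplicities of the minimal projections of $\F_{\frak L}^{k,l}$ sitting inside those of $\F_{\frak L}^{k+1,l}$. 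Well-definedness of the induced endomorphism on the limit $\Z_{M^-}$ is exactly guaranteed by the commutation relation \eqref{eq:MMnnmbs}, which reflects the local property of the $\lambda$-graph bisystem. Passing to the inductive limit over $k$ gives $\F_{\frak L} = \varinjlim_k \F_{\frak L}^k$ and hence
\begin{equation*}
(K_0(\F_{\frak L}), K_0(\F_{\frak L})_+) \cong (\Delta_{(M^-,M^+)}, \Delta_{(M^-,M^+)}^+).
\end{equation*}

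Finally, I would identify the shift automorphism $\rho_{\frak L}$. By its construction in Section 3, $\rho_{\frak L}$ shifts the $k$-index, sending $\F_{\frak L}^k$ isomorphically onto $\F_{\frak L}^{k+1}$ in a way compatible with the chosen filtration. Tracing through the identifications above, the induced map $\rho_{\frak L *}$ on $K_0$ coincides with the canonical shift $[X,k] \mapsto [X, k+1]$ on the limit $\Delta_{(M^-, M^+)}$, which is by definition the dimension automorphism $\delta_{(M^-,M^+)}$. Combining the two steps gives the desired isomorphism of ordered groups with automorphism.

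The main technical obstacle I expect is the middle step: verifying that the inclusion $\F_{\frak L}^k \hookrightarrow \F_{\frak L}^{k+1}$ really does induce $\lambda_{M^+}$ on K-theory. This requires a careful bookkeeping of how minimal projections labeled by pairs of paths in ${\frak L}^-$ and ${\frak L}^+$ decompose when the ${\frak L}^+$-depth is increased, and it is here that the FPCC assumption and the bisystem commutation \eqref{eq:MMnnmbs} must be invoked to guarantee that the transposed $M^+$ data descends consistently to an endomorphism of the first inductive limit $\Z_{M^-}$. Once this compatibility is established, the rest of the proof reduces to the standard continuity of $K_0$ under inductive limits of AF-algebras and a routine diagram chase to identify $\rho_{\frak L *}$ with $\delta_{(M^-, M^+)}$.
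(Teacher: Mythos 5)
Your proposal follows essentially the same route as the paper: the paper likewise computes $K_0(\F_{\frak L})$ from the two-parameter system $\F_{\frak L}^{k,l}$, with one of the two inclusions inducing ${}^t\!M^-$ and the other inducing ${}^t\!M^+$ (hence the endomorphism $\lambda_{M^+}$ on the inner limit $\Z_{M^-}$, well defined by the commutation relation \eqref{eq:MMnnmbs}), and then identifies $\rho_{{\frak L}*}$ with the index shift $\delta_{(M^-,M^+)}$. The only differences are cosmetic: you organize the computation as an iterated inductive limit of subalgebras while the paper defines the isomorphism $\Psi$ directly on representatives in $K_0(\F_{\frak L}^{k,l})=\Z^{m(k,l)}$ and verifies well-definedness by hand, and your index convention is transposed relative to the paper's, where $\iota_+:\F_{\frak L}^{k,l}\to\F_{\frak L}^{k,l+1}$ induces ${}^t\!M^+$ and $\iota_-:\F_{\frak L}^{k,l}\to\F_{\frak L}^{k-1,l}$ induces ${}^t\!M^-$.
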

For a two-sided subshift $\Lambda$, we in particular  have 
the following formula
\begin{equation*}
(K_0(\F_{\Lambda}),K_0(\F_{\Lambda})_+, \rho_{{\Lambda}*})  
\cong
(\Delta_{\Lambda},\Delta^+_{\Lambda},\delta_{\Lambda}).
 \end{equation*}
We will compute the K-groups $K_i(\R_{\frak L}), i=0,1$
of the $C^*$-algebra $\R_{\frak L}$ in the following way.
Let
$(M^-,M^+)$ be a nonnegative matrix bisystem.
For $l \in \Zp$, 
we set the abelian groups
\begin{align*}
K_0^l(M^-,M^+) & = 
{\Z}^{m(l+1)} / ({}^t\!M^{-}_{l,l+1} - {}^t\!M^{+}_{l,l+1}){\Z}^{m(l)},\\
K_1^l(M^-,M^+) & =
 \text{Ker}({}^t\!M^{-}_{l,l+1} - {}^t\!M^{+}_{l,l+1}) 
 \text{ in } {\Z}^{m(l)}. 
\end{align*}
By the commutation relation \eqref{eq:MMnnmbs},
the matrix ${}^t\!M^-_{l,l+1}$ induces homomorphisms
\begin{align*}
 {}^t\!M_0^{-l}:&
 K_0^l(M^-,M^+) \longrightarrow K_0^{l+1}(M^-,M^+),\\
 {}^t\!M_1^{-l}:&
 K_1^l(M^-,M^+) \longrightarrow K_1^{l+1}(M^-,M^+).
\end{align*}
We then have the following K-theory formulas for the $C^*$-algebra 
$\R_{\frak L}$.
\begin{theorem}[{Theorem \ref{thm:RLKgroup}}]
\begin{align*}
K_0(\R_{\frak L}) 
 & = \underset{l}{\varinjlim} \{ {}^t\!M_0^{-l}:
 K_0^l(M^-,M^+) \longrightarrow K_0^{l+1}(M^-,M^+) \},\\
K_1(\R_{\frak L}) & = \underset{l}{\varinjlim} \{ {}^t\!M_1^{-l}:
 K_1^l(M^-,M^+) \longrightarrow K_1^{l+1}(M^-,M^+) \}.
\end{align*}
\end{theorem}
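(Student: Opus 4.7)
The plan is to derive the formulas from the Pimsner--Voiculescu six-term exact sequence, combined with the preceding theorem identifying the pair $(K_0(\F_{\frak L}), \rho_{{\frak L}*})$ with $(\Delta_{(M^-, M^+)}, \delta_{(M^-, M^+)})$. Since $\R_{\frak L} = \F_{\frak L} \rtimes_{\rho_{\frak L}} \Z$ and $\F_{\frak L}$ is AF (so $K_1(\F_{\frak L}) = 0$), the PV sequence collapses to
\begin{equation*}
0 \longrightarrow K_1(\R_{\frak L}) \longrightarrow K_0(\F_{\frak L}) \xrightarrow{\;\id - \rho_{{\frak L}*}\;} K_0(\F_{\frak L}) \longrightarrow K_0(\R_{\frak L}) \longrightarrow 0,
\end{equation*}
reducing the problem to computing the kernel and cokernel of $\id - \delta_{(M^-, M^+)}$ on the double inductive limit $\Delta_{(M^-, M^+)}$.

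Then I would make $\id - \delta$ explicit at finite stages. A class in $\Delta_{(M^-, M^+)}$ may be written $[a, l, k]$ with $a \in \Z^{m(l)}$, subject to the two relations $[a, l, k] = [{}^t\!M^-_{l, l+1} a, l+1, k]$ (from the $\Z_{M^-}$ limit) and $[a, l, k] = [{}^t\!M^+_{l, l+1} a, l+1, k+1]$ (from $\lambda_{M^+}$). Since $\delta[a, l, k] = [a, l, k+1]$, rewriting both sides at the common stage $(l+1, k+1)$ via these identifications yields
\begin{equation*}
(\id - \delta)[a, l, k] = \bigl[({}^t\!M^+_{l, l+1} - {}^t\!M^-_{l, l+1})\, a,\; l+1,\; k+1\bigr],
\end{equation*}
so stage-by-stage $\id - \delta$ is induced (up to sign) by ${}^t\!M^-_{l, l+1} - {}^t\!M^+_{l, l+1} : \Z^{m(l)} \to \Z^{m(l+1)}$, whose cokernel is $K_0^l(M^-, M^+)$ and whose kernel is $K_1^l(M^-, M^+)$. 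Transposing \eqref{eq:MMnnmbs} supplies the intertwining identity ${}^t\!M^-_{l+1, l+2}\, ({}^t\!M^- - {}^t\!M^+)_{l, l+1} = ({}^t\!M^- - {}^t\!M^+)_{l+1, l+2}\, {}^t\!M^-_{l, l+1}$, which both ensures that ${}^t\!M^-_{l+1, l+2}$ descends to the announced maps ${}^t\!M_0^{-l}$ and ${}^t\!M_1^{-l}$, and guarantees compatibility as $l$ increases. Natural homomorphisms $\Phi_i : \varinjlim_l K_i^l(M^-, M^+) \to K_i(\R_{\frak L})$ are then defined by sending a representative at stage $l$ to the class of its image in $\Delta_{(M^-, M^+)}$.

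The main obstacle is verifying that each $\Phi_i$ is an isomorphism. Surjectivity of $\Phi_0$ and injectivity of $\Phi_1$ follow at once from the stagewise formula above. For the surjectivity of $\Phi_1$ (equivalently, the injectivity of $\Phi_0$), suppose $[a, l, k] \in \Ker(\id - \delta)$, so that $[({}^t\!M^- - {}^t\!M^+) a, l+1, k+1] = 0$ in $\Delta_{(M^-, M^+)}$. By the colimit characterization of vanishing, some composition $F$ of matrices ${}^t\!M^{\pm}_{*, *+1}$ annihilates $({}^t\!M^- - {}^t\!M^+) a$ in $\Z^{m(L)}$. Iterating the intertwining identity (and its counterpart with ${}^t\!M^+$ in place of ${}^t\!M^-$) to push the factor $({}^t\!M^- - {}^t\!M^+)$ rightward through $F$, one obtains
\begin{equation*}
F \cdot ({}^t\!M^- - {}^t\!M^+)_{l, l+1}\, a \;=\; ({}^t\!M^- - {}^t\!M^+)_{L-1, L} \cdot \widetilde{F}(a)
\end{equation*}
for some composition $\widetilde{F} : \Z^{m(l)} \to \Z^{m(L-1)}$. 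Hence $\widetilde{F}(a) \in K_1^{L-1}(M^-, M^+)$ and represents the original class $[a, l, k]$. This diagram chase through the double inductive system is the technical core of the argument.
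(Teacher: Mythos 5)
Your proposal is correct and follows essentially the same route as the paper: the Pimsner--Voiculescu sequence with $K_1(\F_{\frak L})=0$ (Lemma \ref{lem:Ktheory1}), the identification $(K_0(\F_{\frak L}),\rho_{{\frak L}*})\cong(\Delta_{(M^-,M^+)},\delta_{(M^-,M^+)})$ from Theorem \ref{thm:DDelta}, and the computation of $\Ker$ and $\Coker$ of $\id-\delta_{(M^-,M^+)}$ as the inductive limits $\varinjlim K_i^l(M^-,M^+)$. The only difference is that you carry out explicitly the stagewise identification and the intertwining diagram chase that the paper delegates to Lemmas \ref{prop:DM9.2} and \ref{lem:DM9.3} as routine consequences of the definitions and of the commutation relation \eqref{eq:nnmbs}.
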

Some examples of the above dimension group (Proposition \ref{prop:7.4})
and K-groups (Proposition \ref{prop:7.5} and Proposition \ref{prop:7.6})
for topological Markov shifts are presented in Section 7.
The K-groups for the even shift that is not any topological Markov shifts are computed in Section 8
(Proposition \ref{prop:evenmainc} and Proposition \ref{prop:evenmainlambda}). 

\bigskip
Throughout the paper,
the notation $\N, \Zp$ will denote the set of positive integers, the set of nonnegative integers,
respectively.
By a nonnegative matrix we mean a finite rectangular matrix with entries in nonnegative integers.
 For a finite set $\Sigma$, the notation $|\Sigma|$ denotes its cardinality. 

%\newpage

%%%%%%%%%%%%%%%%%%%%%%%%%%%%%%%%%%%%%%%%%%
%%%%%%%%%%%%%%%%%%%%%%%%%%%%%%%%%%%%%%%
\section{Subshifts and  $\lambda$-graph bisystems}
%%%%%%%%%%%%%%%%%%%%%%%%%%%%%%%%%%%%
%%%%%%%%%%%%%%%%%%%%%%%%%%%%%%%
Let $\Sigma$ be a finite set, which we call an alphabet. 
We call each element of $\Sigma$ a symbol or a label.
Let us denote by $\Sigma^{\Z}$ the set of bi-infinite sequences
$(x_n)_{n \in \Z}$ of  $\Sigma.$
The set
$\Sigma^{\Z}$ 
is a compact Hausdorff space
by the infinite  product topology.
The homeomorphism of the shift
$\sigma: \Sigma^{\Z} \longrightarrow \Sigma^{\Z}$
is defined by
$\sigma((x_n)_{n\in \Z}) =(x_{n+1})_{n\in \Z}.$
A $\sigma$-invariant closed subset
$\Lambda \subset \Sigma^\Z$, that is $\sigma(\Lambda) =\Lambda,$
is called a subshift, that is a topological dynamical system
$(\Lambda, \sigma)$.
The space $\Lambda$ is called the shift space for $(\Lambda, \sigma).$ 
We often write the subshift $(\Lambda, \sigma)$ as $\Lambda$ 
for brevity. 
Let us denote by $B_n(\Lambda)$
the set of admissible words in $\Lambda$ with length $n$, 
that is 
$B_n(\Lambda) = \{(x_1,\dots,x_n) \in \Sigma^n \mid (x_i)_{i\in \Z}\in \Lambda \}.$
The subshift
$(\Sigma^\Z, \sigma)$  is called the full $|\Sigma|$-shift.
For an $N\times N$ matrix $A = [A(i,j)]_{i,j=1}^N$
with its entries $A(i,j)$ in $\{0,1\},$
the topological Markov shift $\Lambda_A$ 
is defined by
\begin{equation}
\Lambda_A = \{ (x_n)_{n \in \Z} \in \{1,2,\dots, N\}^\Z \mid
A(x_n, x_{n+1}) =1 \text{ for all } n \in \Z \}. \label{eq:LambdaA}
\end{equation}    
It is often called  a shift of finite type or simply SFT.
For a finite labeled directed graph
$\mathcal{G} = (\mathcal{V}, \mathcal{E},\lambda)$
with
vertex set $\mathcal{V},$
edge set $\mathcal{E}$
and labeling map 
$\lambda: \mathcal{E}\longrightarrow \Sigma$,
one may define a subshift $\Lambda_{\mathcal{G}}$
 consisting of bi-infinite label sequences of concatenating paths  
in the labeled graph ${\mathcal{G}}.$
It is called a sofic shift (\cite{Fis}, \cite{Kr84}, \cite{Kr87}, \cite{We}).
There are lots of subshifts that are not sofic shifts (see \cite{LM}, etc.).

A $\lambda$-graph system
${\frak L} =(V,E,\lambda,\iota)$ over $\Sigma$ 
 is a graphical object to present a general subshift
(\cite{MaDocMath1999}).
It consists of 
a vertex set
$
V = \cup_{l \in \Zp} V_{l}
$
and  edge set
$
E = \cup_{l \in \Zp} E_{l,l+1}
$
that is labeled with symbols in $\Sigma$ by $\lambda: E \rightarrow \Sigma$, 
and that is supplied with a surjective map
$
\iota( = \iota_{l,l+1}):V_{l+1} \rightarrow V_l
$
for each
$
l \in  \Zp.
$
The surjective map
$\iota: V\longrightarrow V$ satisfies a certain compatibility condition with labeling 
on edges, called local property of $\lambda$-graph system.
See \cite{MaDocMath1999} for a general theory of $\lambda$-graph systems.  
We emphasize that the $\lambda$-graph systems reflect right-one-sided structure of subshifts.

In \cite{MaPre2019}, the author generalized the notion of $\lambda$-graph system 
and introduce a notion of $\lambda$-graph bisystem, that is a two-sided extension of
$\lambda$-graph system.
It is defined in the following way.
For a directed edge $e$, denote by $s(e)$ and $t(e)$ its 
source vertex and terminal vertex, respectively.
Let $\Sigma^-$ and $\Sigma^+$ be two finite alphabets.

\begin{definition}[{\cite[Definition 3.1]{MaPre2019}}] \label{def:lambdabisystem}
A $\lambda$-graph bisystem $({\frak L}^-, {\frak L}^+)$ is a pair of labeled Bratteli diagrams 
${\frak L}^- =(V, E^-, \lambda^-)$ over $\Sigma^-$ and 
$ {\frak L}^+=(V, E^+, \lambda^+)$ over  $\Sigma^+$
satisfying the following five conditions:
\begin{enumerate}
\renewcommand{\theenumi}{\roman{enumi}}
\renewcommand{\labelenumi}{\textup{(\theenumi)}}
\item
${\frak L}^-$
and
${\frak L}^+$
have its common vertex set $V = \cup_{l\in \Zp} V_l$,
that is a disjoint union of finite sets $V_l, l\in \Zp$
with $m(l) \le m(l+1)$ for $l\in \Zp,$
where
$m(l) := | V_l | < \infty. 
$
\item
The edge sets $E^-$ and $E^+$ are 
disjoint unions of finite sets 
$
E^- = \cup_{l\in \Zp} E_{l, l+1}^-
$
and
$
E^+ = \cup_{l\in \Zp} E_{l, l+1}^+,
$
respectively. 
\item
(1) Every edge $e^- \in E_{l,l+1}^-$ satisfies $s(e^-) \in V_{l+1}, \,\, t(e^-) \in V_l,$ 
and for every vertex $v \in V_l$ with $l\ne 0,$ 
there exist $e^- \in E_{l,l+1}^-, f^- \in E_{l-1, l}^-$ such that 
$v = s(f^-) = t(e^-)$,
 and for every vertex $v \in V_0,$ 
there exists $e^- \in E_{0,1}^-$ such that 
$v = t(e^-)$,
 
(2) 
Every edge $e^+ \in E_{l,l+1}^+$ satisfies $s(e^+) \in V_{l}, \,\, t(e^+) \in V_{l+1},$ 
and for every vertex $v \in V_l$ with $l\ne 0,$ 
there exist $e^+ \in E_{l,l+1}^+, f^+ \in E_{l-1, l}^+$ such that 
$v = t(f^+) = s(e^+)$,
 and for every vertex $v \in V_0,$ 
there exists $e^+ \in E_{0,1}^+$ such that 
$v = s(e^+).$

\item %(resolving property)
(1) The condition $s(e^-) = s(f^-), \, \lambda^-(e^-) = \lambda^-(f^-)$
for $e^-, f^- \in E^-$
implies $e^- = f^-.$
This condition is said to be right-resolving 
for the labeling map $\lambda^-: E^-\longrightarrow \Sigma^-$.

(2) The condition
$t(e^+) = t(f^+), \, \lambda^+(e^+) = \lambda^+(f^+)$
for $e^+, f^+\in E^+$ implies $e^+ = f^+.$
This condition is said to be left-resolving
for the labeling map $\lambda^+: E^+\longrightarrow \Sigma^+$.

\item %(local property)
For every pair 
$u \in V_{l}, \, v \in V_{l+2}$ with $l \in \Zp$,
 we put
\begin{align*}
E_+^-(u,v)
: =  & \{ (e^-, e^+) \in E_{l, l+1}^-\times E_{l+1,l+2}^+ \mid 
t(e^-) = u, \, s(e^-) = s(e^+), t(e^+) = v \},\\
E_-^+(u,v)
:=  & \{ (f^+, f^-) \in E_{l, l+1}^+\times E_{l+1,l+2}^- \mid 
s(f^+) = u, \, t(f^+) = t(f^-), s(f^-) = v \}. 
\end{align*}
Then there exists a bijective correspondence 
$
\varphi:E_+^-(u, v) \longrightarrow E_-^+(u,v)
$
satisfying 
$\lambda^-(e^-) = \lambda^-(f^-), \, 
\lambda^+(e^+) = \lambda^+(f^+)$
whenever $\varphi(e^-, e^+) = (f^+, f^-).$ 
\end{enumerate}
The property (v) is called the local property of $\lambda$-graph bisystem.
The pair $\LGBS$ is called a $\lambda$-{\it graph bisystem over}\/ $\Sigma^\pm.$  
\end{definition}
We write 
$\{v_1^l, \dots, v_{m(l)}^l\}$ for the vertex set $V_l.$ 
The transition matrices $A^-_{l,l+1}, A^+_{l,l+1}$
for  ${\frak L}^-, {\frak L}^+$ respectively
are defined by setting
\begin{align}
A_{l,l+1}^-(i,\beta,j)
 & =
{\begin{cases}
1 &  
    \text{if} \ t(e^-) = v_i^{l}, \lambda^-(e^-) = \beta,
                       s(e^-) = v_j^{l+1} 
    \text{ for some }    e^- \in E_{l,l+1}^-, \\
0           & \text{ otherwise,}
\end{cases}} \label{eq:AM} \\
A_{l,l+1}^+(i,\alpha,j)
 & =
{\begin{cases}
1 &  
    \text{if} \ s(e^+) = v_i^{l}, \lambda^+(e^+) = \alpha,
                       t(e^+) = v_j^{l+1} 
    \text{ for some }    e^+ \in E_{l,l+1}^+, \\
0           & \text{ otherwise}
\end{cases}} \label{eq:AP} 
\end{align}
for
$
i=1,2,\dots,m(l),\ j=1,2,\dots,m(l+1),  \, \beta\in \Sigma^-, \, \alpha \in \Sigma^+.$ 
The local property of $\lambda$-graph bisystem $\LGBS$
ensures us the following identity
\begin{equation}
\sum_{j=1}^{m(l+1)}A_{l,l+1}^-(i,\beta,j)A_{l+1,l+2}^+(j,\alpha,k)
=\sum_{j=1}^{m(l+1)} A_{l,l+1}^+(i,\alpha,j)A_{l+1,l+2}^-(j,\beta,k) \label{eq:local}
\end{equation}
for $i=1,2,\dots,m(l),\ k=1,2,\dots,m(l+2),  \, \beta\in \Sigma^-, \, \alpha \in \Sigma^+.$ 
The pair 
$(A^-, A^+)=(A^-_{l,l+1}, A^+_{l,l+1})_{l\in\Zp}$ 
of the sequences of the matrices $A^-_{l,l+1}, A^+_{l,l+1}, l\in\Zp$ 
is called the transition matrix bisystem for $\LGBS$.
If its top vertex set $V_0$ is a singleton,
 $\LGBS$ is said to be {\it standard}.\/
If  $\Sigma^- = \Sigma^+,$
 $\LGBS$ is said to {\it have a common alphabet}.\/
In this case, we write the alphabet $\Sigma^- = \Sigma^+$
as $\Sigma$.
We write an edge 
$e^- \in E^-$ (resp. $e^+ \in E^+$) as $e$ without 
$-$ sign (resp. $+$ sign) unless we specify.

Let $({\frak L}^-, {\frak L}^+)$ be a $\lambda$-graph bisystem over $\Sigma^\pm.$
For a vertex $u \in V_l$, 
its follower set $F(u)$ in ${\frak L}^-$ and
its predecessor set $P(u)$ in ${\frak L}^+$
are defined in the following way:
\begin{align*}
F(u) := & \{ (\beta_1, \beta_2,\dots,\beta_l) \in {(\Sigma^{-})}^l \mid
\beta_1 =\lambda^-(f_l), \beta_2= \lambda^-(f_{l-1}),\dots, \beta_l=\lambda^-(f_1); \\ 
f_l \in E_{l-1,l}^-,& f_{l-1} \in E_{l-2, l-1}^-, \dots, f_1 \in E_{0,1}^-,\,\,  
 s(f_l) = u, t(f_l) = s(f_{l-1}), \dots, t(f_2) = s(f_1) \}.
\end{align*}
The set  $F(u)$ is figured such as  
\begin{equation*}
\qquad u \overset{\beta_1}{\longrightarrow} 
\bigcirc\overset{\beta_2}{\longrightarrow}
 \cdots \overset{\beta_{l-1}}{\longrightarrow}
\bigcirc\overset{\beta_l}{\longrightarrow}\bigcirc
\qquad \text{ in } \quad {\frak L}^-.
\end{equation*}
Similarly,
\begin{align*}
P(u) := & \{(\alpha_1, \alpha_2, \dots,\alpha_l)  \in {(\Sigma^{+})}^l \mid
\alpha_1=\lambda^+(e_1), \alpha_2=\lambda^+(e_2),\dots, \alpha_l=\lambda^+(e_l);\\ 
 e_1 \in E_{0,1}^+,& e_2 \in E_{1, 2}^+, \dots, e_l \in E_{l-1,l}^+,\,\,  
t(e_1) =s(e_2), t(e_2) = s(e_3), \dots, t(e_{l-1}) =s(e_l), t(e_l) = u \}.
\end{align*}
The set $P(u)$ is figured such as  
\begin{equation*}
 \qquad 
\bigcirc\overset{\alpha_1}{\longrightarrow} 
\bigcirc\overset{\alpha_2}{\longrightarrow}
 \cdots \overset{\alpha_{l-1}}{\longrightarrow}
\bigcirc\overset{\alpha_{l}}{\longrightarrow}u
\qquad \text{ in } \quad {\frak L}^+.
\end{equation*}
If a standard $\lambda$-graph bisystem $\LGBS$ 
having a common alphabet 
satisfies the condition
$F(u) = P(u)$  
for every vertex $u \in V_l, l\in \N,$
it is said to satisfy
 {\it{Follower-Predecessor Compatibility Condition},}
FPCC for brevity.

In \cite[Example 3.2]{MaPre2019}, several examples including $\lambda$-graph systems 
\cite{MaDocMath1999} are presented.
We will present a couple of examples of $\lambda$-graph bisystems
satisfying FPCC.

\begin{example} \label{ex:3.2}
\end{example}

{\bf (i)  A $\lambda$-graph bisystem for full $N$-shift.}

Let $N$ be a positive integer with $N >1.$
Take a finite alphabet
$\Sigma= \{ \alpha_1,\dots,\alpha_N\}$.
We will construct a $\lambda$-graph bisystem
$(\frak{L}_N^-,\frak{L}_N^+)$ satisfying FPCC
in the following way.
Let
$V_l=\{v_l\}$ one point set for each $l \in \Zp,$
and
$
 E_{l,l+1}^- =\{ e_1^-, \dots, e_N^-\}$,
$ E_{l,l+1}^+ =\{ e_1^+, \dots, e_N^+\}
$
such that 
\begin{gather*}
s(e_i^-) = v_{l+1}, \qquad t(e_i^-) = v_l, \qquad
\lambda^-(e_i^-) = \alpha_i \quad \text{ for } i=1,\dots, N, \, l \in \Zp,\\
s(e_i^+) = v_l, \qquad t(e_i^+) = v_{l+1}, \qquad
\lambda^+(e_i^+) = \alpha_i \quad \text{ for } i=1,\dots, N, \, l \in \Zp.   
\end{gather*}
We set
$
\frak{L}_N^- = (V, E^-, \lambda^-)$
and
$\frak{L}_N^+ = (V, E^+, \lambda^+)$.
Then   
$(\frak{L}_N^-,\frak{L}_N^+)$ is a $\lambda$-graph bisystem
satisfying FPCC.

\medskip

{\bf (ii) A standard $\lambda$-graph bisystem for golden mean shift.}

The topological Markov shift defined by the matrix  
$F = 
\begin{bmatrix}
1  & 1 \\
1 & 0
\end{bmatrix}
$
is called the golden mean shift (cf. \cite{LM}).
Let 
$\Sigma = \{\alpha, \beta\}$.
We set
$V_0= \{v_1^0\}, \,V_1 =\{ v_1^1, v_2^1\},\,
V_l =\{ v_1^l, v_2^l, v_3^l, v_4^l\}$ for $l \ge 2.$
The labeled Bratteli diagram ${\frak L}_F^-$
is defined as follows.
Define upward directed edges labeled symbols in $\Sigma$ 
such as
\begin{gather*}
v_1^0\overset{\alpha}{\longleftarrow}v_1^1, \quad
v_1^0\overset{\alpha}{\longleftarrow}v_2^1, \quad
v_1^0\overset{\beta}{\longleftarrow}v_1^1, \\
v_1^1\overset{\alpha}{\longleftarrow}v_1^2, \quad
v_1^1\overset{\alpha}{\longleftarrow}v_3^2, \quad
v_2^1\overset{\alpha}{\longleftarrow}v_2^2, \quad
v_2^1\overset{\alpha}{\longleftarrow}v_4^2, \quad
v_2^1\overset{\beta}{\longleftarrow}v_1^2, \quad
v_2^1\overset{\beta}{\longleftarrow}v_2^2, \\
v_1^l\overset{\alpha}{\longleftarrow}v_1^{l+1}, \quad
v_1^l\overset{\alpha}{\longleftarrow}v_3^{l+1}, \quad
v_2^l\overset{\alpha}{\longleftarrow}v_2^{l+1}, \quad
v_2^l\overset{\alpha}{\longleftarrow}v_4^{l+1}, \quad
v_3^l\overset{\beta}{\longleftarrow}v_1^{l+1}, \quad
v_4^l\overset{\beta}{\longleftarrow}v_2^{l+1} \\
\end{gather*}
for $l \ge2.$
The other labeled Bratteli diagram ${\frak L}_F^+$
is defined as follows.
Define downward directed edges labeled symbols in $\Sigma$ such as
\begin{gather*}
v_1^0\overset{\alpha}{\longrightarrow}v_1^1, \quad
v_1^0\overset{\alpha}{\longrightarrow}v_2^1, \quad
v_1^0\overset{\beta}{\longrightarrow}v_1^1, \\
v_1^1\overset{\alpha}{\longrightarrow}v_1^2, \quad
v_1^1\overset{\alpha}{\longrightarrow}v_2^2, \quad
v_2^1\overset{\alpha}{\longrightarrow}v_3^2, \quad
v_2^1\overset{\alpha}{\longrightarrow}v_4^2, \quad
v_2^1\overset{\beta}{\longrightarrow}v_1^2, \quad
v_2^1\overset{\beta}{\longrightarrow}v_3^2, \\
v_1^l\overset{\alpha}{\longrightarrow}v_1^{l+1}, \quad
v_1^l\overset{\alpha}{\longrightarrow}v_2^{l+1}, \quad
v_3^l\overset{\alpha}{\longrightarrow}v_3^{l+1}, \quad
v_3^l\overset{\alpha}{\longrightarrow}v_4^{l+1}, \quad
v_2^l\overset{\beta}{\longrightarrow}v_1^{l+1}, \quad
v_4^l\overset{\beta}{\longrightarrow}v_3^{l+1} \\
\end{gather*}
for $l \ge2.$
The pair $({\frak L}_F^-,{\frak L}_F^+)$
becomes a standard $\lambda$-graph bisystem satisfying FPCC.
It is figured in Figure \ref{fig:fiboii} in the end of this section.

\medskip

{\bf (iii) A (non-standard) $\lambda$-graph bisystem for golden mean shift.}

In the above example, we consider the 
labeled graphs for $l\ge2$.
Namely, 
$
V_l =\{ v_1^l, v_2^l, v_3^l, v_4^l\}$ for $l \ge 0.$
The labeled Bratteli diagram ${\frak L}_F^-$
is defined as follows.
Define upward directed edges labeled symbols in $\Sigma =\{\alpha,\beta\}$ 
such as
\begin{equation*}
v_1^l\overset{\alpha}{\longleftarrow}v_1^{l+1}, \quad
v_1^l\overset{\alpha}{\longleftarrow}v_3^{l+1}, \quad
v_2^l\overset{\alpha}{\longleftarrow}v_2^{l+1}, \quad
v_2^l\overset{\alpha}{\longleftarrow}v_4^{l+1}, \quad
v_3^l\overset{\beta}{\longleftarrow}v_1^{l+1}, \quad
v_4^l\overset{\beta}{\longleftarrow}v_2^{l+1} 
\end{equation*}
for $l \ge 0.$
Similarly,
the other labeled Bratteli diagram ${\frak L}_F^+$
is defined as follows.
Define downward directed edges labeled symbols in $\Sigma$ such as
\begin{equation*}
v_1^l\overset{\alpha}{\longrightarrow}v_1^{l+1}, \quad
v_1^l\overset{\alpha}{\longrightarrow}v_2^{l+1}, \quad
v_3^l\overset{\alpha}{\longrightarrow}v_3^{l+1}, \quad
v_3^l\overset{\alpha}{\longrightarrow}v_4^{l+1}, \quad
v_2^l\overset{\beta}{\longrightarrow}v_1^{l+1}, \quad
v_4^l\overset{\beta}{\longrightarrow}v_3^{l+1}
\end{equation*}
for $l \ge 0.$
The pair $({\frak L}_F^-,{\frak L}_F^+)$
becomes a  $\lambda$-graph bisystem satisfying FPCC.
It is figured in Figure \ref{fig:fiboiii} in the end of this section.

%\medskip
%The following is a generalization of (iii).

\medskip

{\bf (iv) A standard $\lambda$-graph bisystem for square matrices with entries in $\{0,1\}$.}

Let $M =[m_{i,j}]_{i,j=1}^N$ be an $N\times N$ square matrix with entries in $\{0,1\}$.  
We set 
$N^2 \times N^2$ matrices $M^-, M^+$
over $\Sigma$ by setting
\begin{equation*}
M^- := 
\begin{bmatrix}
m_{11} I_N & m_{21} I_N  & \cdots & m_{N1} I_N  \\
m_{12} I_N  & m_{22} I_N  & \cdots  & m_{N2} I_N  \\
\vdots              & \vdots              & \ddots & \vdots    \\
m_{1N} I_N  & m_{2N} I_N &  \cdots& m_{NN} I_N 
\end{bmatrix},
\qquad
M^+ := 
\begin{bmatrix}
M  & 0    & \dots & 0 \\
0     & M & \ddots & \vdots \\
\vdots     & \ddots  & \ddots & 0 \\
0     & \dots     &  0 & M
\end{bmatrix}
\end{equation*}
where $I_N$ is the $N\times N$ identity matrix. 
Let $V_0 = \{ v^0_1\}$ and $V_l =\{ v_{ij}\}_{i,j=1}^{N}$
be the $N^2$ vertex set for $l=1,2, \dots$,
and
$\Sigma =\{\alpha_{ij}\}_{i,j=1}^{N}$ 
an $N^2$ alphabet set.
We consider two Bratteli diagrams 
${\frak L}_{M}^-,{\frak L}_{M}^+$ over $\Sigma$ defined by the matrices
$M^-, M^+$, respectively in the following way.
To define upward Bratteli diagram 
${\frak L}_{M}^-$,
define upward directed edges labeled symbols in $\Sigma$ 
such as for $i,j,k=1,2,\dots,N$,
\begin{align*}
v_1^0 & \overset{\alpha_{ij}}{\longleftarrow}v_{jk}^{1}  \quad
\text{ in } {\frak L}_M^- \quad
\text{ if } m_{ij} \ne 0, \\
v_{ik}^l & \overset{\alpha_{ij}}{\longleftarrow}v_{jk}^{l+1} \quad
\text{ in } {\frak L}_M^- \quad
\text{ if } m_{ij} \ne 0 \quad \text{ for } l =1,2,\dots.
\end{align*}
Similarly,
to define downward Bratteli diagram 
${\frak L}_{M}^+$,
define downward directed edges labeled symbols in $\Sigma$ 
such as for $i,j,k=1,2,\dots,N$,
\begin{align*}
v_1^0 & \overset{\alpha_{ij}}{\longrightarrow}v_{kj}^{1}  \quad
\text{ in } {\frak L}_M^+ \quad
\text{ if } m_{ij} \ne 0, \\
v_{ki}^l& \overset{\alpha_{ij}}{\longrightarrow}v_{kj}^{l+1} \quad
\text{ in } {\frak L}_M^+ \quad
\text{ if } m_{ij} \ne 0 \quad \text{ for } l =1,2,\dots.
\end{align*}
The pair $({\frak L}_M^-,{\frak L}_M^+)$
becomes a  $\lambda$-graph bisystem satisfying FPCC
(see \cite[Lemma 4.2]{MaPre2019} for detail).

\bigskip

%%%%%%%%%%%%%%%%%%%%%%%%%%%%%%%%%
Let $\LGBS$ be a $\lambda$-graph bisystem satisfying FPCC.
We say that $\LGBS$ presents a subshift $\Lambda$ 
if the set of concatenated finite labeled paths in ${\frak L}^-$
coincides with the set $B_*(\Lambda) = \cup_{n=0}^\infty B_n(\Lambda)$ of admissible words of $\Lambda$,
and similarly the set in ${\frak L}^+$ also coincides with  $B_*(\Lambda)$. 
In \cite{MaPre2019}, 
it was proved that any $\lambda$-graph bisystem $\LGBS$ satisfying FPCC
presents a two-sided subshift, and conversely, 
 any two-sided subshift $\Lambda$ over $\Sigma$
is presented by a $\lambda$-graph bisystem $\LGBS$ satisfying FPCC.
It is written $({\frak L}_\Lambda^-, {\frak L}_\Lambda^+)$
and called the canonical $\lambda$-graph bisystem for $\Lambda$.
We will briefly review the construction  
of the canonical $\lambda$-graph bisystem 
$({\frak L}_\Lambda^-, {\frak L}_\Lambda^+)$ for $\Lambda$
following \cite{MaPre2019}.

We fix a subshift $\Lambda$ over $\Sigma.$
For $k,l \in \Z$ with $k < l$, 
we put $n(k,l) = l-k-1 \in \Zp$.
For  $x=(x_n)_{n\in \Z} \in \Lambda,$
we put a set of words
\begin{align*}
W_{k,l}(x)  
:= 
& \{ (\mu_{k+1},\mu_{k+2},\dots,\mu_{l-1}) \in B_{n(k,l)}(\Lambda) \mid \\
& (\dots, x_{k-1}, x_k, \mu_{k+1},\mu_{k+2},\dots,\mu_{l-1}, 
x_l, x_{l+1},\dots ) \in \Lambda \},
\end{align*}
%\begin{equation*}
%\cdots \underset{x_{k-1}}{\longrightarrow} \underset{x_k}{\longrightarrow}
%\square\square \cdots \square 
%\underset{x_l}{\longrightarrow} \underset{x_{l+1}}{\longrightarrow}
%\cdots
%\end{equation*}
and
$
W_{k,k+1}(x) := \emptyset. 
$
For $x, y \in \Lambda$,
if $W_{k,l}(x) = W_{k,l}(y),$
we write
$x\overset{c}{\underset{(k,l)}{\sim}}y$
and call it
$(k,l)$-{\it{centrally equivalent}}.
Define the set of equivalence classes
$$\Omega_{k,l}^c = \Lambda/\overset{c}{\underset{(k,l)}{\sim}},$$
that is a finite set because the set of words length less than or equal to 
$n(k,l)$ is finite.
Let $m(k,l) =|\Omega_{k,l}^c|$ the cardinal number of 
the finite set $\Omega_{k,l}^c$.
We denote by 
$\{ C_1^{k,l},C_2^{k,l}, \dots, C_{m(k,l)}^{k,l} \}
$ 
the set 
$\Omega_{k,l}^c$
of $\overset{c}{\underset{(k,l)}{\sim}}$ equivalence classes. 
Since
for $x, y \in \Lambda$, we have
$x\overset{c}{\underset{(k,l)}{\sim}}z$
if and only if
$\sigma(x)\overset{c}{\underset{(k-1,l-1)}{\sim}}\sigma(z),$
 we may identify 
$\Omega^c_{k,l}$ with $\Omega^c_{k-1,l-1}$
and 
$C_i^{k,l}$ with $C_i^{k-1,l-1}$ for $i=1,2,\dots,m(k,l)$
through the shift
$\sigma: \Lambda\longrightarrow \Lambda$
so that we identify
$\Omega^c_{k,l}$ with $\Omega^c_{k+n,l+n}$
and 
$C_i^{k,l}$ with $C_i^{k+n,l+n}$ for all 
$n \in \Z, \, i=1,2,\dots,m(k,l).$

%%%%%%%%%%%%%%%%%%%%%%%%%%%%%%%%%%%%
For 
$x=(x_n)_{n\in \Z}  \in C_i^{k,l}$ and $\beta \in \Sigma$
such that  
$ (\beta, \nu_{k+2},\nu_{k+3},\dots,\nu_{l-1}) \in W_{k,l}(x)$ 
for some $\nu =( \nu_{k+2},\nu_{k+3},\dots,\nu_{l-1}) \in B_{l-k-2}(\Lambda)$,
the bi-infinite sequence
\begin{equation*}
 x(\beta,\nu) : = 
(\dots, x_{k-1}, x_k, \beta,\nu_{k+2},\nu_{k+3},\dots,\nu_{l-1}, x_l, x_{l+1},\dots ) \in \Lambda
\end{equation*}
belongs to $\Lambda.$
If $x(\beta, \nu)$ belongs to $C_h^{k+1,l},$
we write
$C_i^{k,l}\beta \subset C_h^{k+1,l}.$
As in \cite[Lemma 4..2]{MaPre2019},
the notation $C_i^{k,l}\beta \subset C_h^{k+1,l}$  
is  well-defined, that is, it does not depend on the choice of 
$x=(x_n)_{n\in \Z}  \in C_i^{k,l}$ and
$\nu =( \nu_{k+2},\nu_{k+3},\dots,\nu_{l-1}) \in B_{l-k-2}(\Lambda)$
as long as  
$ (\beta, \nu_{k+2},\nu_{k+3},\dots,\nu_{l-1}) \in W_{k,l}(x).$

%%%%%%%%%%%%%%%%%%%%%%%%%%%%%%%%%%%%%%%%%%%%%%%%%%%%%
Similarly  for
$x=(x_n)_{n\in \Z}  \in C_i^{k,l}$ and $\alpha \in \Sigma$
such that  
$ (\mu_{k+1},\mu_{k+2},\dots,\mu_{l-2},\alpha) \in W_{k,l}(x)$ 
for some 
$\mu =( \mu_{k+1},\mu_{k+2},\dots,\mu_{l-2}) \in B_{l-k-2}(\Lambda)$,
the bi-infinite sequence
\begin{equation*}
 x(\mu, \alpha) : = 
(\dots, x_{k-1}, x_k, \mu_{k+1},\mu_{k+2},\dots,\mu_{l-2},\alpha, x_l, x_{l+1},\dots ) \in \Lambda
\end{equation*}
belongs to $\Lambda.$
If $x(\mu, \alpha)$ belongs to $C_j^{k,l-1},$
 we write
$\alpha C_i^{k,l} \subset C_j^{k,l-1}.$
The notation $\alpha C_i^{k,l} \subset C_j^{k,l-1}$
is also well-defined, that is, it does not depend on the choice of 
$x=(x_n)_{n\in \Z}  \in C_i^{k,l}$ and
$\mu =( \mu_{k+1},\mu_{k+2},\dots,\mu_{l-2}) \in B_{l-k-2}(\Lambda)$
as long as  
$ (\mu_{k+1},\mu_{k+2},\dots,\mu_{l-2},\alpha) \in W_{k,l}(x).$

%%%%%%%%%%%%%%%%%%%%%%%%%%%%%%%%%%%%%%%%%%%%%%%%%%%%%%%%%%%%%%%%
Let us in particular specify the following equivalence classes
$\Omega^c_{-l,1}$ and $\Omega^c_{-1,l}$
and define the vertex sets $V_l^-$ and $V_l^+$ for $l=0,1,2,\dots$
by setting
\begin{gather*}
V_0^-:= \{ \Lambda\},\quad 
 V_1^-:= \Omega^c_{-1,1},\quad
 V_2^-:= \Omega^c_{-2,1}, \quad
 V_3^-:= \Omega^c_{-3,1}, \quad
\cdots,  \quad   
V_l^-:= \Omega^c_{-l,1}, \quad 
\cdots, \\
V_0^+:= \{ \Lambda\}, \quad
V_1^+:= \Omega^c_{-1,1}, \quad
V_2^+:= \Omega^c_{-1,2}, \quad
V_3^+:= \Omega^c_{-1,3}, \quad
\cdots, \quad
V_l^+:= \Omega^c_{-1,l}, \quad 
\cdots.
\end{gather*}
Write $V_0 = V_0^- = V_0^+.$
The each classes $ C_{i}^{-l,1}$ and $C_{i}^{-1,l}$ for $i=1,2,\dots,m(l)$ 
are identified  and written 
$C_i^l$, by
the bijective correspondence
$
x \in V_l^+ \longleftrightarrow \sigma^{l-1}(x) \in V_l^-,
$
and hence 
 $V_l^-$ and $V_l^+$ 
are identified  for each $l\in \Zp,$
that are  denoted by $V_l.$
We regard $C_i^l$ 
as a vertex denoted by $v_i^l$
and define an edge $e^-$ labeled $\beta\in \Sigma$ from
$v_j^{l+1}$ to $v_i^l$  
if $C_j^{l+1}\beta \subset C_i^l$.
We write $s(e^-) = v_j^{l+1}$, 
$t(e^-) = v_i^l$ %the terminal vertex of $e^-,$
and  $\lambda^-(e^-) = \beta.$
The set of such edges from
 $v_j^{l+1}$ to $v_i^l$  for some $i=1,\dots,m(l), \, j=1,\dots, m(l+1)$
is denoted by
$E_{l,l+1}^-.$
Similarly, we define 
an edge $e^+$ labeled $\alpha\in \Sigma$ from
$v_i^l$ to $v_j^{l+1}$
if $\alpha C_j^{l+1}\subset C_i^l$.
We write $s(e^+) = v_i^l$,
$t(e^+) = v_j^{l+1}$  % the terminal vertex of $e^+,$
and $\lambda^+(e^+) = \alpha.$
The set of such edges from
$v_i^l$ to $v_j^{l+1}$ for some $i=1,\dots,m(l), \, j=1,\dots, m(l+1)$
is denoted by
$E_{l,l+1}^+.$
%%%%%%%%%%%%%%%%
%%%%%%%%%%%%%%%
These are written
\begin{align*}
v_j^{l+1} \overset{\beta}{\underset{e^-}{\longrightarrow}} v_i^l
\qquad 
&\text{ if }\qquad C_j^{l+1}\beta \subset C_i^l, \\
v_i^l \overset{\alpha}{\underset{e^+}{\longrightarrow}} v_j^{l+1}
\qquad 
&\text{ if } \qquad \alpha C_j^{l+1}\subset C_i^l.
\end{align*}
We set 
$
E^- =\cup_{l=0}^{\infty} E_{l,l+1}^-, \,
E^+ =\cup_{l=0}^{\infty} E_{l,l+1}^+.
$ 
We now have  the pair 
$ {\frak L}^-_\Lambda =(V,E^-, \lambda^-)$ and
$ {\frak L}^+_\Lambda =(V,E^+, \lambda^+)$ of labeled Bratteli diagrams.
\begin{proposition}[{\cite[Proposition 4.4]{MaPre2019}}] \label{prop:4.4}
The pair $\LLGBS$ of labeled Bratteli diagrams
is a $\lambda$-graph bisystem satisfying FPCC and presenting the subshift $\Lambda.$ 
\end{proposition}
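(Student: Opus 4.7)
The plan is to verify in turn each of the five conditions (i)–(v) of Definition \ref{def:lambdabisystem}, then the FPCC property, and finally that the pair presents $\Lambda$. Most conditions are immediate consequences of the construction; the genuine content sits in (v) and FPCC.

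For (i) and (ii), I would note that $V_0 = \{\Lambda\}$ is a singleton, each $V_l = \Omega^c_{-l,1} = \Omega^c_{-1,l}$ is finite because there are only finitely many subsets of $B_{l-1}(\Lambda)$ available as possible sets $W_{-l,1}(x)$, and $m(l) \le m(l+1)$ because the $(-l-1,1)$-central equivalence refines the $(-l,1)$-central equivalence (a longer allowed window distinguishes at least as many classes). The edge-set decomposition is by construction. For (iii), given $v_i^l \in V_l$ with $l \geq 1$, I would pick any $x = (x_n)_{n\in\Z} \in C_i^l$: the symbol $\beta := x_{-l}$ together with $\nu = (x_{-l+1},\dots,x_0) \in B_{l-1}(\Lambda)$ witnesses a class $C_h^{l+1}$ with $C_h^{l+1}\beta \subset C_i^l$, giving an outgoing (upward-from-below) edge $e^- \in E^-_{l,l+1}$ ending at $v_i^l$. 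A symmetric argument using $x_l$ yields the ${\frak L}^+$ edges. For $l=0$ this is immediate since $V_0 = \{\Lambda\}$ absorbs every symbol in $\Sigma$ appearing in $\Lambda$.

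For (iv), right-resolvingness of $\lambda^-$ follows from the well-definedness of the containment $C_j^{l+1}\beta \subset C_i^l$ established before the statement: inserting a fixed symbol $\beta$ at the fixed position $-l$ into any representative of $C_j^{l+1}$ lands in a single $(-l,1)$-class, so at most one edge from $v_j^{l+1}$ can be labeled $\beta$. Left-resolvingness of $\lambda^+$ is dual, using $\alpha C_j^{l+1} \subset C_i^l$. For condition (v), the local property, I would identify both $E_+^-(u,v)$ and $E_-^+(u,v)$ (for $u \in V_l$, $v \in V_{l+2}$) with the set of triples $(w, \beta, \alpha)$ where $w \in V_{l+1}$, $\beta,\alpha \in \Sigma$, and $u,v$ are linked to $w$ by the appropriate containments. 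Concretely, using the shift identification $V_{l+1}^+ = V_{l+1}^-$, a path $u \leftarrow w \to v$ in $E_+^-(u,v)$ labeled $(\beta,\alpha)$ corresponds to a bi-infinite sequence obtained from a representative of $u$ by inserting $\beta\alpha$ in the two newly exposed slots; the central equivalence class reached on the $v$-side is the same whether one first goes $u \to w' \xleftarrow{} v$ via $(\alpha,\beta)$ in $E_-^+(u,v)$, because both descriptions amount to inserting the same two symbols $\alpha$ and $\beta$ into the same positions. This gives the bijection $\varphi$ preserving both labels.

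For FPCC, given $u \in V_l$, a word $(\beta_1,\dots,\beta_l) \in F(u)$ corresponds to a finite ${\frak L}^-$-path $u \xrightarrow{\beta_1} \cdots \xrightarrow{\beta_l} v_0$, i.e.\ to the ability to insert $\beta_1\beta_2\cdots\beta_l$ at positions $-l+1,\dots,0$ into a representative of $C_u^{-l,1}$. Under the shift identification $x \in V_l^+ \leftrightarrow \sigma^{l-1}(x) \in V_l^-$, this is exactly the same condition as being able to insert $\beta_1\beta_2\cdots\beta_l$ at positions $1,\dots,l$ into a representative of $C_u^{-1,l}$, which by the left-resolving downward construction is precisely the statement $(\beta_1,\dots,\beta_l) \in P(u)$. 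Finally, to see that $\LLGBS$ presents $\Lambda$, I would observe that concatenated labeled paths in ${\frak L}^-$ starting at a vertex of $V_l$ and ending at $V_0$ trace out exactly the words $(\beta_1,\dots,\beta_l)$ that can be inserted at positions $-l+1,\dots,0$ into some $x \in \Lambda$, hence exactly $B_l(\Lambda)$, and symmetrically for ${\frak L}^+$.

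The main obstacle is condition (v). Conditions (i)–(iv) and the presentation claim are essentially bookkeeping, but verifying the local property requires keeping careful track of three simultaneous identifications: the two different parametrisations of the middle-level vertex set ($V_{l+1}^-$ vs $V_{l+1}^+$) via the shift, the direction conventions on ${\frak L}^\pm$, and the fact that the relevant insertion operation on central equivalence classes does not depend on the order in which the two symbols are inserted. Once that independence is cleanly formulated, the bijection $\varphi$ is forced, and FPCC drops out of the same shift identification.
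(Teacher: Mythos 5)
The paper does not actually prove this statement: it is imported verbatim from \cite[Proposition 4.4]{MaPre2019}, so there is no internal argument to compare against. Judged on its own terms, your verification of (iii), (iv), (v), FPCC and the presentation claim is essentially right, and you correctly isolate the crux: filling one slot of the central window sends the class with word set $W$ to the class with word set $\{\nu : \beta\nu \in W\}$ (left slot) or $\{\nu : \nu\alpha \in W\}$ (right slot), these two operations are well defined on classes and commute, and iterating them gives both the bijection $\varphi$ in (v) and the identities $F(u)=P(u)=W_u$.

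There is, however, a genuine gap in your treatment of condition (i). You justify $m(l)\le m(l+1)$ by asserting that the $(-l-1,1)$-central equivalence refines the $(-l,1)$-central equivalence. This is false: $W_{-l,1}(x)=\{\mu : x_{-l}\mu\in W_{-l-1,1}(x)\}$ depends not only on the $(-l-1,1)$-class of $x$ but also on the symbol $x_{-l}$, which that class does not record. Concretely, in the even shift take $x=\alpha^{\infty}$ and $y=\cdots\alpha\alpha\,\beta\beta\,\alpha\alpha\cdots$ with the block $\beta\beta$ at positions $-1,0$. Both points have $W_{-2,1}=\{\alpha\alpha,\beta\beta\}$, hence are $(-2,1)$-centrally equivalent, yet $W_{-1,1}(x)=\{\alpha\}$ while $W_{-1,1}(y)=\{\beta\}$, so they lie in different $(-1,1)$-classes. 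What the construction does give is a surjective partial map $V_{l+1}\times\Sigma\to V_l$, $(C,\beta)\mapsto C'$ with $C\beta\subset C'$, and surjectivity only yields $m(l)\le |\Sigma|\, m(l+1)$. So the inequality $m(l)\le m(l+1)$, which is part of Definition \ref{def:lambdabisystem} (i) and hence of the assertion that $\LLGBS$ is a \lgbs, requires an argument your proposal does not contain; as it stands this step would fail. (Two further, purely cosmetic points: in (iii) you only exhibit the edge ending at $v_i^l$ and not the one starting there, and in the FPCC paragraph the positions on the $P(u)$ side should be $0,\dots,l-1$ rather than $1,\dots,l$; neither affects the substance.)
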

The $\lambda$-graph bisystem 
$({\frak L}_\Lambda^-, {\frak L}_\Lambda^+)$
for subshift $\Lambda$ is called the {\it canonical}\/  $\lambda$-graph bisystem 
for $\Lambda$. 
The $\lambda$-graph bisystems presented in Example \ref{ex:3.2}
(i), (ii)  are the canonical $\lambda$-graph bisystems for the full $N$-shift,
the golden mean shift, respectively.

\newpage

%%%%%%%%%%%%%%%%%%%%%%%%%%%%%%%%%%%%%%%%%
%\begin{figure}[htbp]
%\begin{center}
%\input{pictureiMP.tex}
%\end{center}
%\caption{ $\lambda$-graph bisystem 
%$({\frak L}^-,{\frak L}^+)$ of Example \ref{ex:3.2} (i) }
%\label{fig:(i)}
%\end{figure}
%%%%%%%%%%%%%%%%%%%%%%%%%%%%%%%%%%%%%%%%%%%%%%%%%%%
%where upward arrows $\longleftarrow$ 
%in ${\frak L}^-$ are labeled $\iota$,
%whereas downward arrows $\longleftarrow$ and {\mathversion{bold} $\longleftarrow$} (bold)
%in ${\frak L}^+$ are labeled $0$ and $1$, respectively.  

%\newpage

%%%%%%%%%%%%%%%%%%%%%%%%%%%%%%%%%%%%%%%%%
\begin{figure}[htbp]
\begin{center}
\input{pictureFiboMP2.tex}
\end{center}
\caption{  A standard $\lambda$-graph bisystem $({\frak L}_F^-,{\frak L}_F^+)$ of Example \ref{ex:3.2} (ii) }
\label{fig:fiboii}
\end{figure}
%%%%%%%%%%%%%%%%%%%%%%%%%%%%%%%%%%%%%%%%%%%%%%%%%%%
where upward arrows $\longleftarrow$ and {\mathversion{bold} $\longleftarrow$} (bold)
in ${\frak L}_F^-$ are labeled $\alpha$ and $\beta$, respectively,
whereas downward arrows $\longleftarrow$ and {\mathversion{bold} $\longleftarrow$} (bold)
in ${\frak L}_F^+$ are labeled $\alpha$ and $\beta$, respectively.  

\newpage

%%%%%%%%%%%%%%%%%%%%%%%%%%%%%%%%%%%%%%%%%
\begin{figure}[htbp]
\begin{center}
\input{picturefiboMP3.tex}
\end{center}
\caption{ A (non standard)$\lambda$-graph bisystem  $({\frak L}_F^-,{\frak L}_F^+)$
 of Example \ref{ex:3.2} (iii) }
\label{fig:fiboiii}
\end{figure}
%%%%%%%%%%%%%%%%%%%%%%%%%%%%%%%%%%%%%%%%%%%%%%%%%%%
where upward arrows $\longleftarrow$ and {\mathversion{bold} $\longleftarrow$} (bold)
in ${\frak L}_F^-$ are labeled $\alpha$ and $\beta$, respectively,
whereas downward arrows $\longleftarrow$ and {\mathversion{bold} $\longleftarrow$} (bold)
in ${\frak L}_F^+$ are labeled $\alpha$ and $\beta$, respectively.  

%%%%%%%%%%%%%%%%%%%%%%%%%%%%%%%%%%%%%%%%%%%%%%
%%%%%%%%%%%%%%%%%%%%%%%%%%%%%%%%%%%%%%

\newpage

%%%%%%%%%%%%%%%%%%%%%%%%%%%%%%%%%%%%%%%%%%%%%%%%%
%%%%%%%%%%%%%%%%%%%%%%%%%%%%%%
\section{Two-sided AF algebras } 
%%%%%%%%%%%%%%%%%%%%%%%%%%%%%%%%%%
Suppose that $\LGBS$ satisfy FPCC, that is,
$P(v_i^n) = F(v_i^n)$ for all $ i =1,2,\dots,m(n), \, n \in \Zp.$
Let us denote by $\Lambda$ the subshift presented by  $\LGBS$. 
The set 
$P(v_i^n) (= F(v_i^n))$ is denoted by 
$W_i^n$.
Put
$N_i^{n} = |W_i^{n}| \in \Zp$
and consider the
$N_i^{n}\times N_i^{n}$
 full matrix algebra
$M_{N_i^n}({\mathbb{C}})$ over ${\mathbb{C}}$.
Let
$E_i^{n}(\mu,\nu), \, \mu, \nu \in W_i^n$ 
be the set of matrix units of the algebra
$M_{N_i^{n}}({\mathbb{C}})$.   
They are partial isometries 
satisfying the identities
\begin{equation*}
E_i^{n}(\mu,\nu) E_i^{n}(\mu,\nu)^* = E_i^{n}(\mu,\mu),\qquad
E_i^{n}(\mu,\nu)^* =E_i^{n}(\nu,\mu), \qquad
\sum_{\mu \in W_i^{n}} E_i^{n}(\mu,\mu) =1.
\end{equation*}
For $k,l\in \Z$ with $k<l$,
put $n(k,l) = l-k-1 \in \Zp$
and
$m(k,l) = m(n(k,l)) = m(l-k-1)$.
We define partial isometries
$E_i^{k,l}(\mu,\nu)$ for 
$ \mu,\nu \in B_{n(k,l)}(\Lambda), 
i=1,2,\dots,m(k,l)
$
by setting
\begin{equation*}
E_i^{k,l}(\mu,\nu)
:= \begin{cases}
E_i^{n(k,l)}(\mu,\nu) & \text{ if } \mu, \nu \in W_i^{n(k,l)}, \\
0   & \text{ otherwise.} 
\end{cases}
\end{equation*}
%%%%%%%%%%%%%%%%%%%%%%%%%%%%
Define the full matrix algebra $\F_i^{k,l}$ for the vertex 
$v_i^{n(k,l)} \in V_{n(k,l)}$
by setting 
\begin{equation}
\F_i^{k,l} := C^*(E_i^{k,l}(\mu,\nu) \mid \mu, \nu \in W_i^{n(k,l)})
\end{equation} 
that is the $C^*$-algebra generated by the partial isometries
$E_i^{k,l}(\mu,\nu), \mu, \nu \in W_i^{n(k,l)}.$
It is isomorphic to $M_{N_i^{n(k,l)}}({\mathbb{C}})$.
Define also the finite dimensional $C^*$-algebra
$\F_{\frak L}^{k,l}$ by
\begin{equation}
\F_{\frak L}^{k,l} := \bigoplus_{i=1}^{m(nk,l}\F_i^{k,l} 
(= \bigoplus_{i=1}^{m(k,l)}M_{N_i^{n(k,l)}}({\mathbb{C}})).
\end{equation} 
The following lemma is obvious.
\begin{lemma}
For $k,l,k',l' \in \Z$ with $k<l$ and $k' <l'$, 
assume $l-k = l' -k'.$
Then the $C^*$-algebras 
$\F_i^{k,l}$ and $\F_i^{k',l'}$ are isomorphic for each $i=1,2, \dots,m(k,l)$
and hence 
$\F_{\frak L}^{k,l}$ and $\F_{\frak L}^{k',l'}$ are isomorphic.
\end{lemma}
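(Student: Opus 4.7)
The plan is essentially to observe that the construction of $\F_i^{k,l}$ factors through the quantity $n(k,l) = l-k-1$ and nothing else, so that the isomorphism is immediate from the hypothesis $l-k = l'-k'$.

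More precisely, I would first note that $l-k = l'-k'$ forces $n(k,l) = n(k',l')$, and consequently $m(k,l) = m(n(k,l)) = m(n(k',l')) = m(k',l')$. Recall from the earlier discussion of the canonical $\lambda$-graph bisystem that the shift $\sigma : \Lambda \to \Lambda$ identifies $\Omega^c_{k,l}$ with $\Omega^c_{k+n,l+n}$ for every $n\in\Z$, and under this identification the equivalence class $C_i^{k,l}$ corresponds to $C_i^{k+n,l+n}$. Applying this with $n = k'-k = l'-l$, one obtains a canonical bijection between the vertex sets indexing the summands for the two pairs, and hence, via the FPCC identification of $P$ and $F$, between the word sets $W_i^{n(k,l)}$ and $W_i^{n(k',l')}$. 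In particular $N_i^{n(k,l)} = N_i^{n(k',l')}$.

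Given this, I would simply define an isomorphism $\Phi_i : \F_i^{k,l} \to \F_i^{k',l'}$ by sending the matrix unit $E_i^{k,l}(\mu,\nu)$ to $E_i^{k',l'}(\mu,\nu)$ for each $\mu,\nu \in W_i^{n(k,l)} = W_i^{n(k',l')}$. This is a $*$-homomorphism on matrix units and extends to a $*$-isomorphism between the two full matrix algebras $M_{N_i^{n(k,l)}}(\mathbb{C})$ and $M_{N_i^{n(k',l')}}(\mathbb{C})$, both of which have the same dimension. Finally, taking the direct sum $\Phi := \bigoplus_{i=1}^{m(k,l)} \Phi_i$ yields the desired isomorphism $\F_{\frak L}^{k,l} \cong \F_{\frak L}^{k',l'}$.

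There is no real obstacle here: the statement is called ``obvious'' in the paper precisely because the whole construction of $\F_i^{k,l}$ depends only on $n(k,l)$ and on the vertex $v_i^{n(k,l)}$, both of which are invariant under translating the pair $(k,l)$ by a constant. The only thing worth articulating carefully is the shift-invariance $\Omega^c_{k,l} \cong \Omega^c_{k+n,l+n}$ which was already established in the construction of the canonical $\lambda$-graph bisystem.
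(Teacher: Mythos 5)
Your proof is correct and is essentially the argument the paper has in mind: it states the lemma as ``obvious'' precisely because $E_i^{k,l}(\mu,\nu)$ is defined as $E_i^{n(k,l)}(\mu,\nu)$ and $W_i^{n(k,l)}$ depends only on $n(k,l)=l-k-1$, so equal differences give literally the same matrix units. The only superfluous step is your appeal to the shift-identification of the classes $\Omega^c_{k,l}$, which is needed only for the canonical $\lambda$-graph bisystem, whereas in the general FPCC setting of Section~3 the set $W_i^n=P(v_i^n)=F(v_i^n)$ is already indexed by the level $n$ alone.
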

Recall that  $(A^-, A^+) =(A^-_{n,n+1}, A^+_{n,n+1})_{n\in \Zp}$
denotes the transition matrix bisystem for the $\lambda$-graph bisystem $\LGBS$ defined by 
\eqref{eq:AM} and \eqref{eq:AP}.
We put
\begin{align*}
\iota_+(E_i^{k,l}(\mu,\nu))
= & \sum_{\alpha \in \Sigma}\sum_{j=1}^{m(k,l+1)}A^+_{n(k,l), n(k,l+1)}(i,\alpha,j)
E_j^{k,l+1}(\mu \alpha,\nu\alpha), \\
\iota_-(E_i^{k,l}(\mu,\nu))
= & \sum_{\beta \in \Sigma}\sum_{j=1}^{m(k-1,l)}A^-_{n(k,l), n(k-1,l)}(i,\beta,j)
E_j^{k-1,l}(\beta\mu,\beta\nu), 
\end{align*}
where $n(k,l) = l-k-1$ and $m(k,l) = m(n(k,l)) = m(l-k-1)$,
and hence $n(k,l+1) =n(k-1,l) = n(k,l) +1$.
\begin{lemma}
The above maps define unital embeddings  
\begin{equation*}
\iota_+: \F_{\frak L}^{k,l} \hookrightarrow \F_{\frak L}^{k,l+1}, \qquad 
\iota_-: \F_{\frak L}^{k,l} \hookrightarrow \F_{\frak L}^{k-1,l} 
\end{equation*}
of $C^*$-algebras.
\end{lemma}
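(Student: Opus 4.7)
The plan is to verify four properties in sequence for $\iota_+$ (the argument for $\iota_-$ will be structurally identical after swapping ${\frak L}^+\leftrightarrow{\frak L}^-$ and predecessor/follower sets): that $\iota_+$ is well-defined as a map into $\F_{\frak L}^{k,l+1}$, that it is multiplicative and $*$-preserving, that it is unital, and that it is injective. First one extends the defining formula linearly from matrix units to all of $\F_{\frak L}^{k,l}=\bigoplus_i\F_i^{k,l}$ and then checks these four properties on the generators.

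The central technical fact underlying well-definedness and multiplicativity is a path-extension principle. Suppose $\mu\in W_i^{n(k,l)}$, which by FPCC equals $P(v_i^{n(k,l)})$, and $A^+_{n(k,l),n(k,l)+1}(i,\alpha,j)=1$, so that there is an edge $v_i^{n(k,l)}\overset{\alpha}{\longrightarrow}v_j^{n(k,l)+1}$ in ${\frak L}^+$. Concatenating this edge to the path in ${\frak L}^+$ whose label reads $\mu$ produces a path from the top vertex down to $v_j^{n(k,l)+1}$ labeled $\mu\alpha$, whence $\mu\alpha\in P(v_j^{n(k,l)+1})=W_j^{n(k,l)+1}$; consequently every matrix unit $E_j^{k,l+1}(\mu\alpha,\nu\alpha)$ occurring with a nonzero coefficient is nonzero. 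Multiplicativity then reduces to the identity $\iota_+(E_i(\mu,\nu))\iota_+(E_{i'}(\mu',\nu'))=\delta_{i,i'}\iota_+(E_i(\mu,\nu)E_i(\mu',\nu'))$: a cross-term $E_j(\mu\alpha,\nu\alpha)E_{j'}(\mu'\alpha',\nu'\alpha')$ is nonzero only if $j=j'$, $\alpha=\alpha'$ and $\nu=\mu'$, and the simultaneous constraints $A^+(i,\alpha,j)=A^+(i',\alpha,j)=1$ force, by the left-resolving property Definition \ref{def:lambdabisystem}(iv)-(2) of ${\frak L}^+$, that $i=i'$. The $*$-compatibility is immediate from $E_j^{k,l+1}(\mu\alpha,\nu\alpha)^*=E_j^{k,l+1}(\nu\alpha,\mu\alpha)$. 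The analogous principle for $\iota_-$ uses $F(v_i^n)$ together with the right-resolving property (iv)-(1) of ${\frak L}^-$.

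For unitality, $\iota_+(1_{\F^{k,l}})$ expands as the double sum $\sum_{i,\mu}\sum_{\alpha,j}A^+(i,\alpha,j)E_j^{k,l+1}(\mu\alpha,\mu\alpha)$, and one checks that each target diagonal matrix unit $E_j^{k,l+1}(\mu',\mu')$ with $\mu'\in W_j^{n(k,l+1)}$ is produced exactly once: the decomposition $\mu'=\mu\alpha$ pins down $(\alpha,j)$, and left-resolving then picks out the unique $i$ for which $A^+(i,\alpha,j)=1$, so that $\mu\in W_i^{n(k,l)}$ by FPCC and path-extension run backwards. Injectivity follows because $\F_{\frak L}^{k,l}$ is a direct sum of simple matrix summands $\F_i^{k,l}$, and Definition \ref{def:lambdabisystem}(iii)-(2) guarantees that every $v_i^{n(k,l)}$ admits at least one outgoing ${\frak L}^+$-edge, so $\iota_+$ is nonzero on each $\F_i^{k,l}$. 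I expect the main obstacle to be the bookkeeping in the double-sum manipulations, specifically applying the path-extension principle and the resolving properties at precisely the right moment; once the bijection between $W_i^n$-indexed matrix units and labeled paths in ${\frak L}^\pm$ is kept straight, each of the four verifications is routine.
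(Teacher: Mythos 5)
Your proposal is correct and follows essentially the same route as the paper: multiplicativity via the vanishing of cross-terms together with the left-resolving property of ${\frak L}^+$ (resp.\ right-resolving for ${\frak L}^-$), and unitality via the identification of $\bigcup_{i,\mu,\alpha}\{\mu\alpha \mid A^+(i,\alpha,j)=1\}$ with $W_j^{n(k,l+1)}$. The only difference is that you spell out two points the paper leaves implicit — the path-extension fact guaranteeing $\mu\alpha\in W_j^{n(k,l+1)}$ and the injectivity argument via nonvanishing on each simple summand (the paper just says ``clearly injective'') — and both of your justifications are correct.
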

\begin{proof}
We have
\begin{align*}
 & \iota_+(E_i^{k,l}(\mu,\nu))\iota_+(E_{i'}^{k,l}(\mu',\nu')) \\
= &
\left( \sum_{\alpha \in \Sigma}\sum_{j=1}^{m(k,l+1)}A^+_{n(k,l), n(k,l+1)}(i,\alpha,j)
E_j^{k,l+1}(\mu \alpha,\nu\alpha) \right) \\
&\cdot
\left( \sum_{\alpha' \in \Sigma}\sum_{j'=1}^{m(k,l+1)}A^+_{n(k,l), n(k,l+1)}(i',\alpha',j')
E_{j'}^{k,l+1}(\mu' \alpha',\nu'\alpha') \right) \\
= &
 \sum_{\alpha,\alpha' \in \Sigma}
 \sum_{j, j'=1}^{m(k,l+1)}
 A^+_{n(k,l), n(k,l+1)}(i,\alpha,j) A^+_{n(k,l), n(k,l+1)}(i',\alpha',j') \\
& E_j^{k,l+1}(\mu \alpha,\nu\alpha)
E_{j'}^{k,l+1}(\mu' \alpha',\nu'\alpha'). 
\end{align*}
Since
\begin{align*}
& E_j^{k,l+1}(\mu \alpha,\nu\alpha)E_{j'}^{k,l+1}(\mu' \alpha',\nu'\alpha') \\
=& 
{\begin{cases}
E_j^{k,l+1}(\mu \alpha,\nu'\alpha) & \text{ if } \
j= j' \text{ and } \nu = \mu', \alpha = \alpha',  \\
0 & \text{ otherwise},
\end{cases}
}
\end{align*}
and
$$
 A^+_{n(k,l), n(k,l+1)}(i,\alpha,j) A^+_{n(k,l), n(k,l+1)}(i',\alpha,j) =0
\quad\text{ for } i \ne i'
$$
because 
${\frak L}^+$ is left-resolving,
we have
\begin{align*}
 & \iota_+(E_i^{k,l}(\mu,\nu))\iota_+(E_{i'}^{k,l}(\mu',\nu')) \\
= &
{\begin{cases}
\sum_{\alpha \in \Sigma}\sum_{j=1}^{m(k,l+1)}A^+_{n(k,l), n(k,l+1)}(i,\alpha,j)
E_j^{k,l+1}(\mu \alpha,\nu'\alpha) & \text{ if } \nu = \mu', i = i',\\
0 & \text{ otherwise }
\end{cases}
} \\
= &
{\begin{cases}
\iota_+(E_i^{k,l}(\mu,\nu')) & \text{ if } \nu = \mu', i= i', \\
0 & \text{ otherwise }
\end{cases}
} \\
= & \iota_+(E_i^{k,l}(\mu,\nu) E_{i'}^{k,l}(\mu',\nu')). 
\end{align*}
Hence 
$\iota_+: \F_{\frak L}^{k,l} \longrightarrow \F_{\frak L}^{k,l+1}
$
may extend to a homomorphism from
$ \F_{\frak L}^{k,l} $ to $\F_{\frak L}^{k,l+1}$,
that is clearly injective.
Similarly we know that 
$ 
\iota_-: \F_{\frak L}^{k,l} \longrightarrow \F_{\frak L}^{k-1,l} 
$
yields an injective homomorphism of $C^*$-algebras.

We will next show that  
$ 
\iota_+: \F_{\frak L}^{k,l} \longrightarrow \F_{\frak L}^{k,l+1} 
$
and similarly 
$ 
\iota_-: \F_{\frak L}^{k,l} \longrightarrow \F_{\frak L}^{k-1,l} 
$
are unital.
Put the projections
$$
P_i^{k,l} := \sum_{\mu \in W_i^{n(k,l)}} E_i^{k,l}(\mu,\mu)
\quad
\text{ and }
\quad
P^{k,l}_{\frak L} :=\sum_{i=1}^{m(k,l)}P_i^{k,l}.  
$$
They are the unit of $\F_{i}^{k,l}$
and
that of $\F_{\frak L}^{k,l}$, respectively.
We have
\begin{align*}
\iota_+(P^{k,l}_{\frak L})
= & \sum_{i=1}^{m(k,l)} \sum_{\mu \in W_i^{n(k,l)}} 
    \sum_{\alpha \in \Sigma}\sum_{j=1}^{m(k,l+1)}
    A^+_{n(k,l), n(k,l+1)}(i,\alpha,j)E_j^{k,l+1}(\mu \alpha,\mu\alpha)\\
= & \sum_{j=1}^{m(k,l+1)}
\left( \sum_{i=1}^{m(k,l)} \sum_{\mu \in W_i^{n(k,l)}} 
    \sum_{\alpha \in \Sigma}
    A^+_{n(k,l), n(k,l+1)}(i,\alpha,j)E_j^{k,l+1}(\mu \alpha,\mu\alpha)\right).
\end{align*}
For a fixed $j=1,2,\dots, m(k,l+1)$, we see 
\begin{equation*}
\bigcup_{i=1}^{m(k,l)} \bigcup_{\mu \in W_i^{n(k,l)}}
\bigcup_{\alpha\in \Sigma}
\{\mu\alpha\mid A^+_{n(k,l), n(k,l+1)}(i,\alpha,j) =1\}
= W_j^{n(k,l+1)},
\end{equation*}
so that 
\begin{equation*}
\iota_+(P^{k,l}_{\frak L})=  \sum_{j=1}^{m(k,l+1)}P_j^{k,l+1} = P^{k,l+1}_{\frak L}.
\end{equation*}
The other equality
$\iota_-(P^{k,l}_{\frak L})=  P^{k-1,l}_{\frak L}
$ 
is similarly shown.
\end{proof}

\begin{lemma}
For $k,l\in \Z$ with $k <l,$
the following diagram is commutative:
\begin{equation*}
\begin{CD}
\F_{\frak L}^{k,l} @>\iota_+>> 
\F_{\frak L}^{k,l+1}  \\
@V{\iota_-}VV  @V{\iota_-}VV \\
 \F_{\frak L}^{k-1,l}
@>\iota_+>> 
 \F_{\frak L}^{k-1,l+1}. \\ 
\end{CD}
\end{equation*}
\end{lemma}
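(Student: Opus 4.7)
The plan is to verify commutativity of the diagram by checking that both compositions $\iota_-\circ\iota_+$ and $\iota_+\circ\iota_-$ agree on the spanning set of matrix units $E_i^{k,l}(\mu,\nu)$ for $\F_{\frak L}^{k,l}$. Since both maps are $*$-homomorphisms between finite-dimensional $C^*$-algebras, agreement on generators suffices.

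The first step is to expand $\iota_-\circ\iota_+(E_i^{k,l}(\mu,\nu))$. Applying $\iota_+$ first produces a sum over $\alpha\in\Sigma$ and $j$ weighted by $A^+_{n(k,l),n(k,l+1)}(i,\alpha,j)$ of the matrix units $E_j^{k,l+1}(\mu\alpha,\nu\alpha)$. Applying $\iota_-$ to each such term produces a further sum over $\beta\in\Sigma$ and $h$ weighted by $A^-_{n(k,l+1),n(k-1,l+1)}(j,\beta,h)$ of $E_h^{k-1,l+1}(\beta\mu\alpha,\beta\nu\alpha)$. Similarly, I would expand $\iota_+\circ\iota_-(E_i^{k,l}(\mu,\nu))$, which yields a double sum with weights $A^-_{n(k,l),n(k-1,l)}(i,\beta,j')\,A^+_{n(k-1,l),n(k-1,l+1)}(j',\alpha,h')$ attached to the same matrix units $E_{h'}^{k-1,l+1}(\beta\mu\alpha,\beta\nu\alpha)$.

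Comparing the two expressions term by term for each fixed $(\alpha,\beta,h)$, the question reduces to whether
\[
\sum_{j=1}^{m(k,l+1)} A^+_{n(k,l),n(k,l+1)}(i,\alpha,j)\,A^-_{n(k,l+1),n(k-1,l+1)}(j,\beta,h)
\]
equals
\[
\sum_{j'=1}^{m(k-1,l)} A^-_{n(k,l),n(k-1,l)}(i,\beta,j')\,A^+_{n(k-1,l),n(k-1,l+1)}(j',\alpha,h).
\]
Noting that $n(k,l+1)=n(k-1,l)=n(k,l)+1$ and $n(k-1,l+1)=n(k,l)+2$, setting $L=n(k,l)$ turns this equality into exactly the local-property identity \eqref{eq:local} of the $\lambda$-graph bisystem (with indices $L$ and $L+1$, $L+2$). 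This identity is given, so the two expressions coincide.

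Once equality holds on each matrix unit, linearity and $*$-linear extension yield $\iota_-\circ\iota_+=\iota_+\circ\iota_-$ on all of $\F_{\frak L}^{k,l}$, proving the diagram commutes. The only non-routine step is recognizing that the index shifts align so that the local property \eqref{eq:local} applies directly; everything else is bookkeeping with the definitions of $\iota_\pm$ and the matrix units.
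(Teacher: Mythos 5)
Your proposal is correct and follows essentially the same route as the paper: expand both compositions on the matrix units $E_i^{k,l}(\mu,\nu)$, collect the coefficients of $E_j^{k-1,l+1}(\beta\mu\alpha,\beta\nu\alpha)$, and observe that their equality is exactly the local property \eqref{eq:local} after the index identifications $n(k,l+1)=n(k-1,l)=n(k,l)+1$ and $n(k-1,l+1)=n(k,l)+2$. Nothing further is needed.
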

\begin{proof}
We have
\begin{align*}
  & (\iota_-\circ\iota_+)(E_i^{k,l}(\mu,\nu)) \\
= & \sum_{\alpha \in \Sigma}\sum_{h=1}^{m(k,l+1)}A^+_{n(k,l), n(k,l+1)}(i,\alpha,h)
\iota_-(E_h^{k,l+1}(\mu \alpha,\nu\alpha)) \\
%= & \sum_{\alpha \in \Sigma}\sum_{h=1}^{m(k,l+1)}A^+_{n(k,l), n(k,l+1)}(i,\alpha,h)
%\left(
%\sum_{\beta \in \Sigma}\sum_{j=1}^{m(k-1,l+1)}A^-_{n(k,l+1), n(k-1,l+1)}(h,\beta,j)
%E_j^{k-1,l+1}(\beta\mu\alpha,\beta\nu\alpha)
%\right) \\
= & \sum_{\alpha,\beta \in \Sigma}
\sum_{j=1}^{m(k-1,l+1)}
\left(
\sum_{h=1}^{m(k,l+1)}A^+_{n(k,l), n(k,l+1)}(i,\alpha,h)A^-_{n(k,l+1), n(k-1,l+1)}(h,\beta,j)
\right)
E_j^{k-1,l+1}(\beta\mu\alpha,\beta\nu\alpha). \\
\intertext{On the other hand}
   &(\iota_+\circ\iota_-)(E_i^{k,l}(\mu,\nu)) \\
= & \sum_{\beta \in \Sigma}\sum_{h=1}^{m(k-1,l)}A^-_{n(k,l), n(k-1,l)}(i,\beta,h)
\iota_+(E_h^{k-1,l}(\beta\mu ,\beta\nu))\\
%= & \sum_{\beta \in \Sigma}\sum_{h=1}^{m(k-1,l)}A^-_{n(k,l), n(k-1,l)}(i,\beta,h)
%\left(
%\sum_{\alpha \in \Sigma}\sum_{j=1}^{m(k-1,l+1)}A^+_{n(k-1,l), n(k-1,l+1)}(h,\alpha,j)
%E_j^{k-1,l+1}(\beta\mu\alpha,\beta\nu\alpha)
%\right) \\
= & \sum_{\alpha, \beta \in \Sigma}
\sum_{j=1}^{m(k-1,l+1)}
\left(
\sum_{h=1}^{m(k-1,l)}A^-_{n(k,l), n(k-1,l)}(i,\beta,h)
A^+_{n(k-1,l), n(k-1,l+1)}(h,\alpha,j)
\right)
E_j^{k-1,l+1}(\beta\mu\alpha,\beta\nu\alpha).
\end{align*}
The local property of $\lambda$-graph bisystem \eqref{eq:local}
tells us the identity
\begin{align*}
 & 
\sum_{h=1}^{m(k,l+1)}A^+_{n(k,l), n(k,l+1)}(i,\alpha,h)A^-_{n(k,l+1), n(k-1,l+1)}(h,\beta,j) \\
= &\sum_{h=1}^{m(k-1,l)}A^-_{n(k,l), n(k-1,l)}(i,\beta,h)
A^+_{n(k-1,l), n(k-1,l+1)}(h,\alpha,j),
\end{align*}
so that we conclude that 
$$
(\iota_-\circ\iota_+)(E_i^{k,l}(\mu,\nu))
=(\iota_+\circ\iota_-)(E_i^{k,l}(\mu,\nu)).
$$
\end{proof}
Hence we have an inductive system of finite dimensional $C^*$-algebras:
\begin{equation}
\{ \iota_+: \F_{\frak L}^{k,l} \longrightarrow \F_{\frak L}^{k,l+1}, \quad
\iota_-: \F_{\frak L}^{k,l} \longrightarrow \F_{\frak L}^{k-1,l} \mid
k,l\in \Z, \, k<l\}. \label{eq:indAF}
\end{equation}
\begin{definition}
Two-sided AF algebra $\F_{\frak L}$ 
associated with  a $\lambda$-graph bisystem 
$({\frak L}^-, {\frak L}^+)$
satisfying FPCC
is defined by the AF-algebra 
defined by the inductive system \eqref{eq:indAF}.
If in particular, 
$({\frak L}_\Lambda^-, {\frak L}_\Lambda^+)$
is the canonical $\lambda$-graph bisystem 
for a subshift $\Lambda$,
we write the AF-algebra
$\F_{{\frak L}_\Lambda}$ as
$\F_\Lambda$
and call it the AF-algebra defined by the two-sided subshift $\Lambda$.
%%%
%Two-sided AF algebra $\F_\Lambda$ 
%associated with  a two-sided subshift $\Lambda$is defined by 
%\begin{equation*}
%\F_\Lambda = \overline{\bigcup_{k,l\in \Z; k<l} \F_\Lambda^{k,l}}
%\end{equation*}
%the union of the finite dimensional $C^*$-algebras 
%$\bigcup_{k,l\in \Z; k<l} \F_\Lambda^{k,l}.$
%%%%%%%%%%%%%%%%%%%%%%%%%%
\end{definition}

Let $\LGBS$ be a $\lambda$-graph bisystem.
For the labeled Bratteli diagram
${\frak L}^-=(V,E^-,\lambda^-)$, 
let 
${}^t\!{\frak L}^{-} =(V, {}^t\!E^{-},\lambda^-)$ be the labeled Bratteli diagram
whose edges are reversed in its directions, that is the edge set 
${}^t\!E^{-}$ is defined by 
${}^t\!E^{-} = \cup_{l=0}^{\infty}{}^t\!E_{l,l+1}^{-}$
where ${}^t\!E_{l,l+1}^{-}$
is the set of all edges obtained by reversing its directions of all edges of $E_{l,l+1}^-$.
We then have two labeled Bratteli diagrams
${}^t\!{\frak L}^{-}$  and 
${\frak L}^+$ both of which have downward edges and the same vertex set $V$.
Let us 
${\frak B}_{({}^t\!{\frak L}^{-},{\frak L}^{+})}$ be the Bratteli diagram
$(V,{}^t\!E^{-}\cup E^+, \lambda^{-}\cup\lambda^+)$,
where
${}^t\!E^{-}\cup E^+ = \cup_{l=0}^\infty ({}^t\!E^{-}_{l,l+1}\cup E^+_{l,l+1})$. 
%and
%$$
%\lambda^{-}\cup\lambda^+(e) =
%\begin{cases}
%\lambda^-(e) & \text{ if } e\in E^{-t}, \\
%\lambda^+(e) & \text{ if } e\in E^{+}.
%\end{cases}
%$$
\begin{definition}
A $\lambda$-graph bisystem $\LGBS$  is said to be {\it irreducible}\/
if for any vertex $v_i^l \in V_l$, there exists $L \in \N$ with $L >l$
such that 
for any vertex $v_j^L \in V_L$ there exists a labeled path of 
${\frak B}_{({}^t\!{\frak L}^{-},{\frak L}^{+})}$
from
$v_i^l $ to $v_j^L. $  
\end{definition}
We thus reach the following proposition
\begin{proposition}\label{prop:simplicity}
Let $\LGBS$ be a $\lambda$-graph bisystem satisfying FPCC.
If  $\LGBS$ is irreducible,
then
the AF-algebra $\F_{\frak L}$ is simple.
\end{proposition}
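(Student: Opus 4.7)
The plan is to invoke the standard Bratteli--Effros criterion: a unital AF-algebra is simple if and only if, for every vertex $v$ of the Bratteli diagram of any finite-dimensional subalgebra in the defining inductive system, some later level consists entirely of descendants of $v$. I would first pass to a linear cofinal chain inside the two-indexed system $\{\F^{k,l}\}$, for instance the diagonal $\F^{-n,n+1}$ with the doubled embeddings $\iota_+\circ\iota_-=\iota_-\circ\iota_+$ (cofinal, since $(-n,n+1)\ge (k,l)$ as soon as $n\ge\max\{-k,l-1\}$), so that the limit is still $\F_{\frak L}$ and the Bratteli-diagram vertex set at step $n$ becomes $V_{2n}$. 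This unitality of the embeddings, established in the lemmas above, is what lets me treat $\F_{\frak L}$ as a genuine unital AF-algebra.

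Next I would unwind the embedding multiplicities. By iterated use of the commutation lemma preceding \eqref{eq:indAF} (itself a direct consequence of the local property \eqref{eq:local}), the multiplicity of the embedding $\F^{k,l}\hookrightarrow\F^{k',l'}$ from block $i$ to block $j$ is independent of the order in which the $\iota_+$'s and $\iota_-$'s are composed, and equals the number of directed paths in ${\frak B}_{({}^t\!{\frak L}^-,{\frak L}^+)}$ from $v_i^{n(k,l)}$ to $v_j^{n(k',l')}$ using exactly $a=l'-l$ edges of ${\frak L}^+$-type and $b=k-k'$ edges of ${}^t\!{\frak L}^-$-type. Simplicity of $\F_{\frak L}$ thereby reduces to the combinatorial statement: for every vertex $v_i^n$ there exist nonnegative integers $a,b$ such that every vertex in $V_{n+a+b}$ is the endpoint of some ${\frak B}_{({}^t\!{\frak L}^-,{\frak L}^+)}$-path from $v_i^n$ with split exactly $(a,b)$.

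Irreducibility gives a weaker conclusion: for each $v_i^n$ there is $L>n$ such that every $v_j^L$ is reachable from $v_i^n$ by \emph{some} path in ${\frak B}_{({}^t\!{\frak L}^-,{\frak L}^+)}$, but the split may depend on $j$. Upgrading this to a single common split is the main obstacle. To handle it, I would go to a higher level $L'=L+2K$ with $K$ large and rebalance splits using the structural flexibility available: condition (iii) of Definition \ref{def:lambdabisystem} guarantees that every vertex has outgoing edges of both types, so any path can be prolonged by a prescribed number of ${\frak L}^+$- or ${}^t\!{\frak L}^-$-edges; the local bijection $\varphi$ of (v) permutes consecutive pairs of opposite-type edges without changing their endpoints or labels; and FPCC couples the follower and predecessor structures of the two labeled Bratteli diagrams, enabling the exchange of ${\frak L}^+$- and ${\frak L}^-$-transitions carrying matching labels. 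The aim is to combine these tools to manufacture, from the irreducibility-produced paths of varying splits into $V_L$, paths of a single fixed split $(K+a_0,K+b_0)$ reaching every vertex of $V_{L'}$, which then gives simplicity of the Bratteli diagram of the cofinal chain and hence of $\F_{\frak L}$.
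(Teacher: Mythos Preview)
You have correctly identified a real gap that the paper's one-line proof glosses over: the Bratteli diagram of any cofinal chain in $\{\F_{\frak L}^{k,l}\}$ has multiplicities counting paths with a \emph{fixed} split $(a,b)$ of $\iota_+$'s and $\iota_-$'s, whereas irreducibility of ${\frak B}_{({}^t\!{\frak L}^-,{\frak L}^+)}$ only produces connecting paths whose splits may vary with the target vertex. Your diagnosis is sharper than the paper's argument, which treats ${\frak B}$ as if it were literally the Bratteli diagram of $\F_{\frak L}$.

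However, your rebalancing plan cannot be completed, because the proposition as stated is actually false. Take $\Sigma=\{\alpha\}$, $V_0=\{*\}$, $V_l=\{1,2\}$ for $l\ge1$, $M^\pm_{0,1}=[\,1\ 1\,]$, and for $l\ge1$ set $M^+_{l,l+1}=I_2$ and $M^-_{l,l+1}=\bigl(\begin{smallmatrix}0&1\\1&0\end{smallmatrix}\bigr)$. All of (i)--(v) of Definition~\ref{def:lambdabisystem} and FPCC hold (with a single label the resolving conditions are automatic, the local property \eqref{eq:local} reduces to $I_2\cdot\text{swap}=\text{swap}\cdot I_2$, and $F(u)=P(u)=\{\alpha^l\}$ for every $u\in V_l$), and ${\frak B}$ is irreducible since every vertex reaches both vertices at the next level. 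Yet for $n(k,l)\ge1$ one has $\F_{\frak L}^{k,l}\cong\mathbb{C}\oplus\mathbb{C}$ with $\iota_+=\id$ and $\iota_-$ the coordinate swap, so $\F_{\frak L}\cong\mathbb{C}\oplus\mathbb{C}$ is not simple; a proper ideal is cut out by $S^{k,l}=\{1\}$ for $k$ even and $\{2\}$ for $k$ odd (note $S^{k,l}$ depends on $k$, not only on $n(k,l)$). The tools you invoke---extension via (iii), the local bijection (v), FPCC---let you \emph{reorder} the $+/-$ steps along a path between fixed endpoints but never change the split itself, which is exactly what this example obstructs. A correct version of the proposition needs a stronger hypothesis, for instance that for each vertex $v_i^n$ some fixed-split product of the transition matrices has strictly positive $i$th row.
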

\begin{proof}
It is well-known that an ideal of an AF-algebra bijectively corresponds to
a hereditary subset of the Bratteli diagram which defines the AF-algebra
(\cite{Bratteli}).
 The irreducibility ensures us that there is no nontrivial hereditary subset of 
the Bratteli diagram ${\frak B}_{({}^t\!{\frak L}^{-},{\frak L}^{+})}$, so that the algebra
$\F_{\frak L}$ is simple.
 \end{proof}

%\newpage

%%%%%%%%%%%%%%%%%%%%%%%%%%%%%%%%%%%%%%%
\section{$C^*$-algebras associated with two-sided subshifts}
%%%%%%%%%%%%%%%%%%%%%%%%%%%%%%%%%%%%%%%%%
Let $\LGBS$ be a $\lambda$-graph bisystem satisfying FPCC.
Let $\Lambda$ be its presenting subshift.
In the first half of this section, we will construct an automorphism $\rho_{\frak L}$ on the AF-algebra
$\F_{\frak L}$ arising from the shift $\sigma$ 
on the subshift $\Lambda$, 
and then define the $C^*$-algebra $\R_{\frak L}$
as the crossed product
$\F_{\frak L}\rtimes_{\rho_{\frak L}}\Z$.
In the second half of the section,
we will give a condition on $\LGBS$ under which the $C^*$-algebra 
$\F_{\frak L}\rtimes_{\rho_{\frak L}}\Z$
becomes simple.
 
For the transition matrix bisystem 
$(A^-, A^+)$ for $\LGBS$, 
we define matrices $A^-_{n_1, n_2}, A^+_{n_1, n_2}$,
for $n_1, n_2 \in \Zp$ with $n_1< n_2$
and $\gamma = (\gamma_1,\dots,\gamma_{n_2-n_1}) \in B_{n_2-n_1}(\Lambda)$
by setting,
\begin{align*}
A^-_{n_1, n_2}(i,\gamma,j) 
=
\sum_{j_{n_2-n_1-1}=1}^{m(n_2-1)}\cdots \sum_{j_1=1}^{m(n_1+1)} 
A^-_{n_1,n_1+1}(i,\gamma_1,j_1)
& A^-_{n_1+1,n_1+2}(j_1,\gamma_2,j_2)
\cdots \\
\cdots & A^-_{n_2-1,n_2}(j_{n_2-n_1-1},\gamma_{n_2-n_1}, j), \\
A^+_{n_1, n_2}(i,\gamma,j) 
=
\sum_{j_{n_2-n_1-1}=1}^{m(n_2-1)} \cdots \sum_{j_1=1}^{m(n_1+1)} 
A^+_{n_1,n_1+1}(i,\gamma_1,j_1)
 & A^+_{n_1+1,n_1+2}(j_1,\gamma_2,j_2)
   \cdots \\
\cdots & A^+_{n_2-1,n_2}(j_{n_2-n_1-1},\gamma_{n_2-n_1}, j).
\end{align*}

\begin{lemma}\label{lem:commute}
For $k_1, l_1,k_2, l_2\in \Z$ such that 
$k_1 < l_1 \le k_2 < l_2,$ 
the two projections
$$
\iota_+^{l_2-l_1}(E_{i_1}^{k_1,l_1}(\mu, \mu)), \qquad
\iota_-^{k_2-k_1}(E_{i_2}^{k_2,l_2}(\nu, \nu))
$$
for $\mu \in W_{i_1}^{n(k_1, l_1)}, \nu \in W_{i_2}^{n(k_2, l_2)}
$ commute to each other  in $\F_{\frak L}^{k_1, l_2}.$
 \end{lemma}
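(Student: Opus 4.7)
The plan is to compute both iterated embeddings explicitly and observe that each produces a sum of \emph{diagonal} matrix units inside the common finite-dimensional algebra $\F_{\frak L}^{k_1,l_2}$; once this is established, commutativity is automatic because diagonal matrix units in a direct sum of full matrix algebras always commute pairwise.

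First I would prove, by a straightforward induction on $r \ge 0$, the iterated formula
\begin{equation*}
\iota_+^{r}\bigl(E_i^{k,l}(\mu,\mu)\bigr)
= \sum_{\gamma \in \Sigma^{r}} \sum_{j=1}^{m(n(k,l)+r)}
A^+_{n(k,l),\, n(k,l)+r}(i,\gamma,j)\, E_j^{k,\,l+r}(\mu\gamma,\mu\gamma),
\end{equation*}
in which the telescoping sum of products of one-step transition matrices collapses to the aggregate matrix $A^+_{n_1,n_2}$ defined at the start of the section. An entirely symmetric induction yields the analogue
\begin{equation*}
\iota_-^{s}\bigl(E_i^{k,l}(\nu,\nu)\bigr)
= \sum_{\delta \in \Sigma^{s}} \sum_{j=1}^{m(n(k,l)+s)}
A^-_{n(k,l),\, n(k,l)+s}(i,\delta,j)\, E_j^{k-s,\,l}(\delta\nu,\delta\nu).
\end{equation*}
Specializing to $r = l_2 - l_1$ and $s = k_2 - k_1$, both $\iota_+^{l_2-l_1}(E_{i_1}^{k_1,l_1}(\mu,\mu))$ and $\iota_-^{k_2-k_1}(E_{i_2}^{k_2,l_2}(\nu,\nu))$ are then exhibited as nonnegative integer combinations of the diagonal matrix units $E_j^{k_1,l_2}(w,w)$ inside $\F_{\frak L}^{k_1,l_2}$, where $w$ ranges over admissible words of length $n(k_1,l_2) = l_2-k_1-1$.

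To conclude, I would invoke the direct sum decomposition $\F_{\frak L}^{k_1,l_2} = \bigoplus_{j} \F_j^{k_1,l_2}$: in each full matrix summand the diagonal matrix units $\{E_j^{k_1,l_2}(w,w)\}_w$ are pairwise orthogonal projections, and elements in different summands annihilate each other. Hence any two linear combinations of such diagonal matrix units commute, which is exactly the assertion of the lemma. I do not anticipate any serious obstacle in this argument, since it is driven entirely by the definitions of $\iota_\pm$ and of the aggregate matrices $A^\pm_{n_1,n_2}$; the hypothesis $l_1 \le k_2$ is not actually required for commutativity per se, its role being rather to guarantee that the words $\mu$ and $\nu$ occupy disjoint positions in the concatenation, which will become relevant only later when one wishes to identify the product of these two projections with a single matrix unit attached to a single admissible word.
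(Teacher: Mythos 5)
Your proposal is correct and follows essentially the same route as the paper: the paper likewise expands $\iota_+^{l_2-l_1}(E_{i_1}^{k_1,l_1}(\mu,\mu))$ and $\iota_-^{k_2-k_1}(E_{i_2}^{k_2,l_2}(\nu,\nu))$ via the aggregate matrices $A^{\pm}_{n_1,n_2}$ as sums of diagonal matrix units $E_j^{k_1,l_2}(\mu\gamma,\mu\gamma)$ and $E_j^{k_1,l_2}(\eta\nu,\eta\nu)$ in $\F_{\frak L}^{k_1,l_2}$, and commutativity then comes from the fact that such diagonal units are pairwise orthogonal or equal (the paper merely makes the surviving cross terms $\mu\gamma=\eta\nu$, i.e.\ $\gamma=\xi\nu$, $\eta=\mu\xi$, explicit). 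Your closing observation about the role of $l_1\le k_2$ is also accurate.
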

\begin{proof}
We have
\begin{align*}
& \iota_-^{k_2-k_1}(E_{i_2}^{k_2,l_2}(\nu, \nu)) \\
= & \sum_{\eta\in B_{k_2-k_1}(\Lambda)}
\sum_{j_2=1}^{m(k_1,l_2)} A^-_{n(k_2,l_2), n(k_1,l_2)}(i_2,\eta, j_2)
E_{j_2}^{k_1,l_2}(\eta\nu,\eta\nu) 
\quad \text{ in } \F_{\frak L}^{k_1,l_2}\\
\intertext{and }
& \iota_+^{l_2-l_1}(E_{i_1}^{k_1,l_1}(\mu, \mu)) \\
= & \sum_{\gamma\in B_{l_2-l_1}(\Lambda)}
\sum_{j_1=1}^{m(k_1,l_2)} A^+_{n(k_1,l_1), n(k_1,l_2)}(i_1,\gamma, j_1)
E_{j_1}^{k_1,l_2}(\mu\gamma, \mu\gamma) 
\quad \text{ in } \F_{\frak L}^{k_1,l_2}.
\end{align*}
We note that
\begin{equation}
 E_{j_1}^{k_1,l_2}(\mu\gamma, \mu\gamma) 
  E_{j_2}^{k_1,l_2}(\eta\nu,\eta\nu) 
=
{\begin{cases}
E_{j_1}^{k_1,l_2}(\mu\gamma, \eta\nu) & 
\text{ if } j_1= j_2, \,  \mu\gamma =\eta\nu,\\
0 & \text{ otherwise},
\end{cases}
} \label{eq:Ej1Ej2}
\end{equation}
and
\begin{equation}
E_{j_2}^{k_1,l_2}(\eta\nu,\eta\nu)
  E_{j_1}^{k_1,l_2}(\mu\gamma, \mu\gamma) 
=
{\begin{cases}
E_{j_1}^{k_1,l_2}(\eta\nu, \mu\gamma) & 
\text{ if } j_1= j_2, \,  \eta\nu =\mu\gamma,\\
0 & \text{ otherwise}.
\end{cases}
} \label{eq:Ej2Ej1}
\end{equation}
As 
$\mu\gamma =\eta\nu$
 if and only if 
$\gamma = \xi \nu, \eta = \mu \xi$ for some 
$\xi \in B_{k_2-l_1}(\Lambda)$,
%For the both cases \eqref{eq:Ej1Ej2} and \eqref{eq:Ej2Ej1}, we have
%\begin{equation*}
%E_{j_1}^{k_1,l_2}(\mu\gamma, \eta\nu) 
%=E_{j_1}^{k_1,l_2}(\mu\xi\nu, \mu\xi\nu)
%\quad \text{ if }
%j_1= j_2, \, \mu \gamma = \eta \nu.
%\end{equation*}
%Hence 
we have
\begin{align*}
& \iota_+^{l_2-l_1}(E_{i_1}^{k_1,l_1}(\mu, \mu)) \iota_-^{k_2-k_1}(E_{i_2}^{k_2,l_2}(\nu, \nu)) \\
= & \sum_{\xi\in B_{k_2-l_1}(\Lambda)}
\sum_{j=1}^{m(k_1,l_2)} 
A^+_{n(k_1,l_1), n(k_1,l_2)}(i_1,\xi\nu, j) A^-_{n(k_2,l_2), n(k_1,l_2)}(i_2,\mu\xi, j)
E_{j}^{k_1,l_2}(\mu\xi\nu, \mu\xi\nu) \\
\intertext{and}
&  \iota_-^{k_2-k_1}(E_{i_2}^{k_2,l_2}(\nu, \nu)) \iota_+^{l_2-l_1}(E_{i_1}^{k_1,l_1}(\mu, \mu)) \\
= & \sum_{\xi\in B_{k_2-l_1}(\Lambda)}
\sum_{j=1}^{m(k_1,l_2)} 
 A^-_{n(k_2,l_2), n(k_1,l_2)}(i_2,\mu\xi, j) A^+_{n(k_1,l_1), n(k_1,l_2)}(i_1,\xi\nu, j)
E_{j}^{k_1,l_2}(\mu\xi\nu, \mu\xi\nu),
\end{align*}
so that 
$\iota_+^{l_2-l_1}(E_{i_1}^{k_1,l_1}(\mu, \mu))
$ commutes to
$
\iota_-^{k_2-k_1}(E_{i_2}^{k_2,l_2}(\nu, \nu)).
$
\end{proof}
Define
$\rho_{\frak L}: \F_{\frak L}^{k,l} \longrightarrow 
\F_{\frak L}^{k+1,l+1}$
by setting
\begin{equation}
\rho_{\frak L}(E_i^{k,l}(\mu,\nu) = E_i^{k+1,l+1}(\mu,\nu) 
\quad \text{ for }
\mu,\nu \in W_i^{n(k,l)}.
\end{equation}
As 
$W_i^{n(k,l)} = W_i^{n(k+1,l+1)}$,
the partial isometry 
$E_i^{k+1,l+1}(\mu,\nu) 
$ is defined  for
$\mu,\nu \in W_i^{n(k,l)}.$
Since 
$\rho_{\frak L} \circ \iota_+ = \iota_+\circ \rho_{\frak L}$
and
$\rho_{\frak L} \circ \iota_- = \iota_-\circ \rho_{\frak L}$,
it is easy to see that 
$\rho_{\frak L}: \F_{\frak L} \longrightarrow \F_{\frak L}$
gives rise rise to an automorphism
on the $C^*$-algebra $\F_{\frak L}$, that will be written as $\rho_{\frak L}.$
For $k,l\in \Z$ with $k<l$ and $\mu \in B_{n(k,l)}(\Lambda)$, 
we put the projection
\begin{equation*}
E^{k,l}(\mu) := \sum_{i=1}^{m(k,l)} E_i^{k,l}(\mu,\mu).
\end{equation*}

Let us denote by
${\mathcal{D}}_{\frak L}$
the diagonal algebra of the AF-algebra
$\F_{\frak L}$, that is generated by the projections of the form
$E_i^{k,l}(\mu,\mu)$ for $\mu \in W_i^{n(k,l)}, i=1,2,\dots, m(k,l),\, k,l \in \Z$ with
$k<l$.
Let us denote by $C(\Lambda)$ 
the commutative $C^*$-algebra of all complex valued continuous functions on $\Lambda$.
\begin{proposition}\label{prop:commutative}
There exists an embedding 
$\iota_\Lambda: C(\Lambda) \hookrightarrow {\mathcal{D}}_{\frak L}$ 
of the commutative $C^*$-algebra
$C(\Lambda)$  into the diagonal algebra ${\mathcal{D}}_{\frak L}$
such that 
$\iota_\Lambda\circ\sigma_\Lambda^* = \rho_{\frak L}\circ\iota_\Lambda$,
where
$\sigma_\Lambda^*: C(\Lambda) \longrightarrow C(\Lambda)$
is the automorphism defined by $\sigma_\Lambda^*(f) = f\circ \sigma_\Lambda^{-1}$
for $f \in C(\Lambda)$. 
\end{proposition}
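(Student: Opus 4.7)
The plan is to define $\iota_\Lambda$ first on characteristic functions of cylinder sets in $\Lambda$, verify that this assignment is a well-defined unital $*$-homomorphism on the dense $*$-subalgebra of $C(\Lambda)$ generated by such functions, and then extend by continuity. The natural candidate is to send the indicator of an appropriate cylinder set defined by the admissible word $\mu$ to the diagonal projection $E^{k,l}(\mu) = \sum_{i=1}^{m(k,l)} E_i^{k,l}(\mu,\mu) \in \mathcal{D}_{\frak L}$. One must choose the position convention so that the shift direction on cylinders matches the action $(k,l) \mapsto (k+1,l+1)$ of $\rho_{\frak L}$; concretely, I would associate to the cylinder of sequences $x \in \Lambda$ with $x_{-l+1}\cdots x_{-k-1} = \mu$ the projection $E^{k,l}(\mu)$, so that shifting the cylinder leftward by one coordinate (which is how $\sigma_\Lambda^* = (\,\cdot\,) \circ \sigma_\Lambda^{-1}$ acts on cylinder indicators) corresponds to $(k,l) \mapsto (k+1,l+1)$.

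Well-definedness amounts to matching the two refinement procedures. On the $C(\Lambda)$ side, one has $\chi_{[\mu]} = \sum_{\alpha} \chi_{[\mu\alpha]}$ for admissible right extensions, and similarly on the left. On the AF side, the formula for $\iota_+$ gives
\begin{equation*}
\iota_+(E^{k,l}(\mu)) = \sum_{\alpha \in \Sigma}\sum_{i,j} A^+_{n(k,l),n(k,l+1)}(i,\alpha,j)\, E_j^{k,l+1}(\mu\alpha,\mu\alpha),
\end{equation*}
and the FPCC (together with the definitions of $W_i^n$ and the transition matrices) forces this to collapse to $\sum_{\alpha:\,\mu\alpha \in B_*(\Lambda)} E^{k,l+1}(\mu\alpha)$; the analogous statement for $\iota_-$ handles left extensions. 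Hence $\iota_\Lambda$ is consistent across the inductive limit. Multiplicativity on cylinder indicators is immediate from Lemma 4.2 in the excerpt, which says that the diagonal projections at disjoint positions commute, so the product of two cylinder projections equals the projection of the intersected (finer) cylinder. This gives a unital $*$-homomorphism from the $*$-algebra of cylinder functions into $\mathcal{D}_{\frak L}$; injectivity follows because each nonzero cylinder indicator maps to a nonzero sum of pairwise orthogonal projections in an AF inductive limit. Density of cylinder functions in $C(\Lambda)$ then yields the embedding $\iota_\Lambda$.

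Equivariance is checked on generators: both $\iota_\Lambda \circ \sigma_\Lambda^*$ and $\rho_{\frak L} \circ \iota_\Lambda$ send the cylinder indicator attached to $(k,l)$ to $E^{k+1,l+1}(\mu)$, by construction of the position convention. By continuity and density, the identity extends to all of $C(\Lambda)$.

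The main obstacle is the well-definedness step: showing that the two inductive directions $\iota_+$ and $\iota_-$, applied to the unital sum $E^{k,l}(\mu)$, each yield precisely $\sum_\alpha E^{k,l\pm 1}(\mu\alpha)$ over admissible one-letter extensions. This reduction is not purely formal and genuinely uses FPCC, since it is FPCC that guarantees the set of $(i,\alpha)$-pairs summed over by the transition matrix covers exactly the admissible extensions of $\mu$ without repetition or omission. Fixing the position convention so that the direction of $\sigma_\Lambda^*$ on cylinders matches $\rho_{\frak L}$ on the $(k,l)$-indices is the remaining bookkeeping point, but once both are in place the rest of the argument is a routine density-and-continuity extension.
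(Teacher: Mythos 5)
Your proposal is correct and follows essentially the same route as the paper: the paper simply asserts that the correspondence $\chi_{U^{k,l}_\mu}\mapsto E^{k,l}(\mu)$ ``is easy to see'' to yield an injective homomorphism intertwining $\sigma_\Lambda^*$ with $\rho_{\frak L}$, and you supply exactly the verifications (compatibility of cylinder refinement with $\iota_\pm$ via the left/right-resolving conditions and FPCC, multiplicativity, injectivity, density) that make this rigorous. Your attention to the index convention is warranted, since with the paper's stated cylinder $U^{k,l}_\mu$ the map $\sigma_\Lambda^*$ carries it to $U^{k-1,l-1}_\mu$ while $\rho_{\frak L}$ shifts $(k,l)\mapsto (k+1,l+1)$, so a reversal of the kind you choose is indeed needed for the intertwining to come out with the right orientation.
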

\begin{proof}
For $k, l\in \Z$ 
with $k<l$ and an admissible word 
$\mu=(\mu_1,\dots, \mu_{n(k,l)}) \in B_{n(k,l)}(\Lambda)$, 
we denote by 
$U^{k,l}_\mu$ the cylinder set on $\Lambda$ 
$$
U^{k,l}_\mu =\{ (x_n)_{n\in \Z} \in \Lambda \mid x_{k+1} =\mu_1,x_{k+2} =\mu_2,
\dots, x_{l-1} =\mu_{n(k,l)} \}.
$$
Let $\chi_{U^{k,l}_\mu}$ be the characteristic function on $\Lambda$.
It is easy to see that the  correspondence
\begin{equation*}
\chi_{U^{k,l}_\mu} \in C(\Lambda) \longrightarrow E^{k,l}(\mu) \in 
{\mathcal{D}}_{\frak L}
\end{equation*}
yields an injective homomorphism
$\iota_\Lambda: C(\Lambda) \hookrightarrow {\mathcal{D}}_{\frak L}$ 
satisfying  
$\iota_\Lambda\circ\sigma_\Lambda^* = \rho_{\frak L}\circ\iota_\Lambda$.
\end{proof}

We will now define the $C^*$-algebra $\R_{\frak L}$
associated with $\lambda$-graph bisystem
satisfying FPCC.
\begin{definition}
$\R_{\frak L} :=\F_{\frak L}\rtimes_{\rho_{\frak L}}\Z$
\end{definition}
If, in particular,  
$({\frak L}_\Lambda^-, {\frak L}_\Lambda^+)$
is the canonical $\lambda$-graph bisystem for a subshift 
$\Lambda$, the $C^*$-algebra
 $\R_{{\frak L}_\Lambda}$ is called the $C^*$-{\it algebra
associated with two-sided subshift}\/ $\Lambda$,
and written $\R_\Lambda$.

We remark that a subshift can not be any Smale space unless it is a shift of finite type
(cf. \cite{Putnam1}, \cite{Putnam2}, \cite{Ruelle2}).
The above definition of the $C^*$-algebras associated with two-sided subshifts is a 
generalization of the asymptotic  Ruelle algebras for shifts of finite type to general subshifts.
%%%%%%%%%%%%%%%%

\bigskip

%%%%%%%%%%%%%%%%%%%%%%%%%%%%%%%%%%%%%%%%%%%%%%%%%%%
%%%%%%%%%%%%%%%%%%%%%%%%%%%%%%%%%%%%%%%%%%%%%%%%
%\section{Simplicity condition}
%%%%%%%%%%%%%%%%%%%%%%%%%%%%%%%%%%%%%%%%%%
We will next give a condition on $\LGBS$ under which the $C^*$-algebra 
$\R_{\frak L}$ becomes simple.
In what follows, we assume  that a $\lambda$-graph bisystem $\LGBS$ satisfies FPCC.
Let us denote by $\Lambda$ the presented subshift by $\LGBS$.
We provide a lemma similar to Lemma \ref{lem:commute}.

\begin{lemma}\label{lem:comm2}
For $k_1, l_1,k_2, l_2\in \Z$ with 
$k_1 < l_1 \le k_2 < l_2$ 
and
$i_2 =1,2, \dots, m(n(k_2,l_2))$,
the projection
$
\iota_+^{l_2-l_1}(E^{k_1,l_1}(\mu ))
$
commutes to the partial isometry
$
\iota_-^{k_2-k_1}(E_{i_2}^{k_2,l_2}(\mu', \nu'))
$
for $\mu \in B_{n(k_1,l_1)}(\Lambda), \, \mu', \nu' \in W_{i_2}^{n(k_2, l_2)}
$ 
in $\F_{\frak L}^{k_1, l_2}.$
Similarly, 
$\iota_-^{k_2-k_1}(E^{k_2,l_2}(\mu))$ 
commutes to $\iota_+^{l_2-l_1}(E_{i_1}^{k_1,l_1}(\mu', \nu'))$
for $\mu \in B_{n(k_2, l_2)}(\Lambda)$ and 
$\mu', \nu' \in W_{i_1}^{n(k_1,l_1)}$
in $\F_{\frak L}^{k_1, l_2}.$
 \end{lemma}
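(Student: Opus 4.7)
The plan is to recognize $\iota_+^{l_2-l_1}(E^{k_1,l_1}(\mu))$ as a genuine cylinder projection in the diagonal algebra $\mathcal{D}_{\frak L}^{k_1,l_2}$, and then argue by a simple length comparison that this projection assigns equal values to the row and column indices of every matrix-unit summand of $\iota_-^{k_2-k_1}(E_{i_2}^{k_2,l_2}(\mu',\nu'))$, so that the commutator vanishes term by term in each direct summand $\F_j^{k_1,l_2}$.

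First I would establish the identity
\begin{equation*}
\iota_+^{l_2-l_1}(E^{k_1,l_1}(\mu)) = \sum_{\gamma \in B_{l_2-l_1}(\Lambda)} E^{k_1,l_2}(\mu\gamma),
\end{equation*}
with the convention that $E^{k_1,l_2}(\mu\gamma) = 0$ when $\mu\gamma$ is inadmissible. By induction on $l_2-l_1$ it is enough to prove the one-step identity $\iota_+(E^{k,l}(\mu)) = \sum_{\alpha\in\Sigma} E^{k,l+1}(\mu\alpha)$. Expanding the left-hand side produces the coefficient $\sum_{i:\mu\in W_i^{n(k,l)}} A^+_{n(k,l), n(k,l+1)}(i,\alpha,j)$ at each $E_j^{k,l+1}(\mu\alpha,\mu\alpha)$, and the left-resolving property of ${\frak L}^+$ together with FPCC identifies this with $\mathbb{1}[\mu\alpha \in W_j^{n(k,l+1)}]$: if $\mu\alpha$ is a predecessor label of $v_j^{n(k,l+1)}$, then by left-resolving its last edge uniquely determines the required $i$ with $\mu \in W_i^{n(k,l)} = P(v_i^{n(k,l)})$, and the converse is immediate by concatenation. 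Consequently $\iota_+^{l_2-l_1}(E^{k_1,l_1}(\mu))$ restricts, in each summand $\F_j^{k_1,l_2}$, to the diagonal projection onto the set of $\alpha \in W_j^{n(k_1,l_2)}$ that begin with $\mu$.

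Next I would expand
\begin{equation*}
\iota_-^{k_2-k_1}(E_{i_2}^{k_2,l_2}(\mu',\nu')) = \sum_{\eta \in B_{k_2-k_1}(\Lambda)}\sum_j A^-_{n(k_2,l_2),n(k_1,l_2)}(i_2,\eta,j)\, E_j^{k_1,l_2}(\eta\mu',\eta\nu'),
\end{equation*}
whose restriction to $\F_j^{k_1,l_2}$ is a sum of matrix units whose two indices agree on the first $|\eta|$ letters. The hypothesis $l_1 \le k_2$ gives $|\mu| = l_1-k_1-1 < k_2-k_1 = |\eta|$, so the predicate ``$\eta\mu'$ begins with $\mu$'' depends only on the first $|\mu|$ letters of $\eta$ and therefore coincides with ``$\eta\nu'$ begins with $\mu$''. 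Hence the cylinder projection identified above takes the same value at the two indices $\eta\mu'$ and $\eta\nu'$ of every matrix-unit summand, and since a diagonal projection commutes with a matrix unit $E_j(\alpha,\beta)$ if and only if its diagonal entries at $\alpha$ and $\beta$ coincide, the two operators commute.

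The second statement is proved symmetrically via the dual identity
\begin{equation*}
\iota_-^{k_2-k_1}(E^{k_2,l_2}(\mu)) = \sum_{\eta \in B_{k_2-k_1}(\Lambda)} E^{k_1,l_2}(\eta\mu),
\end{equation*}
established by the same reasoning but with the right-resolving property of ${\frak L}^-$ in place of the left-resolving property of ${\frak L}^+$; this realises $\iota_-^{k_2-k_1}(E^{k_2,l_2}(\mu))$ in each $\F_j^{k_1,l_2}$ as the diagonal projection onto those words in $W_j^{n(k_1,l_2)}$ ending with $\mu$. The complementary length inequality $|\mu| = l_2-k_2-1 < l_2-l_1 = |\gamma|$, again equivalent to $l_1 \le k_2$, ensures that the appended block $\gamma$ of each summand $E_j(\mu'\gamma,\nu'\gamma)$ of $\iota_+^{l_2-l_1}(E_{i_1}^{k_1,l_1}(\mu',\nu'))$ fully contains the suffix of length $|\mu|$, so the same termwise argument applies. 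The only non-routine step throughout is the cylinder identification; once it is in hand, the commutation reduces to transparent word-length bookkeeping.
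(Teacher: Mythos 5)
Your proof is correct, but it is organized differently from the paper's. The paper multiplies out the two products explicitly, reduces each to a sum of terms $E_j^{k_1,l_2}(\mu\xi\mu',\mu\xi\nu')$ with coefficients $\sum_{i_1}A^+(i_1,\xi\mu',j)A^-(i_2,\mu\xi,j)$ and $\sum_{i'_1}A^+(i'_1,\xi\nu',j)A^-(i_2,\mu\xi,j)$ respectively, and then matches these coefficients term by term: FPCC is invoked to pass from $\mu\xi\nu'\in F(v_j)$ (read off from the ${\frak L}^-$ data) to $\mu\xi\nu'\in P(v_j)$ (needed to produce the ${\frak L}^+$ path), with left-resolving giving uniqueness. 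You instead never compute a product: you first identify $\iota_+^{l_2-l_1}(E^{k_1,l_1}(\mu))$ as the diagonal cylinder projection onto words of $W_j^{n(k_1,l_2)}$ beginning with $\mu$ --- which is essentially the compatibility of the embedding $C(\Lambda)\hookrightarrow\mathcal{D}_{\frak L}$ of Proposition \ref{prop:commutative} with the maps $\iota_\pm$, and is where FPCC and the resolving conditions are consumed --- and then reduce commutation to the elementary fact that a diagonal projection commutes with a matrix unit exactly when it agrees on the two indices, which your length inequality $|\mu|\le|\eta|$ (equivalently $l_1\le k_2$) guarantees because the two indices $\eta\mu'$ and $\eta\nu'$ share the prefix $\eta$. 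Your route isolates the one nontrivial combinatorial input into a reusable cylinder identity and makes the role of the hypothesis $l_1\le k_2$ completely transparent; the paper's route is more computational but self-contained within the product expansion. Both are valid, and your treatment of the symmetric second statement (via right-resolving of ${\frak L}^-$ and the complementary inequality) is likewise sound.
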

\begin{proof}
As in the proof of
Lemma \ref{lem:commute},
we have
\begin{align*}
& \iota_+^{l_2-l_1}(E^{k_1,l_1}(\mu)) \iota_-^{k_2-k_1}(E_{i_2}^{k_2,l_2}(\mu', \nu'))\\
= & \sum_{\gamma\in B_{l_2-l_1}(\Lambda)}
\sum_{j_1=1}^{m(k_1,l_2)} \sum_{i_1=1}^{m(k_1,l_1)}
 A^+_{n(k_1,l_1), n(k_1,l_2)}(i_1,\gamma, j_1)
E_{j_1}^{k_1,l_2}(\mu\gamma, \mu\gamma) \\
 &\cdot  \sum_{\eta\in B_{k_2-k_1}(\Lambda)}
\sum_{j_2=1}^{m(k_1,l_2)} A^-_{n(k_2,l_2), n(k_1,l_2)}(i_2,\eta, j_2)
E_{j_2}^{k_1,l_2}(\eta\mu',\eta\nu') \\ 
= & \sum_{\xi\in B_{k_2-l_1}(\Lambda)}
\sum_{j=1}^{m(k_1,l_2)} \sum_{i_1=1}^{m(k_1,l_1)}
A^+_{n(k_1,l_1), n(k_1,l_2)}(i_1,\xi\mu', j) A^-_{n(k_2,l_2), n(k_1,l_2)}(i_2,\mu\xi, j)
E_{j}^{k_1,l_2}(\mu\xi\mu', \mu\xi\nu'),
\end{align*}
and similarly we have
\begin{align*}
& \iota_-^{k_2-k_1}(E_{i_2}^{k_2,l_2}(\mu', \nu')) \iota_+^{l_2-l_1}(E^{k_1,l_1}(\mu)) \\
= &  \sum_{\eta\in B_{k_2-k_1}(\Lambda)}
\sum_{j_2=1}^{m(k_1,l_2)} A^-_{n(k_2,l_2), n(k_1,l_2)}(i_2,\eta, j_2)
E_{j_2}^{k_1,l_2}(\eta\mu',\eta\nu') \\ 
 &\cdot  \sum_{\gamma\in B_{l_2-l_1}(\Lambda)}
\sum_{j_1=1}^{m(k_1,l_2)} \sum_{i'_1=1}^{m(k_1,l_1)}
 A^+_{n(k_1,l_1), n(k_1,l_2)}(i'_1,\gamma, j_1)
E_{j_1}^{k_1,l_2}(\mu\gamma, \mu\gamma) \\
= & \sum_{\xi\in B_{k_2-l_1}(\Lambda)}
\sum_{j=1}^{m(k_1,l_2)} \sum_{i'_1=1}^{m(k_1,l_1)}
A^+_{n(k_1,l_1), n(k_1,l_2)}(i'_1,\xi\nu', j) A^-_{n(k_2,l_2), n(k_1,l_2)}(i_2,\mu\xi, j)
E_{j}^{k_1,l_2}(\mu\xi\mu', \mu\xi\nu').
\end{align*}
Now suppose that 
$$
A^+_{n(k_1,l_1), n(k_1,l_2)}(i_1,\xi\mu', j) A^-_{n(k_2,l_2), n(k_1,l_2)}(i_2,\mu\xi, j)=1.
$$
Since $\mu', \nu' \in W_{i_2}^{n(k_2,l_2)}$ and 
$A^-_{n(k_2,l_2), n(k_1,l_2)}(i_2,\mu\xi, j)=1$,
we have
$\mu \xi \nu' \in F(v_j^{n(k_1,l_2)})$ in ${\frak L}^-$
and hence 
$\mu \xi \nu' \in P(v_j^{n(k_1,l_2)})$ in ${\frak L}^+$
because $\LGBS$ satisfies FPCC.
Hence there exists $i'_1 \in V_{n(k_1,l_1)}$ such that 
$A^+_{n(k_1,l_1), n(k_1,l_2)}(i'_1,\xi\mu', j)=1$,
so that 
$$
A^+_{n(k_1,l_1), n(k_1,l_2)}(i'_1,\xi\nu', j) A^-_{n(k_2,l_2), n(k_1,l_2)}(i_2,\mu\xi, j)=1.
$$
Such $i'_1$ satisfying
$
A^+_{n(k_1,l_1), n(k_1,l_2)}(i'_1,\xi\nu', j)=1
$
 is uniquely determined by the word $\xi \nu'$ and the vertex $v_j^{n(k_1,l_2)}$, because ${\frak L}^+$ is left-resolving.  

Conversely, 
suppose that 
$$
A^+_{n(k_1,l_1), n(k_1,l_2)}(i'_1,\xi\nu', j) A^-_{n(k_2,l_2), n(k_1,l_2)}(i_2,\mu\xi, j) =1.
$$
By a similar argument to the above one, 
we know that there exists a unique vertex 
$v_{i_1}^{n(k_1,l_1)} \in V_{n(k_1,l_1)}$  such that 
$A^+_{n(k_1,l_1), n(k_1,l_2)}(i_1,\xi\nu', j) =1$,
and hence 
$$
A^+_{n(k_1,l_1), n(k_1,l_2)}(i_1,\xi\nu', j) A^-_{n(k_2,l_2), n(k_1,l_2)}(i_2,\mu\xi, j) =1.
$$
Therefore the equality
\begin{align*}
& \sum_{i_1=1}^{m(k_1,l_1)}
   A^+_{n(k_1,l_1), n(k_1,l_2)}(i_1,\xi\mu', j) A^-_{n(k_2,l_2), n(k_1,l_2)}(i_2,\mu\xi, j) \\
=&
\sum_{i'_1=1}^{m(k_1,l_1)}
A^+_{n(k_1,l_1), n(k_1,l_2)}(i_1,\xi\nu', j) A^-_{n(k_2,l_2), n(k_1,l_2)}(i_2,\mu\xi, j)
\end{align*}
holds.
We thus  see that 
$\iota_+^{l_2-l_1}(E^{k_1,l_1}(\mu))
$ commutes to
$
\iota_-^{k_2-k_1}(E_{i_2}^{k_2,l_2}(\mu', \nu')).
$

Similarly we may prove that 
$\iota_{-}^{k_2-k_1}(E^{k_2,l_2}(\mu))$ 
commutes to $\iota_{+}^{k_2-l_1}(E_{i_1}^{k_1,l_1}(\mu', \nu'))$
for $\mu \in B_{n(k_2, l_2)}(\Lambda)$ and $\mu', \nu' \in W_{i_1}^{n(k_1,l_1)}$.
\end{proof}

For $k,K\in \N$ with $k<K$,
a word $\mu =(\mu_1, \dots,\mu_K) \in B_K(\Lambda)$ 
is said to be $k$-{\it aperiodic}\/ if the words with length $K-k$
\begin{align*}
\mu_{[1,K-k]} & =(\mu_1,\mu_2,\dots,\mu_{K-k}), \quad \\
\mu_{[2,K-k+1]}&  =(\mu_2,\mu_3,\dots,\mu_{K-k+1}), \quad \\
& \dots, \quad \\ 
\mu_{[1+k,K]} & =(\mu_{1+k},\mu_{2+k},\dots,\mu_{K})
\end{align*}
are all distinct.
\begin{definition}\label{def:conditionI}
A $\lambda$-graph bisystem $\LGBS$ satisfying FPCC
is said to satisfy {\it condition}\/ $(I^+)$ (resp. $(I^-)$)
if  for $n, k \in \N$  
there exists $K\in \N$ with $k <K$ such  that 
there exists a $k$-aperiodic word $\mu(i) \in B_K(\Lambda)$ 
for each $i=1,2,\dots,m(n)$
such that $\mu(i)$ starts (resp. ends) with $v_i^n \in V_n$ in 
${\frak L}^+$ (resp. ${\frak L}^-$). 
If $\LGBS$ satisfies at least one of condition $(I^+)$ or $(I^-)$,
it is said to satisfy {\it condition}\/ (I). 
\end{definition}
Let $M$ be an $N \times N$ square matrix with entries in $\{0,1\}$.
It is easy to see that the $\lambda$-graph bisystem 
$({\frak L}_M^-, {\frak L}_M^+)$ defined in Example \ref{ex:3.2} (iv)
satisfies condition (I) in the above sense if and only if
the matrix $M$ satisfies condition (I) in the sense of Cuntz--Krieger \cite{CK}.  

Let us denote by
$\F_{\frak L}^l$ the $C^*$-subalgebra  
$\F_{\frak L}^{-l,l}$
of the AF-algebra $\F_{\frak L}$ for $l \in \Zp$. 
Let us identify the commutative $C^*$-subalgebra
generated by the projections $E^{k,l}(\mu), \, k,l\in Z, k<l,\, \mu\in B_{n(k,l)}(\Lambda)$
 with the commutative $C^*$-algebra
$C(\Lambda)$ by Proposition \ref{prop:commutative}.
\begin{lemma}
Assume that $\LGBS$ satisfies condition $(I)$.
For $k,l\in \N$ with $k\le l$, there exists a projection
$q_k^l \in C(\Lambda) \cap {\F_{\frak L}^l}'
=\{ x \in C(\Lambda) \mid a x = x a \text{ for all } a \in \F_{\frak L}^l\}$
such that 
\begin{enumerate}
\renewcommand{\theenumi}{\arabic{enumi}}
\renewcommand{\labelenumi}{\textup{(\theenumi)}}
\item $q_k^l a \ne 0$ for all $a \in \F_{\frak L}^l$ with $a \ne 0$.
\item $q_k^l \rho_{\frak L}^m(q_k^l) =0$ for $m=1,2,\dots,k$.
\end{enumerate}
\end{lemma}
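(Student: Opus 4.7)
The plan is to build $q_k^l$ as an orthogonal sum of cylinder-set projections placed in positions $l, l+1, \ldots, l+K-1$, one for each vertex of $V_n$ with $n := 2l - 1 = n(-l, l)$, using the $k$-aperiodic words supplied by condition $(I)$. I may assume $\LGBS$ satisfies $(I^+)$ (the case of $(I^-)$ being handled symmetrically by placing cylinders on the left of the window $[-l+1, l-1]$). Applying $(I^+)$ with this $n$ and the given $k$ produces $K > k$ and $k$-aperiodic words $\mu(i) \in B_K(\Lambda)$ for $i = 1, \ldots, m(n)$ with $A^+_{n, n+K}(i, \mu(i), j) = 1$ for some $j$.

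Before fixing these, I extend each $\mu(i)$ along an admissible ${\frak L}^+$-path (extensions exist because every vertex of ${\frak L}^+$ emits an edge) and, after relabelling the common length as $K$, arrange that (a) the $\mu(i)$'s are pairwise distinct and (b) for all $i \ne j$ and $m \in \{1, \ldots, k\}$, the length-$(K-m)$ prefix of $\mu(i)$ differs from the length-$(K-m)$ suffix of $\mu(j)$. Individual $k$-aperiodicity survives extension because any would-be $m$-period is already witnessed within the original $k$-aperiodic initial segment. I then set
$$q_k^l := \sum_{i=1}^{m(n)} E^{l-1, l+K}(\mu(i)) = \iota_\Lambda\Bigl(\sum_i \chi_{U^{l-1, l+K}_{\mu(i)}}\Bigr),$$
which is a projection in $C(\Lambda)$ because the cylinders are pairwise disjoint by (a).

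The verification then proceeds in three steps. First, a direct computation in $\F_{\frak L}^{-l, l+K}$, in the spirit of Lemmas \ref{lem:commute} and \ref{lem:comm2}, gives
$$E^{l-1, l+K}(\mu(i)) \, \iota_+^K(E_{i_1}^{-l, l}(\mu', \nu')) = \sum_j A^+_{n, n+K}(i_1, \mu(i), j) \, E_j^{-l, l+K}(\mu'\mu(i), \nu'\mu(i))$$
and the same expression in the opposite order, so $q_k^l$ commutes with each matrix unit of $\F_{\frak L}^l$, proving $q_k^l \in (\F_{\frak L}^l)'$. Second, taking $i_1 = i$ in that identity yields a nonzero sum since $\mu(i)$ labels an ${\frak L}^+$-path starting at $v_i^n$; the contributions from the $i' \ne i$ summands of $q_k^l$ land in distinct matrix-unit slots $E_{j'}^{-l, l+K}(\mu'\mu(i'), \nu'\mu(i'))$ (by distinctness of $\mu(i')$) and cannot cancel the $i' = i$ contribution, so $q_k^l a \ne 0$ for every nonzero matrix unit $a \in \F_{\frak L}^l$, and hence for every nonzero $a$. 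Third, the product
$$q_k^l \rho_{\frak L}^m(q_k^l) = \sum_{i, j} E^{l-1, l+K}(\mu(i)) \, E^{l-1-m, l+K-m}(\mu(j))$$
translates to the cylinder constraints $x_l \cdots x_{l+K-1} = \mu(i)$ and $x_{l-m} \cdots x_{l+K-1-m} = \mu(j)$ simultaneously, which forces $\mu(i)_1 \cdots \mu(i)_{K-m} = \mu(j)_{m+1} \cdots \mu(j)_K$ on the overlap; this is excluded by $k$-aperiodicity when $i = j$ and by condition (b) when $i \ne j$, giving property (2).

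The principal difficulty is arranging condition (b): producing a single family $\{\mu(i)\}_i$ that is simultaneously $k$-aperiodic individually, pairwise distinct, and mutually shift-incompatible for shifts of size at most $k$, all while each $\mu(i)$ remains a labelled ${\frak L}^+$-path starting at $v_i^n$. This purely combinatorial task, carried out by appending sufficiently long distinguishing ${\frak L}^+$-suffixes, is the chief book-keeping of the proof; after it is performed, the three verifications above are essentially formal consequences of Lemma \ref{lem:comm2} and the defining relations of the matrix units.
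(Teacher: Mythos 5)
Your construction is the same as the paper's: the same level $n=n(-l,l)=2l-1$, the same projection $q_k^l$ built as a sum of cylinder projections over the $k$-aperiodic words supplied by condition $(I^{+})$, and the same three verifications (commutation with $\F_{\frak L}^l$ via the spirit of Lemma \ref{lem:comm2}, non-degeneracy from the words starting at the vertices $v_i^n$, and orthogonality under $\rho_{\frak L}^m$). The substantive difference is that you make explicit two requirements the paper leaves tacit: pairwise distinctness of the $\mu(i)$ (without which the sum is not a projection) and your condition (b), mutual shift-incompatibility. You are right that something like (b) is needed: $k$-aperiodicity of each individual $\mu(i)$ kills only the diagonal terms $i=j$ of $q_k^l\,\rho_{\frak L}^m(q_k^l)$, while a cross term $i\ne j$ survives exactly when $\mu(i)_{[m+1,K]}=\mu(j)_{[1,K-m]}$ with the overlap admissible, and condition (I) as stated says nothing to exclude this. (The paper's own proof also disposes of property (2) by citing $k$-aperiodicity alone, so the issue you found is genuine and not resolved there either.)

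The gap is that you never prove a family satisfying (b) exists. Condition (I) produces, for each vertex $v_i^n$, \emph{some} $k$-aperiodic word of a common length $K$; it gives no control over the relation between the words attached to different vertices, and "appending sufficiently long distinguishing ${\frak L}^{+}$-suffixes" is asserted rather than argued. Extending $\mu(j)$ on the right alters every suffix of $\mu(j)$ as well as the long prefixes of $\mu(j)$, so you must simultaneously satisfy on the order of $k\cdot m(n)^2$ inequations while keeping each word a labelled ${\frak L}^{+}$-path out of its prescribed vertex and keeping all lengths equal; nothing in condition (I) guarantees this system is solvable (in a subshift of low complexity, all long admissible words out of the various vertices may share arbitrarily long common factors). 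Since you yourself identify this as "the chief book-keeping," the proposal is incomplete precisely at its load-bearing step. A minor further point: with your placement $E^{l-1,l+K}(\mu(i))$ the two windows are adjacent, i.e.\ $k_2=l-1<l=l_1$, so Lemma \ref{lem:comm2} does not apply verbatim (its hypothesis is $l_1\le k_2$); your direct computation does go through because the word lengths match exactly, but the paper's placement $E^{n,n+K+1}(\mu(i))$ with $n=2l-1\ge l$ lets one quote the lemma directly.
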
 
\begin{proof}
Assume that  $\LGBS$ satisfies condition $(I^+)$.
For $k,l\in \N$ with $k\le l$, let $n = n(-l,l) = 2l-1$.
For the $n, k\in \N$,  the condition $(I^+)$ ensures us that 
there exists $K >k$ such that 
there exists a $k$-aperiodic word $\mu(i) \in B_k(\Lambda)$ 
for each $i=1,2,\dots,m(n)$
such that $\mu(i)$ starts with $v_i^n$ in 
${\frak L}^+$. 
We set a projection
\begin{equation}
q_k^l = \sum_{i=1}^{m(n)} E^{n,n+K+1}(\mu(i)) \in C(\Lambda). \label{eq:qkl}
\end{equation}
Let $-l < l < n < n+K+1$  correspond to 
$k_1 < l_1 < k_2 < l_2$ in Lemma \ref{lem:commute}, respectively.
We know that 
$q_k^l$ commutes to $\F_{\frak L}^l$.
As the words $\mu(i)$ starts at $v_i^n$ for all $i=1,2,\dots,m(n)$,
we have $q_k^l a \ne 0$ for all $a \in \F_{\frak L}^l$ with $a \ne 0$.
As the words $\mu(i)$ are $k$-aperiodic, we have
$q_k^l \rho_{\frak L}^m(q_k^l) =0$ for $m=1,2,\dots,k$.

In case  $\LGBS$ satisfies condition $(I^-)$,
we easily see that the projection 
$q_k^l$ defined by \eqref{eq:qkl}
satisfies 
$q_k^l \rho_{\frak L}^m(q_k^l) =0$ for $m=-1,-2,\dots,-k$.
We then have
\begin{equation*}
q_k^l \rho_{\frak L}^{|m|}(q_k^l) 
%= \rho_{\frak L}^{|m|}(\rho_{\frak L}^m(q_k^l)q_k^l )
= \rho_{\frak L}^{|m|}(q_k^l \rho_{\frak L}^m(q_k^l))
 =0.
\end{equation*}
Therefore the projection $q_k^l$ satisfies the desired properties.
\end{proof}
Let $U$ be the generating unitary in 
$\R_{\frak L} =\F_{\frak L}\rtimes_{\rho_{\frak L}}\Z$ 
corresponding to the positive generator of $\Z$ satisfying the relation
\begin{equation}
\rho_{\frak L}(x)  = U  x U^*, \qquad x \in \F_{\frak L}.
\end{equation}
The following proposition shows a uniqueness of the crossed product 
$C^*$-algebra $\F_{\frak L}\rtimes_{\rho_{\frak L}}\Z$
under condition (I).
\begin{proposition} \label{prop:unicity}
Assume that $\LGBS$ satisfies condition $(I)$.
If there exists an injective homomorphism
$\pi : \F_{\frak L} \longrightarrow \mathcal{B}$
from $\F_{\frak L}$ to a $C^*$-algebra $\mathcal{B}$
and a unitary $u$ in $\mathcal{B}$ satisfying the relation
\begin{equation}
\pi (\rho_{\frak L}(x))  = u \pi(x) u^*, \qquad x \in \F_{\frak L},
\end{equation}
then  the correspondence
\begin{equation*}
x \in \F_{\frak L} \longrightarrow \pi(x) \in \mathcal{B},
\qquad
U \longrightarrow u \in \mathcal{B}
\end{equation*}
extends to an isomorphism from $\F_{\frak L}\rtimes_{\rho_{\frak L}}\Z$
onto the $C^*$-subalgebra of $\mathcal{B}$ generated by 
$\pi(x), x \in \F_{\frak L}$ and $u$. 
%Then any nontrivial ideal $\mathcal{I}$ of the crossed product 
%$\R_{\frak L} =\F_{\frak L}\rtimes_{\sigma_{\frak L}}\Z$ 
%has nontrivial intersection with the AF-algebra $\F_{\frak L}$.
\end{proposition}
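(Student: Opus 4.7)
The plan is to adapt the standard uniqueness argument for crossed products by $\Z$ (essentially the strategy used in the Cuntz--Krieger uniqueness theorem), with the aperiodicity projections $q_k^l$ from the preceding lemma playing the role that aperiodic words play there. By the universal property of the crossed product, the assignment $x \mapsto \pi(x)$, $U \mapsto u$ extends to a surjective $*$-homomorphism $\widetilde{\pi}: \F_{\frak L}\rtimes_{\rho_{\frak L}}\Z \to C^*(\pi(\F_{\frak L}), u)$; the task is to show that $\widetilde{\pi}$ is injective.

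Let $E: \F_{\frak L}\rtimes_{\rho_{\frak L}}\Z \to \F_{\frak L}$ be the canonical (faithful) conditional expectation coming from the dual $\T$-action. A dense subalgebra of the crossed product consists of finite sums $a = \sum_{n=-N}^{N} x_n U^n$ with $x_n \in \F_{\frak L}^l$ for some $l\in \N$, and $E(a) = x_0$. The core of the proof is the estimate
\begin{equation*}
\|E(a)\| \le \|\widetilde{\pi}(a)\|
\end{equation*}
on this dense subalgebra. Once this is established, extending by density and applying it to $a^*a$ yields $\|E(a^*a)\| \le \|\widetilde{\pi}(a)\|^2$; if $\widetilde{\pi}(a) = 0$, then $E(a^*a) = 0$, and faithfulness of $E$ forces $a = 0$.

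To obtain the estimate, fix $k \ge N$ and set $q = q_k^l \in C(\Lambda) \cap (\F_{\frak L}^l)'$. The covariance relation $u^n \pi(y) = \pi(\rho_{\frak L}^n(y)) u^n$ gives
\begin{equation*}
\pi(q)\, u^n\, \pi(q) = u^n\, \pi\bigl(\rho_{\frak L}^{-n}(q)\cdot q\bigr),
\end{equation*}
and property (2) of the lemma ($q\,\rho_{\frak L}^m(q)=0$ for $1\le |m|\le k$) makes all the $n\neq 0$ terms vanish. Because $q$ commutes with $x_0\in \F_{\frak L}^l$, what survives is
\begin{equation*}
\pi(q)\,\widetilde{\pi}(a)\,\pi(q) = \pi(q x_0 q) = \pi(q x_0).
\end{equation*}
Since $\pi$ is injective on the AF-algebra $\F_{\frak L}$ it is isometric there, so $\|q x_0\| = \|\pi(q x_0)\| \le \|\widetilde{\pi}(a)\|$. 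Finally, because $q$ commutes with the finite-dimensional algebra $\F_{\frak L}^l$, the compression $y \mapsto q y q = q y$ is a $*$-homomorphism $\F_{\frak L}^l \to q \F_{\frak L}^l q$; property (1) of the lemma says it is injective, hence isometric, so $\|q x_0\| = \|x_0\| = \|E(a)\|$, completing the estimate.

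The main technical obstacle is precisely the identity $\pi(q)\widetilde{\pi}(a)\pi(q)=\pi(q x_0)$: it hinges on both the aperiodicity ($q\,\rho_{\frak L}^m(q)=0$ for $1\le |m|\le k$) and the faithfulness ($qa\ne 0$ for $0\neq a\in \F_{\frak L}^l$) of the projections $q_k^l$, and therefore on condition (I). After this, the deduction that $\widetilde{\pi}$ is isometric via the faithful conditional expectation $E$ is routine. One minor bookkeeping point is to ensure that every element in a norm-dense subalgebra may be written as a finite Laurent polynomial with coefficients in some $\F_{\frak L}^l$; this follows from the fact that $\F_{\frak L}=\overline{\bigcup_l \F_{\frak L}^l}$ and the continuity of multiplication by $U$.
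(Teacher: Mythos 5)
Your argument is correct, but it is not the route the paper takes: the paper's proof of Proposition \ref{prop:unicity} is essentially a citation. It observes that the pair $(\F_{\frak L},\rho_{\frak L})$ is a $C^*$-symbolic dynamical system, uses the preceding lemma only to verify that this pair satisfies condition (I) in the abstract sense of \cite[Section 3]{MaMZ2010}, and then invokes \cite[Theorem 3.9]{MaMZ2010} to conclude. What you have written is, in effect, the standard direct argument that underlies that cited theorem: pass to the surjection $\widetilde{\pi}$ given by the universal property, compress a finite Laurent polynomial $\sum_{n=-N}^{N}x_nU^n$ by the aperiodicity projection $q=q_k^l$ ($k\ge N$), use $q\,\rho_{\frak L}^m(q)=0$ for $1\le|m|\le k$ to kill the off-diagonal terms and the faithfulness $qa\ne 0$ on $\F_{\frak L}^l$ to recover $\|x_0\|=\|qx_0\|\le\|\widetilde{\pi}(a)\|$, and finish with the faithful canonical expectation $E$. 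All the steps check out: the identity $\pi(q)\pi(x_n)u^n\pi(q)=\pi(x_n)u^n\pi(\rho_{\frak L}^{-n}(q)q)$ is correct, the vanishing for negative $n$ follows from the positive case by applying $\rho_{\frak L}^{-n}$ to the projection identity, and injective $*$-homomorphisms being isometric gives both $\|\pi(qx_0)\|=\|qx_0\|$ and $\|qx_0\|=\|x_0\|$. Two minor bookkeeping points you should make explicit: the lemma produces $q_k^l$ only for $k\le l$, so when $N>l$ you should replace $l$ by $l'=\max(k,l)$ and use $q_k^{l'}$, which still commutes with $\F_{\frak L}^l\subseteq\F_{\frak L}^{l'}$ and is still faithful there; and the lemma states $q_k^l\rho_{\frak L}^m(q_k^l)=0$ only for $m=1,\dots,k$, so the case of negative $m$ needs the one-line remark that $\rho_{\frak L}^{-m}$ applied to this identity gives $\rho_{\frak L}^{-m}(q_k^l)\,q_k^l=0$. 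Your self-contained proof buys independence from the external reference at the cost of length; the paper's proof buys brevity at the cost of sending the reader to \cite{MaMZ2010}.
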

\begin{proof}
Assume that $\LGBS$ satisfies condition $(I)$.
In general, a pair $(\A,\rho)$ of a unital $C^*$-algebra $\A$ and an automorphism $\rho$ of $\A$
is  an example of a $C^*$-symbolic dynamical system introduced in 
\cite{MaCrelle} (cf. \cite{MaCM2009}, \cite{MaMZ2010}).
 By the previous lemma, the pair $(\F_{\frak L}, \rho_{\frak L})$
satisfies condition $(I)$ as a $C^*$-symbolic dynamical system 
in the sense of \cite[Section 3]{MaMZ2010},
so that by \cite[Theorem 3.9]{MaMZ2010}, we know the desired assertion.
\end{proof}
Therefore we have the following simplicity criterion for the $C^*$-algebra  
$\R_{\frak L}$.
\begin{theorem}\label{thm:simplicity}
Assume that $\LGBS$ satisfies condition $(I)$.
If $\LGBS$ is irreducible,
the $C^*$-algebra $\R_{\frak L}$ is simple. 
\end{theorem}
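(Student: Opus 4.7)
The plan is to combine the two results already in place, namely the simplicity of the AF-core $\F_{\frak L}$ (Proposition \ref{prop:simplicity}) and the uniqueness property of the crossed product under condition (I) (Proposition \ref{prop:unicity}). The standard Cuntz--Krieger type argument for simplicity of crossed products then goes through cleanly.

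Concretely, I would let $J$ be a proper closed two-sided ideal of $\R_{\frak L}$ and consider the quotient homomorphism
\[
\pi : \R_{\frak L} \longrightarrow \R_{\frak L}/J.
\]
Since $\rho_{\frak L}$ is implemented on $\F_{\frak L}\subset\R_{\frak L}$ by the generating unitary $U$, its image $u := \pi(U)$ is a unitary in $\R_{\frak L}/J$ satisfying
\[
\pi(\rho_{\frak L}(x)) = u\,\pi(x)\,u^{*}, \qquad x \in \F_{\frak L}.
\]
I would then split into two cases depending on whether the restriction $\pi|_{\F_{\frak L}}$ is injective.

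If $\pi|_{\F_{\frak L}}$ is injective, then Proposition \ref{prop:unicity} applies with $\mathcal{B} = \R_{\frak L}/J$ and shows that the pair $(\pi|_{\F_{\frak L}}, u)$ extends to an isomorphism from $\F_{\frak L}\rtimes_{\rho_{\frak L}}\Z$ onto the $C^{*}$-subalgebra of $\R_{\frak L}/J$ generated by $\pi(\F_{\frak L})$ and $u$; but that subalgebra is exactly $\pi(\R_{\frak L}) = \R_{\frak L}/J$. Hence $\pi$ is injective and $J = 0$. If on the other hand $\pi|_{\F_{\frak L}}$ is not injective, then $\ker(\pi|_{\F_{\frak L}})$ is a nonzero closed two-sided ideal of $\F_{\frak L}$. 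By Proposition \ref{prop:simplicity}, the AF-algebra $\F_{\frak L}$ is simple, so this kernel must be all of $\F_{\frak L}$. In particular $\pi(1) = 0$, forcing $J = \R_{\frak L}$ and contradicting properness.

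Thus $J$ must be either $0$ or $\R_{\frak L}$, which gives the simplicity. The main point where care is needed is verifying that Proposition \ref{prop:unicity} is genuinely applicable to the quotient map: one must check that $\pi(U)$ is a unitary (immediate, as $U$ is unitary and $\pi$ is a unital $*$-homomorphism) and that the covariance relation survives passage to $\R_{\frak L}/J$ (also immediate). The hypothesis of condition (I) enters only through Proposition \ref{prop:unicity}, while the irreducibility enters only through Proposition \ref{prop:simplicity}, so the argument is a clean bookkeeping exercise once these two inputs are in hand.
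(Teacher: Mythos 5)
Your argument is correct and is essentially the paper's own proof: the paper likewise invokes Proposition \ref{prop:unicity} to conclude that any nontrivial ideal of $\R_{\frak L}$ must meet $\F_{\frak L}$ nontrivially (which is exactly the contrapositive of your injective-restriction case, since a $*$-homomorphism agreeing with $\pi$ on the generators $\F_{\frak L}$ and $U$ must equal $\pi$), and then finishes with the simplicity of $\F_{\frak L}$ from Proposition \ref{prop:simplicity}. Your write-up merely makes explicit the quotient-map bookkeeping that the paper leaves implicit.
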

\begin{proof}
 Suppose that $\LGBS$ satisfies condition (I).
By Proposition \ref{prop:unicity},
any nontrivial ideal $\mathcal{I}$ of the crossed product 
$\R_{\frak L} =\F_{\frak L}\rtimes_{\rho_{\frak L}}\Z$ 
has nontrivial intersection with the AF-algebra $\F_{\frak L}$.
As the AF-algebra $\F_{\frak L}$ is simple by Proposition \ref{prop:simplicity},
 we may conclude that 
the crossed product $\F_{\frak L}\rtimes_{\rho_{\frak L}} \Z$
is simple.
 \end{proof}

%%%%%%%%%%%%%%%%%%%%%%%%%%%%%
%%%%%%%%%%%%%%%%%%%%%%
\section{Invariance under topological conjugacy}
%%%%%%%%%%%%%%%%%%%%%%%%%%%%%%%%%%%%%%%%%%%%%%%%%%%%%%%%%%%%%%%%%%%%%%%%
%%%%%%%%%%%%%%%%%%%%%%%%%%%%%%%%%%%%%%%%%%%%%%%%
In this section, we will prove that the pair $(\F_{\frak L},\rho_{\frak L})$
for a $\lambda$-graph bisystem $\LGBS$ satisfying FPCC 
is invariant under properly strong shift equivalence 
of the associated symbolic matrix bisystems, so that 
 the crossed product $\R_{\frak L}$ is also invariant
under the equivalence relation in the symbolic matrix bisystems.
As a corollary, if two subshifts $\Lambda_1, \Lambda_2$ 
are topologically conjugate, then
the pairs $(\F_{{\frak L}_{\Lambda_1}},\rho_{{\frak L}_{\Lambda_1}})$
and $(\F_{{\frak L}_{\Lambda_2}},\rho_{{\frak L}_{\Lambda_2}})$
for their canonical $\lambda$-graph bisystems 
${\frak L}_{\Lambda_1}, {\frak L}_{\Lambda_2}$
are invariant.
Hence the K-groups
$(K_0(\F_{\frak L}), K_0(\F_{\frak L})_+, \rho_{{\frak L}*})$
and the $C^*$-algebra 
$\R_{\frak L}$ yield topological conjugacy invariants for subshifts.
In order to prove such invariance, we recall the notion of
symbolic matrix bisystem
 and its properly strong shift equivalence introduced in \cite{MaPre2019}. 
We first recall the definition of symbolic matrix bisystem,
that is a matrix presentation of a $\lambda$-graph bisystem.

Let $\Sigma$ be a finite alphabet and 
${\frak S}_\Sigma$ be the set of finite formal sums of elements of $\Sigma.$
 By a symbolic matrix $\A$ over $\Sigma$
 we mean a rectangular finite matrix $\A = [\A(i,j)]_{i,j}$
 whose entries in ${\frak S}_\Sigma.$ 
We write the empty word $\emptyset$ as $0$ in ${\frak S}_\Sigma.$
For the symbolic matrix $\A,$
if $\A(i,j) = \alpha_1+\cdots +\alpha_n$,
then  we write an edge $e_k$ labeled $\alpha_k$
for $k=1,\dots,n$ from a vertex $v_i$ to a vertex $v_j$,
so that we have a finite labeled directed graph.

For two alphabets $\Sigma, \Sigma',$
the notation $\Sigma\cdot\Sigma'$ denotes the set
$\{ a\cdot b \mid a \in \Sigma, \, b \in \Sigma' \}.$
The following notion of specified equivalence 
between symbolic matrices due to M. Nasu in
\cite{Nasu}, \cite{NasuMemoir}.
For two symbolic matrices
$\A$ over alphabet $\Sigma$
 and  $\A'$ over alphabet $\Sigma'$
 and  a bijection
 $\varphi$ from a subset of $\Sigma$ onto a subset of $\Sigma'$,
 we call $\A$ and $\A'$ are  specified equivalence under specification
 $\varphi$ if $\A'$ can be obtained from $\A$
  by replacing every symbol 
 $\alpha$ appearing in components of $\A$ by $\varphi(\alpha)$.
 We write it as
 $\A \overset{\varphi}{\simeq} \A'$, and call 
$\varphi$ a specification from $\Sigma$ to $\Sigma'$.
For two alphabet $\Sigma_1, \Sigma_2,$ 
the bijection 
$\alpha\cdot\beta\in \Sigma_1\cdot\Sigma_2 \longrightarrow 
\beta\cdot\alpha\in \Sigma_2\cdot\Sigma_1$
naturally yields a bijection from 
${\frak S}_{\Sigma_1\cdot\Sigma_2}$
to 
${\frak S}_{\Sigma_2\cdot\Sigma_1}$
that we denote by $\kappa$ 
and call the exchanging specification  
between $\Sigma_1$ and $\Sigma_2.$
\begin{definition}[{\cite{MaPre2019}}]
%Let $\M_{l,l+1}^-$ (resp. $\M_{l,l+1}^+$) be a rectangular symbolic matrix over alphabet 
%$\Sigma^-$ (resp. $\Sigma^+$).  
{\it A symbolic matrix bisystem}\/
$(\M_{l,l+1}^-,\M_{l,l+1}^+), l\in \Zp$ is a pair of
sequences of rectangular symbolic matrices
$\M_{l,l+1}^-$ over $\Sigma^-$ and
$\M_{l,l+1}^+$ over $\Sigma^+$  
 satisfying  
the following five conditions:
\begin{enumerate}
\renewcommand{\theenumi}{\roman{enumi}}
\renewcommand{\labelenumi}{\textup{(\theenumi)}}
\item
Both $\M_{l,l+1}^-$ and $\M_{l,l+1}^+$ are $m(l)\times m(l+1)$ 
rectangular  symbolic matrices such that $m(l) \le m(l+1)$ for $l \in \Zp.$
\item
(1) For $i,$ there exists $j$ such that $\M_{l,l+1}^-(i,j)\ne 0$, 
and for $i,$ there exists $j$ such that $\M_{l,l+1}^+(i,j)\ne 0$.

(2) For $j,$ there exists $i$ such that $\M_{l,l+1}^-(i,j)\ne 0$, 
and for $j,$ there exists $i$ such that $\M_{l,l+1}^+(i,j)\ne 0$.

%\hspace{6mm} For $j,$ there exists $i$ such that $\M_{l,l+1}^+(i,j)\ne 0$.

\item Each column of both $\M_{l,l+1}^-$ and $\M_{l,l+1}^+$ 
does not have multiple labeling.
This means that 
if $\M_{l,l+1}^-(i,j) = \alpha_1+\cdots+\alpha_n,$
then the symbols $\alpha_1,\dots,\alpha_n$ do not appear in any other rows of the $j$th column.
The same  condition is required for  $\M_{l,l+1}^+.$
\item
 %$\M_{l-1,l}^-$ is right-resolving symbolic matrix and 
%$ \M_{l,l+1}^+$ is left-resolving symbolic matrix: that mean, 
Both the $j$th columns 
$[\M_{l,l+1}^-(i,j)]_{i=1}^{m(l)}$
and
$[\M_{l,l+1}^+(i,j)]_{i=1}^{m(l)}$
do not have multiple symbols for each $j=1,2,\dots,m(l+1)$.
Namely, if a symbol $\alpha$ appears in $\M_{l,l+1}^-(i,j)$ for some 
$i\in \{1,2,\dots, m(l)\},$
then it does not appear in  any other row $\M_{l,l+1}^-(i',j)$ for $i' \ne i,$
and $\M_{l,l+1}^+$ has the same property.
\item The specified equivalence
 $\M_{l,l+1}^- \M_{l+1,l+2}^+ \overset{\kappa}{\simeq}
  \M_{l,l+1}^+ \M_{l+1,l+2}^-$ 
 for $l \in \Zp$ holds,
that means for $i=1,2,\dots,m(l),\, j=1,2,\dots,m(l+2)$
\begin{equation}
\sum_{k=1}^{m(l+1)}\M_{l,l+1}^-(i,k) \M_{l+1,l+2}^+(k,j)
 =\sum_{k=1}^{m(l+1)} \kappa\left( \M_{l,l+1}^+(i,k)\M_{l+1,l+2}^-(k,j) \right)
\label{eq:MLocal}
\end{equation}
holds, where $\kappa$ is the exchanging specification between 
$\Sigma$ and $\Sigma'.$
\end{enumerate}
%The matrix $\M_{l,l+1}^-$ (resp.  $\M_{l,l+1}^+$) satisfying
%the condition (iv) is said to be right-resolving (resp. left-resolving). 
The condition (v) exactly expresses the local property of $\lambda$-graph bisystem (v)
in Definition \ref{def:lambdabisystem}.
The pair $(\M^-,\M^+)$ is called a symbolic matrix bisystem over $\Sigma^\pm.$
\end{definition}
It is easy to see that symbolic matrix bisystem is exactly 
a matrix presentation of $\lambda$-graph bisystem.
A symbolic matrix bisystem $(\M^-, \M^+)$ is said to be {\it standard}\/
if  $m(0) =1$, that is its row sizes of the matrices $\M^-_{0,1}$ and $\M^+_{0,1}$ are one.
A symbolic matrix bisystem  $(\M^-, \M^+)$ is said to {\it have a common alphabet}\/
if  $\Sigma^- = \Sigma^+.$
In this case, write the alphabet $\Sigma^- = \Sigma^+$
as $\Sigma,$
and say that $(\M^-, \M^+)$ is a symbolic matrix bisystem over common alphabet
 $\Sigma.$
It s said to satisfy
{\it Follower-Predecessor Compatibility Condition}\/, FPCC for brevity, 
if for every $l \in \N$ and $j = 1,2,\dots,m(l),$
the set of words  appearing in $[\M_{0,1}^-\M_{1,2}^-\cdots\M^-_{l-1,l}](i,j)$
coincides with the set of transposed words appearing in 
$[\M_{0,1}^+\M_{1,2}^+\cdots\M^+_{l-1,l}](i,j).$
Hence we may recognize that symbolic matrix bisystems satisfying FPCC
and $\lambda$-graph bisystems satisfying FPCC are the same objects
(see \cite{MaPre2019} for detail).

%%%%%%%%%%%%%%%%%%%%%%%%%%%%%%%%%%%%%%%%%%%%%%%%%%%%
%Two symbolic matrix bisystems $(\M^-,\M^+)$ over  $\Sigma_\M^\pm$
%and $(\SN^-, \M^+)$ over  $\Sigma_\M^\pm$
%are said to be isomorphic if the sizes $m(l)\times m(l+1)$  and $n(l) \times n(l+1)$ 
%of the matrices $\M^{\pm}_{l,l+1}$ and $\SN^\pm_{l,l+1}$ 
%coincide, that is $m(l) = n(l)$,   for each $l \in \Zp$ and
%there exists a specification $\phi$ from $\Sigma_\M$ to $\Sigma_\M$ 
%and an $m(l) \times m(l)$-square permutation matrix
%$P_l$ for each $l \in \Zp$ such that
%$$P_l \M_{l,l+1}^- \overset{\phi}{\simeq} \SN_{l,l+1}^-P_{l+1},\qquad
%P_l \M_{l,l+1}^+ \overset{\phi}{\simeq} \SN_{l,l+1}^+P_{l+1} \qquad
%\text{ for }\quad l \in \Zp.$$
%%%%%%%%%%%%%%%%%%%%%%%%%%%%%%%%%%%%%%%%%%%%%%%%%%

As seen in Section 2, any subshift is presented by a $\lambda$-graph bisystem
satisfying FPCC,
and hence by a symbolic matrix bisystem satisfying FPCC.
In \cite{MaPre2019}, 
we introduced a notion of properly strong shift equivalence in 
symbolic matrix bisystems satisfying FPCC, and proved that 
 two subshifts are topologically conjugate 
if and only if their canonical symbolic matrix bisystems are properly strong shift equivalent.

Let $(\M_1^-,\M_1^+)$ and $(\M_2^{-}, \M_2^{+})$ be symbolic matrix bisystems over alphabets
$\Sigma_1$ and $\Sigma_2$, respectively,
both of them satisfy FPCC,
where
${\M^-_1}_{l,l+1}, {\M^+_1}_{l,l+1}$ are $m_1(l)\times m_1(l+1)$ matrices and
${\M^-_2}_{l,l+1}, {\M^+_2}_{l,l+1}$ are $m_2(l)\times m_2(l+1)$ matrices.

\begin{definition}[{\cite{MaPre2019}}] \label{def:PSSE}
Two symbolic matrix bisystems 
 $(\M_1^-,\M_1^+)$ and $(\M_2^-, \M_2^+)$
 are said to be {\it properly strong shift equivalent in}\/ $1$-{\it step}\/  
 if there exist alphabets 
$C,D$ and specifications 
$$
 \varphi_1: \Sigma_1 \rightarrow C\cdot D,
 \qquad
 \varphi_2: \Sigma_2 \rightarrow D \cdot C
$$
 and  sequences $c(l),d(l)$ on $l \in \Zp$
 such that  there exist for each $l\in \Zp$
\begin{enumerate}
\renewcommand{\theenumi}{\arabic{enumi}}
\renewcommand{\labelenumi}{\textup{(\theenumi)}}
\item a $c(l)\times d(l+1)$ matrix ${\P}_l$ over $C,$ 
\item a $d(l)\times c(l+1)$ matrix ${\Q}_l$ over $D,$
\item a $d(2l+1)\times d(2l+2)$ matrix $\X_{2l+1} $ over $D$,
\item a $c(2l)\times c(2l+1)$ matrix $\X_{2l} $ over $D$,
\item a $c(2l+1)\times c(2l+2)$ matrix $\Y_{2l+1} $ over $C$,
\item a $d(2l)\times d(2l+2)$ matrix $\Y_{2l} $ over $C$
\end{enumerate}
 satisfying the following equations: 
\begin{gather}
{\M_1^+}_{l,l+1} 
\overset{\varphi_1}{\simeq} {\P}_{2l}{\Q}_{2l+1},\qquad
{\M^+_2}_{l,l+1} 
\overset{\varphi_2}{\simeq} {\Q}_{2l}{\P}_{2l+1}, \label{eq:PSSE1} \\
{\M^-_1}_{l,l+1} \overset{\kappa\varphi_1}{\simeq} \X_{2l}\Y_{2l+1}, \qquad 
{\M^-_2}_{l,l+1} \overset{\kappa\varphi_2}{\simeq} \Y_{2l} \X_{2l+1} \\ 
\intertext{and}
\Y_{2l+1}{\P}_{2l+2} \overset{\kappa}{\simeq} {\P}_{2l+1}\Y_{2l+2},\qquad
\X_{2l+1}{\Q}_{2l+2}\overset{\kappa}{\simeq} {\Q}_{2l+1}\X_{2l+2}, \label{eq:PSSE3}\\ 
\X_{2l}{\P}_{2l+1} \overset{\kappa}{\simeq} {\P}_{2l}\X_{2l+1},\qquad
\Y_{2l}{\Q}_{2l+1}\overset{\kappa}{\simeq} {\Q}_{2l}\Y_{2l+1}, \label{eq:PSSE4}
\end{gather}
where $\kappa$ is the exchanging specification defined by
$\kappa(a\cdot b) = b\cdot a.$

\medskip
We write this situation as
$
(\M_1^-,\M_1^+)\underset{1-\pr}{\approx} (\M_2^-, \M_2^+).
$
Two symbolic matrix bisystems 
 $(\M_1^-,\M_1^+)$ and $(\M_2^-,\M_2^+)$ 
 are said to be {\it properly  strong shift equivalent in }\/ $\ell$-{\it step}\/ 
 if there exists a sequence of  symbolic matrix bisystems
$(\M_{(i)}^-,\M_{(i)}^+),\, i=1,2,\dots, \ell-1$
such that
\begin{equation*}
(\M_1^-,\M_1^+) \underset{1-\pr}{\approx} (\M_{(1)}^-,\M_{(1)}^+)
       \underset{1-\pr}{\approx} 
       \cdots 
       \underset{1-\pr}{\approx} 
(\M_{(\ell-1)}^-,\M_{(\ell-1)}^+) \underset{1-\pr}{\approx} 
( \M_2^-, \M_2^+).
\end{equation*}
We write this situation as
$
(\M_1^-, \M_1^+) \underset{\ell-\pr}{\approx} (\M_2^-,\M_2^+)
$
and simply call it a {\it properly strong shift equivalence}.
\end{definition}
Properly strong shift equivalence in symbolic matrix bisystems is an equivalence relation
(\cite[Proosition 6.2]{MaPre2019}).
For a subshift $\Lambda$,
the symbolic matrix bisystem
$(\M_\Lambda^-,\M_\Lambda^+)$
associated to the canonical $\lambda$-graph bisystem
$({\frak L}^-,{\frak L}^+)$ for $\Lambda$
is called the canonical symbolic matrix bisystem for $\Lambda$.
We have proved the following theorem.
\begin{theorem}[{\cite[Theorem 6.3]{MaPre2019}}] \label{thm:topconj}
Two  subshifts are topologically conjugate
if and only if  
their canonical symbolic matrix bisystems
are properly strong shift equivalent.
\end{theorem}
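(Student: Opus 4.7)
The plan is to prove the two implications separately, following the template of Williams's classical theorem for shifts of finite type, adapted to general subshifts via canonical $\lambda$-graph bisystems.

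For the implication $(\Leftarrow)$, I would assume we have a $1$-step properly strong shift equivalence $(\M_1^-,\M_1^+)\underset{1-\pr}{\approx}(\M_2^-,\M_2^+)$ realized by matrices ${\P}_l,{\Q}_l,\X_l,\Y_l$, alphabets $C,D$, and specifications $\varphi_1,\varphi_2$. The relation ${\M^+_1}_{l,l+1}\overset{\varphi_1}{\simeq}{\P}_{2l}{\Q}_{2l+1}$ tells us that every symbol $\alpha\in\Sigma_1$ labeling a downward edge of ${\frak L}_{\Lambda_1}^+$ factors as $c\cdot d$ with $c\in C, d\in D$, while ${\M^+_2}_{l,l+1}\overset{\varphi_2}{\simeq}{\Q}_{2l}{\P}_{2l+1}$ provides the reverse pairing $d\cdot c$. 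For $x=(x_n)_{n\in\Z}\in\Lambda_1$ I would write each $x_n=c_n\cdot d_n$ and define $\Phi(x)_n:=d_n\cdot c_{n+1}\in\Sigma_2$; the equations \eqref{eq:PSSE3}-\eqref{eq:PSSE4} involving $\X,\Y$ and the exchanging specification $\kappa$ are precisely what is needed to make $\Phi$ well-defined and compatible with the predecessor/follower structure of ${\frak L}_{\Lambda_2}^\pm$, and similarly to construct an inverse map, so $\Phi$ becomes a topological conjugacy.

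For the converse $(\Rightarrow)$, the strategy is to invoke the fact (Krieger's generalization of Williams's theorem to subshifts, together with the Curtis--Hedlund--Lyndon theorem) that every topological conjugacy between two subshifts factors as a composition of elementary recodings, essentially $1$- or $2$-block amalgamations and their inverses. It then suffices to show that each elementary recoding induces a $1$-step properly strong shift equivalence between the corresponding canonical symbolic matrix bisystems. For a single step $\Lambda \rightsquigarrow \Lambda^{[N]}$ (higher-block presentation, or a state splitting/amalgamation), one computes explicitly the vertex sets $V_l^\pm=\Omega^c_{-l,1}, \Omega^c_{-1,l}$ and the matrices $\M^\pm_{l,l+1}$ for both $\Lambda$ and its recoding, and exhibits the bridging matrices ${\P}_l,{\Q}_l,\X_l,\Y_l$ directly from the block map. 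The FPCC property ensures that the identification $F(u)=P(u)$ is preserved throughout, so the $\kappa$-commutation relations fall out of the local property \eqref{eq:local}.

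The main obstacle I expect is the bookkeeping of levels in the Bratteli diagrams under the ${\frak L}^-$ / ${\frak L}^+$ asymmetry: the vertex set $V_l$ is the common identification of $\Omega_{-l,1}^c$ and $\Omega_{-1,l}^c$ via a shift of $\sigma^{l-1}$, and this identification interacts non-trivially with a sliding block code of window size $N$, which shifts the centre of the equivalence class by $N$. Verifying that the matrices ${\P},{\Q}$ (arising from ``splitting'' symbols in the past) intertwine correctly with $\X,\Y$ (arising from ``splitting'' symbols in the future) through the local relations \eqref{eq:PSSE3}-\eqref{eq:PSSE4} will require reconciling these two offsets carefully. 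A secondary technical point is to show that properly strong shift equivalence is genuinely an equivalence relation (reflexivity, symmetry, and transitivity in $\ell$-step), so that compositions of elementary conjugacies yield a well-defined chain of $1$-step equivalences, but this is a formal matter once the elementary step is established.
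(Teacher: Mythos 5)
The paper does not actually prove this theorem here: it is imported verbatim from \cite[Theorem 6.3]{MaPre2019}, so the only thing to compare your sketch against is the architecture of that cited proof, which is visible in Definition \ref{def:PSSE} itself. Your overall strategy is the right one and matches it: the backward implication via the re-pairing $x_n = c_n\cdot d_n \mapsto d_n\cdot c_{n+1}$ is exactly the bipartite code that the data $C$, $D$, $\varphi_1:\Sigma_1\to C\cdot D$, $\varphi_2:\Sigma_2\to D\cdot C$ and the exchanging specification $\kappa$ are designed to produce, and the forward implication does reduce to showing that each elementary conjugacy induces a $1$-step equivalence of canonical symbolic matrix bisystems.

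The one point I would push back on is your choice of decomposition theorem in the forward direction. You propose to factor an arbitrary conjugacy into ``$1$- or $2$-block amalgamations'' or ``state splittings/amalgamations'' in the style of Williams. That decomposition is a theorem about shifts of finite type (and, with work, sofic shifts presented by finite labeled graphs); for a general subshift there is no finite presentation whose states you can split, and the theorem as you invoke it is not available. The tool that actually works at this level of generality, and the one the $C\cdot D$ / $D\cdot C$ structure of Definition \ref{def:PSSE} is modeled on, is Nasu's bipartite code decomposition: every topological conjugacy between subshifts factors as a finite composition of bipartite codes arising from bipartite subshifts over $C\sqcup D$. Once you replace your elementary recodings by bipartite steps, the remaining work is the one you correctly identify as the main obstacle: for a single bipartite subshift one must compute the central equivalence classes $\Omega^c_{-l,1}$ and $\Omega^c_{-1,l}$ of the two halves, extract the matrices $\P_l,\Q_l,\X_l,\Y_l$ from the canonical $\lambda$-graph bisystem of the bipartite subshift, and verify the relations \eqref{eq:PSSE3}--\eqref{eq:PSSE4} against the level offsets coming from the identification of $V_l^-$ with $V_l^+$ via $\sigma^{l-1}$. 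Your sketch names this but does not carry it out, so as written the proposal is a correct plan rather than a proof; with the Williams-style decomposition replaced by Nasu's bipartite decomposition it is the same route as the cited argument.
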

Thanks to Theorem \ref{thm:topconj},
the following theorem will show that 
the automorphism $\rho_{{\frak L}_{\Lambda}}$
on the AF-algebra $\F_{{\frak L}_\Lambda}$ 
is invariant under topological conjugacy of subshifts.

\medskip

%%%%%%%%%%%%%%%%%%%%%%%%%

\begin{theorem}\label{thm:psseC}
Let 
$({\frak L}_1^-, {\frak L}_1^+)$
and
$({\frak L}_2^-, {\frak L}_2^+)$ 
be $\lambda$-graph bisystems satisfying FPCC.
If their symbolic matrix bisystems
$(\M_1^-, \M_1^+)$
and
$(\M_2^-, \M_2^+)$ 
are properly strong shift equivalent, then
there exists an isomorphism 
$\Phi: \F_{{\frak L}_1}\longrightarrow \F_{{\frak L}_2}$
of $C^*$-algebras such that 
$\Phi\circ\rho_{{\frak L}_1} =\rho_{{\frak L}_2}\circ\Phi$.
Hence it induces an isomorphism 
$\widehat{\Phi}: \R_{{\frak L}_1}\longrightarrow \R_{{\frak L}_2}$
between their  crossed products
$\F_{{\frak L}_1}\rtimes_{\rho_{{\frak L}_1}}\Z$
and
$\F_{{\frak L}_2}\rtimes_{\rho_{{\frak L}_2}}\Z$.
\end{theorem}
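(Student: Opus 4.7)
The plan is to reduce to the $1$-step case and then translate the symbolic data of a $1$-step properly strong shift equivalence into an explicit equivariant isomorphism of AF inductive systems. Since properly strong shift equivalence is defined (Definition \ref{def:PSSE}) as a finite chain of $1$-step equivalences, it suffices, by composing equivariant isomorphisms through the intermediate bisystems, to establish the theorem when $(\M_1^-,\M_1^+)\underset{1-\pr}{\approx}(\M_2^-,\M_2^+)$. So I would fix $1$-step data consisting of matrices $\P_l,\Q_l,\X_l,\Y_l$ over alphabets $C,D$ and specifications $\varphi_1,\varphi_2$ satisfying the factorizations \eqref{eq:PSSE1}--\eqref{eq:PSSE4}.

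From this data I would construct an intermediate $\lambda$-graph bisystem ${\frak L}$ whose $+$-transition matrix at step $l\to l+1$ alternates between $\P_l$ (for $l$ even) and $\Q_l$ (for $l$ odd), and whose $-$-transition matrix alternates between $\X_l$ and $\Y_l$ similarly. The four commutation relations \eqref{eq:PSSE3} and \eqref{eq:PSSE4} are exactly the local property (v) of Definition \ref{def:lambdabisystem} applied across successive intermediate levels, so the pair really gives a $\lambda$-graph bisystem. The FPCC condition for ${\frak L}$ must then be checked separately, using FPCC of ${\frak L}_1$ and ${\frak L}_2$ together with the factorizations to match follower and predecessor words one step at a time in the intermediate system.

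Next, the factorizations \eqref{eq:PSSE1} and their $-$-counterparts tell us that \emph{two-step} transitions in ${\frak L}$ from even-to-even levels reproduce the transitions of ${\frak L}_1$ (modulo $\varphi_1$), while two-step transitions from odd-to-odd levels reproduce those of ${\frak L}_2$ (modulo $\varphi_2$). Consequently, the AF-algebras $\F_{{\frak L}_1}$ and $\F_{{\frak L}_2}$ are both canonically realised as inductive limits inside $\F_{\frak L}$, via the even and odd sub-inductive systems of $\{\F^{k,l}_{\frak L}\}$; a routine cofinality argument (the two-step embeddings are ultimately cofinal with the one-step embeddings) shows both sub-inductive systems exhaust $\F_{\frak L}$, giving isomorphisms $\F_{{\frak L}_i}\cong\F_{\frak L}$ for $i=1,2$. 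Let $\Phi\colon\F_{{\frak L}_1}\to\F_{{\frak L}_2}$ be the resulting composition. Because a one-step shift in ${\frak L}_i$ corresponds to a two-step shift in ${\frak L}$, under both identifications the shift $\rho_{{\frak L}_i}$ is conjugate to $\rho_{\frak L}^2$ acting on $\F_{\frak L}$, whence $\Phi\circ\rho_{{\frak L}_1}=\rho_{{\frak L}_2}\circ\Phi$. The induced isomorphism $\widehat\Phi\colon\R_{{\frak L}_1}\to\R_{{\frak L}_2}$ of crossed products is then immediate from the functoriality of $\rtimes\Z$ with respect to equivariant isomorphisms.

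The step I expect to be the main obstacle is the bookkeeping required to promote the purely symbolic/combinatorial identifications between ${\frak L},{\frak L}_1,{\frak L}_2$ to honest $*$-isomorphisms at the level of the matrix units $E^{k,l}_i(\mu,\nu)$ and the embeddings $\iota_\pm$. The specifications $\varphi_1,\varphi_2$ and the exchanging $\kappa$ act on \emph{concatenations} of alphabet symbols and must be tracked consistently through the definitions of the partial isometries, which are indexed by concrete labels $\mu,\nu\in B_*(\Lambda)$ rather than by $\varphi$-equivalence classes. Verifying that all four relations \eqref{eq:PSSE1}--\eqref{eq:PSSE4} translate into the correct intertwining of $\iota_+$ and $\iota_-$ across the three systems (and not merely of their two-step composites) is where the bulk of the computation will sit, and it is here that FPCC in the intermediate system will be essential in order to identify follower and predecessor multiplicities. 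I expect the verification to be routine but lengthy, along the lines of the classical Williams-style argument for strong shift equivalences of topological Markov shifts, adapted to the doubly-indexed inductive system \eqref{eq:indAF}.
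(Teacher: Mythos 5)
Your proposal follows essentially the same route as the paper: reduce to the $1$-step case, build the intermediate bipartite $\lambda$-graph bisystem from the matrices $\P_l,\Q_l,\X_l,\Y_l$, and exploit that its two-step transition data reproduces $({\frak L}_1^-,{\frak L}_1^+)$ on even-aligned levels and $({\frak L}_2^-,{\frak L}_2^+)$ on odd-aligned ones, with the one-step shift of each system corresponding to the square of the intermediate shift. The only difference is presentational: rather than realizing both AF-algebras as cofinal sub-inductive systems of the intermediate AF-algebra, the paper writes down the half-step homomorphisms $j_+\colon\F_{{\frak L}_1}^{k,l}\to\F_{{\frak L}_2}^{k,l+1}$ and $j_-\colon\F_{{\frak L}_2}^{k,l}\to\F_{{\frak L}_1}^{k-1,l}$ explicitly on matrix units via the transition matrices $\widehat{A}^{\pm}$ and verifies $j_-\circ j_+=\iota_-\circ\iota_+$, $j_+\circ j_-=\iota_+\circ\iota_-$ and the equivariance $j_\pm\circ\rho=\rho\circ j_\pm$, which is exactly your intertwining carried out directly.
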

\begin{proof}
Assume that the symbolic matrix bisystems
$(\M_1^-, \M_1^+)$
and
$(\M_2^-, \M_2^+)$ 
are properly strong shift equivalent in $1$-step.
Let $\P_l, \Q_l, \X_l, \Y_l, l\in \Zp$ be symbolic matrices 
as in Definition \ref{def:PSSE}.
As in \cite[Section 6]{MaPre2019},
there exist a bipartite symbolic matrix bisystem
$(\widehat{\M}^-,\widehat{\M}^+)$
with
alphabets $C, D$ and specifications
$\varphi_1: \Sigma_1\longrightarrow C\cdot D, 
\varphi_2: \Sigma_2\longrightarrow D\cdot C
$
such that 
\begin{equation}
 (\M_1^-, \M_1^+) \overset{\varphi_1}{\simeq}(\widehat{\M}^{CD-},\widehat{\M}^{CD+}),
\qquad
 (\M_2^-, \M_2^+) \overset{\varphi_2}{\simeq}(\widehat{\M}^{DC-},\widehat{\M}^{DC+})  \label{eq:bipartitesms}
\end{equation}
where 
$(\widehat{\M}^{CD-},\widehat{\M}^{CD+})$
and
$(\widehat{\M}^{DC-},\widehat{\M}^{DC+})
$
are symbolic matrix bisystems defined by the following way (cf. \cite[Lemma 6.5]{MaPre2019})
\begin{gather*}
\widehat{M}^{CD-}_{l,l+1}:= (\X_{2l} \Y_{2l+1})^\kappa, \qquad
\widehat{M}^{DC-}_{l,l+1}:= (\Y_{2l} \X_{2l+1})^\kappa, \\
\widehat{M}^{CD+}_{l,l+1}:= \P_{2l} \Q_{2l+1}, \qquad
\widehat{M}^{DC+}_{l,l+1}:= \Q_{2l} \P_{2l+1} 
\end{gather*}
where for a symbolic matrix $\A$, the matrix $\A^\kappa$
is denoted by
$A^\kappa(i,j) =\kappa(\A(i,j))$ for the exchanging spacification $\kappa$.
Let 
$(\widehat{\frak L}^{CD-},\widehat{\frak L}^{CD+}),
(\widehat{\frak L}^{DC-},\widehat{\frak L}^{DC+})
$
and 
$(\widehat{\frak L}^-,\widehat{\frak L}^+)$
be the corresponding 
$\lambda$-graph bisystems to the  symbolic matrix bisystems
$(\widehat{\M}^{CD-},\widehat{\M}^{CD+}),
(\widehat{\M}^{DC-},\widehat{\M}^{DC+})
$
and
$(\widehat{\M}^-,\widehat{\M}^+)$, respectively.
The relations \eqref{eq:bipartitesms}
tells us that the $\lambda$-graph bisystems  
$({\frak L}_1^-, {\frak L}_1^+)$
and
$({\frak L}_2^-, {\frak L}_2^+)$
are identified with
$(\widehat{\frak L}^{CD-},\widehat{\frak L}^{CD+})$
and
$(\widehat{\frak L}^{DC-},\widehat{\frak L}^{DC+})$,
through 
the specifications
$\varphi_1: \Sigma_1\longrightarrow C\cdot D
$
and
$ 
\varphi_2: \Sigma_2\longrightarrow D\cdot C$,
respectively.
We identify symbols in $\Sigma_1$ (resp. $\Sigma_2$)
with symbols in $C\cdot D$ (resp. $D\cdot C$) through $\varphi_1$ (resp. $\varphi_2$).

Let us denote by
$E_{1,{i_1}}^{k,l}(\mu,\nu)$ for $\mu,\nu\in W_{i_1}^{n(k,l)}$
the generating partial isometries in 
$\F_{{\frak L}_1}^{k,l}$.
We use similar notation for 
$E_{2,{i_2}}^{k,l}(\xi,\eta)$.
Let $(\widehat{A}_{l,l+1}^-, \widehat{A}_{l,l+1}^+)$ be the transition matrix bisystem
for the bipartite $\lambda$-graph bisystem $(\widehat{\frak L}^-,\widehat{\frak L}^+)$.
We define homomorphisms
\begin{equation}
j_+: \F_{{\frak L}_1}^{k,l}\longrightarrow \F_{{\frak L}_2}^{k,l+1}, \qquad
j_-: \F_{{\frak L}_2}^{k,l}\longrightarrow \F_{{\frak L}_1}^{k-1,l} \label{eq:jjpm}
\end{equation}
by setting
\begin{align*}
 & j_+(E_{1,{i_1}}^{k,l}(\mu,\nu))\\
= & \sum_{c \in C, d \in D}
\sum_{h=1}^{m_1(\hat{n}(k,l)+1)}\sum_{j_2=1}^{m_2(k,l+1)}
\widehat{A}_{\hat{n}(k,l),\hat{n}(k,l)+1}^{+}(i_1,c,h)
\widehat{A}_{\hat{n}(k,l)+1,\hat{n}(k,l+1)}^{-}(h,d,j_2)
E_{2,j_2}^{k,l+1}(d\mu c, d\nu c), \\
& j_-(E_{2,i_2}^{k,l}(\xi,\eta)) \\
= & \sum_{c \in C, d \in D}
\sum_{g=1}^{m_2(\hat{n}(k,l)+1)} \sum_{j_1=1}^{m_1(k-1,l)}
\widehat{A}_{\hat{n}(k,l),\hat{n}(k,l)+1}^{+}(i_2,d,g)
\widehat{A}_{\hat{n}(k,l)+1,\hat{n}(k,l+1)}^{-}(g,d,j_1)
E_{1,j_1}^{k-1,l}(c\xi d, c\eta d),
\end{align*}
where $\hat{n}(k,l) = 2(l-k-1)$. 
We then have 
\begin{gather*}
j_{-}\circ j_+: \F_{{\frak L}_1}^{k,l}\longrightarrow \F_{{\frak L}_1}^{k-1,l+1},\qquad
j_{+}\circ j_-: \F_{{\frak L}_2}^{k,l}\longrightarrow \F_{{\frak L}_2}^{k-1,l+1} \\
\intertext{and}
j_{-}\circ j_+ = \iota_-\circ\iota_+ \quad \text{ in }\F_{{\frak L}_1}, \qquad
j_{+}\circ j_- = \iota_+\circ\iota_- \quad \text{ in }\F_{{\frak L}_2},
\end{gather*}
so that the homomorphisms 
$j_+: \F_{{\frak L}_1}^{k,l}\longrightarrow \F_{{\frak L}_2}^{k,l+1}, 
j_-: \F_{{\frak L}_1}^{k,l}\longrightarrow \F_{{\frak L}_2}^{k-1,l} 
$
yield isomorphisms between 
$\F_{{\frak L}_1}$ and $\F_{{\frak L}_2}$.
We denote the isomorphism 
induced by 
$j_+: \F_{{\frak L}_1}^{k,l}\longrightarrow \F_{{\frak L}_2}^{k,l+1}$
by $\Phi:\F_{{\frak L}_1}\longrightarrow \F_{{\frak L}_2}$, 
so that 
its inverse  $\Phi^{-1}:\F_{{\frak L}_2}\longrightarrow \F_{{\frak L}_1}$
is induced by 
$j_-: \F_{{\frak L}_2}^{k,l}\longrightarrow \F_{{\frak L}_1}^{k,l+1}$.
As the equalities
\begin{equation*}
j_+\circ\rho_{{\frak L}_1} =\rho_{{\frak L}_2}\circ j_+, \qquad
j_-\circ\rho_{{\frak L}_2} =\rho_{{\frak L}_1}\circ j_- 
\end{equation*}
hold, 
we know that $\Phi\circ \rho_{{\frak L}_1} =\rho_{{\frak L}_2}\circ \Phi$.
\end{proof}

%%%%%%%%%%%%%%%%%%%%%%%%%%%%%%%%%%%%%%%%%%%%%%%%%%%%

For subshifts $\Lambda_1$ and $\Lambda_2$,
we consider their canonical $\lambda$-graph bisystems
 $({\frak L}_1^-,{\frak L}_1^+)$ and $({\frak L}_2^-, {\frak L}_2^+)$
 for the subshifts
$\Lambda_1$ and $\Lambda_2$, respectively.
As a corollary, we  have
\begin{corollary}\label{cor:conjugacy}
Suppose that two subshifts 
$\Lambda_1$ and $ \Lambda_2$ are topologically conjugate.
Then there exists an isomorphism 
$\Phi: \F_{\Lambda_1}\longrightarrow \F_{\Lambda_2}$
of $C^*$-algebras such that 
$\Phi\circ\rho_{\Lambda_1} =\rho_{\Lambda_2}\circ\Phi$.
Hence it induces an isomorphism 
$\widehat{\Phi}: \R_{\Lambda_1}\longrightarrow \R_{\Lambda_2}$
between their  crossed products
$\F_{\Lambda_1}\rtimes_{\rho_{\Lambda_1}}\Z$
and
$\F_{\Lambda_2}\rtimes_{\rho_{\Lambda_2}}\Z$.
\end{corollary}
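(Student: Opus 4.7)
The plan is to chain the two main results already in place. Given topological conjugacy of $\Lambda_1$ and $\Lambda_2$, I would first invoke Theorem \ref{thm:topconj} (quoted from \cite{MaPre2019}) to conclude that their canonical symbolic matrix bisystems $(\M_{\Lambda_1}^-, \M_{\Lambda_1}^+)$ and $(\M_{\Lambda_2}^-, \M_{\Lambda_2}^+)$ are properly strong shift equivalent in the sense of Definition \ref{def:PSSE}.

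Next I would unwind the definitions introduced in Section~2 and in the preceding part of this section. By Proposition \ref{prop:4.4}, the canonical $\lambda$-graph bisystems $({\frak L}_{\Lambda_1}^-, {\frak L}_{\Lambda_1}^+)$ and $({\frak L}_{\Lambda_2}^-, {\frak L}_{\Lambda_2}^+)$ satisfy FPCC, and by construction their associated symbolic matrix bisystems are precisely $(\M_{\Lambda_1}^-, \M_{\Lambda_1}^+)$ and $(\M_{\Lambda_2}^-, \M_{\Lambda_2}^+)$. Moreover, by the conventions fixed in Section~3 and Section~4, the AF algebras $\F_{\Lambda_i}$ and shift automorphisms $\rho_{\Lambda_i}$ are by definition $\F_{{\frak L}_{\Lambda_i}}$ and $\rho_{{\frak L}_{\Lambda_i}}$, and the crossed products $\R_{\Lambda_i}$ agree with $\R_{{\frak L}_{\Lambda_i}} = \F_{{\frak L}_{\Lambda_i}} \rtimes_{\rho_{{\frak L}_{\Lambda_i}}} \Z$. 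Thus the hypothesis of Theorem \ref{thm:psseC} is satisfied with $({\frak L}_1^-,{\frak L}_1^+) = ({\frak L}_{\Lambda_1}^-,{\frak L}_{\Lambda_1}^+)$ and $({\frak L}_2^-,{\frak L}_2^+) = ({\frak L}_{\Lambda_2}^-,{\frak L}_{\Lambda_2}^+)$.

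Applying Theorem \ref{thm:psseC} then produces an isomorphism
\begin{equation*}
\Phi : \F_{\Lambda_1} \longrightarrow \F_{\Lambda_2}
\end{equation*}
satisfying $\Phi \circ \rho_{\Lambda_1} = \rho_{\Lambda_2} \circ \Phi$, which is the first assertion. Since $\Phi$ intertwines the two automorphisms, the universal property of the crossed product by an automorphism extends $\Phi$ canonically to an isomorphism
\begin{equation*}
\widehat{\Phi} : \R_{\Lambda_1} = \F_{\Lambda_1}\rtimes_{\rho_{\Lambda_1}}\Z
\longrightarrow \F_{\Lambda_2}\rtimes_{\rho_{\Lambda_2}}\Z = \R_{\Lambda_2},
\end{equation*}
sending the generating unitary implementing $\rho_{\Lambda_1}$ to the one implementing $\rho_{\Lambda_2}$.

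Because this is really just a concatenation of Theorem \ref{thm:topconj} and Theorem \ref{thm:psseC}, no genuine obstacle arises; the only point requiring attention is the notational bookkeeping confirming that $\F_{\Lambda_i}$, $\rho_{\Lambda_i}$, and $\R_{\Lambda_i}$ are the same objects as $\F_{{\frak L}_{\Lambda_i}}$, $\rho_{{\frak L}_{\Lambda_i}}$, and $\R_{{\frak L}_{\Lambda_i}}$, which is immediate from the definitions set down right after Proposition \ref{prop:4.4} and in Section~4.
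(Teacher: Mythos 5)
Your proposal is correct and follows exactly the route the paper intends: the corollary is deduced by combining Theorem \ref{thm:topconj} (topological conjugacy implies properly strong shift equivalence of the canonical symbolic matrix bisystems) with Theorem \ref{thm:psseC}, and the passage to the crossed products is the standard universal-property argument. The notational identifications you flag are indeed the only point to check, and they hold by definition.
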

For a subshift $(\Lambda,\sigma)$,
let us denote by
$({}^t\!\Lambda, \sigma)$
the transposed subshift
$(\Lambda,\sigma^{-1})$
of $(\Lambda,\sigma)$.
The $C^*$-algebra  
$\F_{{}^t\!\Lambda}\rtimes_{\rho_{{}^t\!\Lambda}}\Z$
is canonically isomorphic to
$\F_{\Lambda}\rtimes_{{\rho_{\Lambda}}^{-1}}\Z$
that is also isomorphic to
$\F_{\Lambda}\rtimes_{{\rho_{\Lambda}}}\Z$.
Two subshifts $\Lambda_1$ and $\Lambda_2$
are said to be flip conjugate if 
$\Lambda_1$ is conjugate to $\Lambda_2$ or its transpose
${}^t\!\Lambda_2$.
Hence we have

\begin{corollary}\label{cor:flip}
Suppose that two subshifts 
$\Lambda_1$ are $\Lambda_2$ are flip conjugate.
Then there exists an isomorphism 
$\Psi:  \R_{\Lambda_1}\longrightarrow \R_{\Lambda_2}$
of $C^*$-algebras.
\end{corollary}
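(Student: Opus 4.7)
The proof is essentially a two-step chase once Corollary \ref{cor:conjugacy} is in hand, together with the remarks immediately preceding the statement of the corollary. The plan is to split on the definition of flip conjugacy. If $\Lambda_1$ is topologically conjugate to $\Lambda_2$, then Corollary \ref{cor:conjugacy} directly produces an isomorphism $\widehat{\Phi}: \R_{\Lambda_1} \longrightarrow \R_{\Lambda_2}$ and we set $\Psi := \widehat{\Phi}$. If instead $\Lambda_1$ is topologically conjugate to ${}^t\!\Lambda_2$, then Corollary \ref{cor:conjugacy} gives $\R_{\Lambda_1} \cong \R_{{}^t\!\Lambda_2}$, and it remains to produce a canonical isomorphism $\R_{{}^t\!\Lambda_2} \cong \R_{\Lambda_2}$.

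The existence of this last isomorphism factors through two steps. First, the canonical $\lambda$-graph bisystem $({\frak L}^-_{{}^t\!\Lambda},{\frak L}^+_{{}^t\!\Lambda})$ for the transposed subshift is obtained from $({\frak L}^-_\Lambda,{\frak L}^+_\Lambda)$ simply by interchanging the two labeled Bratteli diagrams, since under $\sigma\mapsto \sigma^{-1}$ the follower set in ${\frak L}^-$ becomes the predecessor set in ${\frak L}^+$ and vice versa. The construction of the AF-algebra $\F_{\frak L}$ in Section 3 is symmetric in the two embeddings $\iota_+$ and $\iota_-$, so interchanging $({\frak L}^-,{\frak L}^+)$ yields an isomorphic AF-algebra; but the shift automorphism $\rho_{\frak L}(E_i^{k,l}(\mu,\nu)) = E_i^{k+1,l+1}(\mu,\nu)$ gets reversed, because reversing the time direction sends $(k+1,l+1)$ to $(k-1,l-1)$. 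This identifies $\F_{{}^t\!\Lambda_2}\rtimes_{\rho_{{}^t\!\Lambda_2}}\Z$ with $\F_{\Lambda_2}\rtimes_{\rho_{\Lambda_2}^{-1}}\Z$, which is the first isomorphism asserted in the paragraph preceding the corollary. Second, there is the standard isomorphism $\F \rtimes_{\rho^{-1}}\Z \cong \F \rtimes_{\rho}\Z$ given by the identity on $\F$ together with $U\mapsto U^*$ on the generating unitary; this is a $*$-homomorphism precisely because conjugation by $U^*$ implements $\rho^{-1}$.

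Composing the three isomorphisms $\R_{\Lambda_1} \cong \R_{{}^t\!\Lambda_2} \cong \F_{\Lambda_2}\rtimes_{\rho_{\Lambda_2}^{-1}}\Z \cong \R_{\Lambda_2}$ yields the required $\Psi$. There is no real obstacle here; the only point that warrants a line of explanation is the symmetry of the construction of $(\F_{\frak L},\rho_{\frak L})$ under interchange of ${\frak L}^-$ and ${\frak L}^+$, which is visible directly from the definitions of $\iota_\pm$ and of $\rho_{\frak L}$ in Section 3.
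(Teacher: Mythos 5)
Your proposal is correct and follows the same route as the paper: the paper's ``proof'' is just the remark preceding the corollary, namely that $\F_{{}^t\!\Lambda}\rtimes_{\rho_{{}^t\!\Lambda}}\Z$ is canonically isomorphic to $\F_{\Lambda}\rtimes_{\rho_{\Lambda}^{-1}}\Z$ and hence to $\F_{\Lambda}\rtimes_{\rho_{\Lambda}}\Z$, combined with Corollary \ref{cor:conjugacy}. You have simply filled in the details of those two canonical isomorphisms (interchange of ${\frak L}^-$ and ${\frak L}^+$ under transposition, and $U\mapsto U^*$), which the paper leaves implicit.
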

We remark that 
the isomorphism $\Phi:\F_{{\frak L}_1}\longrightarrow \F_{{\frak L}_2}$
defined in the proof of Theorem \ref{thm:psseC}
satisfies $\Phi({\mathcal{D}}_{{\frak L}_1}) ={\mathcal{D}}_{{\frak L}_2}$
and $\Phi(C(\Lambda_1)) =C(\Lambda_2)$
where $\Lambda_1, \Lambda_2$ are presented subshifts by 
${\frak L}_1,{\frak L}_2$, respectively.
The condition
$\Phi\circ\rho_{{\frak L}_1} =\rho_{{\frak L}_2}\circ\Phi$
implies the induced isomorphism
$\widehat{\Phi}: \R_{{\frak L}_1}\longrightarrow \R_{{\frak L}_2}$
satisfies
$\widehat{\Phi} \circ \hat{\rho}_{{\frak L}_1,t} 
=\hat{\rho}_{{\frak L}_2,t}\circ \widehat{\Phi}, \, t \in \T$,
where
$\hat{\rho}_{{\frak L}_i,t}$
denotes the dual action on
$\F_{{\frak L}_i}\rtimes_{{\rho_{{\frak L}_i}}}\Z$.

We remark also recent preprints \cite{MaPre2018a}, \cite{MaPre2018b}.
If in particular a subshift is a   topological Markov shift $(\Lambda_A,\sigma_A)$,
then the $C^*$-algebra $\R_{\Lambda_A}$ denoted by $\R_A$
is nothing but the 
asymptotic Ruelle algebra written $\R^a_{\sigma_A}$ in \cite{Putnam1}.
In this case
we know much more than Corollary \ref{cor:conjugacy} and 
Corollary \ref{cor:flip}.
\begin{proposition}[{\cite{MaPre2018a}, \cite{MaPre2018b}}]
Let $(\Lambda_A, \sigma_A)$ and
 $(\Lambda_B, \sigma_B)$ be the topological Markov shifts defined by 
 irreducible non-permutation matrices $A$ and $B$, respectively.
\begin{enumerate}
%\hspace{6cm}
\renewcommand{\theenumi}{\roman{enumi}}
\renewcommand{\labelenumi}{\textup{(\theenumi)}}
\item
 $(\Lambda_A, \sigma_A)$ and
 $(\Lambda_B, \sigma_B)$   
are topologically conjugate if and only if
there exists an isomorphism
$\Phi: \R_A\longrightarrow \R_B$ such that 
$\Phi(C(\Lambda_A)) = C(\Lambda_B)$
and
$\Phi\circ \hat{\rho}_{A,t} =\hat{\rho}_{B,t} \circ \Phi, \, t \in \T$.
\item
 $(\Lambda_A, \sigma_A)$ and
 $(\Lambda_B, \sigma_B)$   
are flip conjugate if and only if
there exists an isomorphism
$\Phi: \R_A\longrightarrow \R_B$ such that 
$\Phi(C(\Lambda_A)) = C(\Lambda_B)$
and
$\Phi\circ \hat{\rho}_{A, t} =\hat{\rho}_{B,\epsilon t} \circ \Phi, \, t \in \T$,
where $\epsilon = \pm 1$,
\end{enumerate}
where $\hat{\rho}_A$ (resp. $\hat{\rho}_B$) is the dual action on
$\R_A = \F_{\Lambda_A}\rtimes_{\rho_A}\Z$ 
(resp. $\R_B = \F_{\Lambda_B}\rtimes_{\rho_B}\Z$).  
\end{proposition}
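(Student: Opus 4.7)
The ``only if'' direction of both (i) and (ii) should follow from Corollary~\ref{cor:conjugacy} (respectively Corollary~\ref{cor:flip}) combined with the remark preceding the proposition, which records that the isomorphism $\widehat{\Phi}$ built from a topological conjugacy already preserves the commutative subalgebra $C(\Lambda)$ and commutes with the dual circle action. For the flip case with $\epsilon=-1$, the canonical identification $\F_{{}^t\!\Lambda}\rtimes_{\rho_{{}^t\!\Lambda}}\Z \cong \F_\Lambda\rtimes_{\rho_\Lambda^{-1}}\Z$ sends the dual $\T$-action to its inverse, producing the intertwining with $\hat{\rho}_{B,\epsilon t}$ in place of $\hat{\rho}_{B,t}$.

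For the ``if'' direction the plan is to reconstruct the shift homeomorphism from the algebraic data. First I would observe that $\hat{\rho}_{A,t}$ acts trivially on $\F_A$ and by multiplication by $t$ on the implementing unitary $U$, so the fixed-point algebra of $\hat{\rho}_A$ is exactly $\F_A$ (and similarly for $B$). The intertwining condition therefore restricts $\Phi$ to an isomorphism $\Phi|_{\F_A}\colon\F_A\to\F_B$, and combined with $\Phi(C(\Lambda_A))=C(\Lambda_B)$ and Gelfand duality this yields a homeomorphism $\varphi\colon\Lambda_B\to\Lambda_A$ with $\Phi(f)=f\circ\varphi$ for every $f\in C(\Lambda_A)$.

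Next I would analyse the degree-$1$ spectral subspace. Since $\Phi$ intertwines the dual actions, $\Phi(U)$ lies in the degree-$1$ spectral subspace of $\R_B$, which equals $V\F_B=\F_B V$ for the implementing unitary $V\in\R_B$, so $\Phi(U)=uV$ for some unitary $u\in\F_B$. Applying $\Phi(UfU^*)=\Phi(U)\Phi(f)\Phi(U)^*$ to $f\in C(\Lambda_A)$ and using $\rho_A(f)=f\circ\sigma_A^{-1}$ together with $\rho_B(\Phi(f))=\Phi(f)\circ\sigma_B^{-1}$ will yield
\begin{equation*}
f\circ\sigma_A^{-1}\circ\varphi \;=\; u\,(f\circ\varphi\circ\sigma_B^{-1})\,u^{*}
\end{equation*}
inside $C(\Lambda_B)$. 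If one can prove $u\in C(\Lambda_B)$, the right-hand side collapses to $f\circ\varphi\circ\sigma_B^{-1}$ and, since $f\in C(\Lambda_A)$ is arbitrary, I obtain $\varphi\circ\sigma_B=\sigma_A\circ\varphi$, i.e., a topological conjugacy. Part (ii) is handled identically, the sign $\epsilon$ forcing $\Phi(U)$ into the degree-$\epsilon$ spectral subspace so that $\Phi(U)=uV^{\epsilon}$, and the final identity becomes $\varphi\circ\sigma_B=\sigma_A^{\epsilon}\circ\varphi$, which for $\epsilon=-1$ is precisely a conjugacy between $(\Lambda_B,\sigma_B)$ and $({}^t\!\Lambda_A,\sigma_A)$.

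The main obstacle will be the step showing $u\in C(\Lambda_B)$: a priori $u$ is only a unitary of $\F_B$ whose inner action preserves $C(\Lambda_B)$. To control this normaliser I would invoke the Putnam--Killough description of $\F_B$ for an irreducible non-permutation matrix $B$ as the groupoid $C^*$-algebra of the two-sided asymptotic equivalence relation on $\Lambda_B$, in which $C(\Lambda_B)$ appears as a Cartan (hence maximal abelian) subalgebra. Using the explicit matrix-unit description of the AF-core together with the bipartite factorisation employed in the proof of Theorem~\ref{thm:psseC}, one can argue that the unitary $u$ must already be diagonal, i.e.\ lie in $C(\Lambda_B)$. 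It is exactly here that the irreducibility and non-permutation hypotheses on $A$ and $B$ are needed, to rule out pathological normaliser elements arising from permutation symmetries of the underlying Markov shift.
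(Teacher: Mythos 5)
First, a point of comparison: the paper itself gives no proof of this proposition --- it is imported verbatim from \cite{MaPre2018a} and \cite{MaPre2018b} --- so your entire argument for the ``if'' directions goes beyond anything in the present text. Your ``only if'' directions are fine: they are exactly what Corollary \ref{cor:conjugacy}, Corollary \ref{cor:flip} and the remark between them record, and your observation that the identification $\F_{{}^t\!\Lambda}\rtimes_{\rho_{{}^t\!\Lambda}}\Z\cong\F_{\Lambda}\rtimes_{\rho_{\Lambda}^{-1}}\Z$ reverses the dual action correctly accounts for $\epsilon=-1$.

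The ``if'' direction, however, has a genuine gap at the step you yourself flag: the claim that $u\in C(\Lambda_B)$. What the computation actually gives is that $u$ is a unitary of $\F_B$ with $u\,C(\Lambda_B)\,u^*=C(\Lambda_B)$, i.e.\ a normalizer of the Cartan subalgebra, and normalizers of a Cartan subalgebra are emphatically not contained in it. Concretely, already each finite-dimensional subalgebra $\F_{\frak L}^{k,l}$ of $\F_B$ contains permutation unitaries of the form $\sum_{i,\mu}E_i^{k,l}(\mu,\pi_i(\mu))$; by the embedding formulas these remain sums of matrix units at every deeper level, so they normalize the diagonal (hence $C(\Lambda_B)$, which for an edge shift coincides with the AF diagonal) without being diagonal. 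The irreducibility and non-permutation hypotheses on $A,B$ do nothing to exclude such elements --- they guarantee simplicity, condition (I), and that $\Lambda_B$ has no isolated points, not triviality of the normalizer.

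Consequently $\Ad(u)$ restricted to $C(\Lambda_B)$ is implemented by a homeomorphism $\psi$ of $\Lambda_B$ that moves each point within its asymptotic equivalence class, and your displayed identity only yields $\sigma_A\circ\varphi=\varphi\circ\psi\circ\sigma_B$ (up to orientation conventions): a conjugacy of $\sigma_A$ with a perturbation of $\sigma_B$ by an asymptotic-class-preserving homeomorphism. Upgrading this ``asymptotic conjugacy'' to a genuine topological conjugacy is precisely the substantive content of \cite{MaPre2018a} and \cite{MaPre2018b}, where it is carried out through the groupoid picture (asymptotic continuous orbit equivalence and an analysis of the associated cocycles, the dual-action equivariance being used to normalize the cocycles). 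Your outline collapses exactly where the real work begins, so as written it does not constitute a proof of the ``if'' halves of (i) or (ii).
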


%%%%%%%%%%%%%%%%%%%%%%%%%%%%%%%%%%%%%%%%%%%%%%%%%%%
%%%%%%%%%%%%%%%%%%%%%%%%%%%%%%%%%%%%%%%%%%%%%%%%
\section{Didimension groups and K-theory formulas}\label{sec:Dim}
%%%%%%%%%%%%%%%%%%%%%%%%%%%%%%%%%%%%%%%%%%
Let $\LGBS$ be a $\lambda$-graph bisystem over alphabets $\Sigma^\pm$.
Let $(A^-, A^+) =(A^-_{l,l+1}, A^+_{l,l+1})_{l\in \Zp} $ be its transition matrix bisystem.
We define the sequence $(M^-, M^+)$ of $m(l) \times m(l+1)$ matrices
by 
\begin{align}
M^-_{l,l+1}(i,j) & =\sum_{\beta\in \Sigma^-}A^-_{l,l+1}(i,\beta,j), \\
M^+_{l,l+1}(i,j) & =\sum_{\alpha\in \Sigma^+}A^+_{l,l+1}(i,\alpha,j), 
\end{align}
for $i=1,2,\dots,m(l), \, j=1,2,\dots,m(l+1)$.
By the local property of $\lambda$-graph bisystem, 
the commutation relations
\begin{equation}
M^-_{l,l+1}M^+_{l+1,l+2} =M^+_{l,l+1}M^-_{l+1,l+2}, \qquad l \in \Zp \label{eq:nnmbs}
\end{equation}
hold.
The sequence 
of pairs of nonnegative matrices is called a nonnegative matrix bisystem.
In this section, 
we will introduce the notions of dimension group, K-groups  
for nonnegative matrix bisystems.
The dimension group defined in this section 
is a generalization of the  
 dimension group for  nonnegative matrices defined by W. Krieger in 
\cite{Kr80Invent}, \cite{Kr80MathAnn},
and
for nonnegative matrix systems defined by the author in \cite{MaDocMath1999}.
For a $\lambda$-graph bisystem $\LGBS$ satisfying FPCC, 
the dimension group will be isomorphic to the $K_0$-group of the AF-algebra
$\F_{\frak L}$ and K-groups are isomorphic to the K-groups for the $C^*$-algebra
$\R_{\frak L}$.
Hence if $\LGBS$ is the canonical $\lambda$-graph bisystem for a subshift $\Lambda$,
the dimension group and K-groups are invariant under topological conjugacy of subshifts.

We  first formulate the  dimension group
and the dimension triple
for nonnegative matrix bisystems.
Let $(M^-, M^+)$
be a nonnegative matrix bisystem.
The transpose ${}^t\!M^{-}_{l,l+1}$ of the matrix $M_{l,l+1}^-$
naturally induces an order preserving homomorphism from
${\Z}^{m(l)}$ to
${\Z}^{m(l+1)}$,
where the positive cone
${\Z}^{m(l)}_{+}$
of the group
${\Z}^{m(l)}$
is defined by
$$
{\Z}^{m(l)}_+ = \{ 
(n_1,n_2,\dots,n_{m(l)}) \in {\Z}^{m(l)} |
n_i \in \Zp, i=1,2,\dots,m(l) \}.
$$
We put the inductive limits:
\begin{align*}
{\Z}_{M^-} & = 
\varinjlim \{ {}^t\!M^{-}_{l,l+1}: {\Z}^{m(l)} 
            \longrightarrow {\Z}^{m(l+1)} \}, \\ %\label{eq:indM1}
{\Z}_{M^{-}}^{+} & = 
\varinjlim \{ {}^t\!M^{-}_{l,l+1}: {\Z}^{m(l)}_+ 
            \longrightarrow {\Z}^{m(l+1)}_+ \} %\label{eq:indM2}
\end{align*}
of the abelian group and its positive cone.
By the relation \eqref{eq:nnmbs},
the sequence of the transposed matrices  ${}^t\!M^{+}_{l,l+1}$
of $M^+_{l,l+1}$ 
 naturally induces an order preserving  endomorphism
on the ordered group 
${\Z}_{M^-}$ 
by 
$x_l \in \Z^{m(l)}\longrightarrow {}^t\!M^{+}_{l,l+1}x_l \in \Z^{m(l+1)}
$
that is 
denoted by
$\lambda_{M^+}$. 
We set 
$
{\Z}_{M^-}(k) = {\Z}_{M^-}
$
and
$
{\Z}^+_{M^-}(k) = {\Z}^+_{M^-}
$
for
$ k \in \N$, 
and define an abelian group and its positive cone by the following inductive 
limits:
\begin{align}
\Delta_{(M^-,M^+)} & = 
\underset{k}{\varinjlim} \{ \lambda_{M^+}:{\Z}_{M^-}(k) 
                    \longrightarrow {\Z}_{M^-}(k+1) \}, \label{eq:dim}\\
\Delta^+_{(M^-,M^+)} & = 
\underset{k}{\varinjlim}\{ \lambda_{M^+}:{\Z}^+_{M^-}(k) 
                    \longrightarrow {\Z}^+_{M^-}(k+1) \}.\label{eq:dimpo}
\end{align} 
We call the ordered group
$(\Delta_{(M^-,M^+)},\Delta^+_{(M^-,M^+)})$
the {\it dimension group  for}\/ $(M^-,M^+)$.
Let us denote by $[X,k]$ for $X \in {\Z}_{M^-}(k), k \in \N$ 
the element $[X,k] $ in $\Delta_{(M^-,M^+)}.$
Since the map
$
\delta_{(M^-, M^+)} :{\Z}_{M^-}(k) 
                    \longrightarrow {\Z}_{M^-}(k+1) 
$
defined by
$\delta_{(M^-, M^+)}([X,k]) = [X,k+1]$
for
$X \in {\Z}_{M^-}
$ 
yields an automorphism on 
$\Delta_{(M^-,M^+)}$
that preserves the positive cone
$\Delta^+_{(M^-,M^+)}$.
We still denote it by $\delta_{(M^-,M^+)}$
and call it  
the {\it dimension automorphism}.
We call 
the triple
$(\Delta_{(M^-,M^+)},\Delta^+_{(M^-,M^+)},\delta_{(M^-,M^+)})$
the {\it dimension triple for}\/ $(M^-,M^+)$.
Let
$\Lambda$ be a subshift and
$(M^-_\Lambda,M^+_\Lambda)$ its associated nonnegative matrix bisystem 
for $\Lambda$.
Then the {\it bidimension triple}
$(\Delta_{\Lambda},\Delta^+_{\Lambda},\delta_{\Lambda})$
 for subshift $\Lambda$
is defined to be the dimension triple
$(\Delta_{(M^-_\Lambda,M^+_\Lambda)},\Delta^+_{(M^-_\Lambda, M^+_\Lambda)},
\delta_{(M^-_\Lambda, M^+_\Lambda)})$.
%The {\it past dimension triple} for $\Lambda$
%is defined as the future dimension triple for the transposed subshift
%$\Lambda^T$ for $\Lambda$.

Suppose that $\LGBS$ is a $\lambda$-graph bisystem satisfying FPCC.
Consider the sequence $\F_{\frak L}^{k,l}, k,l \in \N$ with $k<l$ of
finite dimensional $C^*$-subalgebras of the AF-algebra $\F_{\frak L}$.
 Put
$\F_{\frak L}^l = \F_{\frak L}^{-l,l}$ for $l \in \N$.
The following lemma is direct.
\begin{lemma}
We have an increasing sequence
$$
\F_{\frak L}^1 \hookrightarrow \F_{\frak L}^{2}\hookrightarrow \F_{\frak L}^{3}
\hookrightarrow \cdots 
$$
such that the union $\cup_{l=1}^\infty \F_{\frak L}^l$ 
generates the AF algebra $\F_{\frak L}$. 
\end{lemma}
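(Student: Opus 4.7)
The plan is to deduce both assertions directly from the two unital embeddings
$\iota_+\colon \F_{\frak L}^{k,l}\hookrightarrow \F_{\frak L}^{k,l+1}$ and
$\iota_-\colon \F_{\frak L}^{k,l}\hookrightarrow \F_{\frak L}^{k-1,l}$
together with the commutativity $\iota_-\circ\iota_+=\iota_+\circ\iota_-$
established earlier in the section.

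First I would produce the embedding $\F_{\frak L}^l\hookrightarrow\F_{\frak L}^{l+1}$.
Since $\F_{\frak L}^l=\F_{\frak L}^{-l,l}$ and $\F_{\frak L}^{l+1}=\F_{\frak L}^{-(l+1),l+1}$,
one passes from the former to the latter by first applying $\iota_+$
(raising the upper index from $l$ to $l+1$) and then $\iota_-$
(lowering the lower index from $-l$ to $-(l+1)$); the commutative square shows
that the resulting composition does not depend on the order and is unital
and injective. Thus the displayed chain
$\F_{\frak L}^1\hookrightarrow\F_{\frak L}^2\hookrightarrow\cdots$
is a well-defined sequence of unital $*$-embeddings of finite-dimensional
$C^*$-subalgebras of $\F_{\frak L}$.

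For the density assertion, recall that $\F_{\frak L}$ is, by its very
definition, the inductive limit of the system \eqref{eq:indAF} indexed by pairs
$(k,l)\in\Z^2$ with $k<l$. It is therefore enough to show that every
$\F_{\frak L}^{k,l}$ is contained in some $\F_{\frak L}^{L}$. Given such a pair,
choose any integer $L$ with $L\ge l$ and $-L\le k$ (e.g.\ $L=\max(l,-k)$).
Applying $\iota_+$ a total of $L-l$ times and $\iota_-$ a total of $L+k$ times,
in any order allowed by the commutative square, yields a unital embedding
$\F_{\frak L}^{k,l}\hookrightarrow\F_{\frak L}^{-L,L}=\F_{\frak L}^L$
compatible with the connecting maps of the inductive system. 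Consequently
$\bigcup_{l\ge 1}\F_{\frak L}^l$ is a dense $*$-subalgebra of $\F_{\frak L}$
and generates it as a $C^*$-algebra.

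There is no real obstacle here beyond bookkeeping: the content is entirely
contained in the already-proved fact that $\iota_+$ and $\iota_-$ are
commuting unital embeddings, which lets one rearrange the $L-l$ applications
of $\iota_+$ and $L+k$ applications of $\iota_-$ freely and identify the
result inside $\F_{\frak L}^L$.
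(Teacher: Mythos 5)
Your argument is correct and is exactly the routine verification the paper has in mind: the paper states this lemma without proof (``The following lemma is direct''), and the inclusion $\F_{\frak L}^l\hookrightarrow\F_{\frak L}^{l+1}$ you construct via $\iota_+$ and $\iota_-$ is the same composition $\iota_+\circ\iota_-$ the paper later uses as $\iota_{l,l+1}$. Your cofinality argument ($L=\max(l,-k)$ puts $\F_{\frak L}^{k,l}$ inside $\F_{\frak L}^{L}$, compatibly with the connecting maps by the commutative square) correctly establishes the density claim.
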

We may write the AF algebra $\F_{\frak L}$ as
$
\F_{\frak L} = \lim_{l\to\infty}\F_{\frak L}^l.
$
We define the matrix component
$$
A^2_{l,l+1}(i,\beta,\alpha,j) \quad \text{ for }
i=1,2,\dots,m(2l-1),\,
j=1,2,\dots,m(2l+1),\,
\alpha,\beta \in \Sigma
$$
by
\begin{equation}
A^2_{l,l+1}(i,\beta,\alpha,j) 
= \sum_{k=1}^{m(2l)} A^-_{2l-1, 2l}(i, \beta,k)A^+_{2l, 2l+1}(k,\alpha,j).
\end{equation}
Then the inclusion map
$\iota_{l,l+1}: \F_{\frak L}^l \hookrightarrow \F_{\frak L}^{l+1}
$
defined by 
$$
\iota_{l,l+1}(E_i^{-l,l}(\mu,\nu)) := \iota_{+}\circ \iota_-(E_i^{-l,l}(\mu,\nu)),
\qquad l \in \N
$$
satisfies
\begin{equation}
\iota_{l,l+1}(E_i^{-l,l}(\mu,\nu)) =
\sum_{\alpha,\beta\in\Sigma}\sum_{j=1}^{m(2l+1)}
A^2_{l,l+1}(i,\beta,\alpha,j) E_j^{-l-1, l+1}(\beta\mu\alpha, \beta\nu\alpha). \label{eq:iotaA2}
\end{equation}
Put
$$
M^2_{l,l+1}(i,j) =
\sum_{\alpha,\beta\in\Sigma}
A^2_{l,l+1}(i,\beta,\alpha,j)
$$
so that
$M^2_{l,l+1}(i,j) = [M^-_{2l-1, 2l} M^+_{2l,2l+1}](i,j)
$
for $i=1,2,\dots, m(2l-1), \, j=1,2,\dots,m(2l+1)$.
Denote by 
${}^t\!M^2_{l,l+1}$ the transpose of
$M^2_{l,l+1}$.
We thus have
\begin{lemma}
For $l\in \N$, we have the commutative diagram:
\begin{equation*}
\begin{CD}
K_0(\F_{\frak L}^{l})  @>\iota_{l,l+1*}>> 
K_0(\F_{\frak L}^{l+1})  \\
@|   @| \\
 \Z^{m(2l-1)}
@>{}^t\!M^2_{l,l+1}>> 
 \Z^{m(2l+1)}. \\ 
\end{CD}
\end{equation*}
\end{lemma}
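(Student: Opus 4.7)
The plan is to unwind the identification $K_0(\F_{\frak L}^l) \cong \Z^{m(2l-1)}$ and then read off the effect of $\iota_{l,l+1}$ on a standard basis of minimal projection classes.

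First I would make the vertical identifications explicit. By definition,
\[
\F_{\frak L}^l = \F_{\frak L}^{-l,l} = \bigoplus_{i=1}^{m(-l,l)} \F_i^{-l,l} \cong \bigoplus_{i=1}^{m(2l-1)} M_{N_i^{2l-1}}(\C),
\]
since $n(-l,l) = 2l-1$ and $m(-l,l) = m(2l-1)$. The $K_0$-group of a finite direct sum of matrix algebras is canonically $\Z^{m(2l-1)}$, the isomorphism sending the class of a minimal projection in the $i$-th summand to the $i$-th standard basis vector $e_i$. Concretely, the class $[E_i^{-l,l}(\mu,\mu)]$ for any $\mu \in W_i^{2l-1}$ corresponds to $e_i \in \Z^{m(2l-1)}$, and this is the identification tacitly used at the left and right edges of the diagram.

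Next, I would chase a single basis vector around the square. Applying $\iota_{l,l+1} = \iota_+ \circ \iota_-$ to the minimal projection $E_i^{-l,l}(\mu,\mu)$, formula \eqref{eq:iotaA2} gives
\[
\iota_{l,l+1}(E_i^{-l,l}(\mu,\mu)) = \sum_{\alpha,\beta \in \Sigma} \sum_{j=1}^{m(2l+1)} A^2_{l,l+1}(i,\beta,\alpha,j)\, E_j^{-l-1,l+1}(\beta\mu\alpha,\beta\mu\alpha).
\]
The right-hand side is a sum of mutually orthogonal minimal projections; the number of those sitting in the $j$-th summand of $\F_{\frak L}^{l+1}$ is $\sum_{\alpha,\beta}A^2_{l,l+1}(i,\beta,\alpha,j) = M^2_{l,l+1}(i,j)$. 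Passing to $K_0$, the class of $\iota_{l,l+1}(E_i^{-l,l}(\mu,\mu))$ is therefore $\sum_{j=1}^{m(2l+1)} M^2_{l,l+1}(i,j)\, e_j$, which is exactly ${}^t\!M^2_{l,l+1}\, e_i$.

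Since the map $\iota_{l,l+1*}$ on $K_0$ is $\Z$-linear and agrees with ${}^t\!M^2_{l,l+1}$ on the chosen basis $\{e_i\}_{i=1}^{m(2l-1)}$, the square commutes. No step here is really an obstacle; the only care needed is the bookkeeping of indices $n(k,l)$ and $m(k,l)$, and verifying that the minimal projections appearing in the expansion \eqref{eq:iotaA2} are indeed mutually orthogonal — this follows because for distinct quadruples $(j,\alpha,\beta)$ either the index $j$ differs (giving projections in different summands) or the words $\beta\mu\alpha$ differ (giving orthogonal diagonal matrix units in the same summand).
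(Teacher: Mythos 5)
Your proof is correct and follows essentially the same route as the paper: identify $K_0(\F_{\frak L}^l)$ with $\Z^{m(2l-1)}$ via classes of minimal projections $[E_i^{-l,l}(\mu,\mu)]$, apply formula \eqref{eq:iotaA2}, and count the resulting orthogonal minimal projections in each summand to get the multiplicity $M^2_{l,l+1}(i,j)$. Your added remark on why the projections $E_j^{-l-1,l+1}(\beta\mu\alpha,\beta\mu\alpha)$ are mutually orthogonal is a small refinement the paper leaves implicit.
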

\begin{proof}
As
$
\F_{\frak L}^l= \oplus_{i=1}^{m(2l-1)}\F_{i}^{-l,l} 
=\oplus_{i=1}^{m(2l-1)} M_{N_i^{2l-1}}({\mathbb{C}}),
$
we have
\begin{equation}
K_0(\F_{\frak L}^l) \cong 
\bigoplus_{i=1}^{m(2l-1)}K_0(M_{N_i^{2l-1}}({\mathbb{C}}))\cong 
\bigoplus_{i=1}^{m(2l-1)}\Z = \Z^{m(2l-1)}. \label{eq:KZ}
\end{equation}
%The group $K_0(M_{N_i^{2l-1}}({\mathbb{C}}))$ and hence
%.
By \eqref{eq:iotaA2}, the equality 
\begin{equation}
\iota_{l,l+1*}([E_i^{-l,l}(\mu,\mu)]) 
=\sum_{j=1}^{m(2l+1)}
M^2_{l,l+1}(i,j) [E_j^{-l-1, l+1}(\beta\mu\alpha, \beta\mu\alpha)]
\end{equation}
holds.
Since the group $K_0(\F_{\frak L}^l)$ is generated by the projections
$E_i^{-l,l}(\mu,\mu), \, \mu \in W_i^{2l-1}, i=1,2,\dots,m(2l-1)$,
we obtain the desired commutative diagram.
\end{proof}
Since
$$
K_0(\F_{\frak L}) \cong
{\displaystyle\lim_{l\to\infty}}
\{ \iota_{l,l+1*}:  K_0(\F_{\frak L}^l) \longrightarrow K_0(\F_{\frak L}^{l+1}) \},
$$
 we have
\begin{proposition}
$K_0(\F_{\frak L}) \cong
{\displaystyle\lim_{l\to\infty}}\{ {}^t\!M^2_{l,l+1}: \Z^{m(2l-1)} \longrightarrow \Z^{m(2l+1)} \}.
$
\end{proposition}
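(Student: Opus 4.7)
The plan is to deduce the isomorphism directly from the continuity of the $K_0$-functor under sequential inductive limits of $C^*$-algebras, combined with the preceding lemma. Recall from the remark just before the previous lemma that $\F_{\frak L} = \lim_{l \to \infty} \F_{\frak L}^l$, with the connecting maps being the unital inclusions $\iota_{l,l+1} \colon \F_{\frak L}^l \hookrightarrow \F_{\frak L}^{l+1}$ described by $\iota_+ \circ \iota_-$. Since $K_0$ is a continuous functor on the category of $C^*$-algebras (it commutes with sequential inductive limits), we immediately obtain
\begin{equation*}
K_0(\F_{\frak L}) \cong \varinjlim_{l}\{ \iota_{l,l+1*} \colon K_0(\F_{\frak L}^l) \longrightarrow K_0(\F_{\frak L}^{l+1}) \}.
\end{equation*}

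Next, I would invoke the previous lemma term-by-term. That lemma provides, for each $l \in \N$, a commutative diagram identifying $K_0(\F_{\frak L}^l)$ with $\Z^{m(2l-1)}$ via the standard identification \eqref{eq:KZ} obtained from the block decomposition $\F_{\frak L}^l = \bigoplus_{i=1}^{m(2l-1)} M_{N_i^{2l-1}}(\mathbb{C})$, and showing that under this identification the map $\iota_{l,l+1*}$ is given by the transposed matrix ${}^t\!M^2_{l,l+1} \colon \Z^{m(2l-1)} \longrightarrow \Z^{m(2l+1)}$. Splicing these isomorphisms into the inductive system above yields a natural isomorphism of inductive systems, and hence an isomorphism on the inductive limits:
\begin{equation*}
K_0(\F_{\frak L}) \cong \varinjlim_{l}\{ {}^t\!M^2_{l,l+1} \colon \Z^{m(2l-1)} \longrightarrow \Z^{m(2l+1)} \}.
\end{equation*}

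There is essentially no obstacle here, since the two ingredients (continuity of $K_0$ and the matrix identification of $\iota_{l,l+1*}$) have already been established. The only small point to verify is that the identifications $K_0(\F_{\frak L}^l) \cong \Z^{m(2l-1)}$ in \eqref{eq:KZ} assemble into a map of inductive systems, i.e.\ that they are natural in $l$; but this is precisely the content of the commutative diagram in the preceding lemma. Therefore the proof reduces to citing continuity of $K_0$ and then applying the previous lemma entry-by-entry.
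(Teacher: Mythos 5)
Your argument is correct and is essentially identical to the paper's: the paper simply notes that $K_0(\F_{\frak L}) \cong \varinjlim_l \{\iota_{l,l+1*}: K_0(\F_{\frak L}^l) \to K_0(\F_{\frak L}^{l+1})\}$ by continuity of $K_0$ and then reads off the proposition from the commutative diagram of the preceding lemma. Your additional remark about the naturality of the identifications \eqref{eq:KZ} being exactly the content of that diagram is a fair (if implicit in the paper) observation, but nothing in your route differs from the original.
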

Define the ordered abelian group $(D_{(M^-, M^+)}, D_{(M^-, M^+)}^+)$ 
 by setting
\begin{align*}
D_{(M^-, M^+)} := &
\lim_{l\to\infty}
\{ {}^t\!M^2_{l,l+1}:  \Z^{m(2l-1)} \longrightarrow \Z^{m(2l+1)} \}, \\
D_{(M^-, M^+)}^+ := &
\lim_{l\to\infty}
\{ {}^t\!M^2_{l,l+1}:  \Z^{m(2l-1)}_+ \longrightarrow \Z^{m(2l+1)}_+ \}
\end{align*}
and the map
$d_{(M^-,M^+)}: \Z^{m(2l-1)}\longrightarrow \Z^{m(2l+1)}
$
by
\begin{equation*}
d_{(M^-,M^+)}(x):= {}^t\!M^2_{2l,2l+1}{}^t\!M^2_{2l-1,2l}x \quad \in \Z^{m(2l+1)}
\quad \text{ for } x \in \Z^{m(2l-1)}.
\end{equation*}
Since
${}^t\!M^2_{2l+1,2l+2}\circ d_{(M^-,M^+)} = d_{(M^-,M^+)}\circ {}^t\!M^2_{2l-2,2l-1}
$ for $l \in \Zp$,
the map
$d_{(M^-,M^+)}$ yields an endomorphism on $D_{(M^-, M^+)}$,
that we still denote by $d_{(M^-,M^+)}$.
The identification between $K_0(\F_{\frak L}^l)$ and 
$\Z^{m(2l-1)}$ in \eqref{eq:KZ} gives rise to the following
proposition.
\begin{proposition}\label{prop:KD}
Let ${\frak L}$ be a $\lambda$-graph bisystem satisfying FPCC and $(M^-, M^+)$ 
its nonnegative matrix bisystem. 
Then we have a natural identification
\begin{equation*}
(K_0(\F_{\frak L}),K_0(\F_{\frak L})_+, \rho_{{\frak L}*})  
\cong (D_{(M^-, M^+)},D_{(M^-, M^+)}^+, d_{(M^-, M^+)}).
\end{equation*}
\end{proposition}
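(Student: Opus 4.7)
My plan is to build on the ordered group identification $(K_0(\F_{\frak L}), K_0(\F_{\frak L})_+) \cong (D_{(M^-,M^+)}, D_{(M^-,M^+)}_+)$ that is already encoded in the preceding discussion, and to verify by a direct level-by-level computation that the automorphism $\rho_{{\frak L}*}$ corresponds to $d_{(M^-,M^+)}$ under this identification. The key point is to track what $\rho_{\frak L}$ does with respect to the filtration $\F_{\frak L}^l = \F_{\frak L}^{-l, l}$ used in the identification $K_0(\F_{\frak L}^l) \cong \Z^{m(2l-1)}$.

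I would first observe that $\rho_{\frak L}$ restricts to a $*$-isomorphism $\F_{\frak L}^{-l, l} \overset{\cong}{\to} \F_{\frak L}^{-l+1, l+1}$ via $E_i^{-l, l}(\mu, \nu) \mapsto E_i^{-l+1, l+1}(\mu, \nu)$, and because $n(-l, l) = n(-l+1, l+1) = 2l-1$ the induced map on K-theory is the identity $\Z^{m(2l-1)} \to \Z^{m(2l-1)}$. To land in $K_0(\F_{\frak L}^{l+1}) = \Z^{m(2l+1)}$ compatibly with the filtration, I would then compose with the inclusion $\iota_-^2 : \F_{\frak L}^{-l+1, l+1} \hookrightarrow \F_{\frak L}^{-l-1, l+1} = \F_{\frak L}^{l+1}$. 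Applying the defining formula for $\iota_-$ twice yields
\[
\iota_-^2(E_i^{-l+1, l+1}(\mu, \mu)) = \sum_{\beta_1, \beta_2 \in \Sigma} \sum_{k, j} A^-_{2l-1, 2l}(i, \beta_1, k)\, A^-_{2l, 2l+1}(k, \beta_2, j)\, E_j^{-l-1, l+1}(\beta_2 \beta_1 \mu, \beta_2 \beta_1 \mu),
\]
so at the level of K-theory classes the coefficient of the $j$-th generator of $\Z^{m(2l+1)}$ is precisely $[M^-_{2l-1, 2l} M^-_{2l, 2l+1}](i, j)$. Hence $(\iota_-^2 \circ \rho_{\frak L})_*$ is multiplication by ${}^t M^-_{2l, 2l+1} \cdot {}^t M^-_{2l-1, 2l}$, which is $d_{(M^-, M^+)}$ at this level.

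The remaining step is to verify that this family of level-wise maps is compatible with the transition maps ${}^t M^2_{l, l+1}$ of the inductive system defining $D_{(M^-, M^+)}$; once checked, this produces the desired endomorphism of the limit, and since $\rho_{\frak L}$ is an automorphism of $\F_{\frak L}$ the resulting map is automatically invertible. Unwinding the compatibility square reduces to the matrix identity
\[
M^-_{2l, 2l+1} M^-_{2l+1, 2l+2} M^+_{2l+2, 2l+3} = M^+_{2l, 2l+1} M^-_{2l+1, 2l+2} M^-_{2l+2, 2l+3},
\]
which follows from two applications of the commutation relation $M^-_{n, n+1} M^+_{n+1, n+2} = M^+_{n, n+1} M^-_{n+1, n+2}$ that characterizes a nonnegative matrix bisystem. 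The main obstacle is purely notational — aligning the various transpose conventions and shifted indices between the filtration and the matrix system — since the essential algebraic content is already encoded in the local property of $\LGBS$.
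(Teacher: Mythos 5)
Your proposal is correct and follows essentially the same route as the paper, whose proof of this proposition is a one-line appeal to the identification $K_0(\F_{\frak L}^l)\cong \Z^{m(2l-1)}$; you simply supply the computation that the paper leaves implicit, namely that $\rho_{{\frak L}*}$ at level $l$ is realized by $\iota_-^2\circ\rho_{\frak L}$ and hence by ${}^t\!M^-_{2l,2l+1}\,{}^t\!M^-_{2l-1,2l}$, which is exactly $d_{(M^-,M^+)}$, together with the compatibility identity that two applications of the commutation relation \eqref{eq:nnmbs} provide. The details check out, including your reading of the (typographically garbled) definition of $d_{(M^-,M^+)}$ in the text.
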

We have the following theorem.
\begin{theorem}\label{thm:DDelta}
There exists an isomorphism 
$\Psi:K_0(\F_{\frak L}) \longrightarrow \Delta_{(M^-, M^+)}$
of abelian groups such that 
$\Psi(K_0(\F_{\frak L})_+)=\Delta_{(M^-, M^+)}^+$
and
$\Psi\circ \rho_{{\frak L}*} = \delta_{(M^-, M^+)}\circ \Psi$, that is
\begin{equation*}
(K_0(\F_{\frak L}),K_0(\F_{\frak L})_+, \rho_{{\frak L}*})
 \cong
(\Delta_{(M^-, M^+)}, \Delta_{(M^-, M^+)}^+, \delta_{(M^-, M^+)}).
\end{equation*}
\end{theorem}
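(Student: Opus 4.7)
By Proposition \ref{prop:KD}, it suffices to exhibit an isomorphism
\[
\psi:(D_{(M^-, M^+)}, D_{(M^-, M^+)}^+, d_{(M^-, M^+)})\longrightarrow (\Delta_{(M^-, M^+)}, \Delta_{(M^-, M^+)}^+, \delta_{(M^-, M^+)}).
\]
The driving observation is that both inductive limits are built from the same matrix data $({}^t\!M^-_{l,l+1},{}^t\!M^+_{l,l+1})_{l\in \Zp}$. However $D_{(M^-, M^+)}$ uses the single combined step ${}^t\!M^2_{l,l+1}={}^t\!M^+_{2l,2l+1}{}^t\!M^-_{2l-1,2l}$ as its connecting map, while $\Delta_{(M^-, M^+)}$ uses ${}^t\!M^+$ and ${}^t\!M^-$ independently in a double inductive limit, with $\lambda_{M^+}$ advancing the $k$-counter of $\Delta$ and ${}^t\!M^-$ being the connecting map inside $\Z_{M^-}$. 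Thus $D_{(M^-, M^+)}$ should realize the "diagonal" cofinal subsystem in which each $\lambda_{M^+}$ is paired with one ${}^t\!M^-$, and the plan is to construct $\psi$ explicitly along that diagonal.

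Concretely, for $l\in\N$ and $Y\in\Z^{m(2l-1)}$, I would set
\[
\psi_l(Y):=\bigl[\,[Y,2l-1]_{\Z_{M^-}},\,l-1\,\bigr]_{\Delta},
\]
viewing $Y$ as a representative of a class in $\Z_{M^-}$ at level $2l-1$, placed at the $(l-1)$-th stage of the $\lambda_{M^+}$-limit. To descend to a well-defined $\psi:D_{(M^-, M^+)}\to\Delta_{(M^-, M^+)}$ one must verify $\psi_{l+1}\circ{}^t\!M^2_{l,l+1}=\psi_l$, which reduces to combining the identity $[Y,2l-1]_{\Z_{M^-}}=[{}^t\!M^-_{2l-1,2l}Y,2l]_{\Z_{M^-}}$ with one application of $\lambda_{M^+}$ (which sends this to $[{}^t\!M^+_{2l,2l+1}{}^t\!M^-_{2l-1,2l}Y,2l+1]_{\Z_{M^-}}$ and advances $k$ from $l-1$ to $l$): the resulting class in $\Delta$ is precisely $\psi_{l+1}({}^t\!M^2_{l,l+1}Y)$.

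Next I would verify that $\psi$ is an isomorphism of ordered groups. Positivity is immediate from the nonnegativity of all matrices. For surjectivity, given any representative $[[X,m]_{\Z_{M^-}},k]_{\Delta}$ in $\Delta_{(M^-, M^+)}$, I adjust it using the two available moves: ${}^t\!M^-$ (raises $m$ by $1$, leaves $k$) and $\lambda_{M^+}$ (raises both $m$ and $k$ by $1$). If $m-1\ge 2k$, take $t=m-1-2k$ applications of $\lambda_{M^+}$ to land at $(Y,2l-1,l-1)$ with $l=k+t+1$; if $m-1<2k$, first raise $m$ to $2k+1$ by ${}^t\!M^-$. Injectivity is dual: if $\psi_{l_1}(Y_1)=\psi_{l_2}(Y_2)$ with $l_1\le l_2$, applying $\lambda_{M^+}$ enough times to align the $k$-counter gives equality in $\Z_{M^-}$, and after a further ${}^t\!M^-$-stabilisation in $\Z_{M^-}$ one obtains literal equality at some level $N$, which via repeated use of the commutation relation \eqref{eq:MMnnmbs} rearranges to equality under the connecting maps of $D_{(M^-, M^+)}$.

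Finally, $\psi$ intertwines the automorphisms: since $d_{(M^-,M^+)}$ at level $l$ is two consecutive applications of ${}^t\!M^-$, and ${}^t\!M^-$ is precisely the connecting map of $\Z_{M^-}$, one computes
\[
\psi\bigl(d_{(M^-,M^+)}[Y,l]_D\bigr)
=[[{}^t\!M^-_{2l,2l+1}{}^t\!M^-_{2l-1,2l}Y,2l+1]_{\Z_{M^-}},\,l]_{\Delta}
=[[Y,2l-1]_{\Z_{M^-}},\,l]_{\Delta}
=\delta_{(M^-,M^+)}\bigl(\psi[Y,l]_D\bigr),
\]
so $\psi\circ d_{(M^-,M^+)}=\delta_{(M^-,M^+)}\circ\psi$. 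The main technical obstacle will be the bookkeeping of the two indices $m$ and $k$ in $\Delta_{(M^-, M^+)}$ against the single index $l$ in $D_{(M^-, M^+)}$, and correctly appealing to \eqref{eq:MMnnmbs} whenever the order of ${}^t\!M^+$- and ${}^t\!M^-$-steps must be swapped; however no new conceptual ingredient beyond these manipulations should be required.
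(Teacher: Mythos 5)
Your proposal is correct, but it is organized differently from the paper's proof. The paper does not route Theorem \ref{thm:DDelta} through Proposition \ref{prop:KD}: it works directly with the two-parameter system $K_0(\F_{\frak L}^{k,l})\cong\Z^{m(l-k-1)}$ and defines $\Psi$ by placing a representative $[X_i^{k,l}]$ at level $l-k-1$ inside the copy $\Z_{M^-}(l)$, so that $\iota_-$ matches the inner connecting maps ${}^t\!M^-$ and $\iota_+$ matches $\lambda_{M^+}$; the commutation relation \eqref{eq:MMnnmbs} then enters only through the well-definedness of $\lambda_{M^+}$ and the comparison of two representatives over a common refinement $(k,l)$. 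You instead collapse first to the one-parameter diagonal $D_{(M^-,M^+)}$ of Proposition \ref{prop:KD} and then identify that diagonal limit with the double limit $\Delta_{(M^-,M^+)}$. This cleanly separates the $C^*$-algebraic input (all in Proposition \ref{prop:KD}) from the purely combinatorial comparison of inductive limits, but it concentrates the use of \eqref{eq:MMnnmbs} in your cofinality/rearrangement steps. Two points to watch when writing it up: in the injectivity argument, after stabilizing to a literal equality at some level $N$ with $p$ extra applications of $\lambda_{M^+}$, you must choose $N$ and $p$ compatibly (namely $N=2l_2-1+2p$, which is always achievable by enlarging both) so that each side contains equal numbers of ${}^t\!M^-$- and ${}^t\!M^+$-factors and can be reordered into a power of ${}^t\!M^2$; and note that the paper's displayed formula for $d_{(M^-,M^+)}$ has a typo (the factors should be ${}^t\!M^-_{2l,2l+1}\,{}^t\!M^-_{2l-1,2l}$, exactly as you interpreted it), so your intertwining computation is the intended one.
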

\begin{proof}
Since $\F_{\frak L}$
is the AF algebra of the inductive system
${\displaystyle\lim_{k,l\to\infty}}\F_{\frak L}^{k,l}$,
we have
$K_0(\F_{\frak L}) ={\displaystyle\lim_{k,l\to\infty}} K_0(\F_{\frak L}^{k,l})$.
Hence for $X \in K_0(\F_{\frak L})$,
we may assume that
$ X =[X_i^{k,l}]_{i=1}^{m(k,l)} \in K_0(\F_{\frak L}^{k,l}) = \Z^{m(k,l)}$
for some $k,l\in \Z$ with $k<l$,
where $m(k,l) = m(l-k-1).$  
Since
$$
\Delta_{(M^-, M^+)} 
= \lim_{l\to\infty}\{\lambda_M^+: \Z_{M^-}(l) \longrightarrow \Z_{M^-}(l+1)\}
$$
where
$\Z_{M^-}(l) =\Z_{M^-}= 
 \lim_{n\to\infty}\{ {}^t\!M^{-}_{n,n+1}: \Z^{m(n)} \longrightarrow \Z^{m(n+1)}\}, l\in \N$,
we may define 
$
\Psi(X_i^{k,l}) \in \Z_{M^-}(l)
$
by
$$
\Psi(X_i^{k,l}) = [X_i^{k,l}]_{i=1}^{m(l-k-1)} \in \Z^{m(l-k-1)} \quad
\text{ in } \Z_{M^-}(l). 
$$
We will show that $\Psi$ yields a well-defined homomorphism
from
$
K_0(\F_{\frak L})
$ to
$\Delta_{(M^-, M^+)}$.

Suppose that 
$[X] =[Y]$ in 
$
K_0(\F_{\frak L})
$
with 
$X =[X_i^{k_1,l_1}]_{i=1}^{m(l_1-k_1-1)} \in \Z^{m(l_1-k_1-1)}$
and
$Y =[Y_j^{k_2,l_2}]_{j=1}^{m(l_2-k_2-1)} \in \Z^{m(l_2-k_2-1)}$.
Take $k, l \in \Z$ such that $k<k_i, \, l_i<l$ for $i=1,2$
and
\begin{equation}
\iota_+^{l- l_1}\circ\iota_-^{k_1 -k}(X)
= 
\iota_+^{l- l_2}\circ\iota_-^{k_2 -k}(Y). \label{eq:iotapm}
\end{equation}
We then have 
\begin{align*}
\Psi(X) 
= & [X_i^{k_1,l_1}]_{i=1}^{m(l_1-k_1-1)} \in \Z^{m(l_1-k_1-1)} \quad
\text{ in } \Z_{M^-}(l_1), \\
\Psi(Y) 
= & [Y_j^{k_2,l_2}]_{j=1}^{m(l_2-k_2-1)} \in \Z^{m(l_2-k_2-1)} \quad
\text{ in } \Z_{M^-}(l_2).
\end{align*}
As 
\begin{equation*}
\lambda_{M^+}^{l-l_1}([X_i^{k_1,l_1}]_{i=1}^{m(l_1-k_1-1)}), \quad
\lambda_{M^+}^{l-l_2}([Y_j^{k_2,l_2}]_{j=1}^{m(l_2-k_2-1)}) \in \Z_{M^-}(l)
 \end{equation*}
such that 
\begin{gather*}
\lambda_{M^+}^{l-l_1}([X_i^{k_1,l_1}]_{i=1}^{m(l_1-k_1-1)}) \in 
\Z^{m((l_1-k_1-1) + (l-l_1))} =  \Z^{m(l-k_1-1)} , \\
\lambda_{M^+}^{l-l_2}([Y_j^{k_2,l_2}]_{j=1}^{m(l_2-k_2-1)}) 
\in \Z^{m((l_2-k_2-1) + (l-l_2))} = \Z^{m(l-k_2-1) },
\end{gather*}
by \eqref{eq:iotapm}, we have
\begin{align}
& {}^t\!M^{-}_{l-k_1-1, (l-k_1-1) + (k_1-k)}
(\lambda_{M^+}^{l-l_1}([X_i^{k_1,l_1}]_{i=1}^{m(l_1-k_1-1)}) 
) \\
= &
{}^t\!M^{-}_{l-k_2-1, (l-k_2-1) + (k_2-k)}
(\lambda_{M^+}^{l-l_2}([Y_j^{k_2,l_2}]_{j=1}^{m(l_2-k_2-1)}))
\quad \text{ in }
\Z^{m(l-k-1)}. \label{eq:MlMl} 
\end{align}
We know that
\begin{equation}
[[X_i^{k_1,l_1}]_{i=1}^{m(l_1-k_1-1)} \text{ in }  \Z_{M^-}(l_1)]
=
[\lambda_{M^+}^{l-l_1}([X_i^{k_1,l_1}]_{i=1}^{m(l_1-k_1-1)} \text{ in } 
\Z_{M^-}(l)] \label{eq:XMX}
\end{equation}
as elements of 
$
\Delta_{(M^-, M^+)} 
$
%(= \lim_{l\to\infty}\{\lambda_M^+: \Z_{M^-}(l) \longrightarrow \Z_{M^-}(l+1)\})
and
\begin{align}
& [\lambda_{M^+}^{l-l_1}([X_i^{k_1,l_1}]_{i=1}^{m(l_1-k_1-1)}) \text{ in } \Z^{m(l-k_1-1)}] \\  
= & 
[{}^t\!M^{-}_{l-k_1-1, (l-k_1-1) + (k_1-k)}
(\lambda_{M^+}^{l-l_1}([X_i^{k_1,l_1}]_{i=1}^{m(l_1-k_1-1)}) 
) \text{ in } \Z^{m(l-k-1)}] \label{eq:lambdaMXM} 
\end{align}
as elements of 
$\Z_{M^-}$. 
%(= \lim_{n\to\infty}\{ M^{-t}_{n,n+1}: \Z^{m(n)} \longrightarrow \Z^{m(n+1)} \}).
Similarly,
\begin{equation}
[[Y_j^{k_2,l_2}]_{j=1}^{m(l_2-k_2-1)} \text{ in } \Z_{M^-}(l_2) ]
=
[\lambda_{M^+}^{l-l_2}([Y_j^{k_2,l_2}]_{j=1}^{m(l_2-k_2-1)}) \text{ in } 
\Z_{M^-}(l) ] \label{eq:YMY}
\end{equation}
as elements of 
$
\Delta_{(M^-, M^+)} 
$
and
\begin{align}
& [\lambda_{M^+}^{l-l_2}([Y_j^{k_2,l_2}]_{j=1}^{m(l_2-k_2-1)}) \text{ in } \Z^{m(l-k_2-1)}] \\  
= & 
[{}^t\!M^{-}_{l-k_2-1, (l-k_2-1) + (k_2-k)}
(\lambda_{M^+}^{l-l_2}([Y_j^{k_2,l_2}]_{j=1}^{m(l_2-k_2-1)}) 
) \text{ in }\Z^{m(l-k-1)}] \label{eq:lambdaMYM} 
\end{align}
as elements of 
$\Z_{M^-}$.
Therefore by
\eqref{eq:MlMl},
\eqref{eq:XMX},
\eqref{eq:lambdaMXM}, 
\eqref{eq:YMY}
and
\eqref{eq:lambdaMYM} 
we know that 
$\Psi(X) = \Psi(Y)$
as elements of 
$\Delta_{(M^-, M^+)}$
so that 
 $\Psi$ yields a well-defined homomorphism
from
$
K_0(\F_{\frak L})
$ to
$\Delta_{(M^-, M^+)}$.

Conversely,
for $[Z] \in \Z_{M^-}(l)$ in 
$\Delta_{(M^-,M^+)}$,
we my assume that 
$Z \in \Z^{m(n)}$ for some $n \in \N$
so that $Z = [Z_i]_{i=1}^{m(n)}.$
Put $ k= l-1-n$ and hence $n = l-k-1$
so that
$Z^{k,l}:= [Z_i]_{i=1}^{m(l-k-1)} \in \Z^{m(k,l)}.$
Define 
$\Phi(Z) := [Z^{k,l}] $ in $\Z^{m(k,l)} = K_0(\F_\Lambda^{k,l}).$
It is routine to check that $\Phi$ yields a well-defined homomorphism
from
$\Delta_{(M^-, M^+)}$
to
$K_0(\F_{\frak L})$
such that 
$$
\Phi\circ \Psi = \id_{K_0(\F_{\frak L})}, \qquad
\Psi\circ \Phi = \id_{\Delta_{(M^-, M^+)}}.
$$
Therefore
the group $K_0(\F_{\frak L})$
is isomorphic to 
the group 
$
\Delta_{(M^-, M^+)}.
$
The construction of $\Phi$ and $\Psi$ shows that 
$
\Psi(K_0(\F_{\frak L})_+) \subset 
\Delta_{(M^-, M^+)}^+
$
and
$
\Phi(\Delta_{(M^-, M^+)}^+)
\subset
K_0(\F_{\frak L})_+) 
$ so that 
 $$
\Psi(K_0(\F_{\frak L})_+) = \Delta_{(M^-, M^+)}^+,
\qquad
\Phi(\Delta_{(M^-, M^+)}^+)
=
K_0(\F_{\frak L})_+
$$
and hence  
$\Psi$ and $\Phi$
give rise to order preserving isomorphisms inverses to each other.

For  $X =[X_i^{k,l}]_{i=1}^{m(k,l)} \in K_0(\F_{\frak L}^{k,l})$,
we have 
$$
\rho_{{\frak L} *}(X) = [X_i^{k+1,l+1}]_{i=1}^{m(k+1,l+1)} \in K_0(\F_{\frak L}^{k+1,l+1})
$$
where 
$X_i^{k,l}=  X_i^{k+1,l+1}$ and $m(k,l) = m(k+1,l+1)$
so that 
\begin{equation}
(\Psi\circ\rho_{{\frak L} *})(X) 
= [X_i^{k+1,l+1}]_{i=1}^{m(k+1,l+1)} \quad \in \Z^{m(l-k-1)} \text{ in }
\Z_{M^-} (l+1). \label{eq:Psirho}
\end{equation}
Since the right hand side of \eqref{eq:Psirho} is nothing but 
$\delta_{(M^-, M^+)}\circ \Psi(X)$,
we have 
$\Psi\circ\rho_{{\frak L} *} = \delta_{(M^-, M^+)}\circ \Psi$.
\end{proof}

%%%%%%%%%%%%%%%%%%%%%%%%%%%%%%%%%%%%%%%%%%%%%%%%%%%
%%%%%%%%%%%%%%%%%%%%%%%%%%%%%%%%%%%%%%%%%%%%%%%%
We will next present  K-theory formulas for the $C^*$-algebra
$\R_{\frak L}$.
Let
$(M^-,M^+)$ be a nonnegative matrix bisystem.
For $l \in \Zp$, 
we set the abelian groups
\begin{align*}
K_0^l(M^-,M^+) & = 
{\Z}^{m(l+1)} / ({}^t\!M^{-}_{l,l+1} - {}^t\!M^{+}_{l,l+1}){\Z}^{m(l)},\\
K_1^l(M^-,M^+) & =
 \text{Ker}({}^t\!M^{-}_{l,l+1} - {}^t\!M^{+}_{l,l+1}) 
 \text{ in } {\Z}^{m(l)}. 
\end{align*}
The following lemma is straightforward by using the commutation relations
\eqref{eq:nnmbs}. 
%\begin{equation}
%M^-_{l,l+1}M^+_{l+1,l+2} = M^+_{l,l+1} M^-_{l+1,l+2},
%\qquad l \in \Zp. \label{eq:MM}
%\end{equation}   
\begin{lemma}\label{lem:DM9.1}
The map
${}^t\!M^{-}_{l,l+1} : {\Z}^{m(l)} \longrightarrow {\Z}^{m(l+1)}$
 naturally induces homomorphisms between the following groups:
$$
 {}^t\!M_*^{-l} : K_*^l(M^-,M^+) \rightarrow K_*^{l+1}(M^-,M^+)
 \qquad  \text{ for } \quad  *=0,1.
$$
\end{lemma}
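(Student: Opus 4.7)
The plan is to exploit the commutation relation \eqref{eq:nnmbs}. Taking transposes of both sides of $M^-_{l,l+1}M^+_{l+1,l+2} =M^+_{l,l+1}M^-_{l+1,l+2}$ yields the identity
\begin{equation*}
{}^t\!M^{+}_{l+1,l+2}\,{}^t\!M^{-}_{l,l+1}
= {}^t\!M^{-}_{l+1,l+2}\,{}^t\!M^{+}_{l,l+1},
\end{equation*}
which is the key algebraic fact underlying both induced maps. Everything else is bookkeeping.

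For the $K_1$ case, I would start with $x \in \Z^{m(l)}$ in the kernel of ${}^t\!M^{-}_{l,l+1}-{}^t\!M^{+}_{l,l+1}$, i.e., ${}^t\!M^{-}_{l,l+1}x={}^t\!M^{+}_{l,l+1}x$, and then compute
\begin{equation*}
{}^t\!M^{-}_{l+1,l+2}\bigl({}^t\!M^{-}_{l,l+1}x\bigr)
= {}^t\!M^{-}_{l+1,l+2}\bigl({}^t\!M^{+}_{l,l+1}x\bigr)
= {}^t\!M^{+}_{l+1,l+2}\bigl({}^t\!M^{-}_{l,l+1}x\bigr),
\end{equation*}
where the second equality uses the transposed commutation relation. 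Hence ${}^t\!M^{-}_{l,l+1}x \in K_1^{l+1}(M^-,M^+)$, so ${}^t\!M^{-}_{l,l+1}$ restricts to a well-defined homomorphism ${}^t\!M^{-l}_1:K_1^l(M^-,M^+) \to K_1^{l+1}(M^-,M^+)$.

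For the $K_0$ case, I would verify that ${}^t\!M^{-}_{l+1,l+2}: \Z^{m(l+1)} \to \Z^{m(l+2)}$ descends to the quotients. Namely, for any $y = ({}^t\!M^{-}_{l,l+1}-{}^t\!M^{+}_{l,l+1})x$ with $x \in \Z^{m(l)}$, the same transposed commutation identity gives
\begin{equation*}
{}^t\!M^{-}_{l+1,l+2}\,y
= {}^t\!M^{-}_{l+1,l+2}\,{}^t\!M^{-}_{l,l+1}x - {}^t\!M^{+}_{l+1,l+2}\,{}^t\!M^{-}_{l,l+1}x
= ({}^t\!M^{-}_{l+1,l+2}-{}^t\!M^{+}_{l+1,l+2})\bigl({}^t\!M^{-}_{l,l+1}x\bigr),
\end{equation*}
which lies in the denominator defining $K_0^{l+1}(M^-,M^+)$. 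Therefore ${}^t\!M^{-}_{l+1,l+2}$ factors through a homomorphism ${}^t\!M^{-l}_0:K_0^l(M^-,M^+) \to K_0^{l+1}(M^-,M^+)$.

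There is no real obstacle here; the only point requiring mild attention is keeping the indices straight, since the map inducing $K_1^l \to K_1^{l+1}$ is ${}^t\!M^{-}_{l,l+1}$ acting on $\Z^{m(l)}$, whereas the map inducing $K_0^l \to K_0^{l+1}$ is ${}^t\!M^{-}_{l+1,l+2}$ acting on $\Z^{m(l+1)}$. Both conclusions reduce to a single application of the transposed commutation relation, so the proof is essentially a one-line verification in each case.
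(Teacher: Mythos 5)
Your proof is correct and is precisely the verification the paper omits: the paper states only that the lemma ``is straightforward by using the commutation relations'' \eqref{eq:nnmbs}, and your argument supplies that check by transposing the relation and applying it once in each case. Your closing remark about the index bookkeeping (the $K_0$ map being induced by ${}^t\!M^{-}_{l+1,l+2}$ on $\Z^{m(l+1)}$ while the $K_1$ map is induced by ${}^t\!M^{-}_{l,l+1}$ on $\Z^{m(l)}$) is an accurate and worthwhile clarification of the lemma's phrasing.
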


We now define the K-groups for nonnegative matrix bisystem $(M^-,M^+)$.
\begin{definition}
The {\it K-groups for } 
$(M^-,M^+)$ are defined by the inductive limits of the abelian groups:
\begin{align}
K_0(M^-,M^+) & = \underset{l}{\varinjlim} \{ {}^t\!M_0^{-l}:
 K_0^l(M^-,M^+) \longrightarrow K_0^{l+1}(M^-,M^+) \},\\
K_1(M^-,M^+) & = \underset{l}{\varinjlim} \{ {}^t\!M_1^{-l}:
 K_1^l(M^-,M^+) \longrightarrow K_1^{l+1}(M^-,M^+) \}.
\end{align}
\end{definition}
It is routine to show the following lemma by definition. 
\begin{lemma}\label{prop:DM9.2}
\hspace{6cm}
\begin{enumerate}
\renewcommand{\theenumi}{\roman{enumi}}
\renewcommand{\labelenumi}{\textup{(\theenumi)}}
\item
$
K_0(M^-,M^+) = {\Z}_{M^-} / (\id - \lambda_{M^+}){\Z}_{M^-},
$
\item
$
K_1(M^-,M^+) = {\Ker}(\id - \lambda_{M^+}) \text{ in } {\Z}_{M^-}.
$
\end{enumerate}
\end{lemma}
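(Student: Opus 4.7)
The plan is to obtain both identities simultaneously from a single short exact sequence at each level of the defining inductive system, and then invoke exactness of the direct limit functor.

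First, I would set $D_l := {}^t\!M^{-}_{l,l+1} - {}^t\!M^{+}_{l,l+1}\colon \Z^{m(l)} \to \Z^{m(l+1)}$ and note the tautological exact sequence
\[
0 \longrightarrow K_1^l(M^-,M^+) \longrightarrow \Z^{m(l)} \xrightarrow{\ D_l\ } \Z^{m(l+1)} \longrightarrow K_0^l(M^-,M^+) \longrightarrow 0
\]
for each $l \in \Zp$. The next step is to promote this to a morphism of inductive systems where every column is connected upward by the transpose ${}^t\!M^{-}$ of Lemma \ref{lem:DM9.1}. The key computation is the intertwining relation
\[
{}^t\!M^{-}_{l+1,l+2}\, D_l \;=\; D_{l+1}\, {}^t\!M^{-}_{l,l+1},
\]
which follows directly from transposing the commutation identity \eqref{eq:nnmbs}, giving ${}^t\!M^{-}_{l+1,l+2}\,{}^t\!M^{+}_{l,l+1} = {}^t\!M^{+}_{l+1,l+2}\,{}^t\!M^{-}_{l,l+1}$. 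The same identity shows that ${}^t\!M^{-}_{l,l+1}$ descends to the well-defined map ${}^t\!M_0^{-l}$ on cokernels and restricts to the well-defined map ${}^t\!M_1^{-l}$ on kernels, exactly as in Lemma \ref{lem:DM9.1}.

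Applying $\varinjlim_l$, which is exact on abelian groups, yields the four-term exact sequence
\[
0 \longrightarrow K_1(M^-,M^+) \longrightarrow \Z_{M^-} \xrightarrow{\ \overline{D}\ } \Z_{M^-} \longrightarrow K_0(M^-,M^+) \longrightarrow 0,
\]
where $\overline{D} := \varinjlim_l D_l$ and where both copies of $\Z_{M^-}$ arise from the fact that $\varinjlim_l \Z^{m(l)}$ and $\varinjlim_l \Z^{m(l+1)}$ with connecting maps ${}^t\!M^{-}_{\bullet,\bullet+1}$ are canonically identified by cofinality of the index shift.

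The remaining task is to identify $\overline{D}$ with $\id - \lambda_{M^+}$ on $\Z_{M^-}$. For $x \in \Z^{m(l)}$ representing the class $[x,l] \in \Z_{M^-}$, the identification built into the inductive limit gives $[x,l] = [{}^t\!M^{-}_{l,l+1}x,\, l+1]$, while the very definition of $\lambda_{M^+}$ (as the endomorphism induced by the sequence ${}^t\!M^{+}_{l,l+1}$) gives $\lambda_{M^+}[x,l] = [{}^t\!M^{+}_{l,l+1}x,\, l+1]$. Subtracting,
\[
(\id - \lambda_{M^+})[x,l] \;=\; [D_l x,\, l+1] \;=\; \overline{D}[x,l],
\]
so $\overline{D} = \id - \lambda_{M^+}$; reading off the kernel and cokernel from the four-term sequence gives (i) and (ii) at once. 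The argument is essentially formal; the one place that requires genuine input is the intertwining step, and there the commutation relation \eqref{eq:nnmbs} (which encodes the local property of the $\lambda$-graph bisystem) does all the work, so I do not expect a serious obstacle.
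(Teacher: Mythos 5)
Your argument is correct and is precisely the ``routine'' verification the paper omits (the text only remarks that the lemma follows by definition): the intertwining ${}^t\!M^{-}_{l+1,l+2}D_l = D_{l+1}{}^t\!M^{-}_{l,l+1}$ is exactly the transposed form of \eqref{eq:nnmbs}, the four-term sequences pass to the limit by exactness of filtered colimits, and the identification $\overline{D}=\id-\lambda_{M^+}$ follows from $[x,l]=[{}^t\!M^{-}_{l,l+1}x,\,l+1]$ together with the paper's definition of $\lambda_{M^+}$. No gaps.
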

The following formulas show that 
the groups
$K_*(M^-,M^+)$ are determined by its dimension triple.
\begin{lemma}\label{lem:DM9.3}
\hspace{6cm}
\begin{enumerate}
\renewcommand{\theenumi}{\roman{enumi}}
\renewcommand{\labelenumi}{\textup{(\theenumi)}}
\item
$
K_0(M^-,M^+) = \Delta_{(M^-,M^+)} / (\id - \delta_{(M^-,M^+)})\Delta_{(M^-,M^+)},
$
\item
$
K_1(M^-,M^+) = {\Ker}(\id - \delta_{(M^-,M^+)} ) 
\text{ in } \Delta_{(M^-,M^+)}.
$
\end{enumerate}
\end{lemma}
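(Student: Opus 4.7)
The plan is to exploit the fact that $\Delta_{(M^-,M^+)}$ is a \emph{stationary} inductive limit: all $\Z_{M^-}(k)$ equal the single group $\Z_{M^-}$ and all structure maps equal the single endomorphism $\lambda_{M^+}$, while $\delta_{(M^-,M^+)}$ is simply the shift. The desired formulas then follow by combining Lemma \ref{prop:DM9.2} with a general algebraic identification of the shift's invariants and coinvariants on the limit with those of the generating endomorphism on $\Z_{M^-}$. Concretely, I would introduce the natural homomorphism $\iota : \Z_{M^-} \to \Delta_{(M^-,M^+)}$, $\iota(X) := [X,1]$, and show it induces isomorphisms in both (i) and (ii). The key computation, using the two identities $[X,k] = [\lambda_{M^+}(X), k+1]$ (from the inductive-limit relations) and $\delta_{(M^-,M^+)}[X,k] = [X,k+1]$ (by definition of $\delta$), is
\begin{equation*}
(\id - \delta_{(M^-,M^+)})[X,k] = [X,k] - [X,k+1] = [\lambda_{M^+}(X),k+1] - [X,k+1] = -[(\id-\lambda_{M^+})X,\, k+1],
\end{equation*}
which links the two endomorphisms at the level of elements.

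For (i), this computation immediately shows $\iota$ descends to a well-defined map
$$\bar\iota : \Z_{M^-}/(\id-\lambda_{M^+})\Z_{M^-} \longrightarrow \Delta_{(M^-,M^+)}/(\id-\delta_{(M^-,M^+)})\Delta_{(M^-,M^+)}.$$
Surjectivity follows from the observation that $[X,k] - [X,k-1] = (\delta_{(M^-,M^+)} - \id)[X,k-1]$, so that inductively $[X,k] \equiv [X,1] = \iota(X)$ modulo $(\id-\delta_{(M^-,M^+)})\Delta_{(M^-,M^+)}$, hence every class in the quotient is hit. For injectivity, suppose $[X,1] = -[(\id-\lambda_{M^+})Y, k+1]$ in $\Delta_{(M^-,M^+)}$; unwinding the inductive-limit equivalence produces an integer $r \geq 0$ with $\lambda_{M^+}^{k+r}(X) = (\id-\lambda_{M^+})\bigl(-\lambda_{M^+}^r Y\bigr)$ in $\Z_{M^-}$. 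Since $(\id-\lambda_{M^+})\Z_{M^-}$ absorbs powers of $\lambda_{M^+}$ applied to $X$ (because $X - \lambda_{M^+}(X) \in (\id-\lambda_{M^+})\Z_{M^-}$), one inductively gets $X \equiv \lambda_{M^+}^{k+r}(X) \equiv 0 \bmod (\id-\lambda_{M^+})\Z_{M^-}$.

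For (ii), the same key computation shows that if $\lambda_{M^+}(X) = X$ then $(\id-\delta_{(M^-,M^+)})[X,1] = -[(\id-\lambda_{M^+})X,2] = 0$, so $\iota$ restricts to a map $\Ker(\id-\lambda_{M^+}) \to \Ker(\id-\delta_{(M^-,M^+)})$. Injectivity is immediate: if $\iota(X) = [X,1] = 0$ in $\Delta_{(M^-,M^+)}$ then $\lambda_{M^+}^{p-1}(X) = 0$ for some $p \geq 1$; combined with $\lambda_{M^+}(X) = X$ this gives $X = 0$. For surjectivity, given $Z = [Y,k] \in \Ker(\id-\delta_{(M^-,M^+)})$, the equality $[Y,k] = [Y,k+1] = [\lambda_{M^+}(Y), k+1]$ forces $\lambda_{M^+}^{r+1}(Y) = \lambda_{M^+}^r(Y)$ in $\Z_{M^-}$ for some $r \geq 0$; then $X := \lambda_{M^+}^r(Y)$ satisfies $\lambda_{M^+}(X) = X$ and $\iota(X) = [X,1] = [X, k+r] = [Y,k] = Z$, using that $[X,1] = [X,m]$ for any $m \geq 1$ since $\lambda_{M^+}(X) = X$.

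The main obstacle is bookkeeping the inductive-limit equivalence inside $\Delta_{(M^-,M^+)}$, in particular keeping track of how the level indices interact with $\lambda_{M^+}$ and $\delta_{(M^-,M^+)}$; none of the steps is deep, but one must be careful that representatives at different levels are compared at a common sufficiently large level. Once these verifications are performed, Lemma \ref{prop:DM9.2} translates the target formulas $K_0(M^-,M^+) = \Z_{M^-}/(\id-\lambda_{M^+})\Z_{M^-}$ and $K_1(M^-,M^+) = \Ker(\id-\lambda_{M^+})$ into the claimed expressions involving $\delta_{(M^-,M^+)}$ and $\Delta_{(M^-,M^+)}$.
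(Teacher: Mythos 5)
Your proposal is correct and follows essentially the same route as the paper: the paper simply observes that $\delta_{(M^-,M^+)}$ acts via $\lambda_{M^+}$ on $\Delta_{(M^-,M^+)}$ and declares the assertions straightforward from Lemma \ref{prop:DM9.2}, which is exactly the reduction you carry out, with the inductive-limit bookkeeping (the identity $(\id-\delta_{(M^-,M^+)})[X,k]=-[(\id-\lambda_{M^+})X,k+1]$ and the level-comparison arguments) filled in correctly.
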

\begin{proof}
As the automorphism
$\delta_{(M^-,M^+)}$ is given by
$\lambda_{M^+} = \{ {}^t\!M^{+}_{l,l+1} \}$ 
on  
$\Delta_{(M^-,M^+)}$,
the assertions are straightforward
by Lemma \ref{prop:DM9.2}.
\end{proof}

Now the $C^*$-algebra $\R_{\frak L}$ is the crossed product
$\F_{\frak L}\rtimes_{\rho_{\frak L}}\Z$, so that the six term
exact sequence of K-theory (\cite{PV})
\begin{equation*}
\begin{CD}
 K_0(\F_{\frak L}) @>\id - {\rho_{{\frak L}*}}>> 
 K_0(\F_{\frak L}) @>\iota_{*}>> 
 K_0(\R_{\frak L}) \\
@AAA @. @VVV \\
 K_1(\R_{\frak L})
@<<\iota_{*}<
 K_1(\F_{\frak L}) @<<\id - {\rho_{{\frak L}*}}< 
 K_1(\F_{\frak L}) \\ 
\end{CD}
\end{equation*}
holds.
As $\F_{\frak L}$ is an AF-algebra, 
one sees that $K_1(\F_{\frak L}) = 0$, 
and hence  we have the following formulas.
\begin{lemma}\label{lem:Ktheory1}
\begin{align*}
K_0(\R_{\frak L})  \, 
& \cong \, K_0(\F_{\frak L}) / (\id - {\rho}_{{\frak L}*}) K_0(\F_{\frak L}), \\
K_1(\R_{\frak L}) \,  
& \cong \, \Ker(\id - {\rho}_{{\frak L}*}) \text{ in } K_0(\F_{\frak L}). \\
\end{align*}
\end{lemma}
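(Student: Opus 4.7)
The plan is to read the two formulas directly off the Pimsner--Voiculescu six-term exact sequence associated with the crossed product $\R_{\frak L}=\F_{\frak L}\rtimes_{\rho_{\frak L}}\Z$ (see \cite{PV}), which in this situation takes the form
\begin{equation*}
\begin{CD}
 K_0(\F_{\frak L}) @>\id - {\rho_{{\frak L}*}}>>
 K_0(\F_{\frak L}) @>\iota_{*}>>
 K_0(\R_{\frak L}) \\
@AAA @. @VVV \\
 K_1(\R_{\frak L})
@<<\iota_{*}<
 K_1(\F_{\frak L}) @<<\id - {\rho_{{\frak L}*}}<
 K_1(\F_{\frak L}).
\end{CD}
\end{equation*}

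The essential input specific to our setting is that $\F_{\frak L}$ is an AF-algebra (established in Section~3 as an inductive limit of finite-dimensional $C^*$-algebras via the system \eqref{eq:indAF}), and hence $K_1(\F_{\frak L})=0$. Plugging this into the hexagon collapses it into the four-term exact sequence
\begin{equation*}
0 \longrightarrow K_1(\R_{\frak L}) \longrightarrow
K_0(\F_{\frak L}) \xrightarrow{\id-\rho_{{\frak L}*}} K_0(\F_{\frak L})
\xrightarrow{\iota_*} K_0(\R_{\frak L}) \longrightarrow 0.
\end{equation*}

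From exactness at the left $K_0(\F_{\frak L})$, the connecting map identifies $K_1(\R_{\frak L})$ with $\Ker(\id-\rho_{{\frak L}*})$ inside $K_0(\F_{\frak L})$. From exactness at the right $K_0(\F_{\frak L})$ together with surjectivity of $\iota_*$, the first isomorphism theorem identifies $K_0(\R_{\frak L})$ with the quotient $K_0(\F_{\frak L})/(\id-\rho_{{\frak L}*})K_0(\F_{\frak L})$. These are precisely the two formulas asserted in the lemma. There is no serious obstacle: the only nontrivial ingredients are the general Pimsner--Voiculescu theorem and the vanishing of $K_1$ for AF-algebras, both of which are classical, and both are already in place from earlier in the paper.
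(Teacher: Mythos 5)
Your proposal is correct and follows exactly the paper's own argument: the paper likewise invokes the Pimsner--Voiculescu six-term exact sequence for $\F_{\frak L}\rtimes_{\rho_{\frak L}}\Z$ and uses $K_1(\F_{\frak L})=0$ (since $\F_{\frak L}$ is AF) to collapse it to the stated formulas. Your write-up merely spells out the exactness bookkeeping a bit more explicitly than the paper does.
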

%%%%%%%%%%%%%%%%%%%%%%%%%%%%%%%%%%%%%%%%
By Theorem \ref{thm:DDelta},  
we conclude the following theorem.
\begin{theorem}\label{thm:RLKgroup}
$
K_i(\R_{\frak L}) \cong K_i(M^-,M^+), \, i=0,1.$
\end{theorem}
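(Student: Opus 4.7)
The plan is to chain together three ingredients already assembled in the paper, so that the proof becomes essentially bookkeeping with no new analytic input.

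First, I would invoke Lemma \ref{lem:Ktheory1}, which records the consequence of the Pimsner--Voiculescu six-term exact sequence applied to the crossed product $\F_{\frak L}\rtimes_{\rho_{\frak L}}\Z$: since $\F_{\frak L}$ is AF we have $K_1(\F_{\frak L})=0$, so the six-term sequence collapses to
\begin{equation*}
K_0(\R_{\frak L}) \cong K_0(\F_{\frak L}) / (\id - \rho_{{\frak L}*}) K_0(\F_{\frak L}), \qquad K_1(\R_{\frak L}) \cong \Ker(\id - \rho_{{\frak L}*}) \text{ in } K_0(\F_{\frak L}).
\end{equation*}

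Next I would transport these two formulas along the isomorphism furnished by Theorem \ref{thm:DDelta}. That theorem produces an order isomorphism
$\Psi: K_0(\F_{\frak L}) \longrightarrow \Delta_{(M^-,M^+)}$ intertwining the automorphisms $\rho_{{\frak L}*}$ and $\delta_{(M^-,M^+)}$. Because $\Psi$ is a group isomorphism conjugating one automorphism to the other, it automatically carries the subgroup $(\id-\rho_{{\frak L}*})K_0(\F_{\frak L})$ onto $(\id-\delta_{(M^-,M^+)})\Delta_{(M^-,M^+)}$, and restricts to an isomorphism between the kernels of $\id-\rho_{{\frak L}*}$ and $\id-\delta_{(M^-,M^+)}$. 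Hence
\begin{equation*}
K_0(\R_{\frak L}) \cong \Delta_{(M^-,M^+)}/(\id-\delta_{(M^-,M^+)})\Delta_{(M^-,M^+)}, \qquad K_1(\R_{\frak L}) \cong \Ker(\id-\delta_{(M^-,M^+)}) \text{ in } \Delta_{(M^-,M^+)}.
\end{equation*}

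Finally I would apply Lemma \ref{lem:DM9.3}, which identifies precisely these two objects with $K_0(M^-,M^+)$ and $K_1(M^-,M^+)$ respectively; this lemma is the translation of the inductive-limit description of $K_*(M^-,M^+)$ into the language of $\Delta_{(M^-,M^+)}$ and its dimension automorphism, via the formulas of Lemma \ref{prop:DM9.2}. Composing these three identifications gives the desired isomorphism $K_i(\R_{\frak L})\cong K_i(M^-,M^+)$ for $i=0,1$.

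Since every step reduces to invoking a statement already proved earlier in the paper, there is no real obstacle: the whole argument is a purely formal diagram chase. The only thing one should double-check — and it is immediate from the construction of $\Psi$ in the proof of Theorem \ref{thm:DDelta} — is that $\Psi$ really does intertwine the two $\Z$-actions on the nose (not merely up to inner automorphism), so that it legitimately descends to the quotient and restricts to the kernel in the manner used above.
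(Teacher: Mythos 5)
Your proposal is correct and follows exactly the route the paper takes: the paper's own proof of Theorem \ref{thm:RLKgroup} is simply the concatenation of Lemma \ref{lem:Ktheory1} (the Pimsner--Voiculescu computation), Theorem \ref{thm:DDelta} (the equivariant isomorphism $\Psi$), and Lemma \ref{lem:DM9.3}. Your added remark that $\Psi$ intertwines $\rho_{{\frak L}*}$ and $\delta_{(M^-,M^+)}$ exactly, hence carries $(\id-\rho_{{\frak L}*})K_0(\F_{\frak L})$ onto $(\id-\delta_{(M^-,M^+)})\Delta_{(M^-,M^+)}$ and restricts to the kernels, is precisely the point the paper leaves implicit.
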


%\newpage

%%%%%%%%%%%%%%%%%%%%%%%%%%%%%%%%%%%%%%%%%%%
%%%%%%%%%%%%%%%%%%%%%%%%%%%%%%%%%%%%%%%
\section{Topological Markov shifts}
%%%%%%%%%%%%%%%%%%%%%%%%%%%%%%%%%%%%%%%%%
In this section, we will compute the dimension triple 
$(\Delta_{(M^-,M^+)},\Delta_{(M^-,M^+)}^+,\delta_{(M^-,M^+)})$
and the K-groups $K_i(\R_{\frak L}), i=0,1$ for some examples.

Let $G =(V,E)$ be a finite directed graph.
We assume that $G$ is primitive (see \cite[p. 127]{LM}). 
Let $V = \{ v_1,\dots,v_N\}$
and put
$\Sigma =E$ the edge set of $G$.
We further assume that $G$ does not have multiple edges.
Put $\widetilde{V} = V \times V$.
We write each element 
$(v_p, v_i) $ of $\widetilde{V}$
as $(p,i)$, so that 
$\widetilde{V} = \{(p,i) \mid p,i=1,2,\dots,N\}$.
We also write an edge from $v_i$ to $v_j$ as $\alpha_{ij}$.
If there is no edge from $v_i$ to $v_j$,
we write $\alpha_{ij} =0$. 
Hence the adjacency matrix of the graph is written
$\A = [\alpha_{ij}]_{i,j=1}^N$.
Let $\Lambda_G$ be the topological Markov shift
over $\Sigma$
defined by the graph $G$ (see \cite{LM}).
\begin{lemma}
Let $(\M_{\Lambda_G}^-,\M_{\Lambda_G}^+)$ be the canonical symbolic matrix bisystem for the topological Markov shift $\Lambda_G$ defined by a primitive finite directed graph $G$ as an edge shift.
Define $N^2 \times N^2$ matrices $\M_G^-, \M_G^+$
over $\Sigma$ by setting
\begin{align*}
\M_G^-((p,i), (q,j))
=& 
{\begin{cases}
\alpha_{qp} & \text{ if } i=j, \\
0 & \text{ if } i\ne j, 
\end{cases}} \\
\M_G^+((p,i), (q,j))
=& 
{\begin{cases}
\alpha_{ij} & \text{ if } p=q, \\
0 & \text{ if }  p \ne q 
\end{cases}} \\
\end{align*}
for $(p,i), (q,j) \in N\times N =N^2$, that is 
\begin{equation}
\M_G^- := 
\begin{bmatrix}
\alpha_{11} I_N & \alpha_{21} I_N  & \cdots & \alpha_{N1} I_N  \\
\alpha_{12} I_N  & \alpha_{22} I_N  & \cdots  & \alpha_{N2} I_N  \\
\vdots              & \vdots              & \ddots & \vdots    \\
\alpha_{1N} I_N  & \alpha_{2N} I_N &  \cdots& \alpha_{NN} I_N 
\end{bmatrix},
\qquad
\M_G^+ := 
\begin{bmatrix}
\A  & 0    & \dots & 0 \\
0     & \A & \ddots & \vdots \\
\vdots     & \ddots  & \ddots & 0 \\
0     & \dots     &  0 & \A
\end{bmatrix}. \label{eq:symbma2}
\end{equation}
Then there exists $L \in \N$ such that  
\begin{equation*}
{\M_{\Lambda_G}^-}_{l,l+1} = {\M_G^-}_{l,l+1}, \qquad
{\M_{\Lambda_G}^+}_{l,l+1} = {\M_G^+}_{l,l+1} \quad \text{ for all }\,\, l \ge L.
\end{equation*}
\end{lemma}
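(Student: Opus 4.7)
The plan is to identify the equivalence classes defining the canonical $\lambda$-graph bisystem for the edge shift $\Lambda_G$ with the set $V\times V$ once the level is large enough, and then read off the edge structure directly. The key observation is that for an edge shift, the central filling set $W_{k,l}(x)$ consists precisely of paths $(\mu_{k+1},\dots,\mu_{l-1})$ of length $n(k,l)=l-k-1$ in $G$ with $s(\mu_{k+1})=t(x_k)$ and $t(\mu_{l-1})=s(x_l)$. Thus $W_{k,l}(x)$ depends only on the pair $(t(x_k),s(x_l))\in V\times V$.

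Since $G$ is primitive, there exists $L_0\in\N$ such that for every $n\geq L_0$ and every $(p,i)\in V\times V$ there is at least one path in $G$ of length $n$ from $p$ to $i$. For $n(k,l)\geq L_0$, the sets $W_{k,l}(x)$ attached to distinct pairs $(p,i)\ne(p',i')$ are nonempty and disjoint, since each path in the set has a uniquely determined source and target. Hence the assignment $x\mapsto(t(x_k),s(x_l))$ is a bijection $\Omega^c_{k,l}\to V\times V$. Setting $L=L_0+1$, this bijection applies to both $\Omega^c_{-l,1}$ and $\Omega^c_{-1,l}$ for $l\geq L$, and the two identifications are compatible with the shift identification of $V_l^-$ with $V_l^+$ used in the construction of ${\frak L}_\Lambda^\pm$. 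I would then index the $N^2$ classes by pairs $(p,i)$ in the same order as the block indexing of $\M_G^\pm$ in \eqref{eq:symbma2}.

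To recover the symbolic matrices, I would compute directly from the relations $C_j^{l+1}\beta\subset C_i^l$ and $\alpha C_j^{l+1}\subset C_i^l$. Take a representative $x$ of the class labelled $(q,j)$ in $V_{l+1}$ of ${\frak L}_\Lambda^-$, so that $t(x_{-(l+1)})=q$ and $s(x_1)=j$. Insertability of $\beta$ at position $-l$ forces $s(\beta)=q$, and the inserted sequence $x(\beta,\nu)$ has class label $(t(\beta),s(x_1))=(t(\beta),j)$ in $V_l$. Hence an edge from $(q,j)$ to $(p,i)$ in ${\frak L}_\Lambda^-$ labelled $\beta$ exists precisely when $j=i$ and $\beta=\alpha_{qp}$, reproducing $\M_G^-((p,i),(q,j))=\alpha_{qp}$ if $i=j$ and $0$ otherwise. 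The parallel computation for ${\frak L}_\Lambda^+$, starting from $x\in C^{-1,l+1}_{(q,j)}$ and inserting $\alpha$ at position $l$, yields class $(q,s(\alpha))$ in $V_l$; matching to $(p,i)$ forces $p=q$ and $\alpha=\alpha_{ij}$, reproducing $\M_G^+((p,i),(q,j))=\alpha_{ij}$ if $p=q$ and $0$ otherwise.

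The main technical obstacle is isolating a single threshold $L$ that works uniformly at both levels $l$ and $l+1$, for both ${\frak L}_\Lambda^-$ and ${\frak L}_\Lambda^+$ simultaneously, and verifying that the orderings of the $N^2$ classes on the two sides are consistent with the row/column conventions of $\M_G^\pm$ displayed in \eqref{eq:symbma2}. Neither step is deep: primitivity supplies the uniform $L_0$, and the matching of enumerations is forced once one fixes a consistent indexing of $V\times V$ on both sides of the bisystem.
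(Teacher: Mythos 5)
Your proposal is correct and follows essentially the same route as the paper's proof: identify each central equivalence class $\Omega^c_{-l,1}$ (resp.\ $\Omega^c_{-1,l}$) with a pair of vertices of $G$ via $x\mapsto(t(x_{-l}),s(x_1))$ (resp.\ $(t(x_{-1}),s(x_l))$), invoke primitivity to get a uniform level $L$ past which all $N^2$ pairs occur and are distinguished, and then read off the labeled edges from the insertion relations $C_j^{l+1}\beta\subset C_i^l$ and $\alpha C_j^{l+1}\subset C_i^l$. You are in fact somewhat more explicit than the paper about why the map onto $V\times V$ is a bijection (nonemptiness of each path set plus the fact that a path determines its endpoints), which is a welcome addition rather than a deviation.
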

\begin{proof}
Since the matrix $\A$ is primitive,
there exists $L$ such that $\A^l(i,j) \ne 0$ for all
$i,j =1,2,\dots, N, \, l \ge L.$
Take and fix $l \ge L.$
The equivalence class
$\Omega_{-l, 1}^c$ is determined by the two vertices
$v_q$ and $v_j$ such that 
$y \in \Lambda$ belongs to $\Omega_{-l,1}^c$
if and only if  $v_q = t(y_{-l})$ and $v_j = s(y_1)$.  
If there exists $\beta \in \Sigma =E$ such that 
$s(\beta) = v_q, t(\beta) = v_p$,
then the class $\Omega_{-l+1,1}^c$
defined by $y_{(-\infty,l]} \beta$ and $y_{[1,\infty)}$
determines the two vertices $v_p$ and $v_i$.
Hence if there exists an edge $\beta$ in $G$ such that 
$s(\beta) = v_q, t(\beta) = v_p$, then
we have
$\M_G^-((p,j), (q,j)) = \beta (= \alpha_{qp}) $.  
It is clear that 
$\M_G^-((p,i), (q,j)) = 0$ if $i\ne j.$

On the other hand, 
the equivalence class
$\Omega_{-1,l}^c$ is determined by the two vertices
$v_p$ and $v_j$ such that 
$x \in \Lambda$ belongs to $\Omega_{-1,l}^c$
if and only if  $v_p = t(x_{-1})$ and $v_j = s(x_l)$.  
If there exists $\alpha \in \Sigma =E$ such that 
$s(\alpha) = v_i, t(\alpha) = v_j$,
then the class $\Omega_{-1,l-1}^c$
defined by $x_{(-\infty,-1]}$ and $\alpha x_{[l,\infty)}$
determines the two vertices $v_p$ and $v_i$.
Hence if there exists an edge $\alpha$ in $G$ such that 
$s(\alpha) = v_i, t(\alpha) = v_j$, then
we have
$\M_G^+((p,i), (p,j)) = \alpha (=\alpha_{ij})$.  
It is clear that 
$\M_G^+((p,i), (q,j)) = 0$ if $p\ne q.$
\end{proof}

For the symbolic matrix $\A$ above, let us denote by $A$ the $N \times N$ matrix 
over $\{0,1\}$ obtained from $\A$ by setting all the symbols $\alpha_{ij}$ equal to $1$.
Let us denote by $M_N(\Z)$
the abelian group of $N\times N$ matrix algebra over $\Z$.
For the $N\times N$ matrix $A$,
let $\Psi_A: M_N(\Z) \longrightarrow M_N(\Z) $
be the homomorphism defined by
$\Psi_A(X) = {}^t\!A X {}^t\!A$ for $X \in M_N(\Z). $
In our situation, we note that $m(2l-1) = m(2l+1) = N^2$ for all $l \ge L.$
Let $M_G^-, M_G^+$ be 
the $N^2 \times N^2$ matrix 
over $\{0,1\}$ obtained from $\M_G^-, \M_G^+$ by setting all the symbols $\alpha_{ij}$ equal to $1$, respectively.
We put $M_G^2 = M_G^- M_G^+$.
\begin{lemma}
For $l\in \N$ with $l \ge L$, we have the commutative diagram:
\begin{equation*}
\begin{CD}
\Z^{m(2l-1)} @>M_G^2>>  
\Z^{m(2l+1)}  \\
@|   @| \\
 M_N(\Z)
@>\Psi_A>> 
 M_N(\Z). \\ 
\end{CD}
\end{equation*}
\end{lemma}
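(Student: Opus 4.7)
The plan is to verify the commutativity of the diagram by a direct entrywise computation, using the block decompositions of $\M_G^-$ and $\M_G^+$ in \eqref{eq:symbma2}. First I would fix the identification $\Z^{m(2l-1)} = \Z^{N^2} \cong M_N(\Z)$ that assigns to a vector $X = [X((p,i))]_{p,i=1}^{N}$ the matrix $Y \in M_N(\Z)$ with $Y(p,i) = X((p,i))$, and similarly on the right. Under these identifications, the content of the lemma is that $(M_G^2 X)((p,i)) = \Psi_A(Y)(p,i) = ({}^t\!A\,Y\,{}^t\!A)(p,i)$ for all $(p,i)$.

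Next I would compute $M_G^2 = M_G^- M_G^+$ in block form. After replacing each symbol $\alpha_{ij}$ by its indicator $A(i,j)$, the block presentation in \eqref{eq:symbma2} shows that the $(p,q)$-block of $M_G^-$ equals $A(q,p)\,I_N$ while $M_G^+$ is block-diagonal with each diagonal block equal to $A$. Multiplying block by block,
\begin{equation*}
(M_G^- M_G^+)_{(p,q)\text{-block}} \;=\; \sum_{r=1}^{N} A(r,p)\,I_N \cdot \delta_{r,q}\,A \;=\; A(q,p)\,A,
\end{equation*}
so that the $((p,i),(q,j))$-entry of $M_G^2$ is $A(q,p)\,A(i,j)$.

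Finally I would apply this to an arbitrary $X \in \Z^{N^2}$ with corresponding matrix $Y$. Using the entrywise formula,
\begin{equation*}
(M_G^2 X)((p,i)) \;=\; \sum_{q,j} A(q,p)\,A(i,j)\,Y(q,j) \;=\; \sum_{q,j} {}^t\!A(p,q)\,Y(q,j)\,{}^t\!A(j,i) \;=\; ({}^t\!A\,Y\,{}^t\!A)(p,i),
\end{equation*}
which is exactly $\Psi_A(Y)(p,i)$. This proves commutativity of the diagram.

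The argument is a bookkeeping calculation and there is no substantial obstacle; the only point to get right is the matching between the double index $(p,i)$ in $\Z^{N^2}$ and the row/column indices of $M_N(\Z)$, and being careful that the $I_N$ block structure in $\M_G^-$ couples the second index while the $A$ block structure in $\M_G^+$ couples the first index, which is what produces the two-sided action ${}^t\!A\,(-)\,{}^t\!A$ rather than a one-sided one.
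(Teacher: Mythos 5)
Your proof is correct and follows essentially the same route as the paper: a direct entrywise computation showing that the $((p,i),(q,j))$-entry of $M_G^2=M_G^-M_G^+$ equals $A(q,p)\,A(i,j)$, which is precisely the matrix of $X\mapsto {}^t\!A\,X\,{}^t\!A$ under the identification $\Z^{N^2}\cong M_N(\Z)$ (the paper runs the same calculation in the opposite direction, expanding $[{}^t\!A X{}^t\!A]_{(p,j)}$ and recognizing the entries of $M_G^2$). Only your closing informal remark has the roles of the two indices swapped --- $M_G^-$ acts nontrivially on the first index and $M_G^+$ on the second --- but the displayed computation itself is right.
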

\begin{proof}
We assume  $l \ge L$.
%Define the nonnegative matrix$M^2 = M^- M^+$.
For $X =[X_{(q,i)}]_{(q,i)\in N\times N}\in M_N(\Z)$,
we have
\begin{align*}
 [{}^t\!A X {}^t\!A]_{(p,j)}
=& \sum_{(q,i)\in N\times N}\alpha_{qp}\alpha_{ji} X_{(q,i)} \\
=& \sum_{(q,i)\in N\times N}M_G^- M_G^+((p,j), (q,i)) X_{(q,i)} \\
=& \sum_{(q,i)\in N\times N}M_G^2((p,j), (q,i)) X_{(q,i)} 
\end{align*} 
so that we obtain the desired commutative diagram.
\end{proof}
Therefore we have the following proposition.
\begin{proposition}
Let $(\Lambda_G,\sigma)$ be the topological Markov shift
defined by a primitive directed graph $G =(V,E)$.
Let $A$ be its transition matrix.
Then we have
\begin{equation}
K_0(\F_{\Lambda_G}) 
=\lim_{l\to\infty}\{\Psi_A: M_N(\Z)\longrightarrow M_N(\Z) \} \label{eq:K0markov}
\end{equation}
where
$\Psi_A(X) = {}^t\!A X {}^t\!A$ for $X \in M_N(\Z).$
\end{proposition}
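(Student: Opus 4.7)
The plan is to assemble the result directly from the two lemmas that immediately precede it, together with the general formula for $K_0(\F_{\frak L})$ proved earlier. By the preceding proposition, for the $\lambda$-graph bisystem $\LGBS$ associated with $\Lambda_G$ we have
\begin{equation*}
K_0(\F_{\Lambda_G}) \cong \lim_{l\to\infty}\bigl\{\, {}^t\!M^2_{l,l+1}: \Z^{m(2l-1)} \longrightarrow \Z^{m(2l+1)} \,\bigr\},
\end{equation*}
where $M^2_{l,l+1} = M^-_{2l-1,2l}M^+_{2l,2l+1}$ arises from the canonical symbolic matrix bisystem $(\M^-_{\Lambda_G},\M^+_{\Lambda_G})$ of $\Lambda_G$.

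First I would invoke the first lemma of Section 7 to replace, for all $l \geq L$, the transition matrices ${\M^\pm_{\Lambda_G}}_{l,l+1}$ by the explicit block matrices ${\M^\pm_G}_{l,l+1}$ of \eqref{eq:symbma2}. Since inductive limits depend only on cofinal subsequences, the inductive system defining $K_0(\F_{\Lambda_G})$ is unchanged (up to canonical isomorphism) if we restrict to indices $l \geq L$. Throughout this tail, each $\Z^{m(2l-1)}$ equals $\Z^{N^2}$ and each transition map ${}^t\!M^2_{l,l+1}$ is the transpose of $M_G^- M_G^+$ acting on $\Z^{N^2}$.

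Next I would apply the second lemma of this section, which gives the canonical identification $\Z^{N^2} \cong M_N(\Z)$ under which ${}^t\!(M_G^- M_G^+)$ corresponds to $\Psi_A(X) = {}^t\!A\, X\, {}^t\!A$. Since this identification is natural in $l$, it transforms the entire tail of the inductive system into the constant system
\begin{equation*}
\bigl\{\, \Psi_A : M_N(\Z) \longrightarrow M_N(\Z) \,\bigr\}_{l \geq L},
\end{equation*}
whose inductive limit is exactly the right-hand side of \eqref{eq:K0markov}. Combining these identifications yields the claimed isomorphism.

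The only real technical point to verify is that the identification $\Z^{N^2} \cong M_N(\Z)$ of the two lemmas is compatible with the bonding maps at different levels, so that one obtains a morphism of inductive systems and not merely a level-wise isomorphism; this is immediate because the second lemma is proved uniformly in $l$ (for $l \geq L$). No additional computation is needed beyond what the two lemmas already provide, so I do not foresee a serious obstacle.
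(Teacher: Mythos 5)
Your proposal is correct and is essentially the paper's own argument: the paper states this proposition immediately after the two lemmas with no further proof, precisely because it follows by combining Proposition \ref{prop:KD}'s inductive-limit description of $K_0(\F_{\frak L})$ with the stabilization lemma (canonical bisystem equals $(\M_G^-,\M_G^+)$ for $l\ge L$) and the commutative-diagram lemma identifying $\Z^{N^2}\cong M_N(\Z)$ with the bonding map $\Psi_A$, exactly as you describe. Your remark about cofinality and uniformity in $l$ is the only point needing care, and you handle it correctly.
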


The above formula 
\eqref{eq:K0markov} has appeared in Putnam's paper \cite{Putnam1} 
(cf. \cite{Holton}, \cite{MaCJM}).
In his paper, the right hand of \eqref{eq:K0markov} is written 
as $H({}^t\!A)$.  
For the full $N$-shift, the group $K_0(\F_{\Lambda_G})$ 
has been already computed as in the following way.
\begin{proposition}[{\cite[Lemma 10.2]{MaPre2019}}] \label{prop:7.4}
Let $\Lambda_N$ be the full $N$-shift for $1<N\in \N.$
Then the bidimension triple  
$(\Delta_{\Lambda_N},\Delta_{\Lambda_N}^+, \delta_{\Lambda_N})$
is isomorphic to
$(\Z[\frac{1}{N^2}], \Z[\frac{1}{N^2}]^+, N\times )$,
where $N\times$ means the multiplication by $N$ on $\Z[\frac{1}{N^2}].$ 
\end{proposition}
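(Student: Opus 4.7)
The proof amounts to an explicit computation using the canonical $\lambda$-graph bisystem for the full $N$-shift, which was constructed in Example \ref{ex:3.2}(i). That bisystem has a single vertex $V_l = \{v_l\}$ at each level, with $N$ parallel edges in each of ${\frak L}_N^-$ and ${\frak L}_N^+$ between consecutive levels; hence $m(l) = 1$ for every $l \in \Zp$, and the associated nonnegative matrix bisystem $(M^-_{\Lambda_N}, M^+_{\Lambda_N})$ consists of the $1 \times 1$ scalar matrices $M^\pm_{l, l+1} = [N]$ for all $l$. The commutation relation \eqref{eq:MMnnmbs} holds trivially here.

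First, I would compute the auxiliary ordered group
\begin{equation*}
\Z_{M^-} = \varinjlim \{ N \cdot : \Z \to \Z \}.
\end{equation*}
This is canonically identified with the subring $\Z[\frac{1}{N}] \subset \mathbb{Q}$ via $[X, l] \mapsto X/N^l$, with positive cone transporting to $\Z[\frac{1}{N}]_+$. Under this identification, the order-preserving endomorphism $\lambda_{M^+}$, which is built level-wise from ${}^t\!M^+_{l, l+1} = N$, becomes multiplication by $N$ on $\Z[\frac{1}{N}]$.

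Next, I would form the second direct limit
\begin{equation*}
\Delta_{\Lambda_N}
= \varinjlim_k \{ \lambda_{M^+} : \Z_{M^-}(k) \to \Z_{M^-}(k+1) \}
\cong \varinjlim_k \{ N \cdot : \Z[\tfrac{1}{N}] \to \Z[\tfrac{1}{N}] \}.
\end{equation*}
Since multiplication by $N$ is an order-preserving automorphism of $\Z[\frac{1}{N}]$, this direct limit is canonically isomorphic to $\Z[\frac{1}{N}]$ itself. As $1/N = N/N^2 \in \Z[\frac{1}{N^2}]$, we have the equality $\Z[\frac{1}{N}] = \Z[\frac{1}{N^2}]$ of subrings of $\mathbb{Q}$, and the positive cone becomes $\Z[\frac{1}{N^2}]_+$.

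Finally, I would unpack the definition $\delta([X, k]) = [X, k+1]$ and track the dimension automorphism through both direct limits to conclude that $\delta_{\Lambda_N}$ corresponds to multiplication by $N$ on $\Z[\frac{1}{N^2}]$. The principal bookkeeping point requiring care is the orientation of this last identification: one must reconcile the shift-by-one on the level index $k$ with the conventional normalizations in \eqref{eq:dim} and \eqref{eq:dimpo} so that the resulting action is $N \times$ rather than its inverse. All other parts of the argument are routine unpacking of definitions, since the scalar nature of the matrices collapses both inductive limits to standard constructions with subrings of $\mathbb{Q}$.
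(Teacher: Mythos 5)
Your reduction to the scalar data $m(l)=1$, $M^{\pm}_{l,l+1}=[N]$ and the identification $\Z_{M^-}\cong\Z[\frac{1}{N}]$ via $[X,l]\mapsto X/N^{l}$ are correct, but the step where you declare that $\lambda_{M^+}$ ``becomes multiplication by $N$'' is where the argument breaks. By the definition in Section \ref{sec:Dim}, $\lambda_{M^+}$ sends the class of $x$ at level $l$ to the class of ${}^t\!M^{+}_{l,l+1}x=Nx$ at level $l+1$; since the connecting maps of $\Z_{M^-}$ are the same matrices ${}^t\!M^{-}_{l,l+1}=N$, one has $[Nx,l+1]=[x,l]$, so $\lambda_{M^+}$ is the \emph{identity} on $\Z_{M^-}$, not multiplication by $N$: the level shift built into $\lambda_{M^+}$ exactly cancels the factor of $N$, and with it the only source of the claimed $N\times$. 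The second place a factor of $N$ would have to be produced is the one you yourself flag as a ``bookkeeping point'' and then leave unresolved: even granting $\lambda_{M^+}=N\times$, the map $\delta_{(M^-,M^+)}([X,k])=[X,k+1]$ is the \emph{inverse} of the homomorphism induced by the connecting maps of the inductive limit \eqref{eq:dim}, so the natural identification of $\Delta_{(M^-,M^+)}$ with $\Z[\frac{1}{N}]$ would carry $\delta_{(M^-,M^+)}$ to multiplication by $1/N$, not by $N$. No argument is offered for the opposite orientation.

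This is not a repairable normalization issue, because the target statement is in tension with Theorem \ref{thm:DDelta} together with a direct computation. For the full $N$-shift, $\F_{\Lambda_N}$ is the UHF algebra $\bigotimes_{n\in\Z}M_N({\mathbb{C}})$ and $\rho_{\Lambda_N}$ is the Bernoulli shift: it carries $E^{k,l}(\mu,\mu)$ to $E^{k+1,l+1}(\mu,\mu)$, a projection with the same value of the unique trace, and since the trace induces an order isomorphism $K_0(\F_{\Lambda_N})\cong\Z[\frac{1}{N}]$, this gives $\rho_{\Lambda_N *}=\id$. Theorem \ref{thm:DDelta} then forces $\delta_{\Lambda_N}=\id$, in agreement with the computation $\lambda_{M^+}=\id$ above but not with the multiplication by $N$ asserted in Proposition \ref{prop:7.4} (which this paper does not prove but imports from \cite[Lemma 10.2]{MaPre2019}, so there is no internal proof to compare against). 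In short, your derivation manufactures the factor of $N$ at exactly the two steps that conflict with the stated definitions; before a proof along these lines can succeed, either the definitions of $\lambda_{M^+}$ and $\delta_{(M^-,M^+)}$ or the statement of the Proposition has to be reconciled with the $K$-theoretic identification.
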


We remark that the ordinary dimension triple for the full $N$-shift 
$\Lambda_N$ is
$(\Z[\frac{1}{N}], \Z[\frac{1}{N}]^+, N\times )$.
Although the ordered groups
$(\Z[\frac{1}{N^2}], \Z[\frac{1}{N^2}]^+ )$
and
$(\Z[\frac{1}{N}], \Z[\frac{1}{N}]^+)$
are isomorphic,
their dimension triples are different.

We will now present examples of 
$K_i(\R_{\Lambda_G}), i=1,2$
for topological Markov shifts $\Lambda_G$.

\medskip

{\bf Examples}.

{\bf 1. Full $2$-shift.} 
Let 
$\A = 
\begin{bmatrix}
a & b \\
c & d 
\end{bmatrix}
$
be a symbolic matrix over alphabet $\Sigma =\{ a,b,c,d\}.$ 
Let $G_2$ be the finite directed graph having two vertices and four directed edges
associated with the symbolic matrix $\A$.
Let us consider the symbolic matrix bisystem
$(\M^-_{G_2}, \M^+_{G_2})$ defined by 
\eqref{eq:symbma2}.
It satisfies FPCC.
Let $({\frak L}^-_{G_2}, {\frak L}^+_{G_2})$ be the associated $\lambda$-graph bisystem.
 Its presenting subshift is topologically conjugate to the full $2$-shift.  
Put 
$
M_2
=
\begin{bmatrix}
1 & 1 \\
1 & 1
\end{bmatrix}.
$
By \eqref{eq:symbma2},
 the associated nonnegative matrix bisystem
$(M_2^-, M_2^+)$ is given by  
\begin{equation}
M_2^- = 
\begin{bmatrix}
1 & 0   & 1 & 0 \\
0   & 1 & 0   & 1 \\
1 & 0   & 1 & 0  \\
0   & 1 & 0   & 1 
\end{bmatrix}, \qquad
M_2^+ =
\begin{bmatrix}
1 & 1 & 0 & 0 \\
1 & 1 & 0 & 0 \\
0 & 0 & 1 & 1 \\
0 & 0 & 1 & 1 \\ 
\end{bmatrix}.  \label{eq:nnmbma2}
\end{equation}
We will show the following
\begin{proposition}\label{prop:7.5}
$
K_i(\R_{\Lambda_{G_2}}) 
\cong  \Z[\frac{1}{2}], \, i=0,1.
$
\end{proposition}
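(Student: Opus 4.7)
The plan is to invoke Theorem \ref{thm:RLKgroup} which identifies $K_i(\R_{\frak L})$ with the K-groups $K_i(M^-,M^+)$ of the underlying nonnegative matrix bisystem, and then to compute $K_i(M_2^-,M_2^+)$ directly from the defining inductive limits. Since in this situation $m(l)=4$ and the matrices ${M_2^-}_{l,l+1},{M_2^+}_{l,l+1}$ are independent of $l$ (both given by \eqref{eq:nnmbma2}), the computation reduces to finite linear algebra over $\Z$ followed by one inductive limit.

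The first step is to notice that both matrices in \eqref{eq:nnmbma2} are symmetric, so ${}^t\!M_2^- - {}^t\!M_2^+ = M_2^- - M_2^+$, and to compute its kernel and cokernel. A direct check shows that the kernel is the rank-$2$ subgroup $\{(a,b,b,a)\mid a,b\in\Z\}$ (giving $K_1^l(M_2^-,M_2^+)\cong \Z^2$), while the image is generated by $(0,-1,1,0)$ and $(-1,0,0,1)$, imposing the relations $e_3\equiv e_2$ and $e_4\equiv e_1$ and thus giving $K_0^l(M_2^-,M_2^+)\cong \Z^2$ with basis $[e_1],[e_2]$.

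The second step is to identify the connecting map ${}^t\!M_2^{-l}$ on each $\Z^2$. On $K_1^l$, applying $M_2^-$ to $(a,b,b,a)^t$ yields $(a+b)(1,1,1,1)^t$, so in the coordinates $(a,b)$ the map is $(a,b)\mapsto (a+b,a+b)$. On $K_0^l$, lifting $(a,b)$ to $(a,b,0,0)^t$ and applying $M_2^-$ gives $(a,b,a,b)^t$, which in the quotient equals $(a+b)[e_1]+(a+b)[e_2]$; so again the map is $(a,b)\mapsto (a+b,a+b)$. The two connecting maps are thus given by the same matrix $\bigl(\begin{smallmatrix} 1 & 1 \\ 1 & 1 \end{smallmatrix}\bigr)$.

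The final step is to take the inductive limit of $\Z^2 \xrightarrow{(\begin{smallmatrix}1&1\\1&1\end{smallmatrix})} \Z^2 \to \cdots$. The vector $e_1-e_2$ maps to $0$, so it is killed in the limit, while the stable image is $\Z(e_1+e_2)$, on which the map restricts to multiplication by $2$. Hence the inductive limit is $\varinjlim\{\,\times 2:\Z\to\Z\,\}\cong\Z[\tfrac12]$ for both $i=0$ and $i=1$, completing the proof. The computation is entirely routine; the only potential pitfall is correctly tracking how the connecting map on the cokernel is computed via lifting, but this is resolved by the explicit check above.
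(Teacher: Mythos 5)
Your proposal is correct and follows essentially the same route as the paper: reduce to the K-groups of the nonnegative matrix bisystem via Theorem \ref{thm:RLKgroup}, compute $\Ker$ and $\Coker$ of $M_2^- - M_2^+$ as $\Z^2$, identify the induced connecting map with $\bigl(\begin{smallmatrix}1&1\\1&1\end{smallmatrix}\bigr)$ in both cases, and take the limit to get $\Z[\tfrac12]$. The only difference is cosmetic: the paper diagonalizes the relation matrix by explicit row/column operations (the matrices $R$, $A_2$, $B_2$) before reading off the quotient, whereas you choose the basis representatives $[e_1],[e_2]$ directly and verify the lift.
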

\begin{proof}
Since
$$
{}^t\!M^{-}_{2} - {}^t\!M^{+}_{2} 
=M^{-}_{2} - M^{+}_{2} 
=
\begin{bmatrix}
0  &-1 & 1  & 0 \\
-1 & 0 & 0  & 1 \\
 1 & 0 & 0   &-1  \\
0  & 1 & -1 & 0 
\end{bmatrix}, 
$$
we easily have
$
({}^t\!M^{-}_{2} - {}^t\!M^{+}_{2})\Z^4
=A_2\Z^4
$
where
$
A_{2} 
=
\begin{bmatrix}
-1 &0  & 0  & 0 \\
 0 & -1 & 0 &0 \\
 0 & 1 & 0  & 0 \\
1  & 0 & 0 & 0
\end{bmatrix}. 
$
We set
$$
R 
=
\begin{bmatrix}
1  & 0 & 0  & 0 \\
0 & 1 & 0  & 0 \\
0 & 1 & 1   &0  \\
1  & 0 & 0 & 1
\end{bmatrix}, \qquad
B_2 = RA_2=
\begin{bmatrix}
-1  & 0 & 0  & 0 \\
0 & -1 & 0  & 0 \\
0 & 0 &0   &0  \\
0  & 0 & 0 & 0
\end{bmatrix}. 
$$
The diagram 
\begin{equation*}
\begin{CD}
\Z^{4}/ 
(M^{-}_{2} - M^{+}_{2} ) \Z^{4}  @>\overline{M_2^-}>> 
 \Z^{4}/ 
(M^{-}_{2} - M^{+}_{2} ) \Z^{4}  \\
@\vert @\vert \\
\Z^{4}/ 
A_2 \Z^{4}  @. 
 \Z^{4}/ 
A_2 \Z^{4}  \\
@V{R}VV  @V{R}VV \\
 \Z^{4}/ 
B_2 \Z^{4}  @>\overline{R M_2^- R^{-1}}>> 
 \Z^{4}/ 
B_2 \Z^{4} \\ 
\end{CD}
\end{equation*}
commutes,
where $\overline{M_{2}^{-}}$
is the endomorphism on the abelian group 
$\Z^{4}/ 
(M^{-}_{2} - M^{+}_{2} ) \Z^{4} 
$
induced by the matrix 
$M^{-}_{2}.$
As 
$
R M_2^- R^{-1} 
\begin{bmatrix}
0\\
0\\
z\\
w
\end{bmatrix}
=
\begin{bmatrix}
1  & -1 & 1  & 0 \\
-1 & 1 & 0  & 1 \\
0 & 0 &1   &1  \\
0  & 0 & 1 & 1 
\end{bmatrix}
\begin{bmatrix}
0\\
0\\
z\\
w
\end{bmatrix}
=
\begin{bmatrix}
z\\
w\\
z+w\\
z+w
\end{bmatrix},
$
we have the commutative diagram 
\begin{equation*}
\begin{CD}
\Z^{4}/ 
B_2 \Z^{4}  @>\overline{R M_2^-R^{-1}}>> 
 \Z^{4}/ 
B_2 \Z^{4} \\
@\vert @\vert \\ 
\Z^2 @>M_2>> 
 \Z^2
\end{CD}.
\end{equation*}
As
$ \displaystyle{\lim_{l\to{\infty}}} \{
M_{2}:
\Z^{2} 
 \longrightarrow 
\Z^2
\} 
\cong 
\Z[\frac{1}{2}],
$
we have 
$ K_0(\R_{\Lambda_{G_2}}) \cong \Z[\frac{1}{2}].
$

We will next compute
$ K_1(\R_{\Lambda_{G_2}}).$
A vector 
$
\begin{bmatrix}
x\\
y\\
z\\
w
\end{bmatrix}
$
belongs to
$\Ker({}^t\!M^{-}_{2} - {}^t\!M^{+}_{2} )$
if and only if
$x =w, y = z.$
By putting
$
P=
\begin{bmatrix}
1  & 0 & 0  & 0 \\
0 & 1 & 0  & 0 \\
0 & 0 &0   &0  \\
0  & 0 & 0 & 0 
\end{bmatrix},
$
we see that
\begin{equation*}
\Ker({}^t\!M^{-}_{2} - {}^t\!M^{+}_{2} ) \text{ in } \Z^4
\cong P\Z^4 \cong \Z^2,
\end{equation*}
so that the diagram
\begin{equation*}
\begin{CD}
\Ker({}^t\!M^{-}_{2} - {}^t\!M^{+}_{2} )
  @> M_2^- >> 
\Ker({}^t\!M^{-}_{2} - {}^t\!M^{+}_{2} ) \\
@V{P}VV    @VV{P}V \\ 
\Z^2 @>M_2>> 
 \Z^2
\end{CD}
\end{equation*}
commutes and hence  
$ K_1(\R_{\Lambda_{G_2}}) \cong \Z[\frac{1}{2}].
$
\end{proof}
%%%%%%%%%%%%%%%%%%%%%%%%%%%%%%%%%%%%%%%%%%%%%%%%%
%%%%%%%%%%%%%%%%%%%%%%%%%%%%%%%%%%%%%%%%%%%%%%%%%%%%%%

\medskip

{\bf 2. Golden mean shift.} 
Let 
$
\F = 
\begin{bmatrix}
a & b \\
c & 0 
\end{bmatrix}
$
be a symbolic matrix over alphabet $\Sigma =\{ a,b,c \}.$ 
Let $G_\F$ be the finite directed graph having two vertices and three  directed edges
associated with the symbolic matrix $\F$.
Let us consider the symbolic matrix bisystem
$(\M_{G_\F}^-, \M_{G_\F}^+)$ defined by 
\eqref{eq:symbma2}.
It satisfies FPCC.
Let $({\frak L}^-_{G_\F}, {\frak L}^+_{G_\F})$ 
be the associated $\lambda$-graph bisystem.
 Its presenting subshift is topologically conjugate to the topological Markov shift
defined by the matrix
$F = 
\begin{bmatrix}
1 & 1 \\
1 & 0 
\end{bmatrix},
$
that is called the golden mean shift written $\Lambda_F$ (cf. \cite{LM}).
The matrix $F$ has the 
unique positive eigenvalue  
$\frac{1 + \sqrt{5}}{2}$, denoted by
$\beta$.
By \eqref{eq:symbma2},   
the associated nonnegative matrix bisystem
 $(F^-, F^+)$  is given by  
\begin{equation}
F^- = 
\begin{bmatrix}
1 & 0   & 1 & 0 \\
0   & 1 & 0   & 1 \\
1 & 0   & 0 & 0  \\
0   & 1 & 0   & 0 
\end{bmatrix}, \qquad
F^+ =
\begin{bmatrix}
1 & 1 & 0 & 0 \\
1 & 0 & 0 & 0 \\
0 & 0 & 1 & 1 \\
0 & 0 & 1 & 0 \\ 
\end{bmatrix}.  \label{eq:nnmbgm}
\end{equation}
We will show the following.
\begin{proposition}\label{prop:7.6}
$
   K_i(\R_{\Lambda_F}) \cong
\Z^2, \, i~0,1.
$
\end{proposition}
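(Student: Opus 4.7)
The plan is to apply Theorem \ref{thm:RLKgroup}, which identifies $K_i(\R_{\Lambda_F})$ with the inductive limit of the groups $K_0^l = \Z^{m(l+1)}/({}^t F^-_{l,l+1} - {}^t F^+_{l,l+1})\Z^{m(l)}$ and $K_1^l = \Ker({}^t F^-_{l,l+1} - {}^t F^+_{l,l+1})$ in $\Z^{m(l)}$, with connecting maps induced by ${}^t F^-_{l,l+1}$. As in the discussion of topological Markov shifts earlier in the section, the canonical $\lambda$-graph bisystem of $\Lambda_F$ has transition matrices that stabilize from some level $L$ onward to the $4\times 4$ matrices $(F^-, F^+)$ displayed in \eqref{eq:nnmbgm}. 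Thus the inductive limit is controlled by the action of ${}^t F^-$ on the (stabilized) groups $K_*^l$. A direct inspection shows that both $F^-$ and $F^- - F^+$ are symmetric, so transposition is unnecessary in what follows.

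Next I would compute $\Ker(F^- - F^+)$ and $\Z^4/(F^- - F^+)\Z^4$ explicitly. Solving the four linear equations $(F^- - F^+) v = 0$ yields the free abelian group of rank two with basis
\[
e_1 = (1,0,0,1)^T, \qquad e_2 = (0,1,1,-1)^T.
\]
For the cokernel I would use the columns $c_2 = (-1,1,0,1)^T$ and $c_3 = (1,0,-1,-1)^T$ of $F^- - F^+$; the remaining columns satisfy $c_1 = -(c_2 + c_3)$ and $c_4 = c_2 + c_3$, so the image is exactly $\Z c_2 + \Z c_3$. Adjoining the standard vectors $f_1 = (1,0,0,0)^T$ and $f_2 = (0,1,0,0)^T$, one checks that the $4\times 4$ matrix $[c_2, c_3, f_1, f_2]$ has determinant $\pm 1$, so $\{c_2,c_3,f_1,f_2\}$ is a $\Z$-basis of $\Z^4$ and $\Z^4/(F^- - F^+)\Z^4 \cong \Z f_1 \oplus \Z f_2 \cong \Z^2$.

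The key step is identifying the action of $F^-$ on both $\Ker(F^- - F^+)$ and the cokernel. Direct multiplication gives $F^- e_1 = (1,1,1,0)^T = e_1 + e_2$ and $F^- e_2 = (1,0,0,1)^T = e_1$, so in the basis $\{e_1, e_2\}$ the action of $F^-$ on the kernel is the golden mean matrix $F = \begin{bmatrix} 1 & 1 \\ 1 & 0 \end{bmatrix}$. For the cokernel, expressing the columns $F^- f_1 = (1,0,1,0)^T$ and $F^- f_2 = (0,1,0,1)^T$ in the basis $\{c_2,c_3,f_1,f_2\}$ yields $F^- f_1 = -c_2 - c_3 + f_1 + f_2$ and $F^- f_2 = c_2 + f_1$, so modulo the image the induced map again has matrix $F$.

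Finally, since $\det F = -1$, the matrix $F$ defines an automorphism of $\Z^2$, and hence $\varinjlim\{F : \Z^2 \longrightarrow \Z^2\} \cong \Z^2$. Combined with the preceding step, this gives $K_i(\R_{\Lambda_F}) \cong \Z^2$ for $i=0,1$. The main obstacle is the bookkeeping of Step 3: one must choose complementary generators $f_1, f_2$ so as to simultaneously obtain a $\Z$-basis of $\Z^4$ and a clean expression of $F^- f_1, F^- f_2$ modulo the image. Once this choice is made, the two separate computations (on kernel and cokernel) both collapse to multiplication by the golden mean matrix $F$, which is the essential structural content of the result.
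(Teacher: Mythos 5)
Your proof is correct and follows essentially the same route as the paper: reduce to the stabilized $4\times 4$ pair $(F^-,F^+)$ of \eqref{eq:nnmbgm}, compute the cokernel and kernel of $F^--F^+$ as rank-two groups, and identify the induced action of ${}^t\!F^-=F^-$ on each as the golden mean matrix $F$; your explicit choice of bases $\{c_2,c_3,f_1,f_2\}$ and $\{e_1,e_2\}$ is just a hands-on version of the paper's elementary-matrix conjugations by $S$ and $P$. The only (harmless) difference is at the end, where you note $\det F=-1$ to get $\varinjlim\{F:\Z^2\to\Z^2\}\cong\Z^2$ directly, while the paper passes through $\Z[\frac{1}{\beta}]=\Z+\Z\beta$.
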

\begin{proof}
Since
$$
{}^t\!F^{-} - {}^t\!F^{+}
=F^{-} - F^{+} 
=
\begin{bmatrix}
0  &-1 & 1  & 0 \\
-1 & 1 & 0  & 1 \\
 1 & 0 & -1  &-1  \\
0  & 1 & -1 & 0
\end{bmatrix}, 
$$
we easily have
$
({}^t\!F^{-} - {}^t\!F^{+})\Z^4
=F'\Z^4
$
where
$
F' 
=
\begin{bmatrix}
-1 &0  & 0  & 0 \\
 1 & -1 & 0 &0 \\
 0 & 1 & 0  & 0 \\
1  & 0 & 0 & 0 
\end{bmatrix}. 
$
We set
$$
S
=
\begin{bmatrix}
1  & 0 & 0  & 0 \\
1 & 1 & 0  & 0 \\
1 & 1 & 1   &0  \\
1  & 0 & 0 & 1 
\end{bmatrix}, \qquad
B_2 = SF'=
\begin{bmatrix}
-1  & 0 & 0  & 0 \\
0 & -1 & 0  & 0 \\
0 & 0 &0   &0  \\
0  & 0 & 0 & 0 
\end{bmatrix}.
$$
The diagram 
\begin{equation*}
\begin{CD}
\Z^{4}/ 
(F^{-} - F^{+} ) \Z^{4}  @>\overline{F^-}>> 
 \Z^{4}/ 
(F^{-} - F^{+} ) \Z^{4}  \\
@\vert @\vert \\
\Z^{4}/ 
F' \Z^{4}  @. 
 \Z^{4}/ 
F' \Z^{4}  \\
@V{S}VV  @V{S}VV \\
 \Z^{4}/ 
B_2 \Z^{4}  @>\overline{S F S^{-1}}>> 
 \Z^{4}/ 
B_2 \Z^{4} \\ 
\end{CD}
\end{equation*}
commutes,
where $\overline{F^{-}}$ denotes  the endomorphism on the abelian group
$\Z^{4}/ 
(F^{-} - F^{+} ) \Z^{4} 
$
induced by the matrix $F^-$. 
As 
$
S F^- S^{-1}
\begin{bmatrix}
0\\
0\\
z\\
w
\end{bmatrix}
=
\begin{bmatrix}
1 & -1 & 1  & 0 \\
-1 & 0 & 1  & 1 \\
0 & 0 & 1   &1  \\
0  & 0 & 1 & 0 
\end{bmatrix}
\begin{bmatrix}
0\\
0\\
z\\
w
\end{bmatrix}
=
\begin{bmatrix}
z\\
z+w\\
z+w\\
z
\end{bmatrix},
$
the diagram
\begin{equation*}
\begin{CD}
\Z^{4}/ 
B_2 \Z^{4}  @>\overline{S F^- S^{-1}}>> 
 \Z^{4}/ 
B_2 \Z^{4} \\
@\vert @\vert \\ 
\Z^2 @>F>> 
 \Z^2
\end{CD}
\end{equation*}
commutes. 
As
$\displaystyle{
\lim_{l\to{\infty}}} \{
F:
\Z^{2} 
 \longrightarrow 
\Z^2
\} 
\cong 
\Z[\frac{1}{\beta}],
$
we have 
$ K_0(\R_{\Lambda_F}) \cong \Z[\frac{1}{\beta}].
$
Since $\frac{1}{\beta} = 1-\beta$, the group
$\Z[\frac{1}{\beta}]$ is isomorphic to $\Z + \Z\beta$ and hence to 
$\Z^2$.

We will next compute 
$K_1(\R_{\Lambda_F}).$
A vector 
$
\begin{bmatrix}
x\\
y\\
z\\
w
\end{bmatrix}
$
belongs to
$\Ker({}^t\!F^{-} - {}^t\!F^{+} )$
if and only if
$z =y,\, w =x -y.$
By putting
$
P=
\begin{bmatrix}
1  & 0 & 0  & 0 \\
0 & 1 & 0  & 0 \\
0 & 0 &0   &0  \\
0  & 0 & 0 & 0 
\end{bmatrix},
$
we see that
\begin{equation*}
\Ker({}^t\!F^{-} - {}^t\!F^{+} ) \text{ in } \Z^4
\cong P\Z^4 \cong \Z^2,
\end{equation*}
so that the diagram
\begin{equation*}
\begin{CD}
\Ker({}^t\!F^{-} - {}^t\!F^{+} )
  @> F_2^- >> 
\Ker({}^t\!F^{-} - {}^t\!F^{+} ) \\
@V{P}VV    @VV{P}V \\ 
\Z^2 @>F>> 
 \Z^2
\end{CD}
\end{equation*}
commutes
and hence we  
$ K_1(\R_{\Lambda_F}) \cong \Z[\frac{1}{\beta}] \cong \Z^2.
$
\end{proof}
\begin{remark}
As in \cite[Lemma 10.4]{MaPre2019}, we know that the $C^*$-algebra $\R_{\Lambda_F}$
is a simple A$\T$-algebra with real rank zero having a unique tracial state $\tau$
such that  $\tau_*(K_0(\R_{\Lambda_F}))= \Z + \Z\beta$. 
Hence by classification of theory 
of $C^*$-algebra (\cite{Elliott}, cf. \cite{Ro3}), 
$\R_{\Lambda_F}$ is isomorphic to the irrational rotation $C^*$-algebra
$A_\beta$ with irrational angle $\beta$. 
\end{remark}

%\newpage

%%%%%%%%%%%%%%%%%%%%%%%%%%%%%%%%%%
%%%%%%%%%%%%%%%%%%%%%%%%%%%%%%%%%%%%%%%%%%%%%%%%
\section{Even shift}
%%%%%%%%%%%%%%%%%%%%%%%%%%%%%%%%%%%%
The even shift written $\Lambda_{\ev}$ 
is a sofic subshift over alphabet $\{\alpha,\beta\}$
whose forbidden words are the set of  words
 $$
\alpha\overbrace{\beta\cdots\beta}^{\operatorname{odd}}\alpha.   
$$
It lives outside topological Markov shifts (cf. \cite[Example 1.2.4]{LM}).
In this section, 
we will compute the K-groups 
for two 
$C^*$-algebras 
$\R_{\Lambda_\ev^c}$ and 
$\R_{\Lambda_\ev^\lambda}$
associated with
the $\lambda$-graph bisystems 
$({\frak L}^-_{\Lambda_{\ev}^c}, {\frak L}^+_{\Lambda_{\ev}^c})$
and
$({\frak L}^-_{\Lambda_{\ev}^\lambda}, {\frak L}^+_{\Lambda_{\ev}^\lambda})$
both of which present the even shift, respectively.
The former one 
$({\frak L}^-_{\Lambda_{\ev}^c}, {\frak L}^+_{\Lambda_{\ev}^c})$
is the canonical symbolic matrix bisystem for $\Lambda_\ev$.
The latter  one 
$({\frak L}^-_{\Lambda_{\ev}^\lambda}, {\frak L}^+_{\Lambda_{\ev}^\lambda})$
is an irreducible component of the former one,
so that the associated $C^*$-algebra
$\R_{\Lambda_\ev^\lambda}$ is simple,
whereas 
the other one 
$\R_{\Lambda_\ev^c}$ is not simple.

We will first visualize the canonical $\lambda$-graph bisystem 
$({\frak L}^-_{\Lambda_{\ev}^c}, {\frak L}^+_{\Lambda_{\ev}^c})$ for $\Lambda_\ev$.
For $l\in \N$, we put
\begin{align*}
\Omega_{-1,l}^c(1,1) 
& :=\{ x_{[0,l-1]}\in B_l(\Lambda_\ev) 
\mid (x_n)_{n\in \N} \in\Lambda_\ev, \, x_{-1} =\alpha, x_l = \alpha \},\\ 
\Omega_{-1,l}^c(1,2) 
& :=\{ x_{[0,l-1]}\in B_l(\Lambda_\ev) 
\mid (x_n)_{n\in \N} \in\Lambda_\ev, \, x_{-1} =\alpha, x_l = \beta, x_{l+1} =\alpha \},\\ 
\Omega_{-1,l}^c(2,1) 
& :=\{ x_{[0,l-1]}\in B_l(\Lambda_\ev) 
\mid (x_n)_{n\in \N} \in\Lambda_\ev, \, x_{-2} = \alpha, x_{-1} =\beta, x_l = \alpha \},\\ 
\Omega_{-1,l}^c(2,2) 
& :=\{ x_{[0,l-1]}\in B_l(\Lambda_\ev) 
\mid (x_n)_{n\in \N} \in\Lambda_\ev, \, x_{-2} = \alpha, x_{-1} =\beta, x_l = \beta, x_{l+1} =\alpha \},\\ 
\Omega_{-1,l}^c(1,\infty) 
& :=\{ x_{[0,l-1]}\in B_l(\Lambda_\ev) 
\mid (x_n)_{n\in \N} \in\Lambda_\ev, \, x_{-1} =\alpha, x_{[l,\infty)} = \beta^{+\infty} \},\\
\Omega_{-1,l}^c(2,\infty) 
& :=\{ x_{[0,l-1]}\in B_l(\Lambda_\ev) 
\mid (x_n)_{n\in \N} \in\Lambda_\ev, \, x_{-2} =\alpha, x_{-1} =\beta, x_{[l,\infty)} = \beta^{+\infty} \},\\ 
\Omega_{-1,l}^c(\infty,1) 
& :=\{ x_{[0,l-1]}\in B_l(\Lambda_\ev) 
\mid (x_n)_{n\in \N} \in\Lambda_\ev, \, x_{(-\infty,-1]} =\beta^{-\infty}, x_l = \alpha \},\\
\Omega_{-1,l}^c(\infty,2) 
& :=\{ x_{[0,l-1]}\in B_l(\Lambda_\ev) 
\mid (x_n)_{n\in \N} \in\Lambda_\ev, \, x_{(-\infty,-1]} =\beta^{-\infty}, x_l=\beta,
x_{l+1} = \alpha \},\\ 
\Omega_{-1,l}^c(\infty,\infty) 
& :=\{ x_{[0,l-1]}\in B_l(\Lambda_\ev) 
\mid (x_n)_{n\in \N} \in\Lambda_\ev, \, x_{(-\infty,-1]} =\beta^{-\infty}, x_{[l,\infty)} = \beta^{+\infty} \},
\end{align*}
where $\beta^{+\infty} =(b,b,b,\dots), \beta^{-\infty} =(\dots,b,b,b)$ all symbols are 
$\beta$s. 

\begin{lemma}
\hspace{6cm}
\begin{enumerate}
\renewcommand{\theenumi}{\roman{enumi}}
\renewcommand{\labelenumi}{\textup{(\theenumi)}}
\item For $l=1$, we have
\begin{gather*}
\Omega_{-1,1}^c(1,1)  = \{ \alpha\}, \quad
\Omega_{-1,1}^c(1,2)  = \Omega_{-1,1}^c(2,1)  = \{ \beta\}, \quad
\Omega_{-1,1}^c(2,2)  = \emptyset, \\
\Omega_{-1,1}^c(1,\infty) = \Omega_{-1,1}^c(2,\infty) 
=\Omega_{-1,1}^c(\infty,1)=\Omega_{-1,1}^c(\infty,2) 
=\Omega_{-1,1}^c(\infty,\infty) = \{\alpha,\beta\} \\
\intertext{ and }
\Omega_{-1,1}^c = 
\Omega_{-1,1}^c(1,1) \sqcup \Omega_{-1,1}^c(1,2)\sqcup\Omega_{-1,1}^c(\infty,\infty).
\end{gather*}
Hence the sets
$\{
\Omega_{-1,1}^c(1,1),\, \Omega_{-1,1}^c(1,2), \, \Omega_{-1,1}^c(\infty,\infty)
\}$
are $(-1,1)$-centrally equivalence classes of $\Lambda_\ev$.
\item For $l \ge 2$, each of the sets
$\Omega_{-1,l}^c(i,j), \, i,j=1,2,\infty$
is non-empty, and they are disjoint such that 
\begin{equation*}
\Omega_{-1,l}^c = \bigsqcup_{i,j=1,2,\infty}\Omega_{-1,l}^c(i,j).
\end{equation*}
Hence the sets
$\{\Omega_{-1,l}^c(i,j) \mid  i,j=1,2,\infty \}$
are $(-1,l)$-centrally equivalence classes of $\Lambda_\ev$.
 \end{enumerate}
\end{lemma}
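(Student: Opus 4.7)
The plan is based on the observation that $W_{-1,l}(x)$ depends on $x\in\Lambda_\ev$ only through two coarse invariants: a \emph{left type} $L(x)\in\{1,2,\infty\}$ that records the parity of the run of $\beta$'s immediately preceding position $-1$ (with $L=1$ for even length, including the case $x_{-1}=\alpha$; $L=2$ for odd; and $L=\infty$ if $x_{(-\infty,-1]}=\beta^{-\infty}$), together with an analogous \emph{right type} $R(x)\in\{1,2,\infty\}$ extracted from $x_{[l,\infty)}$. Only the parity matters because the sole forbidden pattern of the even shift is $\alpha\beta^{2k+1}\alpha$, so any potential forbidden word straddling the left or right boundary of the inserted block is controlled entirely by the parity of the total $\beta$-count to the nearest flanking $\alpha$. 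Consequently $x\overset{c}{\underset{(-1,l)}{\sim}} y$ whenever $(L(x),R(x))=(L(y),R(y))$, and the sets $\Omega_{-1,l}^c(i,j)$ can be interpreted as the filling sets of the corresponding equivalence classes.

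For part (i) the work is a direct case analysis on $x_0\in\{\alpha,\beta\}$ for each of the nine labels, testing admissibility against $\alpha\beta^{\mathrm{odd}}\alpha$. One checks that $\Omega_{-1,1}^c(1,1)=\{\alpha\}$ because $x_0=\beta$ produces $\alpha\beta\alpha$; that $\Omega_{-1,1}^c(1,2)=\Omega_{-1,1}^c(2,1)=\{\beta\}$ because $x_0=\alpha$ creates an odd-run across the block; and that $\Omega_{-1,1}^c(2,2)=\emptyset$ because both choices of $x_0$ force a length-$1$ or length-$3$ odd $\beta$-run between two $\alpha$'s. For the labels involving $\infty$, at least one side is unconstrained by a flanking $\alpha$, so both values of $x_0$ are admissible. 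The distinct non-empty filling sets are $\{\alpha\}$, $\{\beta\}$, and $\{\alpha,\beta\}$; they are therefore the three $(-1,1)$-centrally equivalence classes, naturally represented by $(1,1)$, $(1,2)$, and $(\infty,\infty)$, which yields the asserted decomposition of $\Omega_{-1,1}^c$.

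For part (ii) with $l\ge 2$ the plan proceeds in three steps. First, non-emptiness: for each pair $(i,j)$ I would exhibit an explicit admissible middle block, such as $\alpha^l\in\Omega(1,1)$, $\beta^l\in\Omega(\infty,\infty)$, and blocks of the form $\beta^p\alpha^q\beta^r$ for the mixed cases with $p,q,r$ tuned so that the overall parities across both boundaries match the prescribed type; the extra length $l\ge 2$ supplies the room needed. Second, pairwise disjointness: for each pair of distinct labels one constructs a distinguishing middle block, using the principle that the parity of the position of the first (resp.\ last) $\alpha$ in $\mu$ is forced by $L(x)$ (resp.\ $R(x)$), while the $\infty$-types are further distinguished from the finite types by admissibility of all-$\beta$ middle blocks. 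Third, exhaustion: every $x\in\Lambda_\ev$ possesses a well-defined pair $(L(x),R(x))$, and by the reduction in the first paragraph $W_{-1,l}(x)$ depends only on this pair, so $\Omega_{-1,l}^c$ is partitioned by the nine types.

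The main obstacle is Step~2 of part (ii): confirming that the nine filling sets are pairwise distinct. Although each individual comparison is elementary, the bookkeeping is delicate because the left and right parity constraints can interact inside $\mu$ when $\alpha$'s appear in the interior, and one has to verify that the constraints on \emph{parity of first $\alpha$ position in $\mu$}, \emph{parity of last $\alpha$ position in $\mu$}, and \emph{admissibility of the all-$\beta$ block} are genuinely nine independent conditions when $l\ge 2$. This can be handled by isolating a short witness of the form $\beta^p\alpha\beta^q$ of appropriately tuned parity for each type, reducing the verification to a compact $3\times 3$ table.
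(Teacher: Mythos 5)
The paper states this lemma without proof, so there is nothing to compare against; judged on its own terms, your overall strategy (reduce $W_{-1,l}(x)$ to a left/right parity type in $\{1,2,\infty\}^2$ and compare the nine filling sets) is the right one, but two of your steps fail as written. In part (i), the blanket claim that ``for the labels involving $\infty$, at least one side is unconstrained by a flanking $\alpha$, so both values of $x_0$ are admissible'' is false for the types $(2,\infty)$ and $(\infty,2)$: an unconstrained side does not neutralize the constraint coming from the other side. If $x_{-2}=\alpha$, $x_{-1}=\beta$ and $x_{[1,\infty)}=\beta^{+\infty}$, then $x_0=\alpha$ creates the forbidden word $\alpha\beta\alpha$ at positions $-2,-1,0$, so in fact $\Omega_{-1,1}^c(2,\infty)=\{\beta\}$, and symmetrically $\Omega_{-1,1}^c(\infty,2)=\{\beta\}$. (Those two displayed equalities in the statement therefore cannot be established as written; the final conclusion of (i) --- three classes with filling sets $\{\alpha\}$, $\{\beta\}$, $\{\alpha,\beta\}$ --- survives, but with $(2,\infty)$ and $(\infty,2)$ merging into the $\{\beta\}$ class rather than the $\{\alpha,\beta\}$ class.) Your case analysis for the finite types $(1,1),(1,2),(2,1),(2,2)$ is correct.

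In part (ii), the separating witnesses you propose --- the single-$\alpha$ words $\beta^p\alpha\beta^q$ together with the all-$\beta$ word --- do not suffice to distinguish all nine types. With $p+q=l-1$ fixed, the only achievable parity pairs $(p\bmod 2,\,q\bmod 2)$ are $(0,\,(l-1)\bmod 2)$ and $(1,\,l\bmod 2)$. Hence for $l$ even no single-$\alpha$ word lies in $\Omega_{-1,l}^c(1,1)$ (which needs both parities even) or in $\Omega_{-1,l}^c(2,2)$ (both odd), while $\beta^l$ lies in both, so your witness family cannot separate $(1,1)$ from $(2,2)$; for $l$ odd the same failure occurs for $(1,2)$ versus $(2,1)$. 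You need words with two $\alpha$'s, one near each end, e.g.\ $\alpha\beta^{l-2}\alpha\in\Omega_{-1,l}^c(1,1)\setminus\Omega_{-1,l}^c(2,2)$ for even $l$ and $\alpha\alpha\beta^{l-2}\in\Omega_{-1,l}^c(1,2)\setminus\Omega_{-1,l}^c(2,1)$ for odd $l$, to complete the separation. Once such witnesses are added, the remainder of your plan (the parity reduction, non-emptiness, and exhaustion) goes through.
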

Let us represent the canonical $\lambda$-graph bisystem
$({\frak L}_{\Lambda_{\ev}^{c}}^-,
{\frak L}_{\Lambda_{\ev}^{c}}^+)$ 
of the even shift $\Lambda_{\ev}$.
Since 
$\{\Omega_{-1,l}^c(i,j) \mid  i,j=1,2,\infty \}$
are the $(-1,l)$-centrally equivalence classes 
$\{C_i^{-1,l} \mid i=1,2,3\}$ for $l=1$ 
and 
$\{C_i^{-1,l} \mid i=1,2,\dots, 9\}$ for $l \ge 2$, 
we define its vertices by setting 
\begin{gather*}
v^1_1 :=\Omega_{-1,1}^c(1,1), \quad
v^1_2 :=\Omega_{-1,1}^c(1,2), \quad
v^1_3 :=\Omega_{-1,1}^c(\infty,\infty)  \\
\intertext{ and }
v^l_1 :=\Omega_{-1,l}^c(1,1), \quad
v^l_2 :=\Omega_{-1,l}^c(1,2), \quad
v^l_3 :=\Omega_{-1,l}^c(2,1)  \quad
v^l_4 :=\Omega_{-1,l}^c(2,2), \\
v^l_5 :=\Omega_{-1,l}^c(1,\infty), \quad
v^l_6 :=\Omega_{-1,l}^c(2,\infty), \quad
v^l_7 :=\Omega_{-1,l}^c(\infty,1)  \quad
v^l_8 :=\Omega_{-1,l}^c(\infty,2), \\
v^l_9 :=\Omega_{-1,l}^c(\infty,\infty) \qquad \text{ for } l \ge 2.
\end{gather*}
Define the vertex sets $V_l$ by 
\begin{equation*}
V^c_0: = \{ v^0_1\},\qquad
V^c_1 :=\{ v^1_1, v^1_2, v^1_3\}, \qquad 
V^c_l :=\{ v^l_i \}_{i=1}^9 \quad \text{ for } l\ge 2. 
\end{equation*}
%We note that $C^{-1,l}_i = v^l_i, i=1,2,\dots, 9$.
Following the construction of the canonical $\lambda$-graph bisystem for subshift
as in Section 2, we have the canonical  $\lambda$-graph bisystem 
$({\frak L}_{\Lambda_{ev}^{c}}^-,{\frak L}_{\Lambda_{ev}^{c}}^+)$ 
for $\Lambda_\ev$ with its vertex  $V_l^c = \cup_{l=0}^\infty V_l$. 
It is figured as Figure \ref{fig:evencm} and
Figure \ref{fig:evencp}
in the end of this section. 
Let us denote by $(\M^{c-}, \M^{c+})$ the associated symbolic matrix bisystem
for 
$({\frak L}_{\Lambda_{ev}^{c}}^-,{\frak L}_{\Lambda_{ev}^{c}}^+)$.
The following lemma is straightforward by its construction of 
$({\frak L}_{\Lambda_{ev}^{c}}^-,{\frak L}_{\Lambda_{ev}^{c}}^+)$
and by viewing Figure \ref{fig:evencm} and
Figure \ref{fig:evencp}.
\begin{lemma}
\begin{gather*}
\M^{c-}_{0,1} = [\alpha, \beta, \alpha+\beta], \qquad
\M^{c+}_{0,1} = [\alpha, \beta, \alpha+\beta], \\
\M^{c-}_{1,2} = 
\begin{bmatrix}
\alpha&    0   &\beta&0       &0      &0       &\alpha&0                &0                 \\
\beta &\alpha&0       &\beta&\beta &0       & 0      &\alpha+\beta& 0               \\
 0      &0       &0       &0       &\alpha&\beta&\beta & 0               &\alpha+\beta
\end{bmatrix}, \\
\M^{c+}_{1,2} = 
\begin{bmatrix}
\alpha&\beta&0      &0      &\alpha&0                &0      &0       & 0       \\
\beta &0      &\alpha&\beta&0      &\alpha+\beta&\beta&0       &0         \\
    0   &0      &0       &0      &\beta&0                &\alpha&\beta&\alpha+\beta
\end{bmatrix}, \\
\M^{c-}_{l,l+1} = 
\begin{bmatrix}
\alpha&    0   &\beta&0       &0      &0       &\alpha&0       &0            \\
 0      &\alpha&0       &\beta&0      &0       & 0      &\alpha& 0           \\
 \beta&0       &0       &0      &0      &0       &0       & 0      &0            \\
 0      &\beta &0       &0      &0      &0       &0       & 0      &0            \\
0      &0       &0       &0      &\alpha&\beta&0       & 0      &\alpha      \\
0      &0       &0       &0      &\beta &0      &0       & 0      &0            \\
0      &0       &0       &0      &0      &0       &\beta & 0      &0            \\
0      &0       &0       &0      &0      &0       &0       & \beta&0            \\
0      &0       &0       &0      &0      &0       &0       & 0      &\beta
\end{bmatrix}, \quad
\M^{c+}_{l,l+1} = 
\begin{bmatrix}
\alpha&\beta &0       &0       &\alpha&0     &0       &0       &0            \\
\beta &0       &0       &0       &0      &0       & 0      &0      & 0           \\
0       &0       &\alpha&\beta &0      &\alpha&0       & 0      &0            \\
0      &0        &\beta &0      &0      &0       &0       & 0      &0            \\
0      &0       &0       &0      &\beta &0       &0       & 0      &0            \\
0      &0       &0       &0      &0       &\beta &0      &0       & 0            \\
0      &0       &0       &0      &0      &0       &\alpha& \beta&\alpha      \\
0      &0       &0       &0      &0      &0       &\beta &0       &0            \\
0      &0       &0       &0      &0      &0       &0       & 0      &\beta
\end{bmatrix}\,
\text{for  }l\ge 2.
\end{gather*}
\end{lemma}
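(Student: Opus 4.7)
My plan is to verify each claimed matrix identity by direct inspection of the edges in the canonical $\lambda$-graph bisystem, using the conditions from Section~2:
\begin{equation*}
v_j^{l+1} \overset{\beta}{\longrightarrow} v_i^l \text{ in } {\frak L}^-_{\Lambda_\ev^c} \iff C_j^{l+1}\beta \subset C_i^l, \qquad v_i^l \overset{\alpha}{\longrightarrow} v_j^{l+1} \text{ in } {\frak L}^+_{\Lambda_\ev^c} \iff \alpha C_j^{l+1} \subset C_i^l,
\end{equation*}
applied to the identifications of the vertices with the classes $\Omega^c_{-1,l}(p,q)$ from the preceding lemma.

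The first step is to reformulate the $(p,q)$-types as a parity picture for the even shift: the left type $p$ records the parity of $\beta$s between the left edge of the central window and the nearest preceding $\alpha$, with $p=1$ for even (including zero), $p=2$ for odd, and $p=\infty$ when no such $\alpha$ exists (left tail equal to $\beta^{-\infty}$); the right type $q$ is defined symmetrically. Two transition rules then govern all edges. For a left-insertion (the operation underlying ${\frak L}^-_{\Lambda_\ev^c}$) the right type is unchanged, and the left type transforms as: inserting $\alpha$ sends $1\to 1$ and $\infty\to 1$ (but is forbidden from $2$, since this would create the forbidden pattern $\alpha\beta^{\text{odd}}\alpha$), while inserting $\beta$ swaps $1\leftrightarrow 2$ and fixes $\infty$. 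For a right-insertion (${\frak L}^+_{\Lambda_\ev^c}$), the symmetric rules act on the right type and preserve the left.

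For $l\ge 2$, all nine classes $\Omega^c_{-1,l}(p,q)$ are nonempty and mutually distinct, and the transition rules immediately tabulate the entries of $\M^{c\pm}_{l,l+1}$, matching the claimed sparsity pattern and symbol assignments. The boundary matrices $\M^{c\pm}_{0,1}$ are immediate from the computation $\Omega^c_{-1,1}(1,1)=\{\alpha\}$, $\Omega^c_{-1,1}(1,2)=\{\beta\}$, $\Omega^c_{-1,1}(\infty,\infty)=\{\alpha,\beta\}$ recorded in the previous lemma.

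The main obstacle is the pair $\M^{c\pm}_{1,2}$, because the width of the central window at level $1$ is too small to distinguish several of the nine parity types, and they merge under the identification. A direct computation of $W_{-1,1}$ yields the mergers
\begin{equation*}
\Omega^c_{-1,1}(1,2)=\Omega^c_{-1,1}(2,1)=\Omega^c_{-1,1}(2,\infty)=\Omega^c_{-1,1}(\infty,2)=\{\beta\},
\end{equation*}
\begin{equation*}
\Omega^c_{-1,1}(1,\infty)=\Omega^c_{-1,1}(\infty,1)=\Omega^c_{-1,1}(\infty,\infty)=\{\alpha,\beta\},\qquad \Omega^c_{-1,1}(2,2)=\emptyset.
\end{equation*}
The $\alpha+\beta$ entries in $\M^{c\pm}_{1,2}$ reflect precisely the cases where, from a single level-$2$ source, both the $\alpha$-insertion and the $\beta$-insertion land in the same merged level-$1$ class. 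With this bookkeeping accounted for, the same casework from $l\ge 2$ produces $\M^{c\pm}_{1,2}$ as stated.
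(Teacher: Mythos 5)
Your proposal is correct, and it is essentially the proof the paper has in mind: the paper simply declares the lemma ``straightforward by construction'' and points to the figures, whereas you actually carry out the verification, organizing it around the parity transition rules for left- and right-insertion. Your reduction of the nine types to (parity of the boundary $\beta$-run, or $\infty$) on each side is exactly the right invariant for the even shift, and the rules ($\alpha$: $1\to1$, $\infty\to1$, forbidden from $2$; $\beta$: $1\leftrightarrow2$, $\infty\to\infty$) reproduce every entry of $\M^{c\pm}_{l,l+1}$ for $l\ge2$ and, after accounting for the level-$1$ mergers, of $\M^{c\pm}_{1,2}$ as well.

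One point in your write-up deserves emphasis: your merger list at level $1$ places $\Omega^c_{-1,1}(2,\infty)$ and $\Omega^c_{-1,1}(\infty,2)$ in the $\{\beta\}$ class, whereas the paper's preceding lemma lists them among the classes equal to $\{\alpha,\beta\}$. Your computation is the correct one (for a point with $x_{-2}=\alpha$, $x_{-1}=\beta$ and right tail $\beta^{+\infty}$, the choice $x_0=\alpha$ creates the forbidden word $\alpha\beta\alpha$, so only $\beta$ survives), and indeed the matrices $\M^{c\pm}_{1,2}$ asserted in the present lemma are consistent only with your assignment: for instance column $5$ of $\M^{c-}_{1,2}$ is $(0,\beta,\alpha)^{T}$, which requires the $\beta$-insertion from the source $(1,\infty)$ to land in the $\{\beta\}$ class $v^1_2$, not in $v^1_3$. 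So your proof not only fills the gap left by the paper but silently repairs a misstatement in the lemma it relies on; it would be worth flagging that correction explicitly rather than absorbing it into the phrase ``a direct computation of $W_{-1,1}$ yields.''
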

Let $M^{c-}, M^{c+}$ be the $9\times 9$ matrices obtained from 
$\M^{c-}_{l,l+1}, \M^{c+}_{l,l+1}, l\ge 2$ by setting all the symbols equal to one, so that  
\begin{equation*}
M^{c-} = 
\begin{bmatrix}
1       &    0  &1       &0      &0      &0       &1       &0       &0      \\
 0      &1      &0       &1      &0      &0       & 0      &1       &0      \\
1       &0      &0       &0      &0      &0       &0       & 0      &0       \\
 0      &1      &0       &0      &0      &0       &0       & 0      &0       \\
0      &0       &0       &0      &1      &1       &0       & 0      &1      \\
0      &0       &0       &0      &1      &0       &0       & 0      &0       \\
0      &0       &0       &0      &0      &0       &1       & 0      &0       \\
0      &0       &0       &0      &0      &0       &0       & 1      &0       \\
0      &0       &0       &0      &0      &0       &0       & 0      &1
\end{bmatrix}, \quad
M^{c+} = 
\begin{bmatrix}
1      &1       &0       &0      &1      &0       &0       &0       &0      \\
1      &0       &0       &0      &0      &0       & 0      &0       &0      \\
0      &0       &1       &1      &0      &1       &0       & 0      &0       \\
0      &0       &1       &0      &0      &0       &0       & 0      &0       \\
0      &0       &0       &0      &1      &0       &0       & 0      &0       \\
0      &0       &0       &0      &0      &1       &0       &0       & 0       \\
0      &0       &0       &0      &0      &0       &1       & 1      &1     \\
0      &0       &0       &0      &0      &0       &1       &0       &0       \\
0      &0       &0       &0      &0      &0       &0       & 0      &1
\end{bmatrix}
\end{equation*}
and hence
\begin{equation*}
{}^t\!M^{c-} = 
\begin{bmatrix}
1      &    0   &1       &0      &0      &0       &0       &0       &0  \\
0      & 1      &0       &1      &0      &0       & 0      &0       &0 \\
1      & 0      &0       &0      &0      &0       &0       & 0      &0   \\
0      &1       &0       &0      &0      &0       &0       & 0      &0   \\
0      &0       &0       &0      &1      &1       &0       & 0      &0  \\
0      &0       &0       &0      &1      &0       &0       & 0      &0   \\
1      &0       &0       &0      &0      &0       &1       & 0      &0   \\
0      &1       &0       &0      &0      &0       &0       & 1      &0  \\
0      &0       &0       &0      &1      &0       &0       & 0      &1
\end{bmatrix}, \quad
{}^t\!M^{c+} = 
\begin{bmatrix}
1      &1       &0       &0      &0      &0       &0      &0      &0   \\
1      &0       &0       &0      &0      &0       &0      &0      & 0  \\
0      &0       &1       &1      &0      &0       &0      &0      &0  \\
0      &0       &1       &0      &0      &0       &0      &0      &0            \\
1      &0       &0       &0      &1      &0       &0      &0      &0            \\
0      &0       &1       &0      &0      &1       &0      &0      & 0            \\
0      &0       &0       &0      &0      &0       &1      &1      &0      \\
0      &0       &0       &0      &0      &0       &1      &0      &0            \\
0      &0       &0       &0      &0      &0       &1      &0      &1
\end{bmatrix}.
\end{equation*}
We put
\begin{equation*}
B := {}^t\!M^{c-}- {}^t\!M^{c+} = 
\begin{bmatrix}
0      &-1     &1       &0      &0      &0       &0       &0       &0  \\
-1    & 1      &0       &1      &0      &0       &0       &0       &0  \\
1      & 0      &-1     &-1     &0      &0      &0       & 0      &0   \\
0      &1       &-1     &0      &0      &0       &0       & 0      &0   \\
-1     &0       &0      &0      &0      &1       &0       & 0      &0  \\
0      &0       &-1     &0      &1      &-1      &0       & 0      &0   \\
1      &0       &0       &0      &0      &0       &0       & -1     &0   \\
0      &1       &0       &0      &0      &0       &-1      & 1      &0  \\
0      &0       &0       &0      &1      &0       &-1      & 0      &0
\end{bmatrix}.
\end{equation*}
Hence we have 
\begin{lemma}
$K_0(\R_{\Lambda_\ev^c}) \cong \varinjlim\{{}^t\!M^{c-}: \Z^9/B\Z^9 \longrightarrow \Z^9/B\Z^9 \}$.
\end{lemma}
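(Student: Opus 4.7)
The plan is to deduce this as a straightforward application of Theorem \ref{thm:RLKgroup} and the explicit description of the symbolic matrix bisystem $(\M^{c-},\M^{c+})$ computed in the preceding lemma. Namely, Theorem \ref{thm:RLKgroup} gives $K_0(\R_{\Lambda_\ev^c}) \cong K_0(M^{c-},M^{c+})$, and by definition the latter is the inductive limit
\begin{equation*}
K_0(M^{c-},M^{c+}) = \varinjlim_l \{ {}^t\!M_0^{-l}\colon K_0^l(M^{c-},M^{c+}) \to K_0^{l+1}(M^{c-},M^{c+})\},
\end{equation*}
where $K_0^l(M^{c-},M^{c+}) = \Z^{m(l+1)}/({}^t\!M^{c-}_{l,l+1}-{}^t\!M^{c+}_{l,l+1})\Z^{m(l)}$.

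The key observation I would then record is that the formulas for $\M^{c-}_{l,l+1}$ and $\M^{c+}_{l,l+1}$ established in the preceding lemma stabilize for $l \ge 2$: their numerical absolute values are the fixed $9\times 9$ matrices $M^{c-}$ and $M^{c+}$ displayed above, with $m(l)=9$ for every $l\ge 2$. Consequently, for $l\ge 2$ one has $K_0^l(M^{c-},M^{c+}) = \Z^9/B\Z^9$ with $B = {}^t\!M^{c-}-{}^t\!M^{c+}$, and the connecting homomorphism from Lemma \ref{lem:DM9.1} is the one induced by the constant matrix ${}^t\!M^{c-}$. Well-definedness on the quotient is ensured because, at the stable level, the commutation relation \eqref{eq:nnmbs} reduces to $M^{c-}M^{c+}=M^{c+}M^{c-}$, so that ${}^t\!M^{c-}\cdot B = B\cdot {}^t\!M^{c-}$ and hence ${}^t\!M^{c-}$ preserves $B\Z^9$; this identity may be verified by direct inspection of the two $9\times 9$ matrices.

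Since the inductive limit of an $\N$-indexed system depends only on any cofinal tail, the initial terms at levels $l=0,1$ (where $m(0)=1$, $m(1)=3$) can be discarded without changing the limit, yielding
\begin{equation*}
K_0(\R_{\Lambda_\ev^c}) \cong \varinjlim\{{}^t\!M^{c-}\colon \Z^9/B\Z^9 \longrightarrow \Z^9/B\Z^9\}.
\end{equation*}
There is no essential obstacle here; the only mildly tedious step is verifying the commutation $M^{c-}M^{c+}=M^{c+}M^{c-}$ by direct multiplication, but this is a routine check guaranteed abstractly by the local property of the $\lambda$-graph bisystem $({\frak L}_{\Lambda_\ev^c}^-,{\frak L}_{\Lambda_\ev^c}^+)$. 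The subsequent (genuine) computation of the limit group is deferred to the next steps of the section.
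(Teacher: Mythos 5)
Your proposal is correct and follows exactly the route the paper intends: the paper states this lemma with no written proof (just ``Hence we have''), and the intended justification is precisely your chain — Theorem \ref{thm:RLKgroup} reduces $K_0(\R_{\Lambda_\ev^c})$ to $K_0(M^{c-},M^{c+})$, the matrices ${}^t\!M^{c-}_{l,l+1}-{}^t\!M^{c+}_{l,l+1}$ stabilize to the constant matrix $B$ for $l\ge 2$ with $m(l)=9$, and cofinality discards the levels $l=0,1$. Your remark that well-definedness of the connecting map on the quotient follows from the stabilized commutation relation $M^{c-}M^{c+}=M^{c+}M^{c-}$ (an instance of \eqref{eq:nnmbs}, already handled in general by Lemma \ref{lem:DM9.1}) is also correct.
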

To compute the above group of the inductive limit, 
we provide some notation of basic operations on matrices from elementary linear algebras.    
For $i,j =1,2,\dots,9$, define the column operations to be: 
\begin{align*}
C(i\rightarrow j)& = \text{Add the $i$th column to the $j$th column},\\
C(-i\rightarrow j)& = \text{Add the minus of the $i$th column to the $j$th column},\\
C(i\leftrightarrow j)& = \text{Exchange  the $i$th column and the $j$th column}.
\end{align*}
The row operations
$R(i\rightarrow j), R(-i\rightarrow j), R(i\leftrightarrow j)
$
are similarly defined.
The matrix $B$ can be transformed by the sequence of the following column operations:
\begin{align*}
&  B \\ 
& \downarrow \quad 
  C(4\rightarrow 1)C(5\rightarrow 7)C(8\rightarrow 7)C(-8\rightarrow 2)C(7\rightarrow 1) \\  
& 
\begin{bmatrix}
0      &-1      &1       &0      &0      &0       &0       & 0      &0  \\
0      & 1      &0       &1      &0      &0       &0       & 0      &0  \\ 
0      & 0      &-1      &-1     &0      &0       &0       & 0      &0  \\
0      &1       &-1      &0      &0      &0       &0       & 0      &0  \\
-1     &0       &0       &0      &0      &1       &0       & 0      &0  \\
1      &0       &-1      &0      &1      &-1      &1       & 0      &0  \\
0      &1       &0       &0      &0      &0       &-1      & -1     &0  \\
0      &0       &0       &0      &0      &0       &0       & 1      &0  \\
0      &0       &0       &0      &1      &0       &0       & 0      &0
\end{bmatrix} \\
& \downarrow \quad C(6\rightarrow 1)C(7\rightarrow 2)C(-6\rightarrow 3)
C(-7\rightarrow 5) \\  
& 
\begin{bmatrix}
0      &-1      &1       &0      &0      &0       &0       & 0      &0  \\
0      & 1      &0       &1      &0      &0       &0       & 0      &0  \\ 
0      & 0      &-1      &-1     &0      &0       &0       & 0      &0  \\
0      &1       &-1      &0      &0      &0       &0       & 0      &0  \\
0      &0       &-1      &0      &0      &1       &0       & 0      &0  \\
0      &1       &0       &0      &0      &-1      &1       & 0      &0  \\
0      &0       &0       &0      &1      &0       &-1      & -1     &0  \\
0      &0       &0       &0      &0      &0       &0       & 1      &0  \\
0      &0       &0       &0      &1      &0       &0       & 0      &0
\end{bmatrix} \\
& \downarrow \quad C(6\rightarrow 2)C(3\rightarrow 2)C(-4\rightarrow 2)C(8\rightarrow 5) \\  
& 
\begin{bmatrix}
0      &0       &1       &0      &0      &0       &0       & 0      &0  \\
0      &0       &0       &1      &0      &0       &0       & 0      &0  \\ 
0      &0       &-1      &-1     &0      &0       &0       & 0      &0  \\
0      &0       &-1      &0      &0      &0       &0       & 0      &0  \\
0      &0       &-1      &0      &0      &1       &0       & 0      &0  \\
0      &0       &0       &0      &0      &-1      &1       & 0      &0  \\
0      &0       &0       &0      &0      &0       &-1      & -1     &0  \\
0      &0       &0       &0      &1      &0       &0       & 1      &0  \\
0      &0       &0       &0      &1      &0       &0       & 0      &0
\end{bmatrix}, 
\end{align*}
where the column operations
$C(i_1\rightarrow i_2)\cdots C(i_{n-1}\rightarrow i_n)$
are taken from the leftmost $C(i_1\rightarrow i_2)$ in order. 
The last matrix goes to the following matrix denoted by $C$ by the operations
$C(1\leftrightarrow 3)C(2\leftrightarrow 4)C(5\leftrightarrow 9)$
\begin{equation*}
C :=
\begin{bmatrix}
1      &0       &0       &0      &0      &0       &0       & 0      &0  \\
0      &1       &0       &0      &0      &0       &0       & 0      &0  \\ 
-1     &-1      &0       &0      &0      &0       &0       & 0      &0  \\
-1     &0       &0       &0      &0      &0       &0       & 0      &0  \\
-1     &0       &0       &0      &0      &1       &0       & 0      &0  \\
0      &0       &0       &0      &0      &-1      &1       & 0      &0  \\
0      &0       &0       &0      &0      &0       &-1      & -1     &0  \\
0      &0       &0       &0      &0      &0       &0       & 1      &1  \\
0      &0       &0       &0      &0      &0       &0       & 0      &1
\end{bmatrix}. 
\end{equation*}
We note that the equality $B\Z^9 = C \Z^9$ and hence
$\Z^9/ B\Z^9 = \Z^9 /C\Z^9$ hold.
By the row operations
$R(-8\rightarrow 9) R(7\rightarrow 8) R(6\rightarrow 7) R(5\rightarrow 6) R(2\rightarrow 3) R(1\rightarrow 5) R(1\rightarrow 4)R(1\rightarrow 3), 
$
where the row operations above
are taken from the rightmost $R(1\rightarrow 3)$ in order,
the matrices $C$ and ${}^t\!M^-$
go to the following matrices respectively:
\begin{equation*}
\begin{bmatrix}
1      &0       &0       &0      &0      &0       &0       & 0      &0  \\
0      &1       &0       &0      &0      &0       &0       & 0      &0  \\ 
0      &0       &0       &0      &0      &0       &0       & 0      &0  \\
0      &0       &0       &0      &0      &0       &0       & 0      &0  \\
0      &0       &0       &0      &1      &0       &0       & 0      &0  \\
0      &0       &0       &0      &0      &1       &0       & 0      &0  \\
0      &0       &0       &0      &0      &0       &-1      & 0      &0  \\
0      &0       &0       &0      &0      &0       &0       & 1      &0  \\
0      &0       &0       &0      &0      &0       &0       & 0      &0
\end{bmatrix}, \quad 
\begin{bmatrix}
1      &0       &1       &0      &0      &0       &0       &0       &0  \\
0      &1       &0       &1      &0      &0       &0       &0       &0  \\
2      &1       &1       &1      &0      &0       &0       & 0      &0  \\
1      &1       &1       &0      &0      &0       &0       & 0      &0  \\
1      &0       &1       &0      &1      &1       &0       & 0      &0  \\
1      &0       &1       &0      &2      &1       &0       & 0      &0  \\
2      &0       &1       &0      &2      &1       &1       & 0      &0  \\
2      &1       &1       &0      &2      &1       &1       & 1      &0  \\
-2     &-1      &-1      &0      &-1     &-1      &-1      &-1      &1
\end{bmatrix}, 
\end{equation*}
that are written $C^1$ and $M^1$, respectively. 
We thus have
\begin{lemma}\label{lem:evenmainc0}
$
K_0(\R_{\Lambda_{\operatorname{ev}}^{c}}) \cong \Z^3.
$
\end{lemma}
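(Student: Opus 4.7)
The plan is to take the inductive-limit formula $K_0(\R_{\Lambda_\ev^c}) \cong \varinjlim\{{}^t\!M^{c-}: \Z^9/B\Z^9 \longrightarrow \Z^9/B\Z^9\}$ established just before the lemma and to leverage the column/row reductions already performed to convert $B$ into $C^1$ and ${}^t\!M^{c-}$ into $M^1$. Since the column operations act on the right and preserve the image, $B\Z^9 = C^1\Z^9$, while the row operations $R$ used amount to conjugating the endomorphism: if $R$ denotes the composite row operation, then $R{}^t\!M^{c-}R^{-1}$ restricted to the quotient $\Z^9/RC^1\Z^9$ is represented by $M^1$ modulo the image. Thus the inductive limit is unchanged if we replace ${}^t\!M^{c-}$ acting on $\Z^9/B\Z^9$ by $M^1$ acting on $\Z^9/C^1\Z^9$.

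Next I would identify the cokernel explicitly. Reading off $C^1$ one sees that its columns are $\{e_1, e_2, 0, 0, e_5, e_6, -e_7, e_8, 0\}$, so $C^1\Z^9 = \langle e_1, e_2, e_5, e_6, e_7, e_8\rangle$ and
\begin{equation*}
\Z^9/C^1\Z^9 \;\cong\; \Z e_3 \oplus \Z e_4 \oplus \Z e_9 \;\cong\; \Z^3.
\end{equation*}
I would then read the columns $3, 4, 9$ of $M^1$ and project onto the coordinates $3, 4, 9$: column $3$ gives $(1,1,-1)$, column $4$ gives $(1,0,0)$, and column $9$ gives $(0,0,1)$. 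Hence the induced endomorphism on $\Z^3$ is represented by
\begin{equation*}
N \;=\; \begin{bmatrix} 1 & 1 & 0 \\ 1 & 0 & 0 \\ -1 & 0 & 1 \end{bmatrix}.
\end{equation*}

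Finally I would observe $\det N = -1$, so $N$ is an automorphism of $\Z^3$, and consequently $\varinjlim\{N:\Z^3\to\Z^3\}\cong \Z^3$, which gives the desired identification $K_0(\R_{\Lambda_\ev^c})\cong \Z^3$.

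The main obstacle is bookkeeping rather than conceptual: I need to make sure that (i) the row operations $R$ used earlier really do intertwine the endomorphism ${}^t\!M^{c-}$ with $M^1$ in the quotient (equivalently, that the reductions presented do yield $M^1 = R{}^t\!M^{c-}R^{-1}$ modulo $RC^1\Z^9$), and (ii) the sign/permutation choices hidden in the swaps $C(1\leftrightarrow 3), C(2\leftrightarrow 4), C(5\leftrightarrow 9)$ are correctly tracked so that the three free generators of the quotient are indeed $e_3, e_4, e_9$ with the signs stated. Once this is checked, the determinant computation is immediate and the conclusion follows.
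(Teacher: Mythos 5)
Your proposal is correct, and up to the last step it follows the paper's own reduction: inductive-limit formula, column reduction of $B$ to $C$, row reduction to $C^1$ and $M^1$, identification of $\Z^9/C^1\Z^9$ with $\Z\bar{e}_3\oplus\Z\bar{e}_4\oplus\Z\bar{e}_9\cong\Z^3$, and the induced matrix $N$. The divergence is the endgame: the paper conjugates the induced $3\times 3$ matrix to $F\oplus\id$ with $F=\left[\begin{smallmatrix}1&1\\1&0\end{smallmatrix}\right]$ and identifies the limit as $\Z[\frac{1}{\beta}]\oplus\Z\cong\Z^3$, whereas you observe $\det N=-1$, so $N\in GL_3(\Z)$ and the stationary limit of an automorphism is immediately $\Z^3$. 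Your route is shorter and equally valid (and your $N$ is indeed $\Z$-conjugate to $F\oplus\id$). Two bookkeeping points, both of which you rightly flagged as needing verification: first, the column operations give $B\Z^9=C\Z^9$, not $B\Z^9=C^1\Z^9$; the lattice $C^1\Z^9=RC\Z^9$ is the image of $C\Z^9$ under the row operations and is a genuinely different subgroup of $\Z^9$ (for instance $e_1\in C^1\Z^9\setminus C\Z^9$). Second, the printed $M^1$ equals $R\,{}^t\!M^{c-}$, not $R\,{}^t\!M^{c-}R^{-1}$, and $M^1$ does not even preserve $C^1\Z^9$ (its first column is not in $C^1\Z^9$); nevertheless your reading of columns $3,4,9$ is legitimate because every row operation used has source row in $\{1,2,5,6,7,8\}$, so $R$ and $R^{-1}$ fix $e_3,e_4,e_9$, whence $\overline{R\,{}^t\!M^{c-}R^{-1}e_j}=\overline{M^1e_j}$ for $j=3,4,9$, and these three values determine the induced endomorphism of $\Z^9/C^1\Z^9$. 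With that check in place your matrix $N$, the determinant computation, and the conclusion $K_0(\R_{\Lambda_{\operatorname{ev}}^{c}})\cong\Z^3$ are all correct.
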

\begin{proof}
As ${}^t\!M^{c-}: \Z^9 /C\Z^9\longrightarrow \Z^9 /C\Z^9$ is conjugate to
$M^1: \Z^9 /C^1\Z^9\longrightarrow \Z^9 /C^1\Z^9$,
through the preceding row operations,
and the latter one is conjugate  to
$F\oplus\id: \Z^2 \oplus \Z \longrightarrow \Z^2 \oplus \Z$
where
$
F =
\begin{bmatrix}
1 & 1 \\
1 & 0 
\end{bmatrix},
$
we have
\begin{equation*}
 \varinjlim \{{}^t\!M^{c-}: \Z^9 /C\Z^9\longrightarrow \Z^9 /C\Z^9 \} 
\cong
 \varinjlim \{F\oplus\id: \Z^2 \oplus \Z \longrightarrow \Z^2 \oplus \Z \} 
\cong
 \Z[\frac{1}{\beta}] \oplus \Z,
\end{equation*}
where 
$\beta = \frac{1 +\sqrt{5}}{2}$.
Therefore we  know that
$
K_0(\R_{\Lambda_{\operatorname{ev}}^{c}})
\cong
\Z[\frac{1}{\beta}] \oplus \Z
 \cong \Z^3.
$
\end{proof}
\medskip
We will next compute 
$
K_1(\R_{\Lambda_{\operatorname{ev}}^{c}})
$
that is isomorphic to the group of the inductive limit
\begin{equation*}
\varinjlim\{{}^t\!M^{c-}: \Ker(B)\text{ in } \Z^9 \longrightarrow \Ker(B)\text{ in } \Z^9\}.
\end{equation*}
By the following row operations from the rightmost in order, the matrix $B$ goes to:
\begin{align*}
&  B \\ 
& \downarrow \quad R(6\rightarrow 8)R(-9\rightarrow 8)R(1\rightarrow 8)R(5\rightarrow 6)
                   R(1\rightarrow 3)R(2\rightarrow 3)R(1\rightarrow 4) \\  
& 
\begin{bmatrix}
0      &-1      &1       &0      &0      &0       &0       & 0      &0  \\
-1     & 1      &0       &1      &0      &0       &0       & 0      &0  \\ 
0      & 0      &0       &0      &0      &0       &0       & 0      &0  \\
0      &0       &0       &0      &0      &0       &0       & 0      &0  \\
-1     &0       &0       &0      &0      &1       &0       & 0      &0  \\
-1     &0       &-1      &0      &1      &0       &0       & 0      &0  \\
1      &0       &0       &0      &0      &0       &0       & -1     &0  \\
0      &0       &0       &0      &0      &0       &0       & 0      &0  \\
0      &0       &0       &0      &1      &0       &-1      & 0      &0
\end{bmatrix} \\
& \downarrow \quad R(9\leftrightarrow 5)R(7\leftrightarrow 6)R(5\leftrightarrow 4)R(6\leftrightarrow 3) \\  
&
\begin{bmatrix}
0      &-1      &1       &0      &0      &0       &0       & 0      &0  \\
-1     & 1      &0       &1      &0      &0       &0       & 0      &0  \\ 
-1     & 0      &-1      &0      &1      &0       &0       & 0      &0  \\
-1     &0       &0       &0      &0      &1       &0       & 0      &0  \\
0      &0       &0       &0      &1      &0       &-1      & 0      &0  \\
1      &0       &0       &0      &0      &0       &0       & -1     &0  \\
0      &0       &0       &0      &0      &0       &0       & 0      &0  \\
0      &0       &0       &0      &0      &0       &0       & 0      &0  \\
0      &0       &0       &0      &0      &0       &0       & 0      &0
\end{bmatrix}.  
\end{align*}
We write the last matrix as $D$, so that 
$\Ker(B) = \Ker(D)$ and
\begin{equation}
\Ker(D) =\{ (x_i)_{i=1}^9 \in \Z^9 \mid
x_1 = x_2 + x_4 = -x_3 + x_5 = x_6 = x_8, \,x_2 = x_3,\, x_5 = x_7 \}.\label{eq:kerD} 
\end{equation}
We provide a lemma
\begin{lemma}
Put the matrix
$
M= 
\begin{bmatrix}
1 & 1 & 0 \\
1 & 0 & 0 \\
1 & 0& 1
\end{bmatrix}.
$
The correspondence
$\Phi:\Ker(D) \longrightarrow \Z^3$ defined by 
$\Phi((x_i)_{i=1}^9) = (x_7, x_8,  x_9)$
gives rise to an isomorphism such that the diagram
\begin{equation*}
\begin{CD}
\Ker(D)
  @> {}^t\!M^{c-} >> 
\Ker(D) \\
@V{\Phi}VV    @VV{\Phi}V \\ 
\Z^3 @>M >> 
 \Z^3
\end{CD}
\end{equation*}
commutes.
\end{lemma}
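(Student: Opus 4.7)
The plan is to verify the lemma by a direct, bookkeeping-style calculation broken into three steps: parametrize $\Ker(D)$, show the parametrization identifies it with $\Z^3$ via $\Phi$, and then pull ${}^t\!M^{c-}$ through this identification.

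First I would solve the defining equations \eqref{eq:kerD} for $\Ker(D)$ in terms of the three chosen coordinates $(x_7, x_8, x_9)$. Setting $y_1 = x_7$, $y_2 = x_8$, $y_3 = x_9$, the relations force $x_1 = x_6 = x_8 = y_2$, $x_5 = x_7 = y_1$, and $x_2 = x_3 = x_5 - x_1 = y_1 - y_2$, hence $x_4 = x_1 - x_2 = 2y_2 - y_1$, with $x_9 = y_3$ free. This shows $\Phi$ is surjective with an explicit $\Z$-linear inverse, and injectivity is immediate from the parameters. So $\Phi:\Ker(D)\to\Z^3$ is an isomorphism of abelian groups.

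Next I would compute ${}^t\!M^{c-} x$ componentwise for a general $x\in\Ker(D)$, using the displayed form of ${}^t\!M^{c-}$. Substituting the parametrization, the new components $x'_7, x'_8, x'_9$ work out to
\begin{equation*}
x'_7 = x_1 + x_7 = y_1 + y_2, \qquad x'_8 = x_2 + x_8 = y_1, \qquad x'_9 = x_5 + x_9 = y_1 + y_3,
\end{equation*}
which is exactly $M(y_1,y_2,y_3)^t$. As a byproduct the other six components also satisfy the relations \eqref{eq:kerD} (for instance $x'_2 + x'_4 = y_2 = x'_1$ and $-x'_3 + x'_5 = y_1 = x'_1$), so ${}^t\!M^{c-}$ preserves $\Ker(D)$ and the square commutes.

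There is no real obstacle here; the only risk is a transcription error in the nine rows of ${}^t\!M^{c-}$ or in the kernel relations, so I would double-check those against the statement of \eqref{eq:kerD} and the displayed form of ${}^t\!M^{c-}$ before assembling the final proof. Once the computation is written out cleanly, the conclusion of the lemma follows immediately.
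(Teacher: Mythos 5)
Your proposal is correct and follows essentially the same route as the paper: parametrize $\Ker(D)$ by $(x_7,x_8,x_9)$, write down the explicit inverse $\Phi^{-1}(y_1,y_2,y_3)=(y_2,\,y_1-y_2,\,y_1-y_2,\,-y_1+2y_2,\,y_1,\,y_2,\,y_1,\,y_2,\,y_3)$, apply ${}^t\!M^{c-}$ and read off the last three coordinates, which is exactly the paper's computation. The only blemish is a typo in your first parenthetical check: $x'_2+x'_4 = y_2+(y_1-y_2) = y_1 = x'_1$, not $y_2$, though the relation $x'_2+x'_4=x'_1$ that you actually need still holds.
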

\begin{proof}
By \eqref{eq:kerD}, an element 
$(x_7,x_8,x_9) \in \Z^3$ uniquely determines
the other $(x_i)_{i=1}^6$
such that $(x_i)_{i=1}^9 \in \Ker(D)$.
This shows that $\Phi:\Ker(D) \longrightarrow \Z^3$
gives rise to an isomorphism. 
 For $(x,y,z) =(x_7,x_8,x_9) \in \Z^3$,
we have
\begin{equation*}
\Phi^{-1}(x,y,z) = (y, x-y, x-y, -x +2y, x, y, x, y, z) \in \Z^9
\end{equation*}
 so that 
\begin{equation*}
{}^t\!M^{c-}(\Phi^{-1}(x,y,z))
 = (x, y, y, x-y, x+y, x, x+y, x, x+ z) \in \Z^9
\end{equation*}
and hence
 \begin{equation*}
(\Phi\circ {}^t\!M^{c-}\circ \Phi^{-1})(x,y,z) = (x+y, x, x+z) \in \Z^9,
\end{equation*}
 proving the commutativity of the diagram.
\end{proof} 
We thus have
\begin{lemma}\label{lem:evenmainc1}
$
K_1(\R_{\Lambda_{\operatorname{ev}}^{c}}) \cong \Z^3.
$
\end{lemma}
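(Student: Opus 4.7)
The plan is to combine Theorem \ref{thm:RLKgroup} with the preceding lemma, which already reduces the problem to computing an inductive limit on $\Z^3$ of a single integer matrix. By Theorem \ref{thm:RLKgroup},
\begin{equation*}
K_1(\R_{\Lambda_{\operatorname{ev}}^{c}})
\cong \varinjlim\{{}^t\!M^{c-}: \Ker(B) \text{ in } \Z^9 \longrightarrow \Ker(B) \text{ in } \Z^9\},
\end{equation*}
and under the isomorphism $\Phi:\Ker(D)=\Ker(B)\to \Z^3$ established just above, the endomorphism ${}^t\!M^{c-}$ is intertwined with multiplication by the $3\times 3$ integer matrix
\begin{equation*}
M= \begin{bmatrix} 1 & 1 & 0 \\ 1 & 0 & 0 \\ 1 & 0 & 1 \end{bmatrix}.
\end{equation*}
Thus it suffices to identify $\varinjlim\{M:\Z^3\to \Z^3\}$.

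Next, I would compute $\det(M)$ by expanding along the third column: the cofactor of $M_{3,3}=1$ equals $\det\begin{bmatrix}1 & 1 \\ 1 & 0\end{bmatrix}=-1$, while the other two entries of the third column vanish, so $\det(M)=-1$. Hence $M \in GL_3(\Z)$ is invertible as an endomorphism of $\Z^3$, and every map in the inductive system $\{M:\Z^3\to\Z^3\}$ is an isomorphism. Consequently
\begin{equation*}
K_1(\R_{\Lambda_{\operatorname{ev}}^{c}})
\cong \varinjlim\{M:\Z^3\to\Z^3\} \cong \Z^3,
\end{equation*}
which is the desired conclusion.

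There is no serious obstacle here: the bulk of the work, namely the explicit description of $\Ker(B)$ and the conjugation of ${}^t\!M^{c-}$ to $M$, has already been carried out in the preceding lemma. The only step to verify is the unimodularity of $M$, after which the limit collapses. This parallels the pattern seen for topological Markov shifts in Section~7, where $K_1$ for the golden mean shift was also $\Z^2$ because the relevant matrix was unimodular; here the block-triangular structure $M=\begin{bmatrix}F & 0 \\ (1,0) & 1\end{bmatrix}$ with the golden-mean block $F$ makes the unimodularity transparent.
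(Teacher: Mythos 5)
Your proof is correct, and up to the last step it coincides with the paper's: the reduction of $K_1(\R_{\Lambda_{\operatorname{ev}}^{c}})$ to $\varinjlim\{M:\Z^3\to\Z^3\}$ with $M=\left[\begin{smallmatrix}1&1&0\\1&0&0\\1&0&1\end{smallmatrix}\right]$ is exactly the content of the preceding lemma, which both you and the paper invoke. Where you diverge is in evaluating that limit. The paper conjugates $M$ by the unimodular matrix $U=\left[\begin{smallmatrix}1&0&0\\0&1&0\\-1&-1&1\end{smallmatrix}\right]$ to $F\oplus\id$ with $F=\left[\begin{smallmatrix}1&1\\1&0\end{smallmatrix}\right]$ and then quotes $\varinjlim F\cong\Z[\frac{1}{\beta}]\cong\Z^2$ from the golden-mean computation, obtaining $\Z[\frac{1}{\beta}]\oplus\Z\cong\Z^3$. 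You instead note directly that $\det M=-1$ (your cofactor expansion is right, and the block-triangular form $M=\left[\begin{smallmatrix}F&0\\ \ast&1\end{smallmatrix}\right]$ makes it immediate), so $M\in GL_3(\Z)$, every bonding map is an isomorphism, and the limit is $\Z^3$ on the spot. Your route is shorter and isolates the real mechanism --- unimodularity is also what makes the paper's $F\oplus\id$ limit collapse; the paper's route has the mild advantage of exhibiting the answer in the form $\Z[\frac{1}{\beta}]\oplus\Z$, parallel to its $K_0$ computation for the same algebra. Both are complete proofs of the stated isomorphism of abstract groups.
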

\begin{proof}
Put the matrix
$
U= 
\begin{bmatrix}
1 & 0 & 0 \\
0 & 1 & 0 \\
-1 & -1& 1
\end{bmatrix}.
$
The diagram
\begin{equation*}
\begin{CD}
\Z^3
  @>M >> 
\Z^3 \\
@V{U}VV    @VV{U}V \\ 
\Z^3 @>F\oplus\id >> 
 \Z^3
\end{CD}
\end{equation*}
commutes.
By the preceding lemma,
we have 
$
K_1(\R_{\Lambda_{\operatorname{ev}}^{c}}) 
\cong
\Z[\frac{1}{\beta}] \oplus \Z
 \cong \Z^3.
$
\end{proof}
We thus reach the K-group formulas.
\begin{proposition}\label{prop:evenmainc}
$
K_i(\R_{\Lambda_{\operatorname{ev}}^{c}}) \cong \Z^3, \, i=0,1.
$
\end{proposition}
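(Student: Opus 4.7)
The proposition is the conjunction of Lemma \ref{lem:evenmainc0} and Lemma \ref{lem:evenmainc1}, so the plan is simply to combine them. That said, if I were approaching the proposition directly, the strategy is as follows.

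First, apply Theorem \ref{thm:RLKgroup} to rewrite $K_i(\R_{\Lambda_{\operatorname{ev}}^c})$ as the inductive limit of $K_i^l(M^{c-},M^{c+})$ under the action induced by ${}^t\!M^{c-}_{l,l+1}$. Because the matrices $\M^{c-}_{l,l+1}$ and $\M^{c+}_{l,l+1}$ stabilize to fixed $9\times 9$ matrices $M^{c-}$ and $M^{c+}$ for all $l\ge 2$, the limits collapse to
\[
K_0(\R_{\Lambda_{\operatorname{ev}}^c}) \cong \varinjlim\bigl\{{}^t\!M^{c-}:\Z^9/B\Z^9 \to \Z^9/B\Z^9\bigr\},\qquad K_1(\R_{\Lambda_{\operatorname{ev}}^c}) \cong \varinjlim\bigl\{{}^t\!M^{c-}:\Ker(B)\to\Ker(B)\bigr\},
\]
where $B={}^t\!M^{c-}-{}^t\!M^{c+}$. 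The whole computation therefore reduces to finite linear algebra on two fixed matrices, iterated infinitely.

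Second, for each of $K_0$ and $K_1$ separately, choose elementary row/column operations that simplify $B$ while simultaneously conjugating ${}^t\!M^{c-}$ into a block form. For $K_0$, I would carry out column operations transforming $B$ into a matrix $C$ with the same column span such that $\Z^9/C\Z^9$ visibly decomposes as $\Z^2\oplus\Z\oplus(\text{torsion and zeros that cancel})$, and compensating row operations take ${}^t\!M^{c-}$ to the block form $F\oplus \id_{\Z}$ acting on $\Z^2\oplus\Z$, where $F=\bigl(\begin{smallmatrix}1&1\\1&0\end{smallmatrix}\bigr)$. For $K_1$, row operations reduce $B$ to a matrix $D$ with $\Ker B=\Ker D$, then the kernel constraints can be solved to parametrize $\Ker D$ by the triple $(x_7,x_8,x_9)$, yielding an isomorphism $\Ker B\cong\Z^3$ under which ${}^t\!M^{c-}$ becomes the matrix $\bigl(\begin{smallmatrix}1&1&0\\1&0&0\\1&0&1\end{smallmatrix}\bigr)$; a further similarity via $U=\bigl(\begin{smallmatrix}1&0&0\\0&1&0\\-1&-1&1\end{smallmatrix}\bigr)$ splits this as $F\oplus\id_{\Z}$.

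Third, compute both limits:
\[
\varinjlim\{F:\Z^2\to\Z^2\}\cong \Z[\tfrac{1}{\beta}]\cong \Z+\Z\beta\cong \Z^2, \qquad \varinjlim\{\id:\Z\to\Z\}\cong\Z,
\]
where $\beta=\tfrac{1+\sqrt 5}{2}$ satisfies $\beta^2=\beta+1$ (so that $\tfrac{1}{\beta}=\beta-1\in\Z+\Z\beta$). Combining, both $K_0$ and $K_1$ become $\Z^2\oplus\Z\cong\Z^3$.

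The main obstacle is the bookkeeping: one must find row and column operations that simultaneously reduce $B$ to a convenient shape \emph{and} conjugate ${}^t\!M^{c-}$ into the recognizable block form $F\oplus\id$. Nothing is deep — each step is a routine elementary operation — but the $9\times 9$ matrices make the chain of operations long, and one must keep track of the compatibility between the kernel/cokernel reduction and the change-of-basis applied to the dynamics ${}^t\!M^{c-}$. Once the block form $F\oplus\id_{\Z}$ is exhibited, the inductive limit evaluates to $\Z^3$ in both cases and the proposition follows immediately.
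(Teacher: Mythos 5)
Your proposal is correct and follows essentially the same route as the paper: the proposition is indeed just the conjunction of Lemmas \ref{lem:evenmainc0} and \ref{lem:evenmainc1}, and your outline — reduce via Theorem \ref{thm:RLKgroup} to the stabilized $9\times 9$ matrices, use column operations to replace $B$ by $C$ for the cokernel and row operations to replace $B$ by $D$ for the kernel, conjugate ${}^t\!M^{c-}$ to $F\oplus\id$ in each case, and evaluate $\varinjlim F \cong \Z[\tfrac{1}{\beta}]\cong\Z^2$ — is exactly the paper's computation. (Your identity $\tfrac{1}{\beta}=\beta-1$ is the correct one; the paper's ``$\tfrac{1}{\beta}=1-\beta$'' is a sign slip that does not affect the conclusion.)
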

%%%%%%%%%%%%%%%%%%%%%%%%%%%%%%%%%%%%%%%%%%%%%%%%

\medskip

Let us next consider the $\lambda$-graph bisystem 
$({\frak L}_{\Lambda_{\operatorname{ev}}^{\lambda}}^-,
{\frak L}_{\Lambda_{\operatorname{ev}}^{\lambda}}^+)
$ that is figured as Figure \ref{fig:evenlambda}
in the end of this section.
It is a subsystem of 
$({\frak L}_{\Lambda_{\operatorname{ev}}^{c}}^-,
{\frak L}_{\Lambda_{\operatorname{ev}}^{c}}^+)
$
and 
irreducible, satisfies condition (I) in the sense of Definition \ref{def:conditionI}.
Hence the associated $C^*$-algebra 
$\R_{\Lambda_{\operatorname{ev}}^{\lambda}}
$
is simple.
The associated symbolic matrix bisystem
with 
$({\frak L}_{\Lambda_{\operatorname{ev}}^{c}}^-,
{\frak L}_{\Lambda_{\operatorname{ev}}^{c}}^+)
$
is denoted by
$(\M^{\lambda-}_{l,l+1},\M^{\lambda+}_{l,l+1})_{l\in\Zp}$
that is written 
\begin{gather*}
\M^{\lambda-}_{0,1} = [\alpha, \beta], \qquad
\M^{\lambda+}_{0,1} = [\alpha, \beta], \\
\M^{\lambda-}_{1,2} = 
\begin{bmatrix}
\alpha&    0   &\beta&0        \\
\beta &\alpha&0       &\beta \\
 \end{bmatrix}, \quad
\M^{\lambda+}_{1,2} = 
\begin{bmatrix}
\alpha&\beta&0      &0        \\
\beta &0      &\alpha&\beta \\
\end{bmatrix}, \\
\M^{\lambda-}_{l,l+1} = 
\begin{bmatrix}
\alpha&    0   &\beta&0          \\
 0      &\alpha&0       &\beta   \\
 \beta&0       &0       &0         \\
 0      &\beta &0       &0         \\
\end{bmatrix}, \quad
\M^{\lambda+}_{l,l+1} = 
\begin{bmatrix}
\alpha&\beta &0       &0           \\
\beta &0       &0       &0           \\
0       &0       &\alpha&\beta     \\
0      &0        &\beta &0           \\
\end{bmatrix}\quad
\text{for  }l\ge 2.
\end{gather*}
Let us denote by
$(M^{\lambda-}_{l,l+1},M^{\lambda+}_{l,l+1})_{l\in\Zp}$
the associated nonnegative matrix bisystem 
from 
$(\M^{\lambda-}_{l,l+1},\M^{\lambda+}_{l,l+1})_{l\in\Zp}$
obtained by setting all the symbols equal to one, so that we have
\begin{equation*}
M^{\lambda-}_{l,l+1} = F^-, \qquad M^{\lambda+}_{l,l+1} = F^+
\qquad \text{ for } l\ge 2,
\end{equation*}
where $F^-, F^+$ are the matrices appeared in \eqref{eq:nnmbgm}.
Hence by Proposition \ref{prop:7.6}, we have 
$$
K_i(\R_{\Lambda_{\operatorname{ev}}^{\lambda}}) \cong K_i(\R_{\Lambda_F}), \qquad 
i=0,1
$$
so that 
 \begin{proposition}\label{prop:evenmainlambda}
$
K_i(\R_{\Lambda_{\operatorname{ev}}^{\lambda}}) \cong \Z^2, \, 
i=0,1.
$
\end{proposition}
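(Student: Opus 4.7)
The plan is to reduce the computation directly to the golden mean shift case already handled in Proposition \ref{prop:7.6}. By Theorem \ref{thm:RLKgroup}, we have
\begin{equation*}
K_i(\R_{\Lambda_{\operatorname{ev}}^{\lambda}}) \cong K_i(M^{\lambda-}, M^{\lambda+}), \qquad i=0,1,
\end{equation*}
where each $K_i(M^{\lambda-}, M^{\lambda+})$ is an inductive limit of the groups $K_i^l(M^{\lambda-}, M^{\lambda+})$ built from the pairs $({}^t\!M^{\lambda-}_{l,l+1}, {}^t\!M^{\lambda+}_{l,l+1})$ together with the connecting maps induced by ${}^t\!M^{\lambda-}_{l,l+1}$.

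The key observation, which is explicitly recorded in the excerpt just before the statement, is that from level $l \geq 2$ onwards the matrices $M^{\lambda-}_{l,l+1}$ and $M^{\lambda+}_{l,l+1}$ stabilize to the $4 \times 4$ matrices $F^-$ and $F^+$ of equation \eqref{eq:nnmbgm}, namely the nonnegative matrix bisystem that was used to describe the golden mean shift $\Lambda_F$ in Proposition \ref{prop:7.6}. Since the inductive limits defining $K_0(M^{\lambda-}, M^{\lambda+})$ and $K_1(M^{\lambda-}, M^{\lambda+})$ only depend on the eventual (tail) behaviour of the sequence of matrices $\{(M^{\lambda-}_{l,l+1}, M^{\lambda+}_{l,l+1})\}_{l \in \Zp}$, truncating the first two terms does not change the result. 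Thus $K_i(M^{\lambda-}, M^{\lambda+}) \cong K_i(F^-, F^+)$.

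Applying Theorem \ref{thm:RLKgroup} in the other direction then identifies $K_i(F^-, F^+)$ with $K_i(\R_{\Lambda_F})$, and Proposition \ref{prop:7.6} gives $K_i(\R_{\Lambda_F}) \cong \Z^2$ for $i = 0, 1$. Combining these isomorphisms yields the desired conclusion. No part of this argument is delicate: the only point that requires a brief verification is the standard fact that an inductive limit of abelian groups is unchanged under omission of any finite initial segment of the directed system, and the already-noted stabilization of the matrix bisystem at level $l = 2$.
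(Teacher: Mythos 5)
Your proposal is correct and follows essentially the same route as the paper: the paper likewise observes that $M^{\lambda-}_{l,l+1}=F^-$ and $M^{\lambda+}_{l,l+1}=F^+$ for $l\ge 2$ and then invokes Proposition \ref{prop:7.6} via the tail-invariance of the inductive limits, exactly as you do. Your explicit mention of Theorem \ref{thm:RLKgroup} and of the fact that dropping a finite initial segment of the directed system does not change the limit simply makes the paper's implicit justification precise.
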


%\begin{proposition}\label{prop:evenmain}
%\hspace{6cm}
%\begin{enumerate}
%\renewcommand{\theenumi}{\roman{enumi}}
%\renewcommand{\labelenumi}{\textup{(\theenumi)}}
%\item $K_0(\R_{\Lambda_{\operatorname{ev}}^{c}}) = \Z^3, \qquad
%K_1(\R_{\Lambda_{\operatorname{ev}}^{c}}) = \Z^3.$
%\item $K_0(\R_{\Lambda_{\operatorname{ev}}^{\lambda}}) = \Z^2, \qquad
%K_1(\R_{\Lambda_{\operatorname{ev}}^{\lambda}}) = \Z^2.$
%\end{enumerate}
%\end{proposition}

%\newpage 

%%%%%%%%%%%%%%%%%%%%%%%%%%%%%%%%%%%%%%%%%%%%%
\begin{figure}[htbp]
\begin{center}
\input{pictureevencm.tex}
\end{center}
\caption{
  The labeled Bratteli diagram 
${\frak L}_{\Lambda_{\operatorname{ev}}^{c}}^-$
of the  canonical $\lambda$-graph bisystem 
$({\frak L}_{\Lambda_{\operatorname{ev}}^{c}}^-,{\frak L}_{\Lambda_{\operatorname{ev}}^{c}}^+)$ 
of the even shift $\Lambda_{\operatorname{ev}}$ 
}
\label{fig:evencm}
\end{figure}
%where upward arrows $\longleftarrow$ and {\mathversion{bold} $\longleftarrow$} (bold)
% are labeled $\alpha$ and $\beta$, respectively.

%\newpage

%%%%%%%%%%%%%%%%%%%%%%%%%%%%%%%%%%%%%%%%%%%%%%
\begin{figure}[htbp]
\begin{center}
\input{pictureevencp.tex}
\end{center}
\caption{
The labeled Bratteli diagram 
${\frak L}_{\Lambda_{\operatorname{ev}}^{c}}^+$
 of the canonical $\lambda$-graph bisystem 
$({\frak L}_{\Lambda_{\operatorname{ev}}^{c}}^-,{\frak L}_{\Lambda_{\operatorname{ev}}^{c}}^+)$ 
of the even shift $\Lambda_{\operatorname{ev}}$ 
}
\label{fig:evencp}
\end{figure}
%where downward arrows $\longleftarrow$ and {\mathversion{bold} $\longleftarrow$} (bold) are labeled $\alpha$ and $\beta$, respectively.  

%\newpage

%%%%%%%%%%%%%%%%%%%%%%%%%%%%%%%%%%%%%%%%%%%%%%
\begin{figure}[htbp]
\begin{center}
\input{pictureevenlambda.tex}
\end{center}
\caption{  
 The $\lambda$-graph bisystem 
$({\frak L}_{\Lambda_{\operatorname{ev}}^{\lambda}}^-,
{\frak L}_{\Lambda_{\operatorname{ev}}^{\lambda}}^+)$ 
of the even shift $\Lambda_{\operatorname{ev}}$ 
}
\label{fig:evenlambda}
\end{figure}
%where arrows $\longleftarrow$ and {\mathversion{bold} $\longleftarrow$} (bold)
% are labeled $\alpha$ and $\beta$, respectively.

%%%%%%%%%%%%%%%%%%%%%%%%%%%%%%%%%%%%%%%%%%%%%%%%%%%
%%%%%%%%%%%%%%%%%%%%%%%%%%%%%%%%%%%%%%%%%

\newpage

%%%%%%%%%%%%%%%%%%%%%%%%%%%%%%%%%%%%%%
{\it Acknowledgments:}
This work was  supported by JSPS KAKENHI Grant Numbers 15K04896, 19K03537.

%%%%%%%%%%%%%%%%%%%%%%%%%%%%%%

%%%%%%%%%%%%%%%%%%%%%%

%\email{kengo{\@}juen.ac.jp}

\end{document}